\def\l@subsection{\@tocline{2}{0pt}{2.5pc}{5pc}{}}
\numberwithin{equation}{section}
\newtheorem*{theorem*}{Theorem}
\newtheorem*{definition*}{Definition}
\newtheorem*{theorem_A}{Theorem A}
\newtheorem*{corollary_B}{Corollary B}
\newtheorem*{corollary_C}{Corollary C}
\newtheorem{theorem}{Theorem}[section]
\newtheorem{lemma}[theorem]{Lemma}
\newtheorem{proposition}[theorem]{Proposition}
\newtheorem{corollary}[theorem]{Corollary}
\newtheorem{assumption}[theorem]{Assumption}
\newtheorem{remark}[theorem]{Remark}
\theoremstyle{definition}
\newtheorem{definition}[theorem]{Definition}
\newtheorem{example}[theorem]{Example}
\newtheorem{notation}[theorem]{Notation}
\definecolor{darkred}{rgb}{1,0,0} 
\definecolor{darkgreen}{rgb}{0,1,0}
\definecolor{darkblue}{rgb}{0,0,1}
\newcommand{\Spec}{\mathsf{Spec}}
\newcommand{\supp}{\mathsf{supp}}
\newcommand{\Sym}{\mathsf{Sym}}
\renewcommand{\ker}{\mathsf{ker}}
\newcommand{\im}{\mathsf{im}}
\newcommand{\Jac}{\mathsf{Jac}}
\newcommand{\Pic}{\mathsf{Pic}}
\newcommand{\Sing}{\mathsf{Sing}}
\newcommand{\chow}{\mathsf{chow}}
\newcommand{\length}{\mathsf{length}}
\newcommand{\PMod}{\mathsf{PMod}}
\newcommand{\Hilb}{\mathsf{Hilb}}
\newcommand{\Flag}{\mathsf{Flag}}
\newcommand{\Higgs}{\mathsf{Higgs}}
\newcommand{\Exc}{\mathsf{Exc}}
\newcommand{\RSing}{\mathsf{RSing}}
\newcommand{\URSing}{\mathsf{URSing}}
\newcommand{\Hyb}{\mathsf{H}}
\newcommand{\p}{\mathsf{p}}
\newcommand{\q}{\mathsf{q}}
\renewcommand{\r}{\mathsf{r}}
\renewcommand{\t}{\mathsf{t}}
\newcommand{\sym}{\mathfrak{S}}
\newcommand{\wt}{\widetilde}
\newcommand{\rest}{\: \rule[-3.5pt]{0.5pt}{11.5pt}\,{}}
\renewcommand{\to}{\longrightarrow}
\newcommand{\ol}[1]{\overline{#1}}
\newcommand{\calA}{\mathcal{A}}
\newcommand{\calB}{\mathcal{B}}
\newcommand{\calD}{\mathcal{D}}
\newcommand{\calE}{\mathcal{E}}
\newcommand{\calF}{\mathcal{F}}
\newcommand{\calG}{\mathcal{G}}
\newcommand{\calH}{\mathcal{H}}
\newcommand{\calI}{\mathcal{I}}
\newcommand{\calL}{\mathcal{L}}
\newcommand{\calM}{\mathcal{M}}
\newcommand{\calO}{\mathcal{O}}
\newcommand{\calP}{\mathcal{P}}
\newcommand{\calQ}{\mathcal{Q}}
\newcommand{\calR}{\mathcal{R}}
\newcommand{\calT}{\mathcal{T}}
\newcommand{\calU}{\mathcal{U}}
\newcommand{\calV}{\mathcal{V}}
\newcommand{\calHom}{\mathcal{H}\!om}
\newcommand{\calExt}{\mathcal{E}\!xt}
\newcommand{\calTor}{\mathcal{T}\!or}
\renewcommand{\AA}{\mathbb{A}}
\newcommand{\CC}{\mathbb{C}}
\newcommand{\C}{\mathbb{C}}
\newcommand{\NN}{\mathbb{N}}
\newcommand{\PP}{\mathbb{P}}
\newcommand{\ZZ}{\mathbb{Z}}
\newcommand{\WW}{\mathbb{W}}
\newcommand{\id}{\mathrm{id}}
\newcommand{\Prym}{\mathsf{Prym}}
\newcommand{\Nm}{\mathsf{Nm}}
\newcommand{\GL}{\mathsf{GL}}
\newcommand{\SL}{\mathsf{SL}}
\newcommand{\PGL}{\mathsf{PGL}}
\newcommand{\morph}[6]{\begin{array}{cccc} #6: & #1  & \stackrel{#5}{\longrightarrow} &  #2  \\ & #3 & \longmapsto & #4  \end{array}}
\newcommand{\quotient}[2]{{\raisebox{.2em}{\thinspace $#1$}\left / \raisebox{-.15em}{ $#2$}\right.}}
\newcommand{\longhookrightarrow}{\lhook\joinrel\longrightarrow}
\title{Fourier--Mukai transforms and normalisation of nodal curves}
\author[E. Franco]{Emilio Franco}
\address{E. Franco,
\newline\indent Universidad Aut\'onoma de Madrid 
\newline\indent and
\newline\indent Instituto de Ciencias Matem\'aticas (CSIC--UAM--UCM--UC3M)
\newline\indent Campus de Cantoblanco 28049, Madrid, Espa\~na.}
\email{emilio.franco@uam.es}
\author[R. Hanson]{Robert Hanson}
\address{R. Hanson, 
\newline\indent Imperial College London, 
\newline\indent Exhibition Rd, South Kensington, London SW7 2AZ}
\email{robert.hanson@imperial.ac.uk}
\author[J. Horn]{Johannes Horn}
\address{J. Horn,
\newline\indent Goethe-Universit\"at Frankfurt, 
\newline\indent Robert-Mayer-Str. 6-8, 60325 Frankfurt am Main}
\email{horn@math.uni-frankfurt.de}
\author[A. Oliveira]{André Oliveira}
\address{A. Oliveira,
\newline\indent Centro de Matemática da Universidade do Porto,
\newline\indent Departamento de Matemática, Faculdade de Ciências, Universidade do Porto,
\newline\indent Rua do Campo Alegre s/n, 4169-007 Porto, Portugal}
\email{andre.oliveira@fc.up.pt}
\thanks{
First author supported by the Spanish Ministry of Science and Innovation, through project PID2022-141387NB-C22 and the \textit{Severo Ochoa Programme for Centres of Excellence in R$\&$D} (CEX2019-000904-S). Second author supported by the Horizon Europe Marie Skłodowska-Curie Action grant \textit{Hyperkähler mirror symmetry and Langlands duality}, grant ID 101204490. Third author supported by the Deutsche Forschungsgemeinschaft (DFG, German Research Foundation) through the Collaborative Research Centre TRR 326 \textit{Geometry and Arithmetic of Uniformized Structures}, project number 444845124. Fourth author partially supported by FCT (Fundação para a Ciência e Tecnologia), under the projects with reference UID/00144/2025, and associated DOI
 https://doi.org/10.54499/UID/00144/2025, CMUP, member of LASI, and 2024.15931.PEX \textit{Higgs bundles: geometry, algebra and physics}.
}
\begin{document}

\renewcommand{\baselinestretch}{1.10}\normalsize

\begin{abstract}
We study Arinkin's Poincaré sheaf $\calP_C$ on the singular locus of $\ol\Jac_C$, the compactified Jacobian of rank one torsion-free sheaves on an integral nodal projective curve $C$. Each stratum of the singular locus $\Sing(\ol\Jac_C)$ is indexed by a partial normalisation $\Sigma \to C$. We prove that the Poincaré sheaf $\calP_{C}$ restricted to each stratum can be expressed through the Poincaré sheaf $\calP_{\Sigma}$, obtaining a relation between Fourier--Mukai transforms associated to $\calP_C$ and $\calP_\Sigma$. Our approach uses an intermediate geometry: the moduli space of parabolic modules of Bhosle and Cook, to intertwine sheaf data over the two curves. In a sequel, our formulae are used to study mirror symmetry in singular loci of Hitchin systems.
\end{abstract}

\maketitle

\renewcommand{\baselinestretch}{0.9}\normalsize 

\input{toc.tex}
{
\small
\hypersetup{linkcolor=black}
\tableofcontents
}

\newpage

\section{Introduction}
\renewcommand{\baselinestretch}{1.12}\normalsize

In seminal work of Mukai \cite{mukai}, a categorification of the Fourier transform yields a derived equivalence $\Phi^{A} : D^b(A) \xrightarrow{\ \cong\ } D^b(A^{\vee})$ between an abelian variety $A$ and its dual $A^{\vee} := \Pic^0(A)$, with $\Phi^{A}$ satisfying properties that emulate the Fourier analysis of $L^2$-functions. Equivalences of this type, known as \textit{Fourier--Mukai transforms}, have since become widespread in algebraic geometry.  

A classical example of an autodual abelian variety is given by the Jacobian of a smooth projective curve, over which the Fourier--Mukai transform is a non-trivial derived autoequivalence. For certain singular curves $C$, the Jacobian may be replaced by the compactified Jacobian $\ol\Jac^{\,0}_C$ of rank one, degree zero torsion-free sheaves on $C$, the geometry of which is classically studied \cite{altman&kleiman, caporaso, dsouza, esteves, esteves&gagne&kleiman2, esteves&kleiman, melo0, simpson}. Extending results of Esteves--Gagn\'e--Kleiman \cite{esteves&gagne&kleiman2}, the autoduality of $\ol{\Jac}^0_C$ for integral planar curves was proven by Arinkin \cite{arinkin}, and later generalised by Melo--Rapagnetta--Viviani \cite{melo0, melo1, melo2} to fine compactified Jacobians of reduced planar curves. These works establish an autoduality isomorphism $\ol\Jac^{\,0}_C \cong \ol{\Pic}^{\,0}(\ol\Jac^{\,0}_C)$ and a Fourier--Mukai transform $\Phi^{\calP_C} : D^b(\ol\Jac^{\,0}_C) \xrightarrow{\ \cong\ } D^b(\ol\Jac^{\,0}_C)$, both controlled by the \textit{Poincar\'e sheaf} $\calP_C$ on $\ol\Jac^{\,0}_C \times\ol\Jac^{\,0}_C$; the universal family for autoduality as a moduli problem. 

Let $C$ be an integral nodal projective curve. This article and the sequels \cite{FHHO, FM2} describe a filtered approach to computing $\Phi^{\calP_C}$ over the singular locus of $\ol\Jac^{\,0}_C$. The starting point for our work is the observation that the singular locus of $\ol\Jac_C$ is stratified by subvarieties of the form 
\[
\ol\Jac_{\Sigma}^{\,0} \cong \ol\Jac_{\Sigma}^{\, - k} \longhookrightarrow \Sing(\ol\Jac^{\,0}_C) \, ,
\]
where $\nu : \Sigma \to C$ is a partial normalisation of $C$ that resolves $k$ nodes. These subvarieties have appeared in several contexts in algebraic geometry, for instance: the study of \textit{parabolic modules} \cite{bhosle:1992, cook:1993} and \textit{presentation schemes} \cite{esteves&gagne&kleiman}; the singularities of Hitchin systems via the normalisation of spectral curves \cite{ngo, ngo2, FGOP, franco&peon, horn, maulik&shen}; and Hitchin's study of `critical loci' \cite{hitchin_critical}, generically modeled on $\ol\Jac_{\Sigma}^{\, - k} \longhookrightarrow \ol\Jac^{\,0}_C$ studied in families. This article, alongside \cite{FHHO, FM2} (see also Sections \ref{se: vary nu}, \ref{se mirror}), develops an approach that compares, in a systematic way, the interactions between these different appearances of the subvarieties $\ol\Jac_{\Sigma}^{\, - k} \longhookrightarrow \ol\Jac^{\,0}_C$. 

\subsection{Outline of results} In this article we work over an integral nodal projective curve $C$ with a fixed  partial normalisation $\nu : \Sigma \to C$ resolving $k$ nodes. We consider two ways of passing between the derived categories of $\ol\Jac^{\,0}_{\Sigma}$ and $\ol\Jac^{\,0}_C$: firstly by pushforward along the map $\check{\nu} : \ol\Jac_{\Sigma}^{\, -k} \longhookrightarrow \ol\Jac^{\,0}_C$, $M \longmapsto \nu_{*}M$; and secondly by convolution over an intermediate geometry: the parabolic modules of Bhosle and Cook \cite{bhosle:1992, cook:1993}, which consist of pairs $(M,V)$ where $M\in \ol\Jac^{\, 0}_\Sigma$ and $V$ is a subsheaf of $M$ restricted to the resolved nodes. The parameter $V$ may be interpreted as gluing data on $M$, used to construct a point of $\ol\Jac_C^{\, 0}$. The resultant fine moduli space $\PMod^{0}_{\nu}$ of such objects then fits into a (non-commutative) diagram
\begin{equation}
\label{eq: convolution}
\begin{tikzcd}
 & \PMod_{\nu}^0 \arrow[dl, "\dot\nu"'] \arrow[dr, "\rho"] & \\
\ol{\Jac}_\Sigma^{\, 0} & \ol{\Jac}_\Sigma^{\, -k} \arrow[r, "\check\nu"'] \arrow[l, "\tau_{k ,y_0}"] & \ol\Jac^{\,0}_C ,
\end{tikzcd}
\end{equation}
such that $\dot\nu(M,V) = M$ is a projective fibre bundle and $\rho$ is a finite map that defines a partial resolution of singularities. We take the translation $\tau_{k, y_0} := (\bullet) \otimes \calO_\Sigma( ky_0 )$, at fixed $y_0 \in \Sigma$, to trivialise $\Jac^{\,-k}_{\Sigma}$ as a torsor and correct for degree changes caused by pushforward along $\nu$. The moduli $\PMod^{0}_{\nu}$ receives a tautological bundle $\calV_\Sigma$, universally parameterising the subsheaves $V$. By treating the arrows $\ol{\Jac}_\Sigma^{\, 0} \longleftarrow \PMod_{\nu}^0 \to \ol\Jac^{\,0}_C$ as a convolution diagram, with convolution kernel $\det(\calV_{\Sigma}) \to \PMod_{\nu}^0$, we define a pair of convolution functors
\begin{equation}
\label{eq: convolution intro}
\Theta^{\calV_{\Sigma}} := \rho_{*}(\det(\calV_{\Sigma}) \otimes \dot\nu^{*} (\bullet)) : D^b(\ol\Jac_{\Sigma}^{\, 0}) \to D^b(\ol\Jac^{\,0}_C) , 
\end{equation}
\begin{equation}
\label{eq: wt convolution intro}
\wt{\Theta}^{\calV_{\Sigma}} := (\id \times \rho)_{*}(\q_2^{*}\det(\calV_{\Sigma}) \otimes (\id \times \dot\nu)^{*} (\bullet)) : D^b(\ol\Jac_{\Sigma}^{\, -k} \times \ol\Jac_{\Sigma}^{\, 0}) \to D^b(\ol\Jac_{\Sigma}^{\, -k} \times \ol\Jac^{\,0}_C) , \end{equation}
with $\wt{\Theta}^{\calV_{\Sigma}}$ a relative version of $\Theta^{\calV_{\Sigma}}$, and $\q_2 : \ol{\Jac}_\Sigma^{\,-k} \times \PMod^{0}_\nu \to \PMod^{0}_\nu$ the natural projection. 

These functors are then used in the following comparison result, which identifies the relation between the respective Poincar\'e sheaves and Fourier--Mukai transforms over $\ol\Jac^{\,0}_{\Sigma}$ and $\ol\Jac^{\,0}_C$. 

\begin{theorem_A}\label{thm:A}
Let $C$ be an integral nodal projective curve, with partial normalisation $\nu: \Sigma \to C$ resolving precisely $k$ nodes. 

\begin{itemize}
    \item (Theorem \ref{tm Poincare and normalisation}). The Poincar\'e sheaves $\calP_{\Sigma}$ on $\ol{\Jac}^{\,0}_{\Sigma} \times \ol{\Jac}^{\,0}_{\Sigma}$ and $\calP_C$ on $\ol\Jac^{\,0}_C \times \ol\Jac^{\,0}_C$ are related by the isomorphism
\begin{equation*}
(\check\nu \times \id)^*\calP_C \cong \wt{\Theta}^{\calV_{\Sigma}}((\tau_{k, y_0} \times \id)^*\calP_\Sigma ).
\end{equation*}

\item (Theorem \ref{tm relation of FM dual}). The associated Fourier--Mukai transforms $\Phi^{\calP_C} : D^b(\ol\Jac^{\,0}_C) \to D^b(\ol\Jac^{\,0}_C)$ and $\Phi^{\calP_{\Sigma}} : D^b( \ol{\Jac}^{\, 0}_\Sigma ) \to D^b( \ol{\Jac}^{\, 0}_\Sigma )$ admit, for every object $\calF^{\bullet} \in D^b(\ol\Jac^{\,-k}_{\Sigma})$, an isomorphism
\begin{equation*}
\Phi^{\calP_C} \left ( \check\nu_*\calF^\bullet \right ) \cong \Theta^{\calV_{\Sigma}}(\Phi^{\calP_\Sigma}(\tau_{k, y_0,*}\calF^\bullet) ) . 
\end{equation*}
\end{itemize}
\end{theorem_A}
Theorem A can be thought of as a natural transformation law for composing integral and convolution functors, emulating classical formulae in harmonic analysis. The convolution kernel $\det(\calV_{\Sigma})$ in our setup universally captures the variation of the subsheaves $V$ in the moduli $\PMod_{\nu}^0$ of parabolic modules, thus taking into account the singular geometry of $\ol\Jac^{\, 0}_C$. In this way Theorem A describes the relationship between the three universal families on the moduli under consideration. 
 
\subsubsection*{Abelian analogue} Theorem A admits an analogue for Fourier--Mukai transforms taken on either side of an embedding $f: A \to B$ of abelian varieties. The transforms for sheaves pushed along $f$ can be computed via the dual map $f^{\vee} : B^{\vee} \to A^{\vee}$ and the natural transformation  
\begin{equation}
\label{eq abelian comparison}
\Phi^B\circ f_* \simeq (f^\vee)^*\circ\Phi^A ;
\end{equation}
see equation (11.3.3) of \cite{polishchuk}. Theorem A can be considered a singular variant of \eqref{eq abelian comparison}, where we take $\check\nu : \ol\Jac^{\, -k}_{\Sigma} \to \ol\Jac^{\, 0}_{C}$ to play the role of $f$. The matching of terms is not exact, stemming from the fact that the dual map $\check\nu^{\vee} = \nu^{*} : \Jac_C^{0} \to \Jac_{\Sigma}^{0}$ does not extend to the compactified Jacobian $\ol\Jac^{\,0}_C$. Instead, we use $\dot\nu:\PMod_{\nu}^{0} \to \ol\Jac_{\Sigma}^{\,0}$, a resolution of the rational map $\nu^*: \ol{\Jac}^{\, 0}_C \dashrightarrow \ol\Jac^{\, 0}_\Sigma$, and pullback along $\dot\nu$ in the convolution construction. This is why the pullback term in \eqref{eq abelian comparison} is replaced by convolution in Theorem A.

\subsubsection*{Spin-valued formulae} By choosing a spin structure on $\PMod^0_{\nu}$, we derive a spin-valued version of Theorem A, expressed in terms of a so-called \textit{spin-valued Wilson operator}
\[
\WW^{1/2}_{\Sigma, \otimes \Exc(\nu)} : D^b(\ol\Jac^{\, 0}_{\Sigma}) \to D^b(\ol\Jac^{\, 0}_{\Sigma}) . 
\]
See \eqref{eq: spin Wilson} for the definition of $\WW^{1/2}_{\Sigma, \otimes \Exc(\nu)}$. These functors satisfy $\WW_{\Sigma, \otimes \Exc(\nu)} \simeq \WW^{1/2}_{\Sigma, \otimes \Exc(\nu)} \circ \WW^{1/2}_{\Sigma, \otimes \Exc(\nu)}$ and so are genuine square roots of the usual (abelianised) Wilson functors, defined by a universal tensoral action (as in \cite[\textsection 4.6]{donagi&pantev}). The following is a spin-valued reformulation of Theorem A. 

\begin{corollary_B} (Corollary \ref{thm: spin and Wilson}). Adopt the same hypothesis as in Theorem A. Additionally, let $\omega_{\PMod}^{1/2}$ be one of the spin structures on $\PMod_{\nu}^{0}$ described in Corollary \ref{spin structures}.    
\begin{itemize}
    \item One has isomorphisms of Poincar\'e sheaves 
\[
(\check\nu \times \id)^*\calP_C 
\cong (\id \times \rho)_* \left ( \q_2^* \left( \omega_{\PMod}^{1/2} \otimes \dot{\nu}^{*} \calU^{1/2}_{\Sigma, \otimes \Exc(\nu)} \right) \otimes (\tau_{k, y_0} \times \dot{\nu})^*\calP_\Sigma \right ) .
\]
\item For every $\calF^\bullet \in D^b(\Jac_\Sigma^{\,-k})$, one has isomorphisms of transformed sheaves 
\[
\Phi^{\calP_C}_{1 \rightarrow 2} \left ( \check\nu_*\calF^\bullet \right) 
\cong \rho_* \left ( \omega_{\PMod}^{1/2} \otimes \dot\nu^{*} \big( \WW^{1/2}_{\Sigma, \otimes \Exc(\nu)} \circ \Phi^{\calP_\Sigma}_{1 \rightarrow 2}(\tau_{k, y_0,*}\calF^\bullet) \big) \right) .  
\]
\end{itemize} 
\end{corollary_B}
The choice of $\omega_{\PMod}^{1/2}$ is auxiliary to Corollary B, in the sense that both displayed formulae are independent of which spin structure is chosen (see Remark \ref{re: spin independence}). The appearance of square roots can be heuristically explained by the fact that $\nu : \Sigma \to C$ is two-to-one over the locus of resolved singularities, so performing a `Wilson loop' over resolved nodes of $C$ lifts to a so-called \textit{`Wilson spin-half loop'} over $\Sigma$ (see \cite[\textsection 6.1]{kapustin&witten} for the physics of Wilson loops).  

\subsubsection*{Prym-valued formulae} We also establish an analogue of Theorem A over the compactified Prym varieties 
\[
\ol\Prym^{\,0}_{\Sigma} \subset \ol\Jac^{\,0}_{\Sigma} , \quad \quad \ol\Prym^{\,0}_{C} \subset \ol\Jac^{\,0}_{C} . 
\]
These are defined as preimages of certain \textit{norm maps}, associated to a pair of finite, ramified and commuting $n$-coverings $\beta_C : C \to X$ and $\beta_\Sigma : \Sigma \to X$ over a smooth projective curve $X$. Our Prymian results are based on the Fourier--Mukai transforms 
\[
\Psi^{\calR_{C}}_{1 \rightarrow 2} : D^b(\ol\Prym_C^{\, 0}) \xrightarrow{\ \cong\ } D^b (\ol\Prym_C^{\, 0}, \Gamma ) , \quad \quad 
\Psi^{\calR_{\Sigma}}_{1 \rightarrow 2} : D^b(\ol\Prym_{\Sigma}^{\, 0}) \xrightarrow{\ \cong\ } D^b(\ol\Prym_\Sigma^{\, 0}, \Gamma ) \, , 
\]
established in \cite[Thm 4.8]{FHR} and \cite[Thm 4.7]{groechenig&shen}. Here $\Gamma$ is the group of $n$-torsion points in $\Jac_X$, acting via tensor product, and $D^b(\ol\Prym^{\,0}_{\beta_\Sigma} , \Gamma)$ denotes the $\Gamma$-equivariant derived category. In Lemma \ref{Lemma:compatib-maps-Pryms}, we construct a Prym variant of the diagram \eqref{eq: convolution}: 
\begin{equation*}
\begin{tikzcd}
 & \PMod_{\nu}^{0} \times_{\ol{\Jac}_C^{\, 0}} \ol\Prym_C^{\, 0}  \arrow[dl, " \mathring{\nu}"'] \arrow[dr, " \mathring{\rho}"] & \\
\ol{\Prym}_\Sigma^{\, 0}  & \ol{\Prym}_\Sigma^{\, -k} \arrow[r, "\breve\nu"'] \arrow[l, "\mathring{\tau}"] & \ol\Prym^{\,0}_C ,
\end{tikzcd}
\end{equation*}
compatible with restriction from the ambient compactified Jacobians. The following is the analogue of Theorem A for compactified Prym varieties. 

\begin{corollary_C} (Corollary \ref{co: SL_n transform}). 
Let $C$ be an integral nodal curve with arithmetic genus $g$ and a partial normalisation $\nu : \Sigma \to C$. Consider compatible $n$-coverings $C \to X$ and $\Sigma \to X$ and their compactified Pryms, as described above. Then, for every $\calF^\bullet \in D^b(\ol\Prym_{\Sigma}^{\,-k})$, one has the isomorphism
\begin{equation*}
\Psi^{\calR_C}_{1 \rightarrow 2} \left ( R\breve\nu_*\calF^\bullet \right ) \cong  \mathring\rho_* \left( \imath_{\nu}^{*}\det(\calV_{\Sigma}) \otimes \mathring\nu^* \Psi^{\calR_\Sigma}_{1 \rightarrow 2} (\mathring\tau_{*} \calF^\bullet) \right). 
\end{equation*} 
\end{corollary_C}

In the remainder of the introduction we discuss future applications of the above results.

\subsection{Varying the normalisation} 
\label{se: vary nu}
By allowing the partial normalisations $\Sigma \to C$ to vary, Theorem A can be used to make global statements on the autoduality of $\ol\Jac^{\,0}_C$. The strategy is to cover $\Sing(\ol\Jac^{\,0}_C)$ by an increasing filtration of subvarieties  
\begin{equation}
\label{eq: filter Sing}
\check\nu_{n}(\ol\Jac^{\, -n}_{\Sigma_{n}}) 
\subset \cdots 
\subset \check\nu_{1}(\ol\Jac^{\, -1}_{\Sigma_{1}}) 
\subset \Sing(\ol\Jac^{\,0}_C) \
\subset \ol\Jac^{\,0}_C ,  
\end{equation} 
obtained from chains of partial normalisation maps $\Sigma_n \xrightarrow{\nu_n} \cdots \to \Sigma_1 \xrightarrow{\nu_1} C$. By applying Theorem A inductively along \eqref{eq: filter Sing}, one obtains new global descriptions of the Poincar\'e sheaf $\calP_C$ and the associated Fourier--Mukai transforms. This will be taken up in the sequel \cite{FM2}.  

\subsection{Higgs bundles and Langlands duality}
\label{se mirror}

Our work is motivated by the study of Langlands duality on moduli of Higgs bundles, most directly in the form of the \textit{Dolbeault geometric Langlands conjecture} of Donagi and Pantev \cite{donagi&pantev}. Via a \textit{classical limit} of the (global, unramified) de Rham geometric Langlands correspondence, the conjecture predicts, for Langlands self-dual group $G = \GL_n$, a derived autoequivalence on the moduli stack $\Higgs(X)$ of $\GL_n$-Higgs bundles on a smooth projective curve $X$, linear with respect to the \textit{Hitchin fibration} $\Higgs(X) \to \calB$ discovered in celebrated work of Hitchin \cite{hitchin-self}. The base $\calB$ parametrises \textit{spectral covers} $C \to X$, built from the eigenvalues of Higgs fields. The fibre at a point $C \in \calB$, known as a \textit{Hitchin fibre}, is isomorphic to $\ol{\Jac}^{\,0}_C$. 

Arinkin's Fourier--Mukai transform $D^b(\ol\Jac^{\,0}_C) \to D^b(\ol\Jac^{\,0}_C)$, understood in families, provides a solution to the $\GL_n$-Dolbeault geometric Langlands conjecture over the cuspidal (i.e. elliptic) locus $\calB^{cusp} \subset \calB$ consisting of integral spectral curves. The conjecture is solved by a relative Fourier--Mukai equivalence 
\[
\Phi^{cusp} : D^b(\Higgs(X) \times_{\calB} \calB^{cusp}) \xrightarrow{\ \cong\ } D^b(\Higgs(X) \times_{\calB} \calB^{cusp}) .
\]
Our work provides a method for computing $\Phi^{cusp}$ within a particular locus of singular spectral curves: the equisingular locus $\calB^{nodal, k} \subset \calB^{cusp}$ of irreducible nodal curves with exactly $k$ nodes. Indeed, our Theorem A can be understood as a method for computing the functor $\Phi^{nodal, k} = \Phi^{cusp} \times_{\calB} \calB^{nodal, k}$ over each strata of the singular locus $\Sing(\Higgs(X) \times_{\calB}\calB^{nodal, k})$, with each strata defined by pushforward along $\nu : \Sigma \to C$. Following the ideas outlined in Section \ref{se: vary nu}, the sequel \cite{FM2} describes how to extract more global conclusions, via an inductive computation of $\Phi^{nodal, k}$ in which all strata participate. 

Langlands duality also manifests as mirror symmetry on the hyperk\"ahler moduli space $\calM_{\Higgs}(X)$ of semistable $\GL_n$-Higgs bundles on the base curve $X$. In this context, the Hitchin fibration $\calM_{\Higgs}(X) \to \calB$ is studied as a special Lagrangian torus fibration, where autoduality of the fibre spaces $\ol{\Jac}^{\, 0}_C$ describes $\calM_{\Higgs}(X)$ as SYZ mirror self-dual, in the sense of Strominger--Yau--Zaslow \cite{SYZ}. A classical heuristic in mirror symmetry is that mirror A-branes and B-branes are interchanged by Fourier--Mukai transforms along dual SYZ fibres. On $\calM_{\Higgs}(X)$, this heuristic has been studied by many authors via the Fourier--Mukai transforms $D^b(\ol{\Jac}^{\, 0}_C) \xrightarrow{\,\cong\,} D^b(\ol{\Jac}^{\, 0}_C)$. Typically, one is constrained to the dense locus $\calB^{smth} \subset \calB$ of smooth spectral curves, where the transforms are classical. Mirror symmetry over the discriminant locus $\Delta = \calB - \calB^{smth}$ of singular and possibly non-reduced curves remains poorly understood in many aspects. 

The results of this article allow us to run the above-described mirror symmetry heuristic on so-called \textit{`critical loci'} of Hitchin \cite{hitchin_critical}, generically modeled on the family of subvarieties $\nu_{*} : \Jac^{\,-k}_{\Sigma} \longhookrightarrow \ol\Jac^{\,0}_C$, with $\nu $ the full normalisation and $C$ varying in $\calB^{nodal, k} \subset \Delta$. When the spectral cover $C \to X$ has the maximal number of nodes, work of the first and fourth named authors with Peon-Nieto and Gothen \cite{FGOP, corrigendum_FGOP} gave a description of the mirror pairing using the autoduality $\Jac_C \cong \Pic^0(\ol{\Jac}_C)$. With the Fourier–Mukai technology developed in this article, we give a twofold extension of their results: to any number of nodes, and to a more complete computation of the mirror pairing; using autoduality on the entirety of the Hitchin fibre $\ol\Jac_C^{\,0}$. This will appear in future work \cite{FHHO}. 

\subsection{Structure of the paper} 

Section \ref{se prelims} recalls Arinkin's Fourier--Mukai transform for compactified Jacobians, specialised to the case of nodal curves. Section \ref{se CH-PM-UD} introduces related moduli of divisors and sheaves used throughout the paper, including a non-standard algebraic space we call the \textit{Chow--Hilbert space:} a `mixture' of symmetric products and Hilbert schemes. We also recall the moduli spaces of parabolic modules, introduced by Bhosle and Cook, adapting their constructions to partial normalisations.  Section \ref{se: comparison} is dedicated to technical lemmas, comparing several universal families that encode information about Arinkin's Poincar\'e sheaf. This allows us, in Section \ref{sc FM and normalisation}, to prove our main result: a comparison between Poincar\'e sheaves and Fourier--Mukai transforms over $C$ and its partial normalisations, as stated above in Theorem A. We then apply this result to derive spin-valued and Prym-valued reformulations, as stated above in Corollaries B and C. 

\subsection{Acknowledgements}
We thank Jonathan Pridham and David Ben-Zvi for discussions on MathOverflow and pointing out the references \cite{gunningham&safronov, pridham}, which helped us understand the role of spin structures in this article and subsequent work \cite{FHHO}. We also thank Yifan Zhao for discussions on presentation schemes. 

\section{Autoduality for compactified Jacobians of nodal curves}
\label{se prelims}

\subsection{Compactified Jacobians} 

Let $C$ be an integral projective curve over $\C$ of arithmetic genus $g_C$, with $|\Sing(C)|\leq g_C$ nodal singularities. Given a sheaf $\calF$ on $C$, we denote its degree by $\deg(\calF) = \chi(\calF) - \chi(\calO_C)$. Consider the Jacobian $\Jac_C^d$, of degree $d$ line bundles on $C$. Let  $\nu: \Sigma \to C$ be a partial normalisation resolving $k\leq|\Sing(C)|$ nodes. $\Sigma$ is again a nodal (possibly smooth) curve of genus $g_\Sigma=g_C-k$, and we have the exact sequence of groups
\[ 
1 \to (\CC^\times)^k \to \Jac^d_C \xrightarrow{\ \nu^*} \Jac^d_\Sigma \to 1.
\] 
If $k=|\Sing(C)|$, then $\Jac^d_\Sigma$ is an abelian variety and, in this sense, $\Jac^d_C$ is said to be a (torsor for a) semi-abelian variety. Note that $\Jac_C^d$ is not proper, and the study of its compactifications is a classical problem in algebraic geometry \cite{dsouza,altman&kleiman, esteves, simpson}. We are primarily concerned with a compactification $\ol\Jac_C^{\, d}$ parametrising rank $1$ and degree $d$ torsion-free sheaves on $C$, rigidified by global scaling. For $C$ smooth, a torsion-free sheaf is automatically locally free, thus one recovers the classical Jacobian as $\ol{\Jac}_C^{\, d} = \Jac_C^{\, d}$.

\begin{notation}\label{not1}
To streamline notation we abbreviate
\[
\Jac_C:=\Jac_C^0,
\qquad
\overline{\Jac}_C:=\overline{\Jac}_C^{\, 0},
\]
and similarly over partial normalisations $\Sigma$.
\end{notation}

$\ol{\Jac}_C^{\, d}$ satisfies the following geometric properties \cite{dsouza,altman&kleiman,melo1}:  
	\begin{itemize}
		\item $\ol\Jac_C^{\, d}$ is a connected, reduced and irreducible projective scheme of dimension $g$,   with locally complete intersection singularities;
		\item $\ol\Jac_C^{\, d}$ has trivial dualising sheaf;
		\item the smooth locus of $\ol\Jac_C^{\, d}$ coincides with the dense open subset $\Jac_C^{d}$. 
    \end{itemize}

Let $\Hilb_{n, C}$ denote the punctual Hilbert scheme of $C$, parameterising zero-dimensional subschemes $D\subset C$ of length $n$. Associated to a smooth point $x_0 \in C$ is the {\it Abel--Jacobi map}
\begin{equation} \label{Abel-Jacobi}
\morph{\Hilb_{n, C}}{\ol\Jac_C}{D}{\calI^\vee_D(-n x_0),}{}{A_{C}}
\end{equation}
with $\calI_D$ the ideal sheaf of the subscheme $D\subset C$ and $\calI^\vee_D$ its dual.

We also consider the {\it curvilinear} Hilbert scheme $\Hilb^{cur}_{n,C}$, parameterising subschemes $D\in \Hilb_{n,C}$ that can be locally embedded into $\AA^1$, i.e.~$D$ is locally isomorphic to the spectrum of $\C[z]/\langle z^{n_p}\rangle$ for some $n_p\in\NN$. $\Hilb^{cur}_{n,C}$ is particularly useful for studying $\ol\Jac_C$ over nodal curves, due to the following result of Arinkin. 

\begin{proposition}[Proposition 4.5 of \cite{arinkin}] 
\label{pr alpha curv surjective}
Let $C$ be an integral nodal projective curve. There exists a sufficiently large integer $n \in \NN$ such that the restriction of the associated Abel--Jacobi map
\[
\alpha_{C}=A_C\rest_{\Hilb^{cur}_{n,C}} : \Hilb_{n, C}^{cur} \longrightarrow 
\ol\Jac_C, 
\]
is surjective.
\end{proposition}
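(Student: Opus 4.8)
The plan is to check surjectivity on closed points: both schemes are of finite type over $\C$, so it suffices to show that, for $n$ large, every rank one torsion-free sheaf $\calF$ on $C$ of degree zero is isomorphic to $\calI_D^\vee(-nx_0)$ for some curvilinear length-$n$ subscheme $D$. I would first dualise this condition. Since $C$ is nodal, hence Gorenstein, rank one torsion-free sheaves are reflexive, so $\calF \cong \calI_D^\vee(-nx_0)$ is equivalent to producing an injection $\calF^\vee(-nx_0) \hookrightarrow \calO_C$ whose image is the ideal sheaf of a curvilinear subscheme, where $\calF^\vee := \calHom(\calF, \calO_C)$. Using $\calHom(\calF^\vee, \calO_C) \cong \calF$, such maps are exactly the sections
\[
\Hom(\calF^\vee(-nx_0), \calO_C) \cong H^0(\calF(nx_0)),
\]
and any nonzero $s$ in this space is injective, because its source is torsion-free of rank one on the integral curve $C$. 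A degree count ($\deg \calF^\vee(-nx_0) = -n$) then forces the cokernel to have length exactly $n$, so $s$ automatically cuts out a length-$n$ subscheme $D_s$ with $\calF \cong \calI_{D_s}^\vee(-nx_0)$. The only condition left to arrange is that $D_s$ be curvilinear.

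Curvilinearity is local along $\Sing(C)$ (at smooth points every subscheme is curvilinear), so the second step is a local study at each node $p$, in $\hat\calO_{C,p} \cong \C[[x,y]]/(xy)$ with branches $\{y=0\}$, $\{x=0\}$ and maximal ideal $\m_p$. If $\calF$ is locally free at $p$ the section is multiplication by some $g \in \hat\calO_{C,p}$, and $D_s$ is curvilinear at $p$ precisely when $g \notin \m_p^2$, i.e.\ $g$ does not vanish to order $\ge 2$ on both branches. If $\calF$ is not locally free at $p$, then $\calF_p \cong \m_p$ and the analogous computation shows that $D_s$ is curvilinear at $p$ precisely when the jet of $s$ is nondegenerate along at least one branch; in this case the resulting ideal is of the form $(x, y^{m})$, which is genuinely non-invertible, so that curvilinear subschemes meeting the nodes do account for the non-locally-free points of $\ol\Jac_C$. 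In either case curvilinearity of $D_s$ at $p$ is the condition that the two-jet of $s$ at $p$ not vanish simultaneously along both branches, and this is an open condition on $H^0(\calF(nx_0))$. (A nonzero global section never vanishes identically on a single branch, since the Zariski local ring $\calO_{C,p}$ is a domain, so injectivity is never an issue.)

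The third step is to globalise and to make $n$ uniform. For $n \gg 0$ one has $H^1(\calF(nx_0)) = 0$ and the two-jet evaluation maps
\[
H^0(\calF(nx_0)) \longrightarrow \calF(nx_0) \otimes \calO_C/\m_p^2
\]
are surjective at each of the finitely many nodes $p$; hence each node contributes a proper closed ``non-curvilinear'' locus in $H^0(\calF(nx_0))$, and a general section $s$ avoids all of them, yielding a curvilinear $D_s$. To secure a single $n$ that works for every $\calF$ at once, I would use boundedness: $\ol\Jac_C$ is proper, so the family of all such $\calF$ is bounded, and semicontinuity provides a uniform $n_0$ beyond which the vanishing and jet-generation above hold. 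This gives the desired surjectivity for all $n \ge n_0$.

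I expect the main obstacle to be the node-local analysis in the non-locally-free case: when $\calF_p \cong \m_p$ one must track the duality $\calF^\vee \cong \calF$ and identify the image ideal of $s$ precisely enough to read off curvilinearity from a jet condition, which is more delicate than the locally free case. A secondary technical point is verifying that the jet-generation statement holds uniformly across the proper family $\ol\Jac_C$, so that one genuinely obtains a single large $n$.
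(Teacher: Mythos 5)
Your proposal is correct, but note that the paper itself contains no proof of this statement: it is imported verbatim as Proposition 4.5 of \cite{arinkin}, where it is proved for arbitrary integral curves with \emph{planar} singularities. So the comparison is really with Arinkin's argument. Your proof is the natural specialisation of that circle of ideas to nodes, and it is sound: identifying points of $A_C^{-1}(\calF)$ with nonzero sections of $H^0(\calF(nx_0))$ via reflexivity (valid because nodal curves are Gorenstein), the degree count forcing the cokernel to have length exactly $n$, the reduction of curvilinearity to a condition at the nodes, the explicit computation in $\C[[x,y]]/(xy)$ in both local cases (locally free: $\calI_{D_s}=(g)$, curvilinear iff $g\notin\m_p^2$; non-locally free: $\calF_p\cong\m_p$ and $\calI_{D_s}=(x^{a+1},y^{b+1})$, curvilinear iff $a=0$ or $b=0$), and the conclusion that the bad sections form a finite union of proper linear subspaces once the jet maps are surjective and $H^1$ vanishes, which boundedness of the family over the projective scheme $\ol\Jac_C$ makes uniform in $\calF$. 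What the nodal hypothesis buys is precisely this explicitness; Arinkin's planar case needs a subtler codimension estimate, since for worse singularities non-curvilinearity is not read off from a two-jet. Two points to tighten. First, your unified phrasing (``the two-jet does not vanish simultaneously along both branches'') is wrong in the non-locally-free case: there the correct condition is $s\notin\m_p\calF_p$, i.e.\ $s$ generates $\calF_p$ along at least one branch, which is strictly stronger than nonvanishing of the jet (e.g.\ components $(x,y)$ give the non-curvilinear ideal $(x^2,y^2)$); your case-by-case statements are the correct ones, and the bad locus is still a proper linear subspace, so the argument survives. Second, the parenthetical claim that a nonzero global section cannot vanish on a formal branch deserves the actual reason: minimal primes of $\hat{\calO}_{C,p}$ contract to $(0)$ in the domain $\calO_{C,p}$, equivalently nonzero elements of $\calO_{C,p}$ have finite-colength image in the completion.
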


Both $\ol\Jac_C^{\, d}$ and $\Hilb_{n, C}^{cur}$ are fine moduli spaces, giving rise to the following universal objects. 

\begin{itemize}
    \item The sheaf $\calU_{d,C}$ on $C \times \ol\Jac_C^{\, d}$, universal for rank $1$ and degree $d$ torsion-free sheaves on $C$. The universal property is as follows: for every $\calF \in\ol \Jac_C^{\, d}$, there is a universal isomorphism $\calU_{d,C}|_{C\times\{\calF\}}\cong\calF$. We normalise $\calU_{d,C}$ at a fixed point $x_0 \in C$, i.e. $\calU_{d,C}|_{\{x_0\}\times\ol\Jac} \cong \calO_{\ol\Jac^d_C}$. For $d=0$, we simply write $\calU_C = \calU_{0,C}$. 

    \item The universal divisor $\calD_C \subset C\times\Hilb^{cur}_{n,C}$, universal for curvilinear subschemes. 

    \item Along (a restriction of) the projection $h_C:C\times \Hilb^{cur}_{n,C} \to \Hilb^{cur}_{n,C}$, which over $\calD$ is a finite map of degree $n$, we take the pushforward 
    \begin{equation}\label{eq def Acn}
    \calA_{C, n} := h_{C,*} \calO_{\calD_C} , 
    \end{equation}
    which is a locally free sheaf on $\Hilb_{n, C}^{cur}$ of rank $n$.

    \item The Poincar\'e line bundle $P_C \to \Jac_C \times \ol\Jac_C$, universal for topologically trivial line bundles on $\ol\Jac_C$. $P_C$ can be constructed by applying determinant of cohomology to $\calU_C$ \cite{esteves&kleiman}. 
\end{itemize}

\subsection{Fourier--Mukai transform of Arinkin} 
\label{subsec: FM and compactified Jac}
We now review Arinkin's construction of the \textit{Poincar\'e sheaf} $\calP_C$ on $\ol\Jac_C \times \ol\Jac_C$ as a universal extension of the Poincar\'e bundle $P_C$. We give a simplified construction that suffices for nodal curves \cite[§ 4.3.]{arinkin}. The simplification originates from the surjectivity of the Abel--Jacobi map stated in Proposition \ref{pr alpha curv surjective}. 

Consider the flag scheme, $\Flag_{n,C}$, parameterising length $n$ filtrations 
\[
\emptyset = D_0 \subsetneq \cdots \subsetneq D_k \subset \cdots \subsetneq D_n , 
\] 
of finite subschemes $D_i \in \Hilb_{i,C}$ in $C$. Let $\Flag^{cur}_{n, C}$ be the corresponding curvilinear flag scheme such that each $D_i$ is a point of $\Hilb^{cur}_{i, C}$. $\Hilb^{cur}_{n,C}$ and $\Flag^{cur}_{n, C}$ are open subschemes of $\Hilb_{n, C}$ and $\Flag_{n, C}$ respectively, as described by Arinkin \cite[Section 3]{arinkin}. Moreover, by Lemma 3.9 of \cite{melo2}, $\mathrm{codim}(\Hilb_{n,C}\setminus\Hilb^{cur}_{n,C})\geq 2$ in $\Hilb_{n,C}$. The forgetful morphism
\begin{equation*}\label{psiScurv}
    \psi_{C}: \Flag_{n, C}^{cur} \longrightarrow \Hilb_{n, C}^{cur} , \quad D_0 \subsetneq \cdots \subsetneq D_n \mapsto D_n,
\end{equation*}
is a finite flat morphism of degree $n!$ \cite[Proposition 3.5]{arinkin}. Moreover, we define
\begin{equation*}\label{sigmaScurv}
\sigma_{C}: \Flag_{n, C}^{cur} \longrightarrow C^n, \quad D_0 \subset \cdots \subset D_n \mapsto \supp( \ker(\calO_{D_i} \longrightarrow \calO_{D_{i-1}})_{i=1}^{n}) . 
\end{equation*}
The curvilinear flag scheme $\Flag_{n, C}^{cur}$ carries a natural action of the symmetric group $\sym_n$. Every divisor in $\AA^1$ is the zero locus of some polynomial $(z-t_1)\cdots(z-t_n)$. Hence, there is a natural $\sym_n$-action that permutes $t_i$. This induces a $\sym_n$-action on flags of curvilinear subschemes. With respect to this action $\psi_{C}$ is $\sym_n$-invariant and $\sigma_{C}$ is a $\sym_n$-equivariant morphism using the permutation action on $C^n$.

Consider the diagram
\begin{equation*}\label{eq main diagram}
\begin{tikzcd}[column sep = huge]
\Hilb_{n, C}^{cur} \times \ol \Jac_C^{\, d} \arrow[d , "\p_1" ]  & \Flag_{n, C}^{cur} \times \ol \Jac_C^{\, d} \arrow[l , "\psi_{C} \times \id_{\ol \Jac}"'] \arrow[r , "\sigma_{C} \times \id_{\ol \Jac}"] & C^n \times \ol \Jac_C^{\, d}
\\
\Hilb_{n, C}^{cur}
\end{tikzcd} .
\end{equation*} Further, denote by $\calU_{d, C}^{\boxtimes_n}\to C^n\times\ol\Jac^{\, d}_C$ the external tensor product with respect to the projections $C^n\times\ol\Jac_C\to C\times\ol\Jac_C$ onto each factor. We then define the Cohen--Macaulay sheaf $\calG_{d, C}$ on $\Hilb_{n, C}^{cur} \times \ol \Jac_C^{\, d}$ by
\begin{equation}\label{eq description of Gg_C}
\calG_{d, C} := \left ((\psi_{C} \times \id_{\ol \Jac})_{*}  (\sigma_{C} \times \id_{\ol \Jac})^{*} \calU_{d, C}^{\boxtimes_n} \right )^{sign} \otimes \p_1^*\det(\calA_{C,n})^\vee.
\end{equation}
Note that the pushforward sheaf under $\psi_C$ inherits a $\sym_n$-action on the level of germs of sections. The superscript ``sign'' denotes the anti-invariant subsheaf, where $\sym_n$ acts by the sign representation. 

The main result of Arinkin \cite{arinkin} is as following descent statement for $\calG_{0, C}$ along the Abel--Jacobi map. 
 
\begin{theorem}[\cite{arinkin}] 
\label{tm descent construction of Arinkin}
There exists a maximal Cohen--Macaulay sheaf $\calP_C$ over $\ol\Jac_C \times \ol\Jac_C$, called the \emph{Poincar\'e sheaf}, which restricts to the line bundle $P_C$ under the inclusion $\Jac_C \times \ol\Jac_C \cup \ol\Jac_C \times \Jac_C\longhookrightarrow \ol\Jac_C \times \ol\Jac_C$. The Poincaré sheaf $\calP_C$:
\begin{itemize}
    \item is flat over both factors and symmetric under permutation of these factors;
    \item satisfies
\begin{equation} \label{descent construction of Arinkin}
\calG_{0, C}  \cong (\alpha_{C} \times \id_{\ol \Jac})^* \calP_C\otimes \p_2^*N,
\end{equation}
for some line bundle $N$ on $\ol\Jac_C$, where $\p_2:\Hilb_{n,C}^{cur}\times \ol\Jac_C\to\ol\Jac_C$ is the projection;
\item is the universal family for the moduli problem $\ol{\Pic}^{\,0}(\ol\Jac_C)$ of topologically trivial rank one torsion-free sheaves on $\ol\Jac_C$.
\end{itemize}
\end{theorem}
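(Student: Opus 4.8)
The plan is to construct $\calP_C$ by descending $\calG_{0,C}$ along $\alpha_C \times \id$, and then to extend it across the singular locus by a Hartogs-type argument. First I would pin down $\calP_C$ over the open locus $U := (\Jac_C \times \ol\Jac_C) \cup (\ol\Jac_C \times \Jac_C)$: over $\Jac_C \times \ol\Jac_C$ one takes the Poincaré bundle $P_C$, and over $\ol\Jac_C \times \Jac_C$ its image under the swap of factors. These agree on the overlap $\Jac_C \times \Jac_C$ because $P_C$ is symmetric there, giving a line bundle $\calP_C|_U$. Since $\Sing(\ol\Jac_C) = \ol\Jac_C \setminus \Jac_C$ has codimension one and its square therefore has codimension two in $\ol\Jac_C \times \ol\Jac_C$, the real content is to produce a maximal Cohen--Macaulay extension of $\calP_C|_U$ across this codimension-two complement; uniqueness of such an extension is then automatic.

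The heart of the argument is the descent of $\calG_{0,C}$. The map $\alpha_C$ is proper and, by Proposition \ref{pr alpha curv surjective}, surjective; over a line bundle $L \in \Jac_C$ its fibre is the complete linear system $\PP(H^0(C, L(n x_0)))$, while over non-locally-free sheaves the fibres are larger but still connected. To descend $\calG_{0,C}$ along $\alpha_C \times \id$, up to the twist $\p_2^* N$, I would verify that $\calG_{0,C}$ is trivial along the fibres of $\alpha_C$ relative to the second factor: concretely, that for each point $\calF$ in the second copy of $\ol\Jac_C$ and each $\calE$ in the first, the restriction of $\calG_{0,C}$ to $\alpha_C^{-1}(\calE) \times \{\calF\}$ is the structure sheaf up to a line bundle pulled back from the base. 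Over the line-bundle locus this is a cohomology-and-base-change computation on projective spaces; the sign-isotypic construction in \eqref{eq description of Gg_C} is engineered precisely so that the ramification of $\psi_C$ along the flag strata cancels, leaving a rank-one, fibrewise-trivial sheaf, and the determinant twist $\det(\calA_{C,n})^\vee$ absorbs the remaining discrepancy so that the descended object restricted to $U$ matches $\calP_C|_U$.

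Granting descent, the stated properties follow in sequence. Flatness over each factor is inherited from the flatness of $\calG_{0,C}$ over $\Hilb^{cur}_{n,C}$ together with base change and the irreducibility of $\ol\Jac_C$; the maximal Cohen--Macaulay property descends from that of $\calG_{0,C}$, using that $\ol\Jac_C \times \ol\Jac_C$ is lci with trivial dualising sheaf. Symmetry under the swap holds on the dense open $U$, where it is the classical symmetry of the Poincaré bundle, and extends uniquely to the whole product because a maximal Cohen--Macaulay sheaf is determined by its restriction to the complement of a codimension-two set. The restriction statement $\calP_C|_U \cong P_C$ is then part of the construction.

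The subtlest point, and the one I expect to be the main obstacle, is the final clause: that $\calP_C$ is the universal family for $\ol\Pic^0(\ol\Jac_C)$, i.e. that the classifying map $\ol\Jac_C \to \ol\Pic^0(\ol\Jac_C)$ induced by $\calP_C$ is an isomorphism. Over the line-bundle locus this is the Esteves--Gagné--Kleiman autoduality; promoting it across the singular locus requires showing that $\ol\Pic^0(\ol\Jac_C)$ is again representable by $\ol\Jac_C$ and that $\calP_C$, as a flat family of rank-one torsion-free topologically trivial sheaves, induces a bijection on points together with an isomorphism on tangent spaces. Here the triviality of the dualising sheaf and the lci property are essential: they identify the deformation theory of a torsion-free sheaf on $\ol\Jac_C$ with that of the corresponding point, while the codimension-two extension keeps the two moduli problems in agreement off a negligible locus. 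Verifying this matching of deformation theories along the singular strata is the genuinely hard step; everything else is a consequence of the explicit descent.
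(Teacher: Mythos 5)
First, a framing remark: the paper does not prove Theorem \ref{tm descent construction of Arinkin} at all --- it is imported wholesale from \cite{arinkin}, with the twist by $\p_2^*N$ clarified by \cite{melo2} --- so your sketch has to be measured against Arinkin's argument, whose ingredients the paper recalls in Section \ref{se prelims} and re-uses later. Your overall architecture does match that argument: define $\calP_C$ on $U$ from $P_C$ and its swap, obtain existence of the extension by descending $\calG_{0,C}$ along the Abel--Jacobi map, and deduce symmetry (and flatness over the first factor) from uniqueness of maximal Cohen--Macaulay extensions across codimension two. But the two hardest steps are asserted rather than proven, and one of them, as you describe it, would fail. The descent cannot proceed by ``cohomology-and-base-change on projective spaces,'' because $\alpha_C$ is not proper: its fibres are only the curvilinear open subsets of the projective spaces $\PP(H^0(C,\calF(nx_0)))$, with complements of codimension $\geq 2$. (Incidentally, your claim that the fibres over non-locally-free sheaves are ``larger'' is false: for $n\gg 0$ the full map $A_C$ is a projective bundle with all fibres of the same dimension, which is exactly Assumption \ref{assumption}.) The correct mechanism --- Arinkin's, and the one this paper repeats at the end of the proof of Proposition \ref{pr fibrewise description of Poincare and normalisation} --- is to first extend $\calG^{cur}_{0,C}$ uniquely as a maximal Cohen--Macaulay sheaf across $\Hilb_{n,C}\setminus\Hilb^{cur}_{n,C}$, using \cite[Lemma 2.2]{arinkin} together with the codimension bound of \cite[Lemma 3.9]{melo2}, and only then descend along the honest projective bundle $A_C\times\id$. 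Moreover, since $\calG_{0,C}$ is not locally free, fibrewise ``triviality'' must be formulated and verified with care (restrictions to fibres over points where the would-be $\calP_C$ fails to be locally free are trivial sheaves of higher rank, not structure sheaves), and this verification --- involving the flag scheme, the finite flat $\psi_C$, the $\sym_n$ sign-isotypic part of \eqref{eq description of Gg_C}, and explicit local computations at the nodes --- is the actual technical heart of Arinkin's paper. Your proposal replaces it with the statement that the construction is ``engineered precisely so that the ramification cancels,'' which is the conclusion, not an argument.

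The second gap is the final clause. Your plan --- representability of $\ol{\Pic}^{\,0}(\ol\Jac_C)$, bijectivity on points, isomorphisms on tangent spaces, and a ``matching of deformation theories'' along the singular strata --- is a restatement of what has to be shown, not a route to showing it. That $\ol\Jac_C$, equipped with $\calP_C$, represents $\ol{\Pic}^{\,0}(\ol\Jac_C)$ is precisely the autoduality theorem extending \cite{esteves&gagne&kleiman2}; it does not follow from the lci property, triviality of the dualising sheaf, and codimension-two bookkeeping, and no mechanism is offered for carrying the Esteves--Gagn\'e--Kleiman isomorphism across the boundary. You correctly identify this as the hardest point, but flagging the difficulty does not close it; as it stands the proposal proves neither of the two substantive assertions of the theorem.
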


\begin{remark}
The fact that $\calG_{0, C}$ and $(\alpha_{C} \times \id_{\ol \Jac})^* \calP_C$ differ by the pullback of a line bundle is clarified in Proposition 4.12, Remark 4.13 and Lemma 4.14 of \cite{melo2}. The twist has, however, no influence on Arinkin's results.
\end{remark}

Consider the natural projections
\begin{equation*}\label{eq:projections-compact-Jac}
\xymatrix{ & \ol\Jac_C \times \ol\Jac_C  \ar[ld]_{\pi_1} \ar[rd]^{\pi_2} &
\\
\ol\Jac_C & & \ol\Jac_C.
} 
\end{equation*}

The associated Fourier--Mukai transform with kernel $\calP_C$ is defined to be 
\begin{equation} \label{eq FM Arinkin}
\morph{D^b \left ( \ol\Jac_C \right )}{D^b \left ( \ol\Jac_C \right )}{\calF^\bullet}{R \pi_{2,*}(\pi_1^*\calF^\bullet \otimes \calP_C).}{}{\Phi^{\calP_C}_{1 \rightarrow 2}}
\end{equation}

Moreover, the dual sheaf $\calP^\vee_C$ defines the Fourier--Mukai transform 
\begin{equation} \label{eq dual FM Arinkin}
\morph{D^b \left ( \ol\Jac_C \right )}{D^b \left ( \ol\Jac_C \right )}{\calF^\bullet}{R \pi_{1,*}(\pi_2^*\calF^\bullet \otimes \calP^\vee_C).}{}{\Phi^{\calP^\vee_C}_{2 \rightarrow 1}}
\end{equation}

The following is a special case of \cite[Theorem C]{arinkin}.
\begin{theorem} 
\label{thm:derivedequiv}
Let $C$ be an integral projective curve with nodal singularities and arithmetic genus $g$. Then, $\Phi^{\calP_C}_{1 \rightarrow 2}$ is a derived equivalence, with quasi-inverse the $g$-shifted functor $\Phi^{\calP^\vee_C}_{2 \rightarrow 1}[g]$. 
\end{theorem}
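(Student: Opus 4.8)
The plan is to prove that the composite $\Phi^{\calP_C^\vee}_{2\rightarrow1}[g]\circ\Phi^{\calP_C}_{1\rightarrow2}$ is isomorphic to the identity functor of $D^b(\ol\Jac_C)$, together with the symmetric statement; since $\calP_C$ is symmetric under the swap of the two factors, the two composites are interchanged, so it suffices to treat one of them. First I would record the adjunction bookkeeping. Because $\ol\Jac_C$ is Gorenstein (indeed l.c.i.) with $\omega_{\ol\Jac_C}\cong\calO_{\ol\Jac_C}$ and $\dim\ol\Jac_C=g$, Grothendieck--Serre duality identifies the right adjoint of $\Phi^{\calP_C}_{1\rightarrow2}$ with the integral functor whose kernel is $R\calHom(\calP_C,\calO)\otimes\omega_{\ol\Jac_C}[g]\cong\calP_C^\vee[g]$, using that the maximal Cohen--Macaulay sheaf $\calP_C$ has $R\calHom(\calP_C,\calO)=\calP_C^\vee$ concentrated in degree $0$. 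Thus the functor $\Phi^{\calP_C^\vee}_{2\rightarrow1}[g]$ proposed in the statement is precisely the right adjoint, and the theorem becomes the assertion that the unit and counit of this adjunction are isomorphisms.

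By the composition-of-kernels formula, this reduces to a single convolution computation on the triple product. Writing $\pi_{ij}$ for the projections of $\ol\Jac_C\times\ol\Jac_C\times\ol\Jac_C$, I must show that
\[
K:=R\pi_{13,*}\big(\pi_{12}^*\calP_C\otimes^{L}\pi_{23}^*\calP_C^\vee\big)\cong\calO_\Delta[-g],
\]
where $\Delta\subset\ol\Jac_C\times\ol\Jac_C$ is the diagonal; the unit of the adjunction furnishes a canonical morphism $\calO_\Delta\to K[g]$ that I would show to be an isomorphism. Next I would establish this over the \emph{classical locus}. On the open subset of $\ol\Jac_C\times\ol\Jac_C$ where at least one factor lies in the smooth locus $\Jac_C$, the Poincar\'e sheaf restricts to the Poincar\'e line bundle $P_C$ (Theorem \ref{tm descent construction of Arinkin}), and the convolution becomes the classical Fourier--Mukai computation for $P_C$. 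Here the seesaw principle, cohomology-and-base-change, and the vanishing of the cohomology of a nontrivial topologically trivial line bundle on $\ol\Jac_C$ (concentrated in top degree $g$ in the trivial case) identify $K$ with $\calO_\Delta[-g]$ over this open set, and show the canonical map $\calO_\Delta\to K[g]$ is an isomorphism there.

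The main obstacle is to propagate this across the singular locus. The complement of the classical locus in $\ol\Jac_C\times\ol\Jac_C$ is $\Sing(\ol\Jac_C)\times\Sing(\ol\Jac_C)$, which has codimension $\geq2$, and its intersection with $\Delta$ likewise has codimension $\geq2$ in the ambient (Cohen--Macaulay) product. Over this doubly-singular locus $\calP_C$ is genuinely not locally free, so the classical vanishing arguments are unavailable; one must instead show directly that $K$ carries no cohomology sheaves away from degree $g$ and that $\mathcal{H}^{g}(K)$ is exactly $\calO_\Delta$. My approach would exploit the maximal Cohen--Macaulay property of $\calP_C$ and $\calP_C^\vee$ together with their flatness over both factors: base-change to the fibres of $\pi_{13}$ and depth estimates control the cohomology sheaves of the convolution, ruling out contributions in degrees $\neq g$ and forcing $\mathcal{H}^{g}(K)$ to satisfy Serre's condition $S_2$. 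The canonical morphism $\calO_\Delta\to K[g]$ is then a map of $S_2$-sheaves that is an isomorphism outside a locus of codimension $\geq2$, hence an isomorphism by a Hartogs-type extension argument.

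The concrete control over the singular locus is most efficiently obtained from the descent presentation \eqref{descent construction of Arinkin}, which expresses the pullback of $\calP_C$ along the Abel--Jacobi map as the explicit Cohen--Macaulay sheaf $\calG_{0,C}$; combined with the fact that the non-curvilinear locus has codimension $\geq2$ in $\Hilb_{n,C}$, this reduces the delicate local computations to the curvilinear flag geometry, where the relevant pushforwards and $\sym_n$-invariants can be analysed directly. Granting the convolution identity, the unit and counit are isomorphisms, so $\Phi^{\calP_C}_{1\rightarrow2}$ is an equivalence with two-sided quasi-inverse $\Phi^{\calP_C^\vee}_{2\rightarrow1}[g]$, which is the statement. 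This is the special case of \cite[Theorem C]{arinkin} quoted above, whose proof carries out exactly this Cohen--Macaulay analysis.
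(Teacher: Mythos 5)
The paper gives no proof of this statement: it is quoted directly as a special case of Arinkin's Theorem C, which is exactly the citation your argument rests on in its final sentence, so your proposal and the paper take the same route. Your intermediate reconstruction of Arinkin's strategy is broadly faithful (adjunction using the trivial dualising sheaf of $\ol\Jac_C$, reduction to the convolution identity $K\cong\calO_\Delta[-g]$, the classical seesaw computation where at least one factor is locally free, and a Cohen--Macaulay extension argument), but note it is not self-contained as written: the doubly-singular locus $\Sing(\ol\Jac_C)\times\Sing(\ol\Jac_C)$ has dimension $2g-2\geq g$ and meets the diagonal in codimension one \emph{within} the diagonal, so ruling out cohomology of the convolution kernel supported there, and extending the isomorphism with $\calO_\Delta$ across it, is not a routine Hartogs step (which would require codimension $\geq 2$ inside the support) but is precisely the hard content of Arinkin's proof.
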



\subsection{Varying the degree}
This section states a relation between the sheaves $\calG_{0,C}$ and $\calG_{d, C}$ that will be crucial in Section \ref{sc FM and normalisation}. We pass between the two sheaves via twists taken at the fixed smooth point $x_0 \in C$ over which the universal family $\calU_{d,C}$ is normalised. Since $C$ is irreducible, the choice of $x_0$ defines an isomorphism
\begin{equation}
\label{eq translation iso}
\morph{\ol \Jac_C^{\, d'}}{\ol \Jac_C^{\, d'+d}}{\calF}{\calF\otimes \calO_C(dx_0),}{}{\tau_{d, x_0}}
\end{equation}
with inverse $\tau_{-d, x_0}:\ol \Jac_C^{\, d'+d} \to \ol \Jac_C^{\, d'}$. The choice of normalisations on the universal sheaves $\calU_{0,C}\to C\times\ol\Jac_C$ and $\calU_{d,C}\to C\times\ol\Jac_C^{\, d}$ provides a universal equivalence 
\begin{equation}  
\label{eq relation of Universal sheaves with different ds}
\calU_{d, C} \cong (\id_C \times \tau_{-d, x_0})^* \calU_{0, C} \otimes p_C^*\calO_C(dx_0),
\end{equation}
with $p_C:C \times \ol\Jac_C\to C$ the projection to the first factor. 

Given a point $x \in C$, the line bundle $\calO_C(x)^{\boxtimes_n}=\calO_C(x) \boxtimes \dots \boxtimes \calO_C(x)$ over $C^n$ is $\sym_n$-equivariant under permutations in $C^n$, and since tensorization is invariant under permutations, it follows that $\sym_n$ acts trivially on its fibres over the $\sym_n$-fixed points. By a result of Drézet--Narasimhan \cite[Theorem 2.3]{drezet&narasimhan}, $\calO_C(x)^{\boxtimes_n}$ descends to a line bundle $L$ on the symmetric product $\Sym^{n}_C$; 
\begin{equation}\label{eq line bundle descends}
\calO_C(x)^{\boxtimes_n} \cong \pi_C^*L,
\end{equation}
with $\pi_C:C^n \longrightarrow \Sym^{n}_C$ the quotient map. To describe $L$, note that $\calO_C(x)$ admits a section vanishing at $x$, thus $L$ can be endowed with a section vanishing at any tuple containing $x$, i.e.~vanishing over the subvariety of $\Sym^{n}_C$ given by the image of
\[
\morph{\Sym_C^{n-1}}{\Sym^{n}_C}{\sum_{i=1}^{n-1}x_i}{x+\sum_{i=1}^{n-1}x_i.}{}{f_{C, x}}
\] 
Let 
\begin{equation}\label{eq Chow map}
    \morph{\Hilb_{n, C}^{cur}}{\Sym^{n}_C}{D}{\sum_{p\in C}\length(\calO_{D,p})p}{}{\chow_C}
\end{equation}
be the Chow morphism. On the universal family $\calD_C \subset C\times\Hilb^{cur}_{n,C}$ of length $n$ curvilinear subschemes we take the slice 
\[
\calD_{C,x}=\calD_C\cap\{x\}\times\Hilb_{n,C}^{cur} \, ,
\]
parameterising curvilinear finite subschemes containing a point $x \in C$. Then 
\[
\wt\calD_{C,x} := \chow_C(\calD_C\cap\{x\}\times\Hilb_{n,C}^{cur}),
\]
is a Cartier divisor in $\Sym_C^n$, and is equal to the image of $f_{C,x}$. Therefore, we may conclude that
\begin{equation}\label{eq:L=O(tilde D)}
    L\cong\calO_{\Sym}(\wt\calD_{C,x}),
\end{equation}
with $\Sym=\Sym^{n}_C$. From $\calD_{C, x} = \chow_C^{-1}\left ( \wt\calD_{C, x}\right )$ in $\Hilb_{n, C}^{cur}$ it follows that 
\begin{equation} \label{eq definition of Jj_C}
\calO_\Hilb(\calD_{C,x})\cong \chow_C^*\calO_{\Sym}(\wt\calD_{C,x}). 
\end{equation}
In particular, if $C$ is smooth, then $\chow_C$ is an isomorphism and we may identify $\calO_{\Sym}(\wt\calD_{C,x})=\calO_\Hilb(\calD_{C,x})$.

Let $p_C:C \times \ol\Jac_C \to C$ and $h_C:  C \times \Hilb_{n,C}^{cur} \to \Hilb_{n,C}^{cur}$ be the natural projections. The following universal relation on $C \times \Hilb_{n,C}^{cur}$ is well-known for smooth curves; see for example \cite[Proposition 9]{Schwarzenberger}. 

\begin{lemma}\label{lemm:Extend_Schwarz}
Let $C$ be an integral projective curve, either nodal or smooth. The universal sheaf $\calU_C$ and the dual of the ideal sheaf of the universal divisor $\calD_C \subset C \times \Hilb^{cur}_{n,C}$ are related by the isomorphism  
    \[ \calI_{\calD_{C}}^{\vee} \cong
    (\id\times \alpha_C)^* \left(\calU_C \otimes p_C^* \calO_{C}(nx_0) \right) \otimes h_C^*\calO_{\Hilb_C}(\calD_{C,x_0})
    \,. \] 
\end{lemma}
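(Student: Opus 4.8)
The plan is to recognise the right-hand side as the family over $\Hilb^{cur}_{n,C}$ classified by $\alpha_C$, use the universal property of $\calU_C$ to reduce the statement to an equality of line bundles on the base, and then pin that line bundle down on the slice $\{x_0\}\times\Hilb^{cur}_{n,C}$, where the geometry is smooth.

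First I would introduce the rank-one torsion-free sheaf
\[
\calF := \calI_{\calD_C}^\vee \otimes \pr_C^*\calO_C(-nx_0)
\]
on $C\times\Hilb^{cur}_{n,C}$, with $\pr_C$ the projection to $C$. The key claim is that $\calF$ is flat over $\Hilb^{cur}_{n,C}$ and that its restriction to each fibre $C\times\{D\}$ is $\calI_D^\vee(-nx_0)=A_C(D)=\alpha_C(D)$; thus $\calF$ is a flat family of rank-one degree-zero torsion-free sheaves whose classifying map is exactly $\alpha_C$. Granting this, the universal property of the fine moduli space $\ol\Jac_C$ with universal sheaf $\calU_C$ produces a line bundle $M$ on $\Hilb^{cur}_{n,C}$ and an isomorphism $\calF\cong(\id\times\alpha_C)^*\calU_C\otimes h_C^*M$. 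Untwisting by $\pr_C^*\calO_C(nx_0)$ and using $(\id\times\alpha_C)^*p_C^*\calO_C(nx_0)=\pr_C^*\calO_C(nx_0)$ gives
\[
\calI_{\calD_C}^\vee \cong (\id\times\alpha_C)^*\big(\calU_C\otimes p_C^*\calO_C(nx_0)\big)\otimes h_C^*M,
\]
so that the lemma reduces to the identification $M\cong\calO_\Hilb(\calD_{C,x_0})$.

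To identify $M$ I would restrict both sides to the slice $\{x_0\}\times\Hilb^{cur}_{n,C}\cong\Hilb^{cur}_{n,C}$. On the right, the normalisation $\calU_C|_{\{x_0\}\times\ol\Jac_C}\cong\calO_{\ol\Jac_C}$ forces $(\id\times\alpha_C)^*\calU_C$ to restrict to $\calO_\Hilb$, while $p_C^*\calO_C(nx_0)$ restricts to a trivial line bundle along this slice; hence the right-hand side restricts to $M$. On the left, since $x_0$ is a smooth point, $C$ is smooth in a neighbourhood of $\{x_0\}\times\Hilb^{cur}_{n,C}$, so there $\calD_C$ is a relative effective Cartier divisor, $\calI_{\calD_C}$ is invertible, and $\calI_{\calD_C}^\vee\cong\calO_{C\times\Hilb^{cur}_{n,C}}(\calD_C)$ locally. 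As $\{x_0\}\times\Hilb^{cur}_{n,C}$ meets $\calD_C$ properly in $\calD_{C,x_0}$, this restricts to $\calO_\Hilb(\calD_{C,x_0})$. Comparing the two restrictions yields $M\cong\calO_\Hilb(\calD_{C,x_0})$, which finishes the argument.

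The main obstacle is the claim in the first step: that relative dualisation is compatible with restriction to fibres and preserves $\Hilb$-flatness, i.e. that $\calHom(\calI_{\calD_C},\calO)$ restricts on each $C\times\{D\}$ to $\calHom(\calI_D,\calO_C)$ and remains flat over the base. This fails for arbitrary finite subschemes passing through a node, and it is precisely the curvilinear hypothesis that saves it: a curvilinear subscheme supported at a node lies in a single smooth branch, which controls the local structure of $\calI_{\calD_C}$. Concretely, I would apply $\calHom(-,\calO)$ to the structure sequence $0\to\calI_{\calD_C}\to\calO\to\calO_{\calD_C}\to0$ to obtain $0\to\calO\to\calI_{\calD_C}^\vee\to\calExt^1(\calO_{\calD_C},\calO)\to0$, and then verify flatness and base change of the $\calExt^1$-term using the explicit curvilinear local model, invoking Arinkin's analysis of these duals where convenient. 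The distinguished section $\calO\to\calI_{\calD_C}^\vee$ arising here is the one cutting out $\calD_C$, and it is exactly what underlies the identification of $M$ on the smooth slice.
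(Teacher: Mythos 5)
Your proposal is correct and is essentially the paper's own proof: the paper likewise observes that $(\id\times\alpha_C)^*(\calU_C\otimes p_C^*\calO_C(nx_0))$ and $\calI_{\calD_C}^\vee$ are equivalent flat families of rank-one torsion-free sheaves over $\Hilb^{cur}_{n,C}$, hence differ by $h_C^*\calL$ for a line bundle $\calL$ on the base, and then identifies $\calL\cong\calO_{\Hilb_C}(\calD_{C,x_0})$ by restricting to the slice $\{x_0\}\times\Hilb^{cur}_{n,C}$ and invoking the normalisation $\calU_C|_{\{x_0\}\times\ol\Jac_C}\cong\calO_{\ol\Jac_C}$. If anything, you are more careful than the paper on the one point it leaves implicit, namely that $\calI_{\calD_C}^\vee$ is flat over $\Hilb^{cur}_{n,C}$ and that dualisation commutes with restriction to the fibres $C\times\{D\}$.

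One small correction to your sketch of that technical point: it is not true that a curvilinear subscheme supported at a node lies in a single smooth branch. By the paper's own Lemma \ref{lm description of Hilb^cur}, a curvilinear $D$ supported at a node $b$ either lies in one branch (in which case $\calI_D$ is \emph{not} principal, and this is where your local-model analysis of the $\calExt^1$-term is genuinely needed), or it is the length-two subscheme $D_{b,a}=\Spec\left(\CC[y^+,y^-]/\langle y^+y^-,\, y^+-ay^-\rangle\right)$ with $a\in\CC^*$, which meets both branches; in that second case $\calI_D$ is principal, so $\calI_{\calD_C}$ is invertible near such points and flatness and base change of the dual are immediate. The dichotomy is therefore the opposite of what you state, but both cases are harmless and your argument goes through once this is fixed.
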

\begin{proof}
Denote $\calD_{C,x_0}=\calD_C \cap \{x_0\} \times \Hilb_{n,C}^{cur}$ and recall the Abel-Jacobi map $\alpha_C: \Hilb^{cur}_{n,C} \to \ol\Jac_C$ defined in \eqref{Abel-Jacobi}. Consider the rank 1 torsion-free sheaf 
\[
(\id\times \alpha_C)^* \left(\calU_C \otimes p_C^* \calO_{C}(nx_0) \right) , 
\]
defined on $C \times \Hilb_{n,C}^{cur}$. For fixed $D \in \Hilb^{cur}_{n,C}$, its restriction to $C \times \{D\}$ is given by 
\[
\calI^\vee_D(-nx_0) \otimes \calO_C(nx_0)= \calI_D^\vee . 
\]
Thus $\calI_{\calD_C}^\vee$ and $(\id\times \alpha_C)^* \left(\calU_C \otimes p_C^* \calO_{C}(nx_0) \right)$ are equivalent families of rank 1 torsion-free sheaves on $\Hilb_{n,C}^{cur}$. Therefore, they differ by the pullback of a line bundle $\calL$ on $\Hilb_{n,C}^{cur}$, i.e.~
    \[
    \calI^\vee_{\calD_C} \cong (\id\times \alpha_{C})^* \left(\calU_C \otimes p_C^* \calO_{C}(nx_0) \right) \otimes h_C^* \calL
    \] Restricting to $\{x_0\} \times \Hilb_{n,C}^{cur}$, we obtain
    \[
    \calO_{\Hilb_C}(\calD_{C,x_0}) \cong \alpha_C^* (\calU_C |_{\{x_0\} \times\ol\Jac_C}) \otimes \calL= \calL,
    \] where we used that the universal sheaf $\calU_C$ is normalised at $x_0$. This proves the formula.
\end{proof}

We now provide a relation between the sheaves $\calG_{d, C}$ and $\calG_{0,C}$, defined over $\Hilb_{n, C} \times \ol\Jac_C^{\, d}$ and $\Hilb_{n, C} \times \ol\Jac_C$ respectively, that will be used in Proposition \ref{pr fibrewise description of Poincare and normalisation}. Let $\calG^{cur}_{d, C}$ and $\calG^{cur}_{0,C}$ be the restrictions to $\Hilb_{n, C}^{cur} \times \ol\Jac_C^{d}$ and $\Hilb_{n, C}^{cur} \times \ol\Jac_C$. Let $\calO_\Hilb(\calD_{C,x_0})^d$ be the line bundle over $\Hilb_{n, C}^{cur}$ obtained by taking $d$-times the tensor product of the sheaf defined in \eqref{eq definition of Jj_C} over the point $x_0 \in C$. Let $\p_1:\Hilb_{n,C}^{cur}\times\ol\Jac_C\to\Hilb_{n,C}^{cur}$ be the projection and recall the tensorization isomorphism $\tau_{-d,x_0}:\ol\Jac_C^{\, d} \longrightarrow \ol\Jac_C$ from \eqref{eq translation iso}.

\begin{lemma} \label{lm Gg_0 and Gg_d}
In the above notation, there exists an isomorphism of sheaves 
\[
\calG_{d,C}^{cur} \cong \left ( \id_{\Hilb_C} \times \tau_{-d, x_0} \right )^* \left ( \calG_{0,C}^{cur} \otimes \p_1^* \calO_\Hilb(\calD_{C,x_0})^d \right ).
\]
In particular, $\calG_{d,C}^{cur}$ is a maximal Cohen--Macaulay sheaf.
\end{lemma}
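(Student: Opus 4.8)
The plan is to substitute the degree-shift relation \eqref{eq relation of Universal sheaves with different ds} into the definition \eqref{eq description of Gg_C} of $\calG_{d,C}$ and to propagate it through the three operations that build $\calG_{d,C}$ out of $\calU_{d,C}$: the external $n$-th tensor power, the equivariant pull–push along the curvilinear flag scheme, and the final twist by $\det(\calA_{C,n})^\vee$. The entire content of the lemma is to show that the accumulated correction is exactly the line bundle $\p_1^*\calO_\Hilb(\calD_{C,x_0})^d$ and that the translation $\tau_{-d,x_0}$ can be pulled outside the construction. The maximal Cohen--Macaulay assertion will then be a formal consequence.

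First I would take external $n$-th tensor powers of \eqref{eq relation of Universal sheaves with different ds}. Writing $\pi_i : C^n\times\ol\Jac^{\, d}_C \to C\times\ol\Jac^{\, d}_C$ for the factor projections and using $\pi_i\circ(\id_{C^n}\times\tau_{-d,x_0}) = (\id_C\times\tau_{-d,x_0})\circ\pi_i$, one obtains
\[
\calU_{d,C}^{\boxtimes_n}\cong(\id_{C^n}\times\tau_{-d,x_0})^*\calU_{0,C}^{\boxtimes_n}\otimes p_{C^n}^*\calO_C(dx_0)^{\boxtimes_n},
\]
where $p_{C^n}$ is the projection off the $\ol\Jac$-factor. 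I would then pull back along $\sigma_C\times\id$. The first tensor factor rewrites, via $(\id_{C^n}\times\tau_{-d,x_0})\circ(\sigma_C\times\id) = (\sigma_C\times\id)\circ(\id_{\Flag}\times\tau_{-d,x_0})$, as $(\id_{\Flag}\times\tau_{-d,x_0})^*(\sigma_C\times\id)^*\calU_{0,C}^{\boxtimes_n}$. For the correction factor, the key geometric input is the identity $\pi_C\circ\sigma_C = \chow_C\circ\psi_C$ of morphisms $\Flag^{cur}_{n,C}\to\Sym^{n}_C$ (both send a flag to the unordered support cycle of $D_n$). Combining this with $\calO_C(dx_0)^{\boxtimes_n}\cong\pi_C^*L^{\otimes d}$ from \eqref{eq line bundle descends}, the identification $L\cong\calO_{\Sym}(\wt\calD_{C,x_0})$ from \eqref{eq:L=O(tilde D)}, and the relation \eqref{eq definition of Jj_C}, I get $\sigma_C^*\calO_C(dx_0)^{\boxtimes_n}\cong\psi_C^*\calO_\Hilb(\calD_{C,x_0})^d$. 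Hence the correction becomes the pullback along $\psi_C\times\id$ of $\p_1^*\calO_\Hilb(\calD_{C,x_0})^d$.

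Next I would push forward along the finite map $\psi_C\times\id$. Because the correction is pulled back along $\psi_C\times\id$, the projection formula extracts it and leaves $(\psi_C\times\id)_*(\id_{\Flag}\times\tau_{-d,x_0})^*(\sigma_C\times\id)^*\calU_{0,C}^{\boxtimes_n}$; since $\tau_{-d,x_0}$ is an isomorphism, base change commutes this pushforward with $(\id_{\Hilb}\times\tau_{-d,x_0})^*$. Taking the sign-isotypic part then requires that the correction line bundle and the base-change isomorphism be $\sym_n$-equivariantly trivial: the correction is pulled back along the $\sym_n$-invariant morphism $\psi_C$ (hence carries the trivial linearisation), and $\tau_{-d,x_0}$ acts only on the fixed $\ol\Jac$-factor, so both pass through the functor $(-)^{sign}$. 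Finally, tensoring by $\p_1^*\det(\calA_{C,n})^\vee$ — which is pulled back from $\Hilb^{cur}_{n,C}$ and therefore commutes with $(\id_{\Hilb}\times\tau_{-d,x_0})^*$ — assembles the claimed isomorphism. The maximal Cohen--Macaulay statement follows at once: $\calG_{0,C}^{cur}$ is maximal Cohen--Macaulay (its descent \eqref{descent construction of Arinkin} to $\calP_C$ is), and the formula exhibits $\calG_{d,C}^{cur}$ as the pullback of $\calG_{0,C}^{cur}$ along the isomorphism $\id\times\tau_{-d,x_0}$, twisted by a line bundle; both operations preserve maximal Cohen--Macaulayness.

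Most of this is routine diagram-chasing of projection and commutation identities; the one place demanding genuine care — the main, if modest, obstacle — is verifying that $(-)^{sign}$ commutes with every step, i.e.\ that the degree-correcting twist carries the \emph{trivial} $\sym_n$-linearisation and therefore factors out of the anti-invariant subsheaf rather than interacting with the sign representation. This is precisely where I would invoke that $\calO_C(x_0)^{\boxtimes_n}$ is $\sym_n$-equivariant with $\sym_n$ acting trivially on fibres over the $\sym_n$-fixed points (as recorded just before \eqref{eq line bundle descends}), together with the $\sym_n$-invariance of $\psi_C$.
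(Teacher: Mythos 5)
Your proof is correct and follows essentially the same route as the paper's: substitute the degree-shift relation \eqref{eq relation of Universal sheaves with different ds} into \eqref{eq description of Gg_C}, identify the correction via $\pi_C\circ\sigma_C=\chow_C\circ\psi_C$ together with \eqref{eq line bundle descends}, \eqref{eq:L=O(tilde D)} and \eqref{eq definition of Jj_C}, and extract it by the projection formula along $\psi_C\times\id$, commuting everything past the translation and the sign-isotypic part. Your explicit verification that the correction carries the trivial $\sym_n$-linearisation (and your spelled-out maximal Cohen--Macaulay deduction) only makes precise what the paper uses implicitly.
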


\begin{proof}
Consider the diagram
\[
\begin{tikzcd}[column sep = huge]
\Hilb_{n, C}^{cur} \times \ol \Jac_C^{\, d} \arrow[d ,"\cong", "\id_{\Hilb_C} \times \tau_{-d, x_0}"']  & \Flag_{n, C}^{cur} \times \ol \Jac_C^{\, d} \arrow[d , "\id_{\Flag} \times \tau_{-d, x_0}", "\cong"' ] \arrow[l ,  "\psi_{C} \times \id_{\ol \Jac}"'] \arrow[r , "\sigma_{C} \times \id_{\ol \Jac}"] & C^n \times \ol \Jac_C^{\, d} \arrow[d , "\cong"',  "\id_{C^n} \times \tau_{-d, x_0}" ]
\\
\Hilb_{n, C}^{cur} \times \ol\Jac_C  & \Flag_{n, C}^{cur} \times \ol\Jac_C \arrow[l , "\psi_{C} \times \id_{\ol \Jac}"] \arrow[r , "\sigma_{C} \times \id_{\ol \Jac}"'] & C^n \times \ol\Jac_C . 
\end{tikzcd}
\]
By plugging \eqref{eq relation of Universal sheaves with different ds} into \eqref{eq description of Gg_C}, applying functoriality around the right square and base change around the left square,
one has an isomorphism 
\begin{align*}
\calG_{d,C}^{cur} \cong \left ( \id_{\Hilb_C} \times \tau_{-d, x_0} \right )^*\left ((\psi_{C} \times \id_{\ol \Jac})_{*} (\sigma_{C} \times \id_{\ol \Jac})^{*} \left (\calU_{0,C} \otimes p_C^*\calO_C(dx_0) \right)^{\boxtimes_n} \right )^{sign} 
\otimes \p_1^*\det(\calA_{C,n})^\vee.
\end{align*}
Here we made use of the fact that the pullback under the isomorphism $\id_{\Hilb_C} \times \tau_{-d, x_0}$ is invariant under the symmetric group action. Considering the commutative diagram 
\[
\begin{tikzcd}[column sep = huge]
C^n \times \ol\Jac_C \arrow[r, "\q_i \times \id_{\ol \Jac}"] \arrow[d , "\q_{C^n}"'] & C \times \ol\Jac_C \arrow[d, "p_C"]
\\
C^n \arrow[r, "\q_i"'] & C,
\end{tikzcd}
\]
and recalling from \eqref{eq line bundle descends} and \eqref{eq:L=O(tilde D)} that $\calO(dx_0)^{\boxtimes_n}$ is isomorphic to $\pi_C^*\calO_\Sym(\wt\calD^{cur}_{C,x_0})^d$, we obtain
\begin{align*}
\calG_{d,C}^{cur} \cong \left ( \id_{\Hilb_C} \times \tau_{-d, x_0} \right )^*\left ((\psi_{C} \times \id_{\ol \Jac})_{*}  (\sigma_{C} \times \id_{\ol \Jac})^{*} \left ( \calU_{0,C}^{\boxtimes_n} \otimes \q_{C^n}^* \pi_C^*\calO_\Sym(\wt\calD^{cur}_{C,x_0})^d  \right)  \right)^{sign} \\ 
\otimes \p_1^*\det(\calA_{C,n})^\vee.
\end{align*}
Next, we apply functoriality with respect to 
\[
\begin{tikzcd}[column sep = huge]
\Flag_{n, C}^{cur} \times \ol\Jac_C \arrow[d , "\q_{\Flag}"'] \arrow[r , "\sigma_{C} \times \id_{\ol \Jac}"] & C^n \times \ol\Jac_C \arrow[d , "\q_{C^n}" ]
\\
\Flag_{n, C}^{cur} \arrow[r , "\sigma_{C}"'] & C^n, 
\end{tikzcd} \qquad \begin{tikzcd}[column sep = huge]
\Flag_{n, C}^{cur} \arrow[d , "\psi_{C}"'] \arrow[r , "\sigma_C"] &  C^n \arrow[d , "\pi_C"]  
\\
\Hilb_{n, C}^{cur} \arrow[r, "\chow_C"'] &  \Sym^n_C,
\end{tikzcd}
\]
and 
\[
\begin{tikzcd}[column sep = huge]
\Hilb_{n, C}^{cur} \times \ol\Jac_C  \arrow[d , "\p_1"' ] &  \Flag_{n, C}^{cur} \times \ol\Jac_C \arrow[d , "\q_{\Flag}"] \arrow[l , "\psi_{C} \times \id_{\ol \Jac}"'] 
\\
\Hilb_{n, C}^{cur} &  \Flag_{n, C}^{cur} \arrow[l , "\psi_{C}"],
\end{tikzcd}
\]
to observe that
\begin{eqnarray*}
\calG_{d,C}^{cur} \cong \left ( \id_{\Hilb_C} \times \tau_{-d, x_0} \right )^*\left ((\psi_{C} \times \id_{\ol \Jac})_{*} \left ( (\sigma_{C} \times \id_{\ol \Jac})^{*}  \calU_{0,C}^{\boxtimes_n} \otimes  (\psi_{C} \times \id_{\ol \Jac})^*  \p_1^* \calO_\Hilb(\calD_{C,x_0})^d \right )  \right )^{sign}
\\
 \otimes \p_1^*\det(\calA_{C,n})^\vee,
\end{eqnarray*}
where we recall that $\calO_\Hilb(\calD_{C,x_0})^d\cong\chow_C^*\calO_\Sym(\wt\calD^{cur}_{C,x_0})^d$. The statement follows after applying projection formula with respect to $\psi_{C} \times \id_{\ol \Jac}$ followed by \eqref{eq description of Gg_C} for $d=0$.
\end{proof}

\section{Chow--Hilbert spaces and parabolic modules}
\label{se CH-PM-UD}

Over a nodal curve $C$ and a partial normalisation $\nu : \Sigma \to C$, this section establishes preliminary results on fine moduli spaces of sheaves and divisors. We introduce the moduli spaces of \textit{parabolic modules} associated to $\nu$ and discuss the interplay with the curvilinear Hilbert scheme over $C$. Throughout we refer to the following class of divisors related to the normalised nodes.  

\begin{definition} 
\label{de: sing divisors}
\, 
    \begin{itemize}
        \item The \emph{resolved singular locus} $\RSing(\nu) \subset \Sing(C)$ is the divisor consisting of nodes resolved by $\nu$. 

        \item The \emph{non-resolved singularities} are denoted by $\URSing(\nu):=\Sing(C) \setminus \RSing(\nu)$. 

        \item The \emph{exceptional divisor} $\Exc(\nu) \subset \Sigma$ is the reduced divisor $\nu^{-1}(\RSing(\nu))$.
    \end{itemize}
\end{definition}

It will also be convenient for us to fix an ordering $\Sing(C) = \{ b_1, \ldots, b_k, \dots, b_{n}\}$ on the nodes such that 
\[
\RSing(\nu) = b_1 + \dots + b_k, \quad \Exc(\nu) = b^{+}_1 + b^{-}_1 + \dots + b^{+}_k + b^{-}_k, 
\]
where, for each $i = 1, \ldots, k$, the divisor $b^{+}_i + b^{-}_i$ is given by $\nu^{-1}(b_i)$. 

\subsection{Chow--Hilbert spaces}
\label{sc chow-hilbert algebraic space}
In this section we introduce the so-called \textit{Chow--Hilbert space} $\Hyb_\nu$ associated to a partial normalisation $\nu : \Sigma \to C$. Roughly speaking, the Chow--Hilbert space resembles the Hilbert scheme away from $\RSing(\nu)$ and the symmetric product over $\RSing(\nu)$. This space will play an important role in our later analysis of partial normalisations. The construction of the space $\Hyb_\nu$ is as follows. Recall the Chow map $\chow_C : \Hilb_{n,C} \to \Sym^n_C$ and consider the subvariety $R_{\nu} \subset \Hilb_{n,C} \times_{\Sym_{C}^n} \Hilb_{n,C}$ given by pairs of ideals whose localization outside the resolved singular locus $\RSing(\nu)$ coincide: 
\begin{align}
\label{eq:rel_hybrid}
R_{\nu} = \{ (\calI_D,\calI_{D'}) \in \Hilb_{n,C} \times_{\Sym_{C}^n} \Hilb_{n,C} \, | \, \calI_{D,x} = \calI_{D',x} \textnormal{ for all } x \in C \setminus \RSing(\nu) \}. 
\end{align}
Observe that $\calI_{D,x} = \calI_{D',x}$ whenever $x$ lies outside the singular locus $\Sing(C)$, so the defining condition of $R_{\nu}$ needs to be imposed only on the finite set $\URSing(\nu)$. 

The subvariety $R_{\nu}$ naturally provides an equivalence relation on the Hilbert scheme $\Hilb_{n,C}$ given by $\calI_{D} \simeq \calI_{D'}$ if and only if $(\calI_{D}, \calI_{D'})$ is a point in $R_{\nu}$. By identifying $\Hilb_{n,C}$ with its sheaf of points over the \'etale site, we define the {\it Chow--Hilbert space associated to $\nu$} to be the quotient sheaf
\[
\Hyb_{\nu}:= \quotient{\Hilb_{n,C}}{\!\! R_{\nu}} .
\] 
The Chow-Hilbert space $\Hyb_{\nu}$ receives two natural maps: the quotient map  
\begin{align}\label{eq: quotient map to Hyb}
c: \Hilb_{n,C} \to \Hyb_{\nu} , 
\end{align} 
and a descent of the map $\chow_C : \Hilb_{n, C} \to \Sym_C^{n}$, invariant under $R_{\nu}$,  
\begin{equation}
\label{eq: chow descends}
\Hyb_{\nu} \to \Sym_C^{n} . 
\end{equation} 

\begin{lemma} 
\label{etale atlas}
$\Hyb_\nu$ is an algebraic space. 
\end{lemma}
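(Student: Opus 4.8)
The plan is to verify directly the two defining conditions of an algebraic space: that the diagonal $\Hyb_\nu \to \Hyb_\nu \times \Hyb_\nu$ is representable by schemes, and that $\Hyb_\nu$ admits a surjective étale morphism from a scheme. The organising tool throughout will be the descended Chow morphism $\chow' : \Hyb_\nu \to \Sym^n_C$ of \eqref{eq: chow descends}, exploiting that $\Sym^n_C$ is an honest projective scheme and that $\Hyb_\nu$ is obtained from the scheme $\Hilb_{n,C}$ by a contraction which is local over $\Sym^n_C$.

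First I would treat the diagonal. The relation $R_\nu$ is cut out inside $\Hilb_{n,C}\times_{\Sym^n_C}\Hilb_{n,C}$ by the closed conditions $\calI_{D,x}=\calI_{D',x}$ at the finitely many points $x\in\URSing(\nu)$ (all other points imposing nothing, as observed after \eqref{eq:rel_hybrid}), so $R_\nu$ is a closed subscheme of $\Hilb_{n,C}\times\Hilb_{n,C}$. One checks directly that it is reflexive, symmetric and transitive, hence an equivalence relation, and that the base change of the diagonal along the quotient map $c\times c$ of \eqref{eq: quotient map to Hyb} recovers $R_\nu$. Since $R_\nu$ is a scheme and $R_\nu\hookrightarrow\Hilb_{n,C}\times\Hilb_{n,C}$ is an immersion, this shows the diagonal is representable.

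The heart of the argument is the construction of the atlas, and here the subtlety is that the quotient map $c:\Hilb_{n,C}\to\Hyb_\nu$ is \emph{not} itself étale: over a resolved node $b_i$ the relation $R_\nu$ collapses an entire punctual fibre of $\chow_C$ (for instance the $\PP^1$ of length-two subschemes supported at a node), so the projections $R_\nu\rightrightarrows\Hilb_{n,C}$ have fibres of jumping dimension and are neither quasi-finite nor flat. I would therefore build the atlas étale-locally over $\Sym^n_C$. Near a cycle $z=\sum_j m_j p_j$ with the $p_j$ distinct, a standard étale neighbourhood $U_z\to\Sym^n_C$ splits both spaces as products indexed by the support: choosing disjoint étale neighbourhoods $U_{p_j}\subset C$ of the $p_j$, one has $\Sym^n_C|_{U_z}\cong\prod_j\Sym^{m_j}_{U_{p_j}}$ and $\Hilb_{n,C}|_{U_z}\cong\prod_j\Hilb_{m_j,U_{p_j}}$, with $R_\nu$ acting factorwise -- trivially on the factors at the $p_j\notin\RSing(\nu)$ and by the ``equal Chow cycle'' relation on the factors at the resolved nodes. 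Passing to the quotient then leaves the non-resolved factors unchanged and replaces each resolved factor $\Hilb_{m_j,U_{p_j}}$ by its image $\Sym^{m_j}_{U_{p_j}}$ under the Hilbert--Chow morphism, giving
\[
\Hyb_\nu|_{U_z}\cong\prod_{p_j\notin\RSing(\nu)}\Hilb_{m_j,U_{p_j}}\ \times\ \prod_{p_j\in\RSing(\nu)}\Sym^{m_j}_{U_{p_j}},
\]
which is a scheme. The disjoint union of these local models over an étale cover $\{U_z\}$ of $\Sym^n_C$ then maps étale-surjectively onto $\Hyb_\nu$, furnishing the required atlas and proving that $\Hyb_\nu$ is an algebraic space.

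The step I expect to be the main obstacle is identifying the quotient of each resolved-node factor with the symmetric product, that is, checking that as étale sheaves $\Hilb_{m,U_p}$ modulo the equal-cycle relation is exactly $\Sym^m_{U_p}$. This amounts to showing that the Hilbert--Chow morphism at a node is a geometric quotient onto its image, which requires control of the punctual Hilbert scheme of the node and, in particular, the verification that its Chow fibres are geometrically connected, so that the sheaf-theoretic quotient coincides with the scheme-theoretic image $\Sym^m_{U_p}$. The factorwise compatibility of $R_\nu$ with the étale-local product decomposition, and the compatibility of these local identifications across overlaps, must also be checked with care.
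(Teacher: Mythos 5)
Your strategy is genuinely different from the paper's, and part of it is sound and even complementary to what the paper does: the verification that the diagonal of $\Hyb_\nu$ is representable, via the observation that $R_\nu$ is an equivalence relation admitting an immersion into $\Hilb_{n,C}\times\Hilb_{n,C}$ and is recovered by base change of the diagonal along $c\times c$, is correct and is left implicit in the paper; likewise your observation that $c$ itself is not \'etale (the $R_\nu$-classes jump in dimension over punctual subschemes at resolved nodes) correctly explains why some other atlas is needed. The paper's atlas, however, is global rather than \'etale-local over $\Sym^n_C$: it uses the charts $\Sym^k_{C_1}\times\Hilb_{n-k,C_2}$, $0\le k\le n$, with $C_1=C\setminus\URSing(\nu)$ and $C_2=C\setminus\RSing(\nu)$, mapping to $\Hyb_\nu$ by $(D_1,D_2)\mapsto[D_1\cup D_2]$; \'etaleness is checked directly from the fact that the only ambiguity in decomposing a class $[D]$ this way is the assignment of points of the smooth locus to one factor or the other, so each chart has finite fibres of cardinality between $1$ and $2^n$. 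In particular, the symmetric-product directions at the resolved nodes enter as chart \emph{data}, and the paper never has to realise a symmetric product as a quotient of a Hilbert scheme.

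That last point is exactly where your argument breaks, and the step you flag as ``the main obstacle'' is not a technical verification but a genuine gap: the identification of the resolved-node factor quotient with $\Sym^{m}_{U_{p}}$ is false as a statement about \'etale sheaves, and geometric connectedness of the Chow fibres cannot repair it. On that factor the relation is the full fibre product $\Hilb_{m,U_p}\times_{\Sym^m_{U_p}}\Hilb_{m,U_p}$ (equality of localisations at smooth points is automatic once the cycles agree), so the presheaf quotient injects into $\Sym^m_{U_p}$ and the \'etale-sheaf quotient is precisely the \'etale-sheaf image of the Hilbert--Chow morphism. For this image to be all of $\Sym^m_{U_p}$ one would need \'etale-local sections of Hilbert--Chow near the cycle $m\cdot p$, and these do not exist for $m\ge 2$: writing the branches of $U_p$ at the node as $B^{+}=\{y=0\}$ and $B^{-}=\{x=0\}$, any section over an \'etale neighbourhood $V$ of a point $v$ lying over $m\cdot p$ is, on the (reduced) preimage of the component $\Sym^m B^{+}$, forced by separatedness and uniqueness of lifts over the dense locus of cycles avoiding $p$ to agree with the canonical family $\Sym^m B^{+}=\Hilb_{m,B^{+}}\subset\Hilb_{m,U_p}$, whose value at $m\cdot p$ is the ideal $(y,x^m)$; on the preimage of $\Sym^m B^{-}$ it is likewise forced to take the value $(x,y^m)$. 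Both components pass through $v$, and $(y,x^m)\neq(x,y^m)$, so no section exists. Consequently the quotient sheaf injects into, but does not surject onto, your claimed local model, and your charts do not even admit morphisms to the quotient sheaf; connectedness of the punctual fibre (a $\PP^1$ here) is irrelevant, since the obstruction is the failure of \'etale-local liftability of cycle families to subscheme families, i.e.\ the failure of Hilbert--Chow to be smooth or flat at the node. Any repair must therefore, as in the paper's proof, build the symmetric-product directions at the resolved nodes directly into the atlas rather than attempt to produce them as an \'etale-sheaf quotient of the Hilbert scheme.
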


\begin{proof} Consider the pair of Zariski-open subschemes $C_1 := C \setminus \mathsf{URSing(\nu)}$ and $C_2 := C \setminus \RSing(\nu)$. We define an atlas
\begin{equation}
\label{eq: simple etale atlas}
\bigcup_{k = 0}^{n} \Sym^k_{C_1} \times \Hilb_{n - k, C_2} \to \Hyb_{\nu} ,  
\end{equation}
where each chart is defined on $S$-points by  
\[
\Sym^k_{C_1}(S) \times \Hilb_{n-k, C_2}(S) \to \Hyb_{\nu}(S) , 
\]
\[
(D_1, D_2) \longmapsto [ D_1 \cup D_2 ] . 
\]
On geometric points, the atlas represents the possible decompositions of a  divisor $D \subset C$ into disjoint components $D = D_1 \sqcup D_2$ such that $D_1$ contains $D \cap \RSing(\nu)$ and $D_2$ contains $D \cap \URSing(\nu)$. The fibre at $[D] \in \Hyb_{\nu}(\Spec(\CC))$ is finite, of cardinality between $1$ and $2^n$; the lower bound is obtained when $[D]$ is a single point, and the upper bound when $[D]$ is supported on $n$ points in $C \setminus \Sing(C)$). Each chart is therefore \'etale and so \eqref{eq: simple etale atlas} defines an \'etale presentation on $\Hyb_{\nu}$. 
\end{proof}

\begin{remark} The sheaf $\Hyb_{\nu}$ also represents a complex analytic space, where in the analytic topology, one can separate the nodes of $C$ by pair-wise disjoint analytic open neighborhoods $D_{b} \subset C$, modeled on the standard open charts  $\{(x,y)\in \CC^2 \mid xy=0, \vert x \vert, \vert y \vert < \varepsilon \}$  One can then represent $\Hyb_{\nu}$ as the analytic space obtained as a gluing of analytic Hilbert schemes over the neighborhoods $D_{b}$. 
\end{remark}

\begin{remark}
When $\nu : \Sigma \to C$ is the full normalisation, $\Hyb_\nu$ is the symmetric product $\Sym_{C}^n$ and the quotient $\Hilb_C \to \Hyb_{\nu}$ coincides with the chow map in \eqref{eq Chow map}. 
\end{remark}

\subsection{Moduli of parabolic modules}  
\label{sec parabolic modules}

A key ingredient in our work will be the moduli of {\it parabolic modules} \cite{rego:1980, bhosle:1992, cook:1993,cook:1998, gothen&oliveira, FGOP}, used to intertwine sheaf data over $\Sigma$ and $C$. The existing literature concerning these objects is still somewhat limited, having been studied over an integral curve $C$ with only simple singularities (i.e. of type ADE). We specialise to the case where $C$ has simple nodal singularities (i.e. of type $A_1$).

A key reason for introducing parabolic modules is the observation that the map $\hat\nu : \Jac^{\, d}_{C} \to \Jac^{\, d}_{\Sigma}$ acting via the pullback $L \mapsto \nu^{*}L$ does not extend to the compactifications by rank one torsion-free sheaves. The moduli of parabolic modules, introduced by Bhosle and Cook \cite{bhosle:1992,cook:1993,cook:1998}, defines a compactification of $\Jac_C^d$ where $\hat{\nu}$ does extend. We define a particular case of parabolic modules living over the exceptional divisor $\Exc(\nu) = b^{\pm}_1 + \cdots + b_k^{\pm} \subset \Sigma$ consisting of nodes resolved by a fixed partial normalisation $\nu : \Sigma \to C$.  

\begin{definition}
\label{def par mod}
Let $\nu : \Sigma \to C$ be as above. A rank $1$ and degree $d$ \emph{parabolic module} for $\nu$ is a pair $(M,V)$ where $M$ is a degree $d$ rank $1$ torsion-free sheaf on $\Sigma$ and $V$ is a subsheaf of $M\otimes\calO_{\Exc(\nu)}$ such that
\[
V=V_1\oplus\cdots\oplus V_k,
\]
with $V_i$ a $1$-dimensional vector subspace of $M \otimes \calO_{\{ b_i^{+}, b_i^{-} \}} \cong M_{b_i^+}\oplus M_{b_i^-}$, where $M_{b_i^\pm} := M\otimes \calO_{b_i^\pm}$.

\end{definition}

\begin{remark}\mbox{}
\begin{enumerate}
    \item  The more general notion of parabolic modules associated to other kinds of singularities is defined by Cook \cite{cook:1993}, where the subspaces $V_i$ are allowed to be higher dimensional. In this situation, one needs to impose that $V_i$ is an $\calO_{C,b_i}$-submodule of $M\otimes\calO_{\{ b_i^{+}, b_i^{-} \}}$ via pushforward under $\nu$ i.e.~via the inclusion $\calO_{C}\longhookrightarrow\nu_*\calO_{\Sigma}$.
    
    \item In the case where $C$ has only $k=1$ node these objects were first considered by Bhosle in \cite{bhosle:1992}, who named them `generalised parabolic bundles'. The case of various simple nodes is an easy generalization of Bhosle's definition, so our setup fits that of \cite{bhosle:1992}.    
    \end{enumerate}
\end{remark}

Denote by $\PMod_\nu^{\, d}$ the fine moduli space of rank $1$ and degree $d$ parabolic modules for $\nu : \Sigma \to C$. We refer again to \cite{cook:1993,cook:1998,bhosle:1992} for its construction. 

By \cite[Theorem 1]{cook:1998} or \cite[Theorem 3]{bhosle:1992}, there is a finite morphism
\begin{equation} \label{eq definition of rho}
\morph{\PMod_\nu^{\, d}}{\ol \Jac_C^{\, d}}{(M,V)}{\calF}{}{\rho}  , 
\end{equation}
where $\calF$ is defined by the short exact sequence
\begin{equation} \label{eq preimage of tau}
0\longrightarrow\calF\longrightarrow \nu_*M \longrightarrow \nu_*\big(M\otimes\calO_{\Exc(\nu)}/V\big)\longrightarrow 0,
\end{equation}
and the surjection is defined by the composition $\nu_*M\to\nu_*\big(M\otimes\calO_{\Exc(\nu)}\big)\to\nu_*\big(M\otimes\calO_{\Exc(\nu)}/V\big)$. Note that, for each $i$, $\dim(\nu_*(M \otimes \calO_{\{ b_i^{+}, b_i^{-} \}}/V_i)) = 1$, hence 
\begin{equation*} \label{eq sing of C from par mods}
\nu_*\big(M\otimes\calO_{\Exc(\nu)}/V\big) \cong \calO_{\RSing(\nu)}.
\end{equation*}
Since the quotient $\nu_*(M\otimes\calO_{\Exc(\nu)}/V)$ is an $\calO_{C}$-module and $\nu_*M\to\nu_*\big(M\otimes\calO_{\Exc(\nu)}/V\big)$ is a morphism of $\calO_C$-modules, $\calF$ inherits an $\calO_{C}$-module structure too. In addition, $\deg(\nu_*M)=d+k$ and $\dim\big(\nu_*\big(M\otimes\calO_{\Exc(\nu)}/V\big)\big)=k$, thus indeed $\deg(\calF)=d$. 

Let $\rho_0$ denote the restriction of $\rho$ to $\rho^{-1}(\Jac_C^d)$. By \cite[Theorem 1]{cook:1998} or \cite[Theorem 3]{bhosle:1992}, we have an isomorphism 
\begin{equation}\label{eq:JasobianinParMod}
\rho_0:\rho^{-1}(\Jac_C^d)\xrightarrow{\ \cong\ }\Jac_C^d,
\end{equation}
and so $\Jac_C^d$ can be seen as a dense open subspace of $\PMod_{\nu}^{\, d}$. Moreover, $\PMod_{\nu}^{\, d}$ is proper and therefore a compactification of $\Jac_C^d$. It is however different from the compactification $\ol{\Jac}_C^{\, d}$, for it admits a natural projection
\begin{equation}
\label{eq proj PMod onto Jac}
    \morph{\PMod_{\nu}^{\, d}}{\ol\Jac_\Sigma^{\,d}}{(M,V)}{M,}{}{\dot{\nu}} 
\end{equation}
extending the pullback morphism $\hat{\nu}$, via the isomorphism \eqref{eq:JasobianinParMod}. 

We then have the following diagram summarising the previous discussion:
\[  \begin{tikzcd}[ampersand replacement=\&]
\&\rho^{-1}(\Jac^d_{C})\arrow[r,hook]\arrow[ld,"\cong"']\&\PMod^d_{\nu} \arrow[ld,"\rho"'] \arrow[rd,"\dot\nu"] \&\\
\Jac^d_{C}\arrow[r,hook]\&\ol\Jac^{\, d}_{C} \& \&  \ol\Jac^{\,d}_{\Sigma}.
\end{tikzcd}\]

The geometry of $\PMod_{\nu}^{\, d}$ and $\rho$ is particularly simple in our case and appears in \cite{bhosle:1992} in the case of a curve with a single node. We provide a proof of the following result for completeness.
Recall from Definition \ref{de: sing divisors} that we have attached the divisors
\[
\RSing(\nu) = b_1 + \dots + b_k, \qquad \Exc(\nu)=b_1^++b_1^-+\cdots+b_k^++b_k^-, \qquad \nu^{-1}(b_i)= b_i^{+} + b_i^- , 
\]
over $C$ and $\Sigma$, associated to the partial normalisation $\nu : \Sigma \to C$.
\begin{lemma} 
\label{lm tau surjective}
\,
\begin{enumerate} 
    \item The morphism $\dot\nu : \PMod_{\nu}^{\, d} \to \ol\Jac_\Sigma^{\,d}$ from \eqref{eq proj PMod onto Jac} is a $(\PP^1)^k$-bundle. 

    \item The morphism $\rho:\PMod_{\nu}^{\, d}\to\ol\Jac_C^{\, d}$ from \eqref{eq definition of rho} is a partial resolution of singularities, i.e.~it is a proper birational morphism which is an isomorphism on an open  dense subset of $\PMod_{\nu}^{\, d}$, containing $\rho^{-1}(\Jac_C^d)$.

    \item Two points of $\PMod_{\nu}^{\, d}$ have the same image
\[
\rho (M, V) = \rho  (M',V') ,
\]
if and only if
\begin{equation} \label{eq condition for J}
M' = M \otimes J,
\end{equation}
where $J$ is the line bundle
\[
J = \calO_\Sigma(b_1^- - b_1^+)^{a_1} \otimes \dots \otimes \calO_\Sigma(b_k^- - b_k^+)^{a_k},
\]
with $a_i \in \{ -1, 0, 1 \}$, such that for those $l$ with $a_l = 1$, those $m$ with $a_m = -1$ and those $n$ with $a_n = 0$, we have
\begin{equation} \label{eq condition for V_i}
V_l = M \otimes \calO_{b_l^{+}} \quad \text{and} \quad V'_l = M' \otimes \calO_{b_l^{-}}, 
\end{equation}
\begin{equation} \label{eq condition for V_j}
V_m = M \otimes \calO_{b_m^{-}} \quad \text{and} \quad V'_m = M' \otimes \calO_{b_m^{+}}
\end{equation}
and 
\begin{equation} \label{eq condition for V_k}
V_n = V'_n.
\end{equation}

\item If $\calF \in \ol\Jac_C^{\, d}$ is not locally free at $\ell\in\{0,\ldots,k\}$ points of $\RSing(\nu)$, then $\rho^{-1}(\calF)\cong\ZZ_2^\ell$.
\end{enumerate}

\end{lemma}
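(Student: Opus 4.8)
The plan is to read off the fibre count from part (3), which already characterises when two parabolic modules share a $\rho$-image, combined with a local dictionary relating the parabolic datum $V_i$ to the local freeness of $\calF = \rho(M,V)$ at a resolved node $b_i$. Since $\rho$ is proper and birational onto the irreducible space $\ol\Jac_C^{\, d}$ by part (2), the fibre $\rho^{-1}(\calF)$ is nonempty, so I fix a preimage $(M,V)$ and enumerate the remaining preimages relative to it.

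First I would establish the dictionary: for any $(M,V) \in \rho^{-1}(\calF)$ and any resolved node $b_i$, the sheaf $\calF$ fails to be locally free at $b_i$ if and only if $V_i$ is one of the two ``axes'' $M\otimes\calO_{b_i^+}$ or $M\otimes\calO_{b_i^-}$. The key observation is that $b_i^{\pm}$ are smooth points of $\Sigma$, so $M$ is automatically locally free there and $(\nu_* M)_{b_i} \cong (\nu_*\calO_\Sigma)_{b_i} = \wt{\calO}_{b_i}$, the full normalisation of $\calO_{C, b_i} \cong \C[[x,y]]/(xy)$. Localising \eqref{eq preimage of tau} at $b_i$ exhibits $\calF_{b_i}$ as the preimage in $\nu_* M$ of the line $V_i \subset M_{b_i^+}\oplus M_{b_i^-}$ under evaluation at $b_i$. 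Writing $(\nu_* M)_{b_i} = A \oplus B$ with $A = \calO_{\Sigma, b_i^+}$ and $B = \calO_{\Sigma, b_i^-}$, one computes that a non-axis line $V_i = \{(\lambda, c\lambda): \lambda \in \C\}$ with $c \neq 0$ yields $\calF_{b_i} \cong \calO_{C, b_i}$, whereas an axis, say $V_i = M\otimes\calO_{b_i^+}$, yields $\calF_{b_i} \cong A \oplus \m_B \cong \wt{\calO}_{b_i}$, which has two-dimensional fibre and so is not locally free. In particular the set $S$ of resolved nodes at which $V_i$ is an axis coincides with the set of resolved nodes at which $\calF$ is not locally free, and hence has cardinality $\ell$ independently of the chosen preimage.

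With the dictionary in hand I would run part (3). Any $(M', V') \in \rho^{-1}(\calF)$ has the form $M' = M \otimes J$ with $J = \bigotimes_i \calO_\Sigma(b_i^- - b_i^+)^{a_i}$, $a_i \in \{-1, 0, 1\}$, subject to \eqref{eq condition for V_i}--\eqref{eq condition for V_k}. For $i \notin S$ the datum $V_i$ is not an axis, so neither $a_i = 1$ nor $a_i = -1$ is admissible and $a_i = 0$ is forced, with $V_i' = V_i$. For $i \in S$, say $V_i = M \otimes \calO_{b_i^+}$, exactly the values $a_i = 0$ (giving $V_i' = V_i$ and $M'$ unchanged at $b_i$) and $a_i = 1$ (giving $V_i' = M'\otimes\calO_{b_i^-}$) are admissible, while $a_i = -1$ is excluded since $V_i \neq M\otimes\calO_{b_i^-}$; symmetrically if the base axis is the other one. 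Thus the admissible tuples amount to a free binary choice at each of the $\ell$ nodes of $S$ and are forced elsewhere, giving $2^\ell$ preimages. Distinct admissible tuples give non-isomorphic parabolic modules: if two tuples differ at $i \in S$ then any isomorphism of the underlying line bundles would have to carry the $b_i^+$-axis to the $b_i^-$-axis at $b_i$, which is impossible since such an isomorphism respects the two points $b_i^{\pm}$ separately. Recording at each node of $S$ whether $a_i = 0$ or $a_i = \pm 1$ gives a bijection $\rho^{-1}(\calF) \cong \{0,1\}^S \cong \ZZ_2^\ell$; intrinsically, toggling the resolution branch at each node of $S$ defines a free transitive $\ZZ_2^\ell$-action on the fibre.

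The main obstacle is the local dictionary, i.e. making precise and correct the claim that an axis choice for $V_i$ is exactly what destroys local freeness of $\calF$ at $b_i$; this rests on the elementary but essential point that $M$ is locally free at the resolved nodes (as $\Sigma$ is smooth there), so that $\nu_* M$ is locally the full normalisation $\wt{\calO}$, reducing everything to the classification of colength-one submodules of $\wt{\calO}$. Once this is settled, the count is bookkeeping with the three cases of part (3); the only mild subtlety is distinctness of the fibre points when some $\calO_\Sigma(b_i^- - b_i^+)$ happens to be trivial in $\Pic(\Sigma)$, which is handled by comparing the parabolic subspaces $V_i'$ rather than the underlying sheaves $M'$.
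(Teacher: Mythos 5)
Your treatment of part (4) is mathematically sound, and your ``local dictionary'' is correct: since $b_i^{\pm}$ are smooth points of $\Sigma$, the stalk $(\nu_*M)_{b_i}$ is the normalisation $\wt{\calO}_{b_i}$ of $\calO_{C,b_i}$, the stalk $\calF_{b_i}$ is the preimage of the line $V_i$ under evaluation, and this preimage is free precisely when $V_i$ is not one of the two axes. This is in fact more detail than the paper records for part (4), where it simply asserts that ``one immediately sees'' that preimages correspond to axis choices at the non--locally-free nodes. Your remark on distinguishing fibre points through the parabolic subspaces $V_i'$ rather than the underlying sheaves (relevant when $\calO_\Sigma(b_i^--b_i^+)$ is trivial in $\Pic(\Sigma)$) is also a genuine point that the paper leaves implicit.

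The gap is that the statement to be proven is the entire lemma, and your proposal establishes only part (4): you invoke part (2) for nonemptiness of the fibre and part (3) for the classification of all preimages (both its ``only if'' direction, to see every preimage comes from an admissible tuple, and its ``if'' direction, to see every admissible tuple yields a preimage), and part (1) is never addressed. Part (3) is the substantive core of the lemma, and nothing in your dictionary proves it: the dictionary analyses a single preimage, whereas (3) compares two preimages. The paper's proof of (3) extends exactly the kind of stalk computation you perform, namely: when $\calF_b$ is not locally free, its stalk is isomorphic either to $\calO_{\Sigma,b^+}\oplus\calO_\Sigma(-b^-)_{b^-}$ or to $\calO_\Sigma(-b^+)_{b^+}\oplus\calO_{\Sigma,b^-}$ according to the axis, and these two $\calO_{C,b}$-modules are identified by a meromorphic section of $\calO_\Sigma(b^--b^+)$ with divisor $b^--b^+$; globalising these sections produces the twist $J$ and the conditions \eqref{eq condition for V_i}--\eqref{eq condition for V_k}, and conversely an isomorphism $\rho(M,V)\cong\rho(M',V')$ forces this form by comparing local descriptions node by node. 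Part (2) additionally requires surjectivity of $\rho$ (the paper argues as in \cite[Lemma 5.9]{FGOP}) together with the isomorphism \eqref{eq:JasobianinParMod} over the locally free locus. Until you supply arguments for (1)--(3) — and your dictionary is a good first step toward (3), but only a step — the proposal is an incomplete proof of the lemma.
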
 

\begin{proof}
(1) Follows directly from the definition of a parabolic module. 

For (2), the surjectivity of $\rho$ can be proven as in \cite[Lemma 5.9]{FGOP}. Moreover, as $\rho$ is an isomorphism on the smooth locus $\Jac_C \subset \ol\Jac_C$, any two different points $(M,V)$ and $(M',V')$ with the same image under $\rho$ will be mapped to the singular locus of $\ol\Jac_C$. Hence, $\rho$ is a partial resolution of singularities.
	
For (3), let us first consider that the stalk of $\calF := \rho(M,V)$ at $b \in \Sing(C)$ is the kernel of a map $\calO_{\Sigma, b^+} \oplus \calO_{\Sigma, b^-} \longrightarrow \calO_b$. If $\calF_b$ is not locally-free at $b$, then 
\begin{equation} \label{eq description of Ff_x 1}
\calF_{b} \cong \calO_{\Sigma, b^+} \oplus \calO_{\Sigma}(-b^-)_{b^-},
\end{equation}
if $V$ is locally $\calO_{\Sigma,b^+}$, or
\begin{equation} \label{eq description of Ff_x 2}
\calF_{b} \cong \calO_{\Sigma}(-b^+)_{b^+} \oplus \calO_{\Sigma, b^-},
\end{equation}
when $V$ is given by $\calO_{\Sigma,b^-}$. The previous distinction is redundant since one can build the isomorphism of $\calO_{C,b}$-modules between \eqref{eq description of Ff_x 1} and \eqref{eq description of Ff_x 2}, by means of a meromorphic section of $\calO_\Sigma(b^--b^+)$ with divisor $b^--b^+$. 
	
It then follows that, given $(M,V)$, $(M',V')$ and $J$ satisfying \eqref{eq condition for J}--\eqref{eq condition for V_k}, one obtains an isomorphism between the sheaves $\calF = \rho(M,V) = \ker \left ( \nu_*M \to \nu_*\big(M\otimes\calO_{\Exc(\nu)}/V\big) \right )$ and $\calF':= \rho(M',V') = \ker \left ( \nu_*M' \to \nu_*\big(M'\otimes\calO_{\Exc(\nu)}/V'\big)\right )$ by means of a meromorphic section of $J$ having simple zeros on the $b_i^-$'s and on the $b_j^+$'s, and having simple poles on the $b_i^+$'s and on the $b_j^-$'s. 
	
Assuming that $\calF \cong \calF'$, one has, for every point $b_i \in \RSing(\nu)$, that $\calF$ and $\calF'$ are either locally-free or their stalks are described as \eqref{eq description of Ff_x 1} or as \eqref{eq description of Ff_x 2}. If both have the same local description we set $a_i = 0$. If not, we set $a_i = 1$ if we need an isomorphism given by a meromorphic section vanishing on $b_i^+$ or $a_i = -1$ in the remaining case. Repeating this construction for every point of $\RSing(\nu)$, we obtain $J$ satisfying \eqref{eq condition for J} and the conditions \eqref{eq condition for V_i}, \eqref{eq condition for V_j} and \eqref{eq condition for V_k} hold. 
	
Finally for (4), given a sheaf $\calF \in \ol\Jac_C^{\, d}$ which is not locally free at $\{ b_{j_1}, \dots, b_{j_\ell}\}\subset\RSing(\nu)$, one immediately sees that its preimages under $\rho$ are of the form $(M,V)$ with $V_{j_i}$ of the form $M \otimes \calO_{b_{j_i}^+}$ or $M \otimes \calO_{b_{j_i}^-}$. The statement follows from this observation.
\end{proof}

We now study the universal families associated to the fine moduli spaces that have been previously introduced. Fix a smooth geometric point $x_0 \in C$ and the unique preimage $y_0 = \nu^{-1}(x_0) \in \Sigma$. Let
\[ 
\calU_{\Sigma} \to \Sigma \times \ol\Jac_\Sigma^{\,d},
\]
be the universal sheaf for $\ol\Jac_\Sigma^{\,d}$, normalised at $y_0$. Thus $\calU_{\Sigma}$ is the universal family such that  
\[
\calU_{\Sigma}|_{\{y_0\}\times \ol\Jac} \cong \calO_{\ol\Jac} .
\]
We denote the restriction of the universal bundle to a slice by $\calU_{\Sigma,y} := \calU_{\Sigma}|_{\{y\}\times \ol\Jac}$,
for any point $y\in\Sigma$. Whenever $y$ is non-singular, $\calU_{\Sigma,y}$ is a line bundle. In particular, this is true for $y \in \Exc(\nu)$.

For each of the resolved singular points $\{ b_1, \dots, b_k\} = \RSing(\nu)$, consider the rank two vector bundle 
\[ 
\dot \nu^*\calU_{\Sigma, b_i^+} \oplus \dot \nu^*\calU_{\Sigma, b_i^-} , 
\] 
over $\PMod_{\nu}^d$, where we recall $\dot\nu : \PMod_{\nu}^d \to \ol\Jac_\Sigma^{\,d}$ is the forgetful map \eqref{eq proj PMod onto Jac}. 
Then, consider the relative tautological line bundle $\calV_{\Sigma, i} \to \PMod_{\nu}^d$, that is, the line subbundle
\begin{equation} \label{eq definition of Vv_i}
\calV_{\Sigma, i} \subset \dot \nu^*\calU_{\Sigma, b_i^+} \oplus \dot \nu^*\calU_{\Sigma, b_i^-},
\end{equation}
whose fibre over the point $(M, V_1\oplus \dots\oplus V_k)\in\PMod_{\nu}^d$ is $V_i$, where we recall from the definition of parabolic modules that $V_i$ is a 1-dimensional subspace of $M_{b_i^-}\oplus M_{b_i^+}$. The restriction of this tautological bundle to the preimages $\dot\nu^{-1}(M) \subset \PMod_{\nu}^{d}$ is given by  
\[
\calV_{\Sigma, i}|_{\dot{\nu}^{-1}(M)} \cong \pi_i^*\calO_{\PP^1}(-1),
\] 
with $\pi_i : (\PP^1)^k \to \PP^1$ the projection to the $i$-th factor.

The universal family of parabolic modules is the tuple $\left((\id \times \dot{\nu})^*\calU_\Sigma, \calV_{\Sigma} \right)$, where $\calV_{\Sigma}$ is the vector bundle on $\PMod_{\nu}^d$ given by
\begin{equation}\label{eq:def V}
    \calV_{\Sigma}=\calV_{\Sigma, 1} \oplus \dots \oplus \calV_{\Sigma, k},
\end{equation}
which comes naturally equipped with the inclusion
    \[ \calV_{\Sigma}=\calV_{\Sigma, 1} \oplus \dots \oplus \calV_{\Sigma, k} \subset \dot{\nu}^*\calU_\Sigma|_{\nu^{-1}(\RSing(\nu)) \times \PMod}\cong\bigoplus\limits_{i=1}^k \dot \nu^*\calU_{\Sigma,b_i^+}\oplus \dot \nu^*\calU_{\Sigma,b_i^-}\,.
    \]
    Due to the compatible normalisations of $\calU_C$ and $\calU_\Sigma$, we obtain a universal version of the exact sequence \eqref{eq preimage of tau} on $C \times \PMod_{\nu}^d$, given by 
    \begin{equation}\label{eq:univ-ses}
    0 \to (\id \times \rho)^{*}\calU_C 
    \to (\nu \times \id)_*(\id \times \dot\nu)^* \calU_\Sigma 
    \to \calT
    \to 0, 
    \end{equation} 
    where 
    \begin{equation}\label{eq:calT}
        \calT:=\big((\nu\times\id)_*\big(\id \times\dot{\nu})^*\calU_\Sigma|_{\nu^{-1}(\Sing(C)) \times \PMod}\big)/\calV_{\Sigma} 
    \end{equation}
    is supported on $\Sing(C)\times\PMod_{\nu}^d$. The pushforward of $\calT$ along the projection $C \times \PMod_{\nu}^d \to \PMod_{\nu}^d$ recovers the universal quotient bundle
    \begin{equation}
         \label{equ:univ_quotient}\calQ:=\calQ_1\oplus\cdots\oplus\calQ_k:=\bigoplus_{i=1}^k ( \dot \nu^*\calU_{\Sigma, b_i^+} \oplus \dot \nu^*\calU_{\Sigma, b_i^-})/\,\calV_{\Sigma, i}
\end{equation}  
on $\PMod_{\nu}^d$.

We conclude this section by describing the canonical bundle $\omega_{\PMod}$ of $\PMod_{\nu}^d$. 
\begin{lemma} 
\label{lm description of omega_PMod}
Let $C$ be an integral nodal projective curve with partial normalisation $\nu : \Sigma \to C$. Let $\RSing(\nu) = \{ b_1, \dots, b_k\}$ be the divisor of singularities resolved by $\nu$. Then, there exists an isomorphism of line bundles \begin{equation}\label{eq: canonical PMod}
\omega_{\PMod} \cong \bigotimes\limits_{i=1}^k \calV_{\Sigma, i}^2 \otimes \dot\nu^*\calU_{d, b_i^+}^\vee \otimes \dot \nu^*\calU_{d, b_i^-}^\vee
\end{equation}
\end{lemma}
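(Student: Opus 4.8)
The plan is to exploit the fibration structure recorded in Lemma \ref{lm tau surjective}(1) and reduce the computation of $\omega_{\PMod}$ to a fibrewise one over $\ol\Jac^{\,d}_\Sigma$. Since $\dot\nu : \PMod_\nu^d \to \ol\Jac^{\,d}_\Sigma$ is a smooth morphism (a $(\PP^1)^k$-bundle), its relative dualising sheaf $\omega_{\PMod/\ol\Jac^{\,d}_\Sigma}$ is a line bundle and one has $\omega_{\PMod} \cong \omega_{\PMod/\ol\Jac^{\,d}_\Sigma} \otimes \dot\nu^*\omega_{\ol\Jac^{\,d}_\Sigma}$. Using the triviality of the dualising sheaf of $\ol\Jac^{\,d}_\Sigma$ recalled in Section \ref{se prelims}, the pullback factor is trivial and the problem collapses to computing $\omega_{\PMod/\ol\Jac^{\,d}_\Sigma}$. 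A preliminary technical point to address is that $\ol\Jac^{\,d}_\Sigma$ is only Cohen--Macaulay (lci) rather than smooth, so I would phrase the argument throughout in terms of dualising sheaves; the relative formula still holds because $\dot\nu$ is smooth and the base is Cohen--Macaulay, which forces $\PMod_\nu^d$ to be Cohen--Macaulay with line-bundle dualising sheaf.

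Next I would identify $\PMod_\nu^d$ with the fibre product over $\ol\Jac^{\,d}_\Sigma$ of the $k$ projectivised rank-two bundles $\PP(\dot\nu^*\calU_{\Sigma,b_i^+}\oplus\dot\nu^*\calU_{\Sigma,b_i^-})$ parametrising the lines $V_i \subset M_{b_i^+}\oplus M_{b_i^-}$. Under this identification, $\calV_{\Sigma,i}$ is precisely the relative tautological subbundle $\calO(-1)$ of the $i$-th factor, as confirmed by the fibrewise normalisation $\calV_{\Sigma,i}|_{\dot\nu^{-1}(M)}\cong\pi_i^*\calO_{\PP^1}(-1)$ recorded before the lemma. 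Since the relative tangent sheaf of a fibre product of smooth fibrations splits as the direct sum of the pullbacks of the individual relative tangent sheaves, the relative dualising sheaf factorises as $\omega_{\PMod/\ol\Jac^{\,d}_\Sigma}\cong\bigotimes_{i=1}^k\omega_i$, where $\omega_i$ denotes the relative canonical bundle of the $i$-th projective bundle.

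For each factor I would apply the relative Euler sequence underlying \eqref{equ:univ_quotient}, namely $0\to\calV_{\Sigma,i}\to\dot\nu^*\calU_{\Sigma,b_i^+}\oplus\dot\nu^*\calU_{\Sigma,b_i^-}\to\calQ_i\to 0$, which yields $T_{\PP/\ol\Jac}\cong\calV_{\Sigma,i}^\vee\otimes\calQ_i$ and hence $\omega_i\cong\calV_{\Sigma,i}\otimes\calQ_i^\vee$. Taking determinants in the Euler sequence gives $\calQ_i\cong\dot\nu^*\calU_{\Sigma,b_i^+}\otimes\dot\nu^*\calU_{\Sigma,b_i^-}\otimes\calV_{\Sigma,i}^\vee$, and substitution produces $\omega_i\cong\calV_{\Sigma,i}^2\otimes\dot\nu^*\calU_{\Sigma,b_i^+}^\vee\otimes\dot\nu^*\calU_{\Sigma,b_i^-}^\vee$. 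Assembling these isomorphisms over $i=1,\dots,k$ then yields the claimed formula.

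The main obstacle I anticipate is not any single computation---each is standard projective-bundle bookkeeping---but rather the conventions and singularities surrounding them: fixing the $\PP(E)$ convention so that $\calV_{\Sigma,i}$ is genuinely the tautological $\calO(-1)$ (consistent with the stated fibrewise normalisation), and justifying both the relative dualising sheaf formula and its multiplicativity over the singular, merely Cohen--Macaulay base $\ol\Jac^{\,d}_\Sigma$. Once these points are settled, the determinant manipulation of the Euler sequence is routine.
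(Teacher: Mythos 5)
Your proposal is correct and follows essentially the same route as the paper: identify $\PMod_\nu^d$ with the fibre product of the projective bundles $\PP(\calU_{d,b_i^+}\oplus\calU_{d,b_i^-})$ over $\ol\Jac^{\,d}_\Sigma$, compute the relative dualising sheaf factor by factor, and use triviality of the dualising sheaf of $\ol\Jac^{\,d}_\Sigma$ to conclude $\omega_{\PMod}\cong\omega_{\PMod/\ol\Jac^{\,d}_\Sigma}$. The only cosmetic difference is that you derive the relative canonical bundle of each $\PP^1$-bundle from the relative Euler sequence and the quotient bundles $\calQ_i$, whereas the paper simply cites the standard formula $\omega_{\PP(U_i)/\ol\Jac^{\,d}_\Sigma}\cong\calO_p(-2)\otimes p^*\det(U_i)^\vee$; these are the same computation.
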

\begin{proof} The vector bundle $U_i := \calU_{d, b_i^+} \oplus \calU_{d, b_i^-}$ gives rise to the projective bundle 
\[
p: \PP(U_i)=\PP\left(\calU_{d, b_i^+} \oplus \calU_{d, b_i^-}\right) \to \ol\Jac_\Sigma^{\,d}. 
\] 
By a standard formula (see \cite[Ex III, 8.4]{hartshorne}) the relative dualising sheaf of the bundle map $p$ is  
\[ 
\omega_{\PP(U_i)/\ol\Jac_\Sigma^{\,d}}\cong\calO_p(-2) \otimes p^*\det(U_i)^\vee=\calO_p(-2)\otimes p^*\calU_{d, b_i^+}^\vee \otimes  p^*\calU_{d, b_i^-}^\vee,
\]
where $\calO_p(-2)$ is the square of the relative tautological line bundle. $\PMod_{\nu}^d$ is the $(\PP^1)^k$-bundle given by the fibre product 
\[
\PP(U_1) \times_{\ol\Jac_\Sigma^{\,d}}\cdots \times_{\ol\Jac_\Sigma^{\,d}} \PP(U_k) \to \ol\Jac^d_\Sigma
\]
It follows that the relative dualising sheaf $\omega_{\PMod/\ol\Jac_\Sigma^{\,d}}$ is isomorphic to \eqref{eq: canonical PMod}. Moreover $\ol\Jac_\Sigma^{\,d}$ has trivial canonical bundle, and so we have $
\omega_{\PMod} \cong \omega_{\PMod/\ol\Jac_\Sigma^{\,d}}$ concluding the proof. 
\end{proof} 

We now take square roots of the expression in Lemma \ref{lm description of omega_PMod}, which requires the following. 

\begin{lemma} For each $y \in \Sigma\setminus\Sing(\Sigma)$, the restriction $\calU_{\Sigma, y} = \calU_{\Sigma} |_{\{y\} \times \ol\Jac_\Sigma}$ admits a square root. Moreover, the choice of square root is parameterised by $H^1(\Sigma, \ZZ_2)$.
\end{lemma}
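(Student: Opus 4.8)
The plan is to realise $\calU_{\Sigma,y}$ as an element of the divisible group $\Pic^{0}(\ol\Jac_\Sigma)$, divide by two there, and then identify the resulting ambiguity with $H^1(\Sigma,\ZZ_2)$. Since $y$ is a smooth point of $\Sigma$, every rank one torsion-free sheaf on $\Sigma$ is locally free at $y$, so $\calU_{\Sigma,y}=\calU_\Sigma|_{\{y\}\times\ol\Jac_\Sigma}$ is a genuine line bundle on $\ol\Jac_\Sigma$ (with fibre $M_y$ over $M$). I would thus reduce the statement to three claims: (a) $\calU_{\Sigma,y}\in\Pic^{0}(\ol\Jac_\Sigma)$; (b) $\Pic^{0}(\ol\Jac_\Sigma)$ is divisible; and (c) $\Pic^{0}(\ol\Jac_\Sigma)[2]\cong H^1(\Sigma,\ZZ_2)$.

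For (a), I would restrict $\calU_\Sigma$ to $(\Sigma\setminus\Sing(\Sigma))\times\ol\Jac_\Sigma$, where it is a line bundle and hence an algebraic family of line bundles on $\ol\Jac_\Sigma$ parameterised by $\Sigma\setminus\Sing(\Sigma)$. As $C$ is integral and $\nu$ a partial normalisation, $\Sigma$ is irreducible, so this base is connected and contains $y_0$. The classifying map $\Sigma\setminus\Sing(\Sigma)\to \mathrm{NS}(\ol\Jac_\Sigma)=\Pic(\ol\Jac_\Sigma)/\Pic^{0}(\ol\Jac_\Sigma)$ to the discrete N\'eron--Severi group is constant on the connected source and vanishes at $y_0$, since $\calU_{\Sigma,y_0}\cong\calO_{\ol\Jac_\Sigma}$ by the normalisation of $\calU_\Sigma$; hence $\calU_{\Sigma,y}$ lies in $\Pic^{0}(\ol\Jac_\Sigma)$. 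For (b), $\Pic^{0}(\ol\Jac_\Sigma)$ is the group of $\CC$-points of the identity component of the Picard scheme of the projective variety $\ol\Jac_\Sigma$, i.e.\ of a connected commutative algebraic group over $\CC$, on which multiplication by $2$ is surjective; this produces a square root $L_0$ of $\calU_{\Sigma,y}$ inside $\Pic^{0}(\ol\Jac_\Sigma)$.

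It then remains to identify the torsor of square roots. Any two square roots differ by a class in $\Pic^{0}(\ol\Jac_\Sigma)[2]$, so the square roots in $\Pic^{0}(\ol\Jac_\Sigma)$ form a torsor under this group. For (c) I would use that the Poincar\'e bundle $P_\Sigma\to\Jac_\Sigma\times\ol\Jac_\Sigma$, being universal for topologically trivial line bundles on $\ol\Jac_\Sigma$, yields an isomorphism $\Jac_\Sigma\cong\Pic^{0}(\ol\Jac_\Sigma)$, whence $\Pic^{0}(\ol\Jac_\Sigma)[2]\cong\Jac_\Sigma[2]$. The Kummer sequence $1\to\mu_2\to\calO_\Sigma^{\times}\xrightarrow{(\cdot)^2}\calO_\Sigma^{\times}\to1$, combined with the surjectivity of squaring on $H^0(\Sigma,\calO_\Sigma^{\times})=\CC^{\times}$, then gives $H^1(\Sigma,\ZZ_2)\cong\Pic(\Sigma)[2]=\Jac_\Sigma[2]$, the last equality because a $2$-torsion line bundle has degree $0$ and $\Sigma$ is irreducible.

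The main obstacle I anticipate is not existence but pinning the torsor group down to be exactly $H^1(\Sigma,\ZZ_2)$. The relation $L^{\otimes2}\cong\calU_{\Sigma,y}\in\Pic^{0}(\ol\Jac_\Sigma)$ only forces $c_1(L)$ to be $2$-torsion in $\mathrm{NS}(\ol\Jac_\Sigma)$, so a priori a square root could lie outside $\Pic^{0}$; to exclude this one must check that $\mathrm{NS}(\ol\Jac_\Sigma)$ (equivalently $H^2(\ol\Jac_\Sigma,\ZZ)$) is torsion-free, or else declare that one works with square roots inside $\Pic^{0}(\ol\Jac_\Sigma)$. A secondary point is that $\Sigma$ may itself be singular, so that $\Jac_\Sigma$ is only semiabelian; this affects neither its divisibility nor the Kummer computation, and the identification $\Jac_\Sigma[2]\cong H^1(\Sigma,\ZZ_2)$ persists verbatim.
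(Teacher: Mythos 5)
Your proposal is correct and follows essentially the same route as the paper: realise $\calU_{\Sigma,y}$ as a point of $\Pic^0(\ol\Jac_\Sigma)$, transfer via the group-preserving autoduality $\Pic^0(\ol\Jac_\Sigma)\cong\Jac_\Sigma$, and identify the set of square roots there as a torsor under $\Jac_\Sigma[2]\cong H^1(\Sigma,\ZZ_2)$. The only differences are in the verification of sub-steps: the paper deduces membership in $\Pic^0(\ol\Jac_\Sigma)$ from the Abel--Jacobi compatibility $A_\Sigma^*\calP_\Sigma\cong\calU_\Sigma$ rather than your N\'eron--Severi constancy argument, and it leaves implicit both the Kummer-sequence identification and the restriction to square roots inside $\Pic^0$ (a point you rightly flag, and which the paper's own proof tacitly assumes as well).
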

\begin{proof}
The compatibility $A_\Sigma^*\calP_\Sigma \cong \calU_\Sigma$ between the Poincaré sheaf with the Abel-Jacobi map shows that $\calU_{\Sigma,y} \cong \calP_\Sigma |_{\{A_\Sigma(y)\}\times \ol\Jac_{\Sigma}}$ is a geometric point of $\Pic^{0}(\ol\Jac_{\Sigma})$. The autoduality $\Pic^{0}(\ol\Jac_{\Sigma}) \cong \Jac_\Sigma$ preserves the group structure, so it is equivalent to describe the square roots of the corresponding geometric point in $\Jac_\Sigma$. These are parametrised by the 2-torsion points of $ \Jac^0_{\Sigma}$, which in turn correspond to $H^1(\Sigma, \ZZ_2)$. 
\end{proof}

\begin{corollary}
\label{spin structures}
Adopt the same hypothesis as Lemma \ref{lm description of omega_PMod}. At every point $b_i^{\pm} \in \Exc(\nu) = \nu^{-1}(\RSing(\nu))$, choose a square root $\calU^{1/2}_{\Sigma, b_i^{\pm}}$ of the line bundle $\calU_{\Sigma, b_i^{\pm}}$. Each such family of choices $\big\{ \calU^{1/2}_{\Sigma, b_i^{\pm}}\}_{i = 1,\ldots,k}$ determines a spin structure on $\PMod_{\nu}^d$ given by 
\[
\omega_{\PMod}^{1/2} :=  \bigotimes\limits_{i=1}^k \calV_{\Sigma, i} \otimes \dot\nu^*\calU^{-1/2}_{b_i^-} \otimes \dot \nu^*\calU^{-1/2}_{b_i^+} .
\]
\end{corollary}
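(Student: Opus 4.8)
The plan is to verify directly that the proposed line bundle squares to the canonical bundle, that is, that $(\omega_{\PMod}^{1/2})^{\otimes 2}\cong\omega_{\PMod}$, which is exactly what it means for $\omega_{\PMod}^{1/2}$ to be a spin structure. The entire computation rests on Lemma \ref{lm description of omega_PMod}, which already identifies $\omega_{\PMod}$ with $\bigotimes_{i=1}^k\calV_{\Sigma,i}^2\otimes\dot\nu^*\calU_{d,b_i^+}^\vee\otimes\dot\nu^*\calU_{d,b_i^-}^\vee$, so the task is purely to match this against the square of the displayed expression.

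First I would check that the square roots appearing in the statement actually exist. Since $\nu$ resolves each node $b_i\in\RSing(\nu)$, the fibre $\nu^{-1}(b_i)=b_i^+ + b_i^-$ consists of two distinct smooth points of $\Sigma$, so each $b_i^\pm$ lies in $\Sigma\setminus\Sing(\Sigma)$ and $\calU_{\Sigma,b_i^\pm}$ is a line bundle on $\ol\Jac_\Sigma^{\,d}$. The preceding lemma then guarantees a square root $\calU^{1/2}_{\Sigma,b_i^\pm}$, with the set of choices a torsor under $H^1(\Sigma,\ZZ_2)$; this is precisely the family of choices indexing the candidate spin structures.

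Next I would square the defining expression. Since the tensor product of line bundles is commutative, one has
\[
(\omega_{\PMod}^{1/2})^{\otimes 2}\cong\bigotimes_{i=1}^k\calV_{\Sigma,i}^{\otimes 2}\otimes\dot\nu^*(\calU_{b_i^-}^{-1/2})^{\otimes 2}\otimes\dot\nu^*(\calU_{b_i^+}^{-1/2})^{\otimes 2}.
\]
Using the defining square-root property $(\calU^{1/2}_{\Sigma,b_i^\pm})^{\otimes 2}\cong\calU_{\Sigma,b_i^\pm}$, hence $(\calU_{b_i^\pm}^{-1/2})^{\otimes 2}\cong\calU_{\Sigma,b_i^\pm}^\vee$, together with the compatibility of the pullback $\dot\nu^*$ with tensor products and duals, and the identification $\calU_{\Sigma,b_i^\pm}=\calU_{d,b_i^\pm}$ of the degree-$d$ universal sheaf on $\Sigma$ restricted to the exceptional points, this reduces exactly to the right-hand side of Lemma \ref{lm description of omega_PMod}. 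Hence $(\omega_{\PMod}^{1/2})^{\otimes 2}\cong\omega_{\PMod}$, as required.

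Finally, the argument is essentially bookkeeping and presents no real obstacle; the only genuine input is the existence of the square roots at the smooth points $b_i^\pm$, which I have reduced to the preceding lemma. I would also remark that any two admissible choices of the square roots differ by a $2$-torsion line bundle pulled back from $\ol\Jac_\Sigma^{\,d}$, which is consistent with the independence statements recorded later (cf. Remark \ref{re: spin independence}).
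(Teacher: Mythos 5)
Your proposal is correct and matches the paper's (implicit) argument exactly: the corollary is stated as an immediate consequence of Lemma \ref{lm description of omega_PMod} together with the preceding lemma guaranteeing square roots of $\calU_{\Sigma,y}$ at smooth points $y\in\Sigma$, which is precisely the squaring-and-matching computation you carry out. The only input you use beyond bookkeeping --- existence of the square roots at the smooth points $b_i^{\pm}$ and the identification $\calU_{\Sigma,b_i^{\pm}}=\calU_{d,b_i^{\pm}}$ --- is the same input the paper relies on.
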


\section{Comparison results for curvilinear Hilbert schemes and parabolic modules}
\label{se: comparison}

\subsection{Parabolic modules via curvilinear divisors}
\label{se para and curv}
In this section we present some technical results concerning the interplay between the curvilinear Hilbert scheme of $C$ and the moduli space of parabolic modules over $\nu : \Sigma \to C$. The results described are preliminaries for Section \ref{sc relation Poincares}. As before, we take $C$ to be an irreducible nodal projective curve with simple nodes and denote by $\nu : \Sigma \to C$ its partial normalisation at the nodes $\{b_1, \dots, b_k\}=\RSing(\nu)\subset\Sing(C)\subset C$. Recall that we have chosen a smooth point $x_0\in C$ with preimage $y_0 = \nu^{-1}(x_0) \in \Sigma$. We specialise to the degree zero case $d=0$.

\begin{notation}\label{not2}
As in Notation \ref{not1}, when $d=0$, we will write $\PMod_\nu:=\PMod_\nu^0$.
\end{notation}

Recall the Chow-Hilbert space introduced in Section \ref{sc chow-hilbert algebraic space}. The natural morphism 
\[
\chow(\nu): \Sym_{\Sigma\setminus{\nu^{-1}(\URSing(\nu))}}^{n} \to \Sym_{C\setminus{\URSing(\nu)}}^n
\]
and the identity isomorphism $\Hilb^n_{\Sigma\setminus{\Exc(\nu)}} \to \Hilb_{C\setminus{\RSing}}^n$, glue to a morphism 
\[ \nu^{(n)}: \Hilb^{cur}_{n,\Sigma} \to \Hyb_\nu\,.
\] Taking the cartesian square above $\nu^{(n)}$ and the quotient map $c$ from \eqref{eq: quotient map to Hyb} we obtain  
\begin{equation} \label{eq cartesian diagram Hilb Sym}
\begin{tikzcd}[column sep = huge]
\Hilb_{n,C}^{cur} \times_{\Hyb_{\nu}} \Hilb_{n,\Sigma}^{cur} \arrow[d, "\beta"'] \arrow[r, "q"] & \Hilb_{n,\Sigma}^{cur} \arrow[d, "\nu^{(n)}"]
\\
\Hilb_{n,C}^{cur}  \arrow[r, "c"'] & \Hyb_\nu\,.
\end{tikzcd}
\end{equation}
Note the pullback is a scheme as the Chow--Hilbert space $\Hyb_{\nu}$ is an algebraic space (Lemma \ref{etale atlas}). Recall also the finite morphism $\rho$ of \eqref{eq definition of rho} and consider also the Cartesian diagram
\begin{equation}\label{eq cartesian diagram Hilb PMod}
\begin{tikzcd}[column sep = huge]
\Hilb_{n,C}^{cur} \times_{\ol\Jac_C} \PMod_{\nu} \arrow[d, "\wt \rho \,"'] \arrow[r, "\wt{\alpha}_C"] & \PMod_{\nu} \arrow[d, "\rho"]
\\
\Hilb_{n,C}^{cur} \arrow[r, "\alpha_C"'] & \ol\Jac_C.
\end{tikzcd}  
\end{equation}
Proposition \ref{pr alpha curv surjective} allowed us to choose $n$ so that $\alpha_C:\Hilb_{n,C}^{cur}\to\ol\Jac_C$ is surjective. However, by \cite{dsouza,altman&kleiman}, for large enough $n$, we know that $A_C:\Hilb_{n,C}\to\ol\Jac_C$, is actually a projective bundle (see page 425 and Note 4.7 of \cite{dsouza} and  \cite[Theorem 8.6]{altman&kleiman}). We assume such choice of $n$ from now on:

\begin{assumption}\label{assumption}
Take the number $n\in\NN$ to be sufficiently large such that $A_C:\Hilb_{n,C}\to\ol\Jac_C$ is a projective bundle over $\ol\Jac_C$, with fibre over $\calF$ equal to $\PP(H^0(C,\calF(nx_0)))$.
\end{assumption}

Under this assumption, $\alpha_C:\Hilb_{n,C}^{cur}\to\ol\Jac_C$ is a smooth surjective morphism, such that the complement of its fibres in the (projective) fibres of $A_C:\Hilb_{n,C}\to\ol\Jac_C$ is of codimension at least two, by \cite[Lemma 3.9]{melo2}. 
In particular, the base change $\wt{\alpha}_C$ in \eqref{eq cartesian diagram Hilb PMod} is also smooth and surjective.

Let $(D,(M,V))\in \Hilb_{n,C}^{cur} \times_{\ol \Jac_C} \PMod_{\nu}$ and let 
\[\calF:=\alpha_C(D) = \rho(M,V).\] 
By the definition of $\alpha_C$, the subscheme $D\subset C$ determines a section $s_D : \calO_C \to \calI_D^\vee=\calF(nx_0)$. On the other hand, $(M,V)$ corresponds under $\rho$ to an embedding $\jmath: \calF \longhookrightarrow \nu_*M$. Hence, the composition $\jmath\circ s_D$ is a section of the sheaf $(\nu_*M)(nx_0) \cong \nu_*(M(ny_0))$. By adjointness, this gives a section of the rank $1$ torsion-free sheaf $M(ny_0)$ over $\Sigma$. Its vanishing locus determines, in turn, a degree $n$ effective divisor $E\in\alpha_\Sigma^{-1}(M) \subset \Hilb_{n,\Sigma}^{cur}$, where 
\[
\alpha_\Sigma:\Hilb_{n,\Sigma}^{cur}\to\ol\Jac_\Sigma,\ \ \ \alpha_\Sigma(\calI_E)=\calI_E^\vee(-ny_0) 
\] 
is also smooth and projective, whenever Assumption \ref{assumption} holds. As $\jmath$ vanishes nowhere (because the torsion in \eqref{eq preimage of tau} is of rank $1$), the vanishing locus of $\jmath \circ s_D$ is still $D$. Then, again by adjointness, we have $\nu(E) \subset D$, so $c (D) = \nu^{(n)} (E)$ as both have the same length. 

The upshot of the preceding discussion is that, taking into account the diagrams \eqref{eq cartesian diagram Hilb Sym} and \eqref{eq cartesian diagram Hilb PMod}, the morphism 
\begin{equation} \label{eq def j}
j : \Hilb_{n,C}^{cur} \times_{\ol\Jac_C} \PMod_{\nu} \to \Hilb_{n,C}^{cur} \times_{\Hyb_\nu} \Hilb_{n,\Sigma}^{cur},\ \ \ \ j(D,(M,V))=(D,E)
\end{equation}
is well-defined. Here $E$ is constructed as above. To describe this map in more detail, we require the following description of points on $\Hilb^{cur}_{n,C}$.

\begin{lemma} \label{lm description of Hilb^cur}
Let $C$ be an irreducible nodal projective curve with $k$ nodes at $\{b_1, \dots, b_k\}=\RSing(\nu)\subset C$, with $\nu : \Sigma \to C$ the partial normalisation at $\RSing(\nu)$. Decompose the subscheme $D \in \Hilb_{n,C}^{cur}$ as 
\begin{equation} \label{eq decomposition of D}
D = D_0 + D_1 + \dots + D_k,
\end{equation}
with $D_0$ being supported on a subset of $C\setminus\RSing(\nu)$ and $D_i$ supported on the nodal point $b_i$. 

For $i\geq 1$, if $D_i$ is non-empty, then one of the following two options holds:
\begin{enumerate}
\item $D_i$ 
is supported on one of the branches of $C$ at $b_i$, in which case $\calI_{D_i}$ is not principal. 
\item $D_i$ is $D_{b_i,a_i}$ for $a_i \in \CC^*$, where
\[
D_{b_i,a_i} := \Spec \left ( \CC[y_i^+, y_i^-]/\langle y_i^+y_i^-, y_i^+ - a_i y_i^- \rangle \right ),
\]
with $y_i^+$ and $y_i^-$ being local coordinates of $C$ at the node $b_i$. In this case, $D_i$ has length $2$ and $\calI_{D_i}$ is principal.  
\end{enumerate}
\end{lemma}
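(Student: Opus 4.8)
The plan is to reduce the statement to a purely local classification at each node and then analyse ideals in the completed local ring of $C$. Since $D$ is a finite subscheme, its ideal sheaf $\calI_D$ is determined by its stalks, and the conditions in the statement only concern $\calI_{D_i}$ at the single node $b_i$; so after the decomposition \eqref{eq decomposition of D} it suffices to work formally at $b_i$. There the complete local ring is $\widehat{\calO}_{C,b_i}\cong\C[[y_i^+,y_i^-]]/\langle y_i^+y_i^-\rangle$, with $y_i^\pm$ cutting out the two branches, and its normalisation is $\C[[y_i^+]]\times\C[[y_i^-]]$. Curvilinearity of $D$ means $\calO_{D_i}\cong\C[z]/\langle z^m\rangle$ with $m=\length(\calO_{D_i})$, so $D_i$ corresponds to a surjection $\phi:\widehat{\calO}_{C,b_i}\to\C[z]/\langle z^m\rangle$, determined by $\phi(y_i^\pm)\in\langle z\rangle$ subject to the nodal relation $\phi(y_i^+)\phi(y_i^-)=0$. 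I would first record that surjectivity forces at least one of $\phi(y_i^+),\phi(y_i^-)$ to have order exactly one in $z$, and that (writing each image as $z^{a_\pm}\cdot(\mathrm{unit})$ or $0$) the relation gives $a_++a_-\geq m$.

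The classification then splits according to whether one of the branch coordinates lies in $\calI_{D_i}$. If, say, $\phi(y_i^-)=0$, then $y_i^-\in\calI_{D_i}$, so $D_i$ is scheme-theoretically contained in the branch $\{y_i^-=0\}$, with $\calI_{D_i}=\langle (y_i^+)^m,\,y_i^-\rangle$; this is case (1). To see such an ideal is never principal I would use that any finite-colength principal ideal of $\widehat{\calO}_{C,b_i}$ is generated by a non-zero-divisor $f$, and that passing to the normalisation gives $\length(\widehat{\calO}_{C,b_i}/\langle f\rangle)=\ord_{y_i^+}(f)+\ord_{y_i^-}(f)$; a generator of an ideal supported on a single branch would be a unit on the other branch, hence a unit, a contradiction. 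If instead both $\phi(y_i^+)$ and $\phi(y_i^-)$ are non-zero, then $y_i^\pm\notin\calI_{D_i}$ and $D_i$ meets both branches. Here I would show $\calI_{D_i}$ is generated by a single non-zero-divisor whose linear part involves both coordinates, so that it may be normalised to $y_i^+-a_iy_i^-$ with $a_i\in\C^\times$; this identifies $D_i$ with $D_{b_i,a_i}$, and the colength formula then yields $\length=\ord_{y_i^+}+\ord_{y_i^-}=1+1=2$, giving case (2) together with principality. The global statement for $D=D_0+D_1+\cdots+D_k$ follows by taking the support decomposition of $\calO_D$.

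The main obstacle is the careful treatment of the non-reduced subschemes concentrated at the node, where the two branches interact. Establishing that a single-branch ideal cannot be principal genuinely requires the normalisation/branch-order computation rather than a naive count of generators; and in the straddling case the delicate point is controlling the order of contact of $D_i$ with each branch, i.e. showing that the admissible curvilinear configuration meeting both branches is forced into the transverse length-two normal form $D_{b_i,a_i}$ and fixing the generator up to the parameter $a_i\in\C^\times$ via the nodal relation $y_i^+y_i^-=0$. This local normal-form analysis — reconciling curvilinearity (embedding dimension one) with the two-branch structure of $\widehat{\calO}_{C,b_i}$ — is where the real content of the lemma lies, and I expect it to be the step demanding the most care.
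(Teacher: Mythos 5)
Your local setup is sound---reducing to $\widehat{\calO}_{C,b_i}\cong\C[[y_i^+,y_i^-]]/\langle y_i^+y_i^-\rangle$ and parametrising curvilinear $D_i$ by surjections $\phi:\widehat{\calO}_{C,b_i}\to\C[z]/\langle z^m\rangle$---and your case (1) (one-branch ideals are never principal) is correct and provable along the lines you sketch, although the phrase ``a unit on the other branch'' is not quite right: a generator would have order exactly one there, and the contradiction comes from restricting the relation $y_i^-=gf$ to the two branches. The genuine gap is case (2). The claim that when $\phi(y_i^+),\phi(y_i^-)$ are both non-zero the ideal is generated by an element whose linear part involves both coordinates (equivalently, that both branch orders equal $1$, hence $\length(D_i)=2$) is exactly the step you defer with ``I would show'', and it cannot be proved: your own constraints---$\min(a_+,a_-)=1$ and $a_++a_-\ge m$---admit the solution $(a_+,a_-)=(1,m-1)$ for every $m$, and this solution is realised. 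Take $\phi(y_i^+)=z$, $\phi(y_i^-)=z^{m-1}$; then $\phi(y_i^+)\phi(y_i^-)=z^m=0$, $\phi$ is surjective, and $\ker\phi=\langle (y_i^+)^{m-1}-y_i^-\rangle$ is principal, with quotient $\C[y_i^+]/\langle (y_i^+)^m\rangle\cong\C[z]/\langle z^m\rangle$. For $m\ge 3$ this is a curvilinear subscheme of length $m$ supported at the node, contained in neither branch (one checks directly that $y_i^\pm\notin\langle (y_i^+)^{m-1}-y_i^-\rangle$), and distinct from every $D_{b_i,a_i}$. So ``meets both branches'' does not force the transverse length-two normal form, and the dichotomy cannot be completed by the route you propose.

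You should also know that this is not a defect of your write-up alone: the paper's own proof makes the identical leap, asserting that curvilinearity (``contained in some $\AA^1$'') forces $D_i\subset D_{b_i,a_i}$, which is the same unproved implication. Under the definition of curvilinear actually given in the paper---$\calO_{D_i}$ abstractly isomorphic to $\C[z]/\langle z^{n_p}\rangle$---the subschemes cut out by $(y_i^+)^{m-1}-a\,y_i^-$ with $a\in\CC^*$ and $m\ge 3$ are curvilinear, principal, supported at the node, and straddle both branches with higher tangency to one of them; they form a third family that the statement of the lemma omits. So the step you yourself flagged as ``demanding the most care'' is precisely where the statement, as formulated, breaks: any correct treatment must either strengthen the notion of curvilinearity in play (abstract embeddability into $\AA^1$ does not control the tangency of $D_i$ with the branches) or enlarge the classification to include this extra family.
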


\begin{proof}
Knowing that $C$ is locally isomorphic to $\Spec \left ( \CC[y_i^+, y_i^-]/\langle  y_i^+ y_i^-\rangle \right )$, the proof becomes a simple exercise. If $D_i$ is not contained in $\Spec \left ( \CC[y_i^\pm] \right )$ it must have length at least $2$. As, by hypothesis, it is contained in some $\AA^1$, then $D_i \subset D_{b_i, a_i}$ for some $a_i \in \CC^*$. Observe that the right-hand-side is a length $2$ subscheme, hence the latter is identified with $D_i$ itself. It follows from this identification that $\calI_{D_i}$ is principal. 
\end{proof}

We now describe points $(\calI_D,E) \in \Hilb_{n,C}^{cur} \times_{\Hyb_\nu} \Hilb_{n,\Sigma}^{cur}$ that lie in the image of $j$. From the description prior to \eqref{eq def j}, such points satisfy the condition $\nu(E) \subset D$. The following is immediate from combining this condition with the description of curvilinear divisors in Lemma \ref{lm description of Hilb^cur}. 

\begin{corollary} \label{co description of Hilb^cur times PMod} Let $C$ be an irreducible nodal curve and $\nu : \Sigma \to C$ a partial normalisation at $\RSing(\nu) = \{b_1, \dots, b_k\}$, with $\Exc(\nu) =\{ b_1^+, b_1^-, \dots, b_k^+, b_k^-\}$. Take $(\calI_D, \calI_E) \in \Hilb_{n,C}^{cur} \times_{\Hyb_\nu} \Hilb_{n,\Sigma}^{cur}$ so that $\nu(E) \subset D$. Considering the decomposition \eqref{eq decomposition of D} of $D$, then $E$ decomposes as
\begin{equation} \label{eq decomposition of E}
E = E_0 + E_1^+ + E_1^- + \dots + E_k^+ + E_k^-,
\end{equation}
with $E_0$ being supported on a subset of $\Sigma\setminus\Exc(\nu)$, $E_i^+$ supported at $b_i^+$ and $E_i^-$ at $b_i^-$.

Then $D_0 = \nu (E_0)$ and, for $i\geq 1$,
\begin{enumerate}
\item if $D_i$ is $0$ then $E_i^+ = 0$ and $E_i^- = 0$; 
\item if $D_i$ 
is supported on one of the branches of $C$ at $b_i$, then either $D_i = \nu(E_i^+)$ and $E_i^- = 0$, or $D_i = \nu(E_i^-)$ and $E_i^+ = 0$;
\item if $D_i$ is $D_{b_i,a_i}$ for $a_i \in \CC^*$, then $E_i^+ = b_i^+$ and $E_i^- = b_i^-$, hence $\nu(E_i) = b_i \subset D_i$.
\end{enumerate}
\end{corollary}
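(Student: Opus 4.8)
The plan is to reduce the statement to a purely local analysis at each resolved node $b_i$, since both defining conditions—membership in the fibre product over $\Hyb_\nu$ and the containment $\nu(E) \subset D$—are local on $C$. First I would unwind the fibre-product condition $c(\calI_D) = \nu^{(n)}(\calI_E)$ through the atlas \eqref{eq: simple etale atlas}. Away from $\RSing(\nu)$, the normalisation $\nu$ restricts to an isomorphism $\Sigma \setminus \Exc(\nu) \xrightarrow{\ \cong\ } C \setminus \RSing(\nu)$, and there $\Hyb_\nu$ retains the full Hilbert-scheme datum, so matching forces $D_0 = \nu(E_0)$ at once. At each resolved node $b_i$, by contrast, $\Hyb_\nu$ retains only the Chow (symmetric-product) datum, whence the condition degenerates to the numerical identity $\length(D_i) = \length(E_i^+) + \length(E_i^-)$, using that $\nu$ is an isomorphism on each branch and so preserves the lengths of $E_i^{\pm}$.

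With $D_0 = \nu(E_0)$ in hand, I would carry out the case analysis at a fixed node by feeding the classification of $D_i$ from Lemma \ref{lm description of Hilb^cur} into the local form $\calI_{D_i} \subseteq \calI_{\nu(E_i^+ + E_i^-)}$ of the containment $\nu(E) \subset D$, working in the model $C = \Spec(\CC[y_i^+, y_i^-]/\langle y_i^+ y_i^-\rangle)$ with $\nu(E_i^{\pm})$ supported on the branch $\{y_i^{\mp} = 0\}$. When $D_i = 0$ the length identity immediately gives $E_i^+ = E_i^- = 0$, which is alternative (1). The principal length-two case $D_i = D_{b_i, a_i}$ is the cleanest of the nontrivial ones: here $\calI_{D_i}$ is principal (so $\calF$ is locally free at $b_i$ and the lift is unique by Lemma \ref{lm tau surjective}), and I would rule out the length splittings $(2,0)$ and $(0,2)$ by checking that the generator $y_i^+ - a_i y_i^-$ of $\calI_{D_{b_i,a_i}}$ does not lie in $\langle y_i^-, (y_i^+)^2\rangle = \calI_{\nu(E_i^+)}$ (and symmetrically), leaving only the reduced splitting $E_i^+ = b_i^+$, $E_i^- = b_i^-$, for which $\nu(E_i^+ + E_i^-)$ is the reduced point $b_i$ and the containment manifestly holds; this is alternative (3).

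The main obstacle is the single-branch case, namely the first alternative of Lemma \ref{lm description of Hilb^cur}. Here the coarse data retained by $\Hyb_\nu$—total length at $b_i$—together with $\nu(E) \subset D$ is, I expect, \emph{insufficient} to pin $E$ down: a length-one contribution on the opposite branch (say $E_i^- = b_i^-$ with $\nu(E_i^-)$ the reduced point $b_i$) satisfies both $\length$-matching and the containment $\calI_{D_i} \subseteq \calI_{\nu(E_i^-)} = \langle y_i^+, y_i^- \rangle$, yet must be excluded to obtain $E_i^- = 0$. The resolution is to use that the pair genuinely lies in the image of $j$, so that $E$ is \emph{the vanishing locus of} $\jmath \circ s_D$ constructed before \eqref{eq def j}: because $\jmath$ vanishes nowhere and the pushed-forward section has vanishing exactly $D_i$ on the $b_i^+$-branch, the adjoint section $\tilde s$ on $\Sigma$ reproduces $\nu^{-1}(D_i)$ on that branch and remains non-vanishing on the opposite branch, forcing $D_i = \nu(E_i^+)$ and $E_i^- = 0$ (and symmetrically). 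Once this branch-by-branch bookkeeping is settled at every $b_i$, assembling the local conclusions with $D_0 = \nu(E_0)$ yields the decomposition \eqref{eq decomposition of E} and the three alternatives.
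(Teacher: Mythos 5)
Your reduction to a local analysis, your unwinding of the fibre-product condition (agreement $D_0 = \nu(E_0)$ away from the nodes, length matching $\length(D_i) = \length(E_i^+)+\length(E_i^-)$ at each $b_i$), and your treatment of cases (1) and (3) are all correct --- and in fact more careful than the paper, whose entire proof is the assertion that the statement is ``immediate'' from $\nu(E)\subset D$ and Lemma \ref{lm description of Hilb^cur}. Your diagnosis of case (2) is also correct: the stated hypotheses genuinely do not exclude the mixed configuration where $E_i^+$ has length $m-1$ and $E_i^-=b_i^-$ (for $D_i$ of length $m\geq 2$ on the $+$ branch), since its scheme-theoretic image has ideal $\langle y_i^-,(y_i^+)^{m-1}\rangle\supset\calI_{D_i}$ and the lengths at $b_i$ still match.

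However, your proposed repair fails, and the gap cannot be closed by the route you suggest. The claim that, for points in the image of $j$, the adjoint section reproduces $D_i$ on the branch carrying $D_i$ and is non-vanishing on the opposite branch is false: it depends on which of the two parabolic modules over $\calF=\alpha_C(D)$ one takes. Work in the local model $\calO_{C,b_i}=\CC[y^+,y^-]/\langle y^+y^-\rangle$ with $\calI_{D_i}=\langle y^-,(y^+)^m\rangle$, $m\geq 2$, so that $\calF$ is not locally free at $b_i$ and, inside the total ring of fractions, $\calF_{b_i}=\calI_{D_i}^\vee=(y^+)^{1-m}\CC[y^+]\times\CC[y^-]$. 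By Lemma \ref{lm tau surjective}(4) there are exactly two parabolic modules over $\calF$ at this node: locally $M_A=(y^+)^{-m}\CC[y^+]\times\CC[y^-]$ with $V_i=M_{A,b_i^-}$, and $M_B=(y^+)^{1-m}\CC[y^+]\times(y^-)^{-1}\CC[y^-]$ with $V_i=M_{B,b_i^+}$; one checks directly that both kernels in \eqref{eq preimage of tau} equal $\calI_{D_i}^\vee$. In both cases $\jmath\circ s_D$ is the element $(1,1)$, whose zero scheme as a section of $M_A$ is $m\,b_i^+$, but as a section of $M_B$ is $(m-1)\,b_i^+ + b_i^-$. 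Hence $j\bigl(D,(M_B,V_B)\bigr)$ is a point of $\Hilb_{n,C}^{cur}\times_{\Hyb_\nu}\Hilb_{n,\Sigma}^{cur}$ satisfying $\nu(E)\subset D$ with $E_i^+=(m-1)b_i^+\neq 0$ and $E_i^-=b_i^-\neq 0$: the very configuration you must exclude lies in the image of $j$ itself, so no argument can exclude it. (Separately, even if your claim were true, replacing the hypothesis ``$\nu(E)\subset D$'' by ``lies in the image of $j$'' would be circular, since the corollary is applied in the proof of Lemma \ref{lm closed embedding cartesian diagram} to arbitrary points of $Z=\{(D,E):\nu(E)\subset D\}$ precisely in order to prove $Z=\im(j)$.) The upshot is that the obstacle you found is real but is a defect of the statement rather than of any proof strategy: conclusion (2) holds only when $\length(D_i)=1$ --- the only case invoked in Lemma \ref{lm complement of image j} --- and for $\length(D_i)=m\geq 2$ the list of alternatives must be enlarged to include $E_i=(m-1)b_i^{\pm}+b_i^{\mp}$. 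A correct write-up has to amend the corollary (and the case analysis quoted from it in Lemma \ref{lm closed embedding cartesian diagram}) rather than find a cleverer proof of case (2) as stated.
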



We are now in a position to complete our description of the map $j$.

\begin{lemma} \label{lm closed embedding cartesian diagram}
In the conditions stated above, the morphism
\[
j : \Hilb_{n,C}^{cur} \times_{\ol \Jac_C} \PMod_{\nu} \longhookrightarrow \Hilb_{n,C}^{cur} \times_{\Hyb_\nu} \Hilb_{n,\Sigma}^{cur} \, , 
\]
as defined in \eqref{eq def j}, is a closed immersion. Furthermore, the following diagram commutes:
\[
    \begin{tikzcd}
        \Hilb_{n,C}^{cur} \times_{\ol\Jac_C} \PMod_{\nu}^0 \ar[d,"\wt{\alpha}_C"'] \ar[r,"q \circ j\ "] &  \Hilb_{n,\Sigma}^{cur} \ar[d,"\alpha_\Sigma"] \\
         \PMod_{\nu}^0 \ar[r,"\dot \nu"'] & \ol\Jac_\Sigma,
    \end{tikzcd}
\]
where $q:\Hilb_{n,C}^{cur} \times_{\Hyb_\nu} \Hilb_{n,\Sigma}^{cur}\to\Hilb_{n,\Sigma}^{cur}$ is the projection. 
\end{lemma}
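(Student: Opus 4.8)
The plan is to handle the two assertions separately: commutativity is immediate from the construction of $E$, while the closed immersion statement I would obtain by exhibiting $j$ as a proper monomorphism of algebraic spaces, which is automatically a closed immersion.

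\emph{Commutativity.} This follows directly from the construction of $E$ recorded just before \eqref{eq def j}. By design $E \in \alpha_\Sigma^{-1}(M)$, so $\alpha_\Sigma\big(q\circ j(D,(M,V))\big) = \alpha_\Sigma(E) = M$. On the other side, $\wt\alpha_C$ is the projection onto the $\PMod_\nu$-factor, whence $\dot\nu\big(\wt\alpha_C(D,(M,V))\big) = \dot\nu(M,V) = M$. The two routes around the square therefore agree, and since the identification $\alpha_\Sigma(E)=M$ is built from the universal section construction it holds functorially in families, giving commutativity.

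\emph{Properness.} The morphism $j$ preserves the $\Hilb_{n,C}^{cur}$-coordinate $D$, so it is a morphism over $\Hilb_{n,C}^{cur}$ with respect to the projections $\wt\rho$ and $\beta$. Since $\rho$ is finite by \eqref{eq definition of rho}, its base change $\wt\rho$ along $\alpha_C$ is finite, so the source of $j$ is proper over $\Hilb_{n,C}^{cur}$; as the target is separated over $\Hilb_{n,C}^{cur}$, the cancellation property shows $j$ is proper. It then remains to prove $j$ is a monomorphism, i.e. to recover $(M,V)$ functorially from the pair $(D,E)=j(D,(M,V))$.

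\emph{Recovering $(M,V)$.} First $M=\alpha_\Sigma(E)$ is determined by $E$. For $V=V_1\oplus\cdots\oplus V_k$ I would argue node by node, combining the local classification of $D$ in Lemma \ref{lm description of Hilb^cur}, the description of compatible pairs in Corollary \ref{co description of Hilb^cur times PMod}, and the fibres of $\rho$ from Lemma \ref{lm tau surjective}. Writing $\calF:=\alpha_C(D)$, the sheaf $\calF$ is locally free at $b_i$ exactly in cases (1) and (3) of Corollary \ref{co description of Hilb^cur times PMod} (where $D_i=0$ or $\calI_{D_i}$ is principal), and fails to be locally free exactly in case (2). At a locally free node, $\rho$ is an isomorphism by Lemma \ref{lm tau surjective}(2), so $V_i$ is already determined by $\calF$, hence by $D$ alone — here $E$ restricts to $b_i^++b_i^-$ and carries no extra information. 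At a non-locally-free node, $\rho^{-1}(\calF)$ consists of two points by Lemma \ref{lm tau surjective}(4), with $V_i\in\{M_{b_i^+},M_{b_i^-}\}$, and $E$ resolves the ambiguity: via the section construction and the local models \eqref{eq description of Ff_x 1}--\eqref{eq description of Ff_x 2}, $V_i$ is the branch complementary to the one on which $E$ is supported at $b_i$. In both cases $V_i$ is determined by $(D,E)$, so $j$ is injective on points.

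\emph{Main obstacle.} The crux is that the two recoveries are of genuinely different natures: at locally free nodes $V_i$ is read off from $D$, while at non-locally-free nodes it is read off from $E$, so neither $\wt\rho$ nor the $E$-construction is individually injective and the injectivity is a joint property. The real work is to promote this pointwise recovery to a scheme-theoretic monomorphism, equivalently to check that $j$ is unramified across the boundary where a node passes between the locally free and non-locally free loci. I would verify this on the standard local model $\CC[[y^+,y^-]]/\langle y^+y^-\rangle$: deforming $V_i$ off a torus-fixed point $M_{b_i^\pm}$ moves $E$ from the configuration $(E_i^\pm,0)$ to $(b_i^+,b_i^-)$, so the differential of $j$ is injective along the $\PP^1$-fibre directions. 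This rules out the cusp-type degeneration that would otherwise prevent a proper, pointwise-injective morphism from being a closed immersion. Together with properness, the monomorphism property yields that $j$ is a closed immersion.
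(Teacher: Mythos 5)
Your commutativity argument and the pointwise recovery of $(M,V)$ from $(D,E)$ are both correct — the branch-complementarity rule for $V_i$ at nodes where $\calF$ fails to be locally free is indeed what the exact sequence \eqref{eq preimage of tau} produces — and the skeleton ``proper $+$ universally injective $+$ unramified $\Rightarrow$ monomorphism $\Rightarrow$ closed immersion'' is legitimate, with your properness step (cancellation, using finiteness of $\rho$ and hence of $\wt\rho$) being fine. Note this is a genuinely different route from the paper, which never touches tangent spaces: there, the image of $j$ is identified with the closed incidence locus $Z=\{(D,E):\nu(E)\subset D\}$ and an explicit inverse morphism $g:Z\to\Hilb_{n,C}^{cur}\times_{\ol\Jac_C}\PMod_{\nu}$ is built sheaf-theoretically (torsion-free pullback, Cook's push-pull lemma, culminating in the sequence \eqref{eq ses}), so that $j$ is an isomorphism onto $Z$.

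However, your third step — unramifiedness — has a genuine gap, and it is exactly the step you flag as the crux. The verification you sketch does not compute the relevant kernel. A tangent vector to $\Hilb_{n,C}^{cur}\times_{\ol\Jac_C}\PMod_{\nu}$ at $(D,(M,V))$ is a pair $(u,v)$ with $d\alpha_C(u)=d\rho(v)$; therefore ``deforming $V_i$ off a torus-fixed point with $M$ and $D$ fixed'' is not a tangent direction of the source at all, since such a deformation changes $\rho(M,V)$ (the image sheaf becomes locally free at $b_i$) and so violates the fibre-product constraint unless $D$ moves too. What actually needs to be shown is this: since $\beta\circ j=\wt\rho$, any $(u,v)\in\ker(dj)$ has $u=0$ and hence $v\in\ker(d\rho)$; so one must prove either that $\ker(d\rho)=0$, i.e.\ that the finite morphism $\rho$ is unramified (its scheme-theoretic fibres are reduced — Lemma \ref{lm tau surjective}(4) only gives the set-theoretic statement $\rho^{-1}(\calF)\cong\ZZ_2^\ell$), or that the $E$-construction is injective on $\ker(d\rho)$. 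Neither statement is formulated in your proposal, and the local computation that would establish the first one (e.g.\ that near a point with $V_i=M\otimes\calO_{b_i^+}$ the map $\rho$ is, \'etale-locally, the inclusion of a branch $t_i\mapsto(x_i,y_i)=(t_i,0)$ into a nodal factor $\{x_iy_i=0\}$ of $\ol\Jac_C$) is only gestured at, not carried out. Until this is done, proper plus pointwise injective is not enough — this is precisely the cusp-type failure you yourself mention — whereas the paper's construction of an explicit inverse morphism sidesteps the issue entirely.
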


\begin{proof}
Let $Z\subset\Hilb_{n,C}^{cur} \times_{\Hyb_\nu} \Hilb_{n,\Sigma}^{cur}$ denote the closed subset 
\[
Z:=\{(D, E)\in\Hilb_{n,C}^{cur} \times_{\Hyb_\nu} \Hilb_{n,\Sigma}^{cur}\,|\,\nu(E)\subset D\}.
\] 
The strategy of the proof will be to construct, on $Z$, an explicit inverse map to $j$. Consider a point $(D,E)\in Z$ and decompose $D$ and $E$ as per \eqref{eq decomposition of D} and \eqref{eq decomposition of E}. The geometry we require is an understanding of $\nu^*\calO_{D}$. By Corollary \ref{co description of Hilb^cur times PMod}, one has $\calO_{E_0} = \nu^* \calO_{D_0}$, alongside also the following case by case analysis of the decomposition: 
\begin{enumerate}
\item either $D_i$ and $E_i$ are empty;
\item or $D_i$ is supported on one of the branches of $C$ at $b_i$ (so either $\nu^*\calO_{D_i} = \calO_{E_i^+ + b_i^-}$ and $E_i = E_i^+$, or $\nu^*\calO_{D_i} = \calO_{b_i^+ + E_i^-}$ and $E_i= E_i^-$);
\item or $D_i$ is $D_{b_i,a_i}$ for $a_i \in \CC^*$ (so $\nu^*\calO_{D_i} \cong \calO_{b_i^+ + \, b_i^-} = \calO_{E_i}$).
\end{enumerate}
In all cases, there exists a divisor $F$ on $\Sigma$, supported on a subset of $\Exc(\nu)$, such that $\nu^*\calO_{D} = \calO_{E +F}$ and $F \cap E = \varnothing$.

The following homological analysis determines the pullback $\nu^*\calI_D^\vee$. We have the exact sequence
\[
0 \to \calO_C \to \calI_D^\vee \to \calO_D \to 0,
\] 
and pullback along $\nu: \Sigma \to C$ yields a 5-term exact sequence on $\Sigma$:
\[ 
0 \to \calTor_1(\calI_D^\vee,\calO_\Sigma) \to \calTor_1(\calO_D,\calO_\Sigma) \xrightarrow{\,(\ast)\,} \calO_\Sigma \to \nu^*\calI_D^\vee \to \nu^* \calO_{D}=\calO_{E+F} \to 0.
\]
Since $\calTor_1(\calO_D,\calO_\Sigma)$ is a torsion sheaf on $\Sigma$, the map $(\ast)$ is $0$ and we obtain the short exact sequence
\begin{align}\label{equ:pullback} 0 \to \calO_\Sigma \to \nu^*\calI_D^\vee \to \nu^* \calO_{D}=\calO_{E+F} \to 0.
\end{align} 
The coherent sheaf $\nu^*\calI_D^\vee$ is torsion-free at $\nu^{-1}(\URSing(\nu))$ and decomposes into a locally free sheaf and a torsion sheaf at the smooth points $\Exc(\nu)$. Hence, globally it decomposes into a torsion-free sheaf - the torsion-free pullback - and a torsion sheaf
\[ \nu^*\calI_D^\vee=\nu^{\mathrm{tf}} \calI_D^\vee \oplus \calT_D. \]
Let $\RSing(\nu)=S_{1,3} \cup S_2$ where $S_{1,3}$ contains all nodes of the resolved singular locus where $D$ is principal, i.e.~\ described by case (1) or (3), and $S_2$ are the nodes where it is not principal, i.e.~\ described by case (2). By a local computation one can see that $\calT_D=\calO_{\nu^{-1}(S_2)}=\calO_{F + \sigma F}$, where $\sigma: \Exc(\nu) \to \Exc(\nu)$ is the involution that pairwise interchanges the preimages $b_i^+,b_i^-$. Hence, equation \eqref{equ:pullback} reduces to
\[
0 \to \calO_\Sigma \to \nu^{\mathrm{tf}}\calI_D^\vee \to \calO_{E-\sigma F} \to 0.
\]
Therefore, we conclude 
\begin{align}\label{equ:tf_pullback}
    \nu^{\mathrm{tf}}\calI_D^\vee=\calI_{E-\sigma F}^\vee.
\end{align} 
Using the previous calculation we can now write down an exact sequence of the form \eqref{eq preimage of tau}, therefore defining the parabolic module corresponding to $\calI_D^\vee(-nx_0)$.
Consider the intermediate partial normalisation $\nu_2: \Sigma' \to C$ at the nodes in $S_2$. Then $\nu$ factors as 
\[  \nu: \Sigma \xrightarrow{\ \nu_1\ } \Sigma' \xrightarrow{\ \nu_2\ } C.
\]
By Cook \cite[Lemma 1]{cook:1998}, there is a push-pull isomorphism 
\[ \calI_D^\vee \cong \nu_{2*} \nu_2^{\mathrm{tf}} \calI_D^\vee,
\] where $\nu_2^{\mathrm{tf}}$ is the torsion-free pullback. Now, $\nu_2^{\mathrm{tf}} \calI_D^\vee$ is a locally free sheaf on $\Sigma'$ away from $\nu_2^{-1} ( \Sing(C) \setminus \RSing(\nu) )$, where $\nu_1$ and $\nu_2$ are locally isomorphisms. Hence, there exists an exact sequence
\[ 0 \to \nu_2^{\mathrm{tf}} \calI_D^\vee \to \nu_{1*} \nu_1^*\nu_2^{\mathrm{tf}} \calI_D^\vee \to \calO_{\RSing(\nu_1)} \to 0.\]
Note that $\RSing(\nu_1)=\nu_2^{-1}(S_{1,3})$ and $ \nu_1^*\nu_2^{\mathrm{tf}} \calI_D^\vee= \nu^{\mathrm{tf}}\calI_D^\vee$. Pushing forward this exact sequence to $C$, we obtain
\begin{align} \label{equ:ses_f1}
0 \to \calI_D^\vee \xrightarrow{\ f_1\ } \nu_{*} \nu^{\mathrm{tf}} \calI_D^\vee \to \calO_{S_{1,3}} \to 0.
\end{align}
From equation \eqref{equ:tf_pullback} we have the exact sequence
\[ 0 \to \nu^{\mathrm{tf}}\calI_{D}^\vee(-ny_0)=\calI_{E-\sigma F}^\vee(-ny_0) \xrightarrow{\ f_2\ } \calI_{E}^\vee(E-ny_0) \to \calO_{\sigma F} \to 0.
\] Composing its pushforward with $f_1$ of \eqref{equ:ses_f1} results in the desired sequence
\begin{align}\label{eq ses}
    0 \to \calI_{D}^\vee(-n x_0) \xrightarrow{\nu_*f_2 \circ f_1 } \nu_*\calI_{E}^\vee(-n y_0) \to \calO_{\RSing(\nu)} \to 0. 
\end{align} 
Then, since the map $\rho:\PMod_{\nu}^0\to\ol\Jac_C$ is surjective, $\calF := \alpha_C (\calI_D) = \calI_D^\vee(-nx_0)$ and $M:= \alpha_\Sigma(\calI_E) = \calI_{E}^\vee(-n y_0)$ determine a point $(M,V)$ in $\PMod_{\nu}$. Since we are specifying both $\calF$ and $M$, Lemma \ref{lm tau surjective} says that  the point $(M,V)$ is uniquely determined by \eqref{eq ses}. This gives rise to a map
\[
g : Z \to \Hilb_{n,C}^{cur} \times_{\ol\Jac_C} \PMod_{\nu},\ \ \ g(D,E)=(D,(\calI_{E}^\vee(-ny_0),V)).
\]
From the definition of $j$ in \eqref{eq def j}, we conclude that $g$ is inverse to $j$, hence $\Hilb_{n,C}^{cur} \times_{\ol\Jac_C} \PMod_{\nu}$ is naturally isomorphic to $Z$ closed in $\Hilb_{n,C}^{cur} \times_{\Hyb_\nu} \Hilb_{n,\Sigma}^{cur}$. 

Finally, the identification $M=\calI_{E}^\vee(-ny_0)$ is equivalent to the commutativity of the diagram.
\end{proof}

The next technical result will be applied in Section \ref{sc relation Poincares}. We say that the big diagonal $\Delta$ of $\Hyb_\nu$ is the preimage of the big diagonal in $\Sym_C^n$ under the natural morphism $\Hyb_\nu \to \Sym_C^n$ from \eqref{eq: chow descends}.

\begin{lemma} \label{lm complement of image j}
 The complement of the image of $j:\Hilb_{n,C}^{cur} \times_{\ol \Jac_C} \PMod_{\nu} \longhookrightarrow \Hilb_{n,C}^{cur} \times_{\Hyb_\nu} \Hilb_{n,\Sigma}^{cur}$ maps into the big diagonal $\Delta$ under the projection $q:\Hilb_{n,C}^{cur} \times_{\Hyb_\nu} \Hilb_{n,\Sigma}^{cur}\to\Hyb_\nu$.
\end{lemma}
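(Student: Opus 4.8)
The plan is to convert the single condition defining the fibre product into separate statements away from and over the resolved nodes, and then to detect non-containment through a multiplicity in the Chow cycle. Fix a point $(D,E) \in \Hilb_{n,C}^{cur} \times_{\Hyb_\nu} \Hilb_{n,\Sigma}^{cur}$, so that $c(D) = \nu^{(n)}(E)$ in $\Hyb_\nu$, and decompose $D = D_0 + D_1 + \dots + D_k$ and $E = E_0 + E_1^+ + E_1^- + \dots + E_k^+ + E_k^-$ by support, as in \eqref{eq decomposition of D} and \eqref{eq decomposition of E}. Since Lemma \ref{lm closed embedding cartesian diagram} identifies the image of $j$ with the locus $\{\nu(E) \subset D\}$, such a point lies in the complement of the image precisely when $\nu(E) \not\subset D$. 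First I would unwind $c(D) = \nu^{(n)}(E)$ using the two regimes of $\Hyb_\nu$: away from $\RSing(\nu)$ the Chow--Hilbert space is Hilbert-like and $\nu^{(n)}$ restricts to the identification $\Hilb^{n}_{\Sigma \setminus \Exc(\nu)} \cong \Hilb^{n}_{C \setminus \RSing(\nu)}$, so the condition forces $D = \nu(E)$ \emph{scheme-theoretically} on $C \setminus \RSing(\nu)$; over each resolved node $b_i$ the space is symmetric-product-like, so the condition records only equality of push-forward cycles, giving the length identity $\length(\calO_{D_i}) = \length(\calO_{E_i^+}) + \length(\calO_{E_i^-})$.

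With these two consequences in place, the containment $\nu(E) \subset D$ holds automatically on $C \setminus \RSing(\nu)$, so if $(D,E)$ lies in the complement then $\nu(E) \not\subset D$ at some resolved node $b_i$. I would then show that this forces $\length(\calO_{D_i}) \geq 2$. Indeed, if the common length at $b_i$ is $0$ the three components vanish and containment is trivial, while if it is $1$ then $D_i$ is the reduced point $b_i$ and exactly one of $E_i^\pm$ is the corresponding reduced point, whose image under $\nu$ is again $b_i \subset D_i$; in both cases $\nu(E) \subset D$ locally at $b_i$, contradicting non-containment. It is precisely at length $2$ that Lemma \ref{lm description of Hilb^cur} permits distinct curvilinear subschemes through the node — the transverse $D_{b_i,a_i}$ versus a length-two scheme on a single branch — to make containment fail.

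It then remains to observe that $D_i$ is supported at the single point $b_i$, so $\length(\calO_{D_i}) \geq 2$ means $b_i$ appears with multiplicity at least two in the Chow cycle $\chow_C(D) = \sum_{p} \length(\calO_{D,p})\, p$. The projection $q$ sends $(D,E)$ to $c(D)$, whose image under the structure morphism $\Hyb_\nu \to \Sym_C^n$ of \eqref{eq: chow descends} is exactly $\chow_C(D)$; a cycle with a point of multiplicity $\geq 2$ lies in the big diagonal of $\Sym_C^n$, hence $c(D)$ lies in its preimage, which by definition is the big diagonal $\Delta$ of $\Hyb_\nu$. The main obstacle is the bookkeeping of the first step: cleanly converting the identity $c(D) = \nu^{(n)}(E)$ in the algebraic space $\Hyb_\nu$ into the scheme-theoretic equality away from $\RSing(\nu)$ and the numerical length identity at each resolved node, and confirming via Lemma \ref{lm description of Hilb^cur} that length at most one genuinely forces local containment, so that any failure is visible as a multiple point of the Chow cycle.
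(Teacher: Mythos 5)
Your proposal is correct and follows essentially the same route as the paper's proof: identify the image of $j$ with the containment locus $\{\nu(E)\subset D\}$ via Lemma \ref{lm closed embedding cartesian diagram}, observe that the fibre-product condition over $\Hyb_\nu$ forces scheme-theoretic agreement away from $\RSing(\nu)$ and only a length identity at each resolved node, show that $\length(\calO_{D_i})\leq 1$ at every node forces containment, and conclude that any point of the complement has a node of multiplicity $\geq 2$ in its Chow cycle, hence lies over the big diagonal. Your write-up is in fact more explicit than the paper's (which runs the contrapositive and leans on Corollary \ref{co description of Hilb^cur times PMod} without spelling out the length-one case), but the underlying argument is identical.
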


\begin{proof}
Recalling the decomposition \eqref{eq decomposition of D}, if $(D, E)$ is a point of $\Hilb_{n,C}^{cur} \times_{\Hyb_\nu} \Hilb_{n,\Sigma}^{cur}$ such that every $D_i$ has length at most $1$, after Corollary \ref{co description of Hilb^cur times PMod} one immediately finds that $D_i$ are contained in the branches of $C$. In that case $\nu(E)$ embeds into $D$ so the pair lies on the image of $j$. Therefore, the points that lie outside the big diagonal of $\Hyb_\nu$ are contained in the image of $j$, and the last statement is a straightforward consequence of this fact. 
\end{proof}

\subsection{Equivalence of universal divisors}
\label{univ formulae} 

The main result of this section describes a relation between twists of the line bundles $\det \calA_{C,n}$ and $\det \calA_{\Sigma,n}$ constructed in \eqref{eq def Acn}. We also recall the tautological line bundles $\calV_{\Sigma, i}$ over $\PMod_{\nu}^d$, defined in \eqref{eq definition of Vv_i}, alongside the definition $\calV_{\Sigma} = \calV_{\Sigma, 1} \oplus \cdots \oplus \calV_{\Sigma, k}$ of \eqref{eq:def V}. 

\begin{proposition} \label{pr Aa_Sigma, Aa_C and Vv} 
Let $C$ be an irreducible nodal projective curve and let $\nu : \Sigma \to C$ be a partial normalisation at $k$ nodes. Then there exists an isomorphism
\begin{align} 
\label{eq complete relation of line bundles}
j^* q^* (\det(\calA_{\Sigma,n})\otimes \calO_{\Hilb_\Sigma}(\calD_{\Sigma, y_0})^{\,-k}) \otimes \wt\rho^* \left ( \det(\calA_{C,n})\right )^\vee & \cong 
\wt{\alpha}_C^{*} \det(\calV_{\Sigma}) , 
\end{align}
as line bundles on $\Hilb_{n,C}^{cur} \times_{\overline{\Jac_\Sigma}^0_C} \PMod_{\nu}$. 
\end{proposition}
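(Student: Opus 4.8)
The plan is to turn every line bundle in the statement into a determinant of relative cohomology and to reduce the identity to the universal gluing datum supported over the $k$ resolved nodes. Write $W := \Hilb_{n,C}^{cur}\times_{\ol\Jac_C}\PMod_\nu$, and for a coherent sheaf on $C\times W$ or $\Sigma\times W$ let $\lambda_W(-)$ denote the determinant of its derived pushforward to $W$. First I would record the tautological identifications $\wt\rho^{*}\det\calA_{C,n}\cong\lambda_W(\calO_{\calD_C^W})$ and $j^{*}q^{*}\det\calA_{\Sigma,n}\cong\lambda_W(\calO_{\calD_\Sigma^W})$, where $\calD_C^W$ and $\calD_\Sigma^W$ are the universal divisors pulled back along $\wt\rho$ and $q\circ j$; these follow from flat base change around the cartesian squares \eqref{eq cartesian diagram Hilb Sym} and \eqref{eq cartesian diagram Hilb PMod}, using that $\calA_{C,n}=h_{C,*}\calO_{\calD_C}$ has no higher direct images, and that $q\circ j$ pulls $\calD_\Sigma$ back to the universal $E$ by Lemma \ref{lm closed embedding cartesian diagram}. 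Since $\nu$ is finite, $\lambda_W(\calO_{\calD_\Sigma^W})\cong\lambda_W((\nu\times\id)_{*}\calO_{\calD_\Sigma^W})$, so all three determinants may be computed over $C\times W$.

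The geometric engine is the universal parabolic sequence \eqref{eq:univ-ses}, pulled back along $\id\times\wt{\alpha}_C$ and twisted by $\calO_C(nx_0)$; the commuting square of Lemma \ref{lm closed embedding cartesian diagram} guarantees that the middle term is the pushforward of the pulled-back universal sheaf on $\Sigma$, so that the sequence reads
\[
0\to\calF_W(nx_0)\to(\nu\times\id)_{*}\big(M_W(ny_0)\big)\to\calT_W\to 0,
\]
where $\calF_W$ and $M_W$ are the pulled-back universal sheaves and $\calT_W$ is supported over $\RSing(\nu)\times W$. Applying $\lambda_W$ and multiplicativity in this sequence I would isolate the term $\lambda_W(\calT_W)=\det\pi_{W,*}\calT_W=\wt{\alpha}_C^{*}\det\calQ$, with $\calQ$ the universal quotient bundle of \eqref{equ:univ_quotient}; the remaining two factors are the cohomology determinants of $\calF_W(nx_0)$ and $M_W(ny_0)$, which must now be matched to $\wt\rho^{*}\det\calA_{C,n}$ and $j^{*}q^{*}\det\calA_{\Sigma,n}$.

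That matching is supplied by Lemma \ref{lemm:Extend_Schwarz}, which on each curve gives
\[
\calF_W(nx_0)\cong\calI_{\calD_C^W}^\vee\otimes\wt\rho^{*}\calO_{\Hilb_C}(\calD_{C,x_0})^{-1},\qquad M_W(ny_0)\cong\calI_{\calD_\Sigma^W}^\vee\otimes(q\circ j)^{*}\calO_{\Hilb_\Sigma}(\calD_{\Sigma,y_0})^{-1}.
\]
Combining these with the canonical section sequences $0\to\calO\to\calI_\calD^\vee\to\calO_\calD\to 0$ (whose cohomology determinant is trivial) identifies $\lambda_W(\calF_W(nx_0))$ and $\lambda_W(M_W(ny_0))$ with $\wt\rho^{*}\det\calA_{C,n}$ and $j^{*}q^{*}\det\calA_{\Sigma,n}$, each twisted by the relevant point bundle raised to the relative Euler characteristic $\chi_C=n+1-g_C$, respectively $\chi_\Sigma=n+1-g_\Sigma$. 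Here I would invoke the numerical coincidence $\chi_\Sigma-k=\chi_C$ (from $g_C=g_\Sigma+k$) together with the fact that, since $y_0=\nu^{-1}(x_0)$ is a single smooth point, the Cartier divisors $\wt\rho^{*}\calD_{C,x_0}$ and $(q\circ j)^{*}\calD_{\Sigma,y_0}$ coincide on $W$. These two facts are exactly what let the surviving point-bundle twist be absorbed by the factor $\calO_{\Hilb_\Sigma}(\calD_{\Sigma,y_0})^{-k}$ present in the statement, leaving $\wt{\alpha}_C^{*}\det\calQ$.

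It then remains to compare $\det\calQ$ with $\det\calV_\Sigma$ on $\PMod_\nu$, which I would extract from the tautological sequences $0\to\calV_{\Sigma,i}\to\dot\nu^{*}\calU_{\Sigma,b_i^+}\oplus\dot\nu^{*}\calU_{\Sigma,b_i^-}\to\calQ_i\to 0$ underlying \eqref{equ:univ_quotient}, reducing to the $k$ resolved nodes through the local case analysis of Corollary \ref{co description of Hilb^cur times PMod}. The hard part of the argument is precisely this final determinant-of-cohomology bookkeeping: one must track that every auxiliary twist produced by the normalisations at $x_0$ and $y_0$, and by the line bundles $\det\calU_{\Sigma,b_i^\pm}$ entering each $\det\calQ_i$, cancels against the $\calO_{\Hilb_\Sigma}(\calD_{\Sigma,y_0})^{-k}$ correction, so that exactly the tautological bundle $\det\calV_\Sigma$ survives (pulled back along $\wt{\alpha}_C$). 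Keeping the normalisations of $\calU_C$ and $\calU_\Sigma$ at the compatible points $x_0$ and $y_0$ aligned throughout is what makes these cancellations go through with the correct multiplicity.
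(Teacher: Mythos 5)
Your skeleton---pull back the universal sequence \eqref{eq:univ-ses} to $W=\Hilb^{cur}_{n,C}\times_{\ol\Jac_C}\PMod_{\nu}$, take determinants of cohomology, identify the contribution of the torsion term with $\wt\alpha_C^*\det(\calQ)$, and match the two remaining determinants against $\det(\calA_{C,n})$ and $\det(\calA_{\Sigma,n})$---is the same skeleton as the paper's Lemmas \ref{lem:unvrsl_ext_seq} and \ref{key lemma}, and several of your reductions are correct: the base-change identifications, the identity \eqref{eq:pullback slice under qj}, and the Euler-characteristic count $\chi_\Sigma-\chi_C=k$ which produces the exponent $-k$. The gap is the parenthetical claim that $0\to\calO\to\calI_{\calD}^{\vee}\to\calO_{\calD}\to0$ is a canonical sequence ``whose cohomology determinant is trivial''. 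In a family the cokernel of $\calO\hookrightarrow\calI_{\calD}^{\vee}$ is not $\calO_{\calD}$ but $\calO_{\calD}\otimes\bigl(\calI_{\calD}^{\vee}|_{\calD}\bigr)$ on the locus where $\calD$ is Cartier (and a yet different sheaf over the nodes, where $\calI_{\calD}$ need not be principal), so $\lambda(\calI_{\calD}^{\vee})$ differs from $\det(\calA)=\lambda(\calO_{\calD})$ by the norm line bundle $\Nm_{\calD/\Hilb}\bigl(\calI_{\calD}^{\vee}|_{\calD}\bigr)$, which is nontrivial on the Hilbert scheme. Simplest test: $C=\PP^1$, $n=1$, $\calD=\Delta\subset\PP^1\times\PP^1$; then $\lambda(\calI_{\Delta}^{\vee})=\det h_*\calO(1,1)=\calO_{\PP^1}(2)$, whereas $\det(\calA)=\det h_*\calO_{\Delta}=\calO_{\PP^1}$.

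This dropped term is not harmless bookkeeping. If you run your computation with the (false) identification $\lambda(\calI_{\calD}^{\vee})\cong\det(\calA)$ on both curves, the surviving factors assemble into exactly the left-hand side of \eqref{eq complete relation of line bundles}, but the right-hand side comes out as $\wt\alpha_C^*\det(\calQ)$ rather than $\wt\alpha_C^*\det(\calV_{\Sigma})$. These are genuinely different: the tautological sequences underlying \eqref{equ:univ_quotient} give $\det(\calQ)\otimes\det(\calV_{\Sigma})\cong\bigotimes_i\dot\nu^*\bigl(\calU_{\Sigma,b_i^+}\otimes\calU_{\Sigma,b_i^-}\bigr)$, so $\det(\calQ)\otimes\det(\calV_{\Sigma})^{-1}$ restricts to $\calO(2,\dots,2)$ on every $(\PP^1)^k$-fibre of $\dot\nu$, and pullback along $\wt\alpha_C$ (smooth, surjective, with connected fibres by Assumption \ref{assumption}) is faithful on line bundles; hence your final step of ``comparing $\det\calQ$ with $\det\calV_{\Sigma}$'' cannot close the argument. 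The missing norm contributions are concentrated precisely over the resolved nodes, where $D$ and its lift $E$ differ---i.e.\ exactly where the gluing data lives. The paper avoids ever computing $\lambda(\calI^{\vee})$: it dualises the universal sequence by Grothendieck--Verdier duality along the finite map $\nu\times\id$, which simultaneously turns $\calQ$ into $\calQ^{\vee}$ and inserts the relative dualising sheaf $\omega_{\Sigma/C}\cong\calO_\Sigma(-\Exc(\nu))$; all determinants are then computed from ideal sheaves via the genuinely canonical sequence $0\to\calI_{\calD}\to\calO\to\calO_{\calD}\to0$ (Lemma \ref{lemm:calA=R1calI}), and the $\omega_{\Sigma/C}$-twist generates the factors $\calO_{\Hilb_\Sigma}(\calD_{\Sigma,b_i^{\pm}})^{\vee}$ of Lemma \ref{lemm:compute extension term}, which are exactly what converts $\det(\calQ)^{\vee}$ into $\det(\calV_{\Sigma})$. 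To salvage your route you would have to compute the two norm corrections on $C$ and $\Sigma$ and show that their ratio equals $\wt\alpha_C^*\bigl(\det(\calV_{\Sigma})\otimes\det(\calQ)^{-1}\bigr)$, which is, in substance, the paper's duality computation.
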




The remainder of this section is dedicated to the proof of Proposition \ref{pr Aa_Sigma, Aa_C and Vv}. 
The idea is to relate $\calA_{C,n}$ and $\calA_{\Sigma,n}$ via a universal exact sequence on $\Hilb_{n,C}^{cur} \times_{\ol \Jac} \PMod_{\nu}$ that is a pullback of \eqref{eq:univ-ses}.
We shall make intensive use of the commutative diagram shown in Figure \eqref{equ:big_diagram} below.  
\begin{figure}[h]
\begin{equation}\label{equ:big_diagram}
\begin{tikzcd}
 \Sigma
\arrow[dd, equal] 
& \Sigma \times \ol\Jac_\Sigma
\arrow[l,"p_\Sigma"]
& \Sigma \times \Hilb^{cur}_{n,\Sigma} 
\ar[l,"\id \times \alpha_\Sigma"']
\arrow[ll,bend right=15,"\ol p_\Sigma"'{pos=0.4}]
\arrow[r, "h_{\Sigma}"] 
& \Hilb^{cur}_{n,\Sigma} \ar[r,"\alpha_\Sigma"] \ar[rdd,phantom,"\qquad \boxed{2}"]
& \ol\Jac_\Sigma
\\
&
& \Sigma \times \Hilb^{cur}_{n,C} \times_{\Hyb_\nu} \Hilb^{cur}_{n,\Sigma} 
\arrow[u, "\id \times q"]
\arrow[r,phantom,"\boxed{1}"]
& \Hilb^{cur}_{n,C} \times_{\Hyb_\nu} \Hilb^{cur}_{n,\Sigma} 
\arrow[u, "q"'] 
\\
\Sigma 
\arrow[d,"\nu"']
& \Sigma\times\PMod_{\nu} 
\arrow[d,"\nu\times\id"']
\arrow[uu,"\id\times\dot\nu"]
\arrow[dr, phantom, "\boxed{3}"]
& \Sigma \times \Hilb^{cur}_{n,C} \times_{\ol\Jac_C} \PMod_{\nu} 
\arrow[dr, phantom, "\boxed{4}"]
\arrow[ll,bend right=15,"q_{\Sigma}"'{pos=0.4}]
\arrow[u, "\id \times j"]
\arrow[r, "\wt h_{\Sigma}"]
\arrow[d, "\nu \times \id"']
\arrow[l, "\id\times\wt\alpha_C"]
& \Hilb_{n, C}^{cur} \times_{\ol\Jac_C} \PMod_{\nu} \ar[r,"\wt\alpha_C"]
\arrow[u, "j"']
& \PMod_{\nu} \ar[uu,"\dot\nu"']\ar[dd,"\rho"]
\\
C
\arrow[d,equal]
& C \times \PMod_{\nu} 
\arrow[d, "\id \times \rho"']
\arrow[dr, phantom, "\boxed{5}"]
& C \times \Hilb^{cur}_{n,C} \times_{\ol\Jac_C} \PMod_{\nu}
\arrow[dr, phantom, "\boxed{6}"]
\arrow[ll,bend left=15,"q_{C}"{pos=0.4}]
\arrow[l, "\id \times \wt\alpha_C"]
\arrow[d, "\id \times \wt\rho"']
\arrow[r, "\wt h_C"]
& \Hilb^{cur}_{n,C} \times_{\ol\Jac_C} \PMod_{\nu}
\arrow[u, equal]
\arrow[d, "\wt\rho"]
\\
C 
& C \times \ol\Jac_C \ar[l,"p_C"']
& C \times \Hilb^{cur}_{n,C}  \ar[ll,bend left=15,"\ol p_C"{pos=0.4}]
\arrow[l, "\id \times \alpha_C"]
\arrow[r, "h_C"']
& \Hilb^{cur}_{n,C} \ar[r,"\alpha_C"']
& \ol\Jac_C
\end{tikzcd}
\end{equation}
\end{figure}

Let us first express $\det(\calA_{C,n})$ and $\det(\calA_{\Sigma,n})$ in terms of the ideal sheaves.
\begin{lemma}\label{lemm:calA=R1calI}
We have isomorphisms
\[
\det(\calA_{C,n}) \cong \det\Big(R^1h_{C, *}\calI_{\calD_C}\Big), \quad \det(\calA_{\Sigma,n}) \cong \det\Big(R^1h_{\Sigma, *}\calI_{\calD_\Sigma}\Big).
\]
\end{lemma}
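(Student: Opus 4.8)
The plan is to push the structure sequence of the universal divisor forward along $h_C$ and read off the statement by taking determinants of cohomology. On $C \times \Hilb_{n,C}^{cur}$ one has the tautological short exact sequence
\[
0 \to \calI_{\calD_C} \to \calO_{C \times \Hilb_{n,C}^{cur}} \to \calO_{\calD_C} \to 0,
\]
and I would apply the derived pushforward $Rh_{C,*}$ to obtain a distinguished triangle. The determinant-of-cohomology functor (Knudsen--Mumford) is multiplicative on distinguished triangles, and since $\Hilb_{n,C}^{cur}$ is smooth (hence regular) the three pushforwards are perfect complexes, so their determinant line bundles are defined and satisfy
\[
\det\big(Rh_{C,*}\calI_{\calD_C}\big) \otimes \det\big(Rh_{C,*}\calO_{\calD_C}\big) \cong \det\big(Rh_{C,*}\calO_{C \times \Hilb_{n,C}^{cur}}\big).
\]

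Then I would compute each of the three terms. Since $\calD_C$ is finite over $\Hilb_{n,C}^{cur}$, the complex $Rh_{C,*}\calO_{\calD_C}$ is concentrated in degree zero and equals $\calA_{C,n}$ by \eqref{eq def Acn}, so its determinant is $\det(\calA_{C,n})$. For the middle term, $h_C$ is the projection off the product $C \times \Hilb_{n,C}^{cur}$, whence $Rh_{C,*}\calO_{C\times\Hilb_{n,C}^{cur}} \cong R\Gamma(C,\calO_C) \otimes \calO_{\Hilb_{n,C}^{cur}}$; its cohomology sheaves $\calO_{\Hilb_{n,C}^{cur}}$ and $H^1(C,\calO_C)\otimes\calO_{\Hilb_{n,C}^{cur}}$ are globally trivial bundles ($H^1(C,\calO_C)$ being a constant vector space), so this determinant is trivial. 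Finally, $h_{C,*}\calI_{\calD_C} = 0$, since a nonzero global function on the integral curve $C$ cannot vanish along a nonempty subscheme $D$, and $R^{\geq 2}h_{C,*}$ vanishes for dimension reasons; hence $Rh_{C,*}\calI_{\calD_C} \cong R^1 h_{C,*}\calI_{\calD_C}[-1]$ and $\det(Rh_{C,*}\calI_{\calD_C}) \cong \det(R^1 h_{C,*}\calI_{\calD_C})^{-1}$. Substituting into the multiplicativity relation and cancelling the trivial factor yields $\det(\calA_{C,n}) \cong \det(R^1 h_{C,*}\calI_{\calD_C})$, and the argument for $\Sigma$ is verbatim the same.

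I expect the only genuine subtlety to be the formal justification that the determinant line bundles are defined and multiplicative, i.e. that the three pushforwards are perfect complexes so that the Knudsen--Mumford determinant applies across the triangle; this is precisely where regularity of $\Hilb_{n,C}^{cur}$ and $\Hilb_{n,\Sigma}^{cur}$ enters. If one prefers to avoid derived language, the same conclusion follows by writing out the four-term long exact sequence of ordinary pushforwards
\[
0 \to \calO_{\Hilb_{n,C}^{cur}} \to \calA_{C,n} \to R^1 h_{C,*}\calI_{\calD_C} \to H^1(C,\calO_C)\otimes\calO_{\Hilb_{n,C}^{cur}} \to 0
\]
(using $h_{C,*}\calI_{\calD_C}=0$ and $R^1 h_{C,*}\calO_{\calD_C}=0$) and invoking multiplicativity of determinants of coherent sheaves across an exact sequence on the regular scheme $\Hilb_{n,C}^{cur}$; the two outer terms contribute trivial determinants, so the determinants of the two middle terms agree.
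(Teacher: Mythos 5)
Your argument is, in substance, the paper's own proof: the paper also pushes the sequence $0 \to \calI_{\calD_C} \to \calO_{C\times\Hilb_C} \to \calO_{\calD_C} \to 0$ forward along $h_C$, uses $h_{C,*}\calI_{\calD_C}=0$, $h_{C,*}\calO_{C\times\Hilb_C}=\calO_{\Hilb_C}$, $R^1h_{C,*}\calO_{C\times\Hilb_C}=\calO_{\Hilb_C}^{\oplus g}$ and the vanishing of $R^1$ of the finite part, and then takes determinants of the resulting long exact sequence --- exactly your second, non-derived formulation. The one genuine flaw is your claim that $\Hilb^{cur}_{n,C}$ is smooth (hence regular), which you single out as ``precisely where'' the determinant formalism is justified. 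This is false when $C$ is nodal: every length-one subscheme is curvilinear, so $\Hilb^{cur}_{1,C}=C$ itself, which has nodes; more generally, any $D$ consisting of a reduced node together with $n-1$ distinct smooth points is a curvilinear point near which $\Hilb^{cur}_{n,C}$ is locally isomorphic to $C\times C^{n-1}$, hence singular. The same applies to $\Hilb^{cur}_{n,\Sigma}$ whenever the partial normalisation $\Sigma$ is still nodal. This matters because on a non-regular base an arbitrary coherent sheaf need not be perfect, so neither the Knudsen--Mumford determinant of the three pushforwards nor the multiplicativity of $\det$ across your four-term sequence can be obtained from regularity of the base, as you propose.

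The gap is repairable by a standard substitute, after which your proof goes through verbatim. Perfectness of $Rh_{C,*}\calI_{\calD_C}$, $Rh_{C,*}\calO_{C\times\Hilb_C}$ and $Rh_{C,*}\calO_{\calD_C}$ requires no hypothesis on the base: $h_C$ is proper, and all three sheaves are flat over $\Hilb^{cur}_{n,C}$ ($\calO_{\calD_C}$ by the defining property of the Hilbert scheme, $\calO_{C\times\Hilb_C}$ trivially, and $\calI_{\calD_C}$ as the kernel of a surjection of base-flat sheaves with flat quotient), and the derived pushforward of a base-flat coherent sheaf under a proper morphism is always a perfect complex. In fact one can say more: since $H^0(C,\calI_D)=0$ on every fibre, the perfect complex $Rh_{C,*}\calI_{\calD_C}$ (of amplitude $[0,1]$) is locally presented by a map of vector bundles that is injective on all fibres, so $R^1h_{C,*}\calI_{\calD_C}$ is locally free; thus all four terms of your sequence $0 \to \calO_{\Hilb^{cur}_{n,C}} \to \calA_{C,n} \to R^1h_{C,*}\calI_{\calD_C} \to H^1(C,\calO_C)\otimes\calO_{\Hilb^{cur}_{n,C}} \to 0$ are locally free (for $\calA_{C,n}$ this is noted after \eqref{eq def Acn}), and the determinant manipulation is elementary. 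With this replacement your conclusion, and its verbatim transcription for $\Sigma$, agrees with the paper.
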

\begin{proof} Recall that, by definition $\calA_C = h_{C, *}\calO_{\calD_C}$. We use the short exact sequence 
\[ 
0 \to \calI_{\calD_C} \to \calO_{C \times \Hilb_C} \to \calO_{\calD_C} \to 0,
\]
over $C \times \Hilb^{cur}_{n,C}$. The long exact sequence induced by pushforward along $h_C: C \times \Hilb^{cur}_{n,C} \to \Hilb^{cur}_{n,C}$ is given by 
\begin{align*}
0 \to \calA_C 
\to R^1h_{C,*}\calI_{\calD_C} \to \calO_{\Hilb_C}^{\oplus g} \to 0,
\end{align*}
since $h_{C, *}\calI_{\calD_C} = 0$, $h_{C,*}\calO_{C \times \Hilb_C} = \calO_{\Hilb_C}$ and so $R^1h_{C,*}\calO_{C\times \Hilb_C}=\calO_{\Hilb_C}^{\oplus g}$.
Taking determinants yields the formula of the lemma. The same argument applies to the smooth curve $\Sigma$.
\end{proof}

Recall that $C$ is an irreducible nodal projective curve equipped with a partial normalisation $\nu:\Sigma\to C$ at $k$ (simple) nodes and with a choice of a smooth point $x_0\in C$ and $y_0=\nu^{-1}(x_0)$. Recall that we denote the resolved singular locus by $\RSing(\nu)=\{b_1, \dots, b_k\}\subset C$ and exceptional divisor by $\Exc(\nu)=\{b_1^+, b_1^-, \dots, b_k^+, b_k^-\}\subset \Sigma$, where in our notation $\nu^{-1}(b_i)=\{b_i^+,b_i^-\}$. Recall also the universal exact sequence \eqref{eq:univ-ses} involving the universal parabolic module $\left((\id \times \dot{\nu})^*\calU_\Sigma, \calV_{\Sigma} \right)$ and the torsion sheaf $\calT$ defined in \eqref{eq:calT}.

\begin{lemma}
\label{lem:unvrsl_ext_seq}
    There is an exact sequence, 
    \begin{align*} 
    0 \to (\id \times \wt\rho)^*\calI_{\calD_{C}}^{\vee}
\to (\nu \times \id)_{*} (\id \times &(q\circ j))^*\calI_{\calD_{\Sigma}}^{\vee} \to (\id\times\wt\alpha_C)^*\calT \otimes (\id \times \wt\rho)^*h_C^*\calO_{\Hilb_C}(\calD_{C,x_0}) \to 0 \, ,
    \end{align*}
     on $C\times\Hilb_{n,C}^{cur} \times_{\ol\Jac_C} \PMod_{\nu}$.
\end{lemma}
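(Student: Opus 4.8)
The plan is to realise the claimed sequence as a twist of the pullback, along $\id\times\wt\alpha_C$, of the universal short exact sequence \eqref{eq:univ-ses}. Write $W:=\Hilb_{n,C}^{cur}\times_{\ol\Jac_C}\PMod_{\nu}$. Since $\wt\alpha_C$ is smooth (hence flat), so is $\id\times\wt\alpha_C$, and pulling \eqref{eq:univ-ses} back preserves exactness, producing on $C\times W$ the sequence
\[
0\to(\id\times\wt\alpha_C)^*(\id\times\rho)^*\calU_C\to(\id\times\wt\alpha_C)^*(\nu\times\id)_*(\id\times\dot\nu)^*\calU_\Sigma\to(\id\times\wt\alpha_C)^*\calT\to0.
\]
First I would rewrite the outer maps by functoriality: the identity $\rho\circ\wt\alpha_C=\alpha_C\circ\wt\rho$ from \eqref{eq cartesian diagram Hilb PMod} turns the first term into $(\id\times\wt\rho)^*(\id\times\alpha_C)^*\calU_C$. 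The Cartesian square with vertical maps $\nu\times\id$ and flat horizontal maps $\id\times\wt\alpha_C$ (visible in the big diagram \eqref{equ:big_diagram}) lets me apply flat base change, with $\nu\times\id$ finite so that $(\nu\times\id)_*$ is exact and has no higher direct images; combined with $\dot\nu\circ\wt\alpha_C=\alpha_\Sigma\circ q\circ j$ from Lemma \ref{lm closed embedding cartesian diagram}, the middle term becomes $(\nu\times\id)_*(\id\times(q\circ j))^*(\id\times\alpha_\Sigma)^*\calU_\Sigma$.

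Next I would tensor the whole sequence by the line bundle $\calL:=q_C^*\calO_C(nx_0)\otimes\wt h_C^*\wt\rho^*\calO_{\Hilb_C}(\calD_{C,x_0})$ and identify the three terms via the Schwarzenberger relation of Lemma \ref{lemm:Extend_Schwarz}. Applying Lemma \ref{lemm:Extend_Schwarz} over $C$ and pulling back by $\id\times\wt\rho$ shows that the first term, twisted by $\calL$, is exactly $(\id\times\wt\rho)^*\calI_{\calD_C}^\vee$. For the middle term, Lemma \ref{lemm:Extend_Schwarz} over $\Sigma$ expresses $(\id\times(q\circ j))^*\calI_{\calD_\Sigma}^\vee$ as $(\id\times\wt\alpha_C)^*(\id\times\dot\nu)^*\calU_\Sigma$ twisted by $q_\Sigma^*\calO_\Sigma(ny_0)$ and by $\wt h_\Sigma^*(q\circ j)^*\calO_{\Hilb_\Sigma}(\calD_{\Sigma,y_0})$; as $x_0$ is smooth with $\nu^{-1}(x_0)=y_0$ one has $\nu^*\calO_C(nx_0)\cong\calO_\Sigma(ny_0)$, so both twists are pulled back along $\nu\times\id$ and the projection formula moves them outside the pushforward, leaving a twist that agrees with $\calL$ by virtue of the divisor identity stated below. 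For the third term, $\calT$ is supported on $\Sing(C)\times\PMod_{\nu}$ by \eqref{eq:calT}, hence $(\id\times\wt\alpha_C)^*\calT$ is supported away from $\{x_0\}\times W$; the canonical section of $\calO_C(nx_0)$ trivialises $q_C^*\calO_C(nx_0)$ on this support, so tensoring by that factor of $\calL$ is harmless and the third term becomes $(\id\times\wt\alpha_C)^*\calT\otimes(\id\times\wt\rho)^*h_C^*\calO_{\Hilb_C}(\calD_{C,x_0})$, as required.

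The only non-formal input, and the main obstacle, is the divisor identity $(q\circ j)^*\calO_{\Hilb_\Sigma}(\calD_{\Sigma,y_0})\cong\wt\rho^*\calO_{\Hilb_C}(\calD_{C,x_0})$ on $W$, needed to reconcile the two descriptions of the middle term. I would prove it by comparing the associated Cartier divisors: writing $j(D,(M,V))=(D,E)$, a point of $W$ lies in $\wt\rho^{-1}(\calD_{C,x_0})$ exactly when $x_0\in D$ and in $(q\circ j)^{-1}(\calD_{\Sigma,y_0})$ exactly when $y_0\in E$. Since $x_0$ is a smooth point, $\nu$ restricts to an isomorphism near it with $\nu^{-1}(x_0)=y_0$, so the relation $D_0=\nu(E_0)$ of Corollary \ref{co description of Hilb^cur times PMod} forces $\length(\calO_{D,x_0})=\length(\calO_{E,y_0})$, and the two divisors coincide scheme-theoretically. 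Once this identity is in place, the three identifications assemble the tensored sequence into the asserted exact sequence, with maps the twisted pullbacks of those in \eqref{eq:univ-ses}.
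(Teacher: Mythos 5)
Your proposal is correct and follows essentially the same route as the paper's proof: pulling back the universal sequence \eqref{eq:univ-ses} along the flat map $\id\times\wt\alpha_C$, identifying terms via the commutativity/base-change squares of \eqref{equ:big_diagram} and Lemma \ref{lemm:Extend_Schwarz}, invoking the divisor identity \eqref{eq:pullback slice under qj}, and using the support of $\calT$ on $\RSing(\nu)\times\PMod_{\nu}$ to absorb the twist pulled back from $C$. The only differences are cosmetic — you tensor by the line bundle $\calL$ before substituting the Schwarzenberger identities rather than after, and you spell out the scheme-theoretic equality of divisors behind \eqref{eq:pullback slice under qj} in slightly more detail than the paper's one-line appeal to Corollary \ref{co description of Hilb^cur times PMod}.
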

\begin{proof}
    By Assumption \ref{assumption}, $\alpha_C:\Hilb_{n,C}^{cur}\to\ol\Jac_C$ is surjective and smooth. It follows by \eqref{eq cartesian diagram Hilb PMod} that the same holds for $\wt{\alpha}_C$. In particular, $\id\times\wt{\alpha}_C: C \times \Hilb^{cur}_{n,C} \times_{\ol\Jac_C} \PMod_{\nu} \to C \times \PMod_{\nu}$ is flat. So pulling back along $\id \times \wt\alpha_C$ is an exact functor. We pullback the universal exact sequence \eqref{eq:univ-ses} along $\id \times \wt\alpha_C$. Using commutativity around diagram $\boxed{5}$ and flat base change around diagram $\boxed{3}$ of \eqref{equ:big_diagram}, we obtain 
    \begin{align} \label{equ:towards_universal}
    0 \to (\id \times \wt{\rho})^*(\id \times \alpha_C)^*\calU_C 
    \to (\nu \times \id)_*(\id \times \wt\alpha_C)^*(\id \times \dot\nu)^* \calU_\Sigma 
    \to (\id \times \wt\alpha_C)^*\calT \to 0 . 
    \end{align} 
    We saw in Lemma \ref{lm closed embedding cartesian diagram} that the diagram $\boxed{2}$ of \eqref{equ:big_diagram} commutes. 
    Hence, we can replace the middle term above by the sheaf  
    \[ (\nu \times \id)_*(\id \times q \circ j)^*(\id \times \alpha_\Sigma)^* \calU_\Sigma\,.
    \] 
    Now, using Lemma \ref{lemm:Extend_Schwarz}, we have an isomorphism 
    \[ (\id \times \alpha_\Sigma)^* \calU_\Sigma
    \cong \calI_{\calD_{\Sigma}}^{\vee} \otimes \ol p_\Sigma^* \calO_{\Sigma}(-ny_0)  \otimes h_\Sigma^*\calO_{\Hilb_\Sigma}(\calD_{\Sigma,y_0})^\vee,
    \] 
    as well as a corresponding isomorphism for $(\id \times \wt\alpha_C)^*\calU_C$. Inserting this into \eqref{equ:towards_universal} we obtain the exact sequence
    \begin{align*}
    0 &\to (\id \times \wt{\rho})^*\left(\calI_{\calD_{C}}^{\vee} \otimes \ol p_C^* \calO_{C}(-nx_0)  \otimes h_C^*\calO_{\Hilb_C}(\calD_{C,x_0})^\vee\right) \\
    &\to (\nu \times \id)_*(\id \times q \circ j)^* \left(\calI_{\calD_{\Sigma}}^{\vee} \otimes \ol p_\Sigma^* \calO_{\Sigma}(-ny_0)  \otimes h_\Sigma^*\calO_{\Hilb_\Sigma}(\calD_{\Sigma,y_0})^\vee\right)
    \to (\id \times \wt\alpha_C)^*\calT \to 0.
    \end{align*} 
    We saw in Corollary \ref{co description of Hilb^cur times PMod} that $E$ contains $y_0$ if and only if $D$ contains $x_0$. 
    Hence,
    \begin{equation}\label{eq:pullback slice under qj}
        (q\circ j)^*\calO_{\Hilb_\Sigma}(\calD_{\Sigma,y_0})\cong \wt{\rho}^*\calO_{\Hilb_C}(\calD_{C,x_0}).
    \end{equation} In particular, 
    \[ (\id \times (q\circ j))^*h_\Sigma^*\calO_{\Hilb_\Sigma}(\calD_{\Sigma,y_0})=(\nu \times \id)^*(\id \times \wt{\rho})^*h_C^*\calO_{\Hilb_C}(\calD_{C,x_0})\,.
    \] 
    So, using projection formula and tensoring with the line bundle $(\id \times \wt{\rho})^*h_C^*\calO_{\Hilb_C}(\calD_{C,x_0})$, we obtain 
    \begin{align*} 
    0 \to (\id \times \wt\rho)^*\left(\calI_{\calD_{C}}^{\vee} \otimes \ol p_C^* \calO_{C}(-nx_0) \right)
&\to (\nu \times \id)_{*}(\id \times (q\circ j))^* \left( \calI_{\calD_{\Sigma}}^{\vee} \otimes \ol p_\Sigma^* \calO_{\Sigma}(-ny_0) \right) \\
&\to (\id \times \wt\alpha_C)^*\calT \otimes (\id \times \wt\rho)^*h_C^*\calO_{\Hilb_C}(\calD_{C,x_0})
\to 0.
    \end{align*}
    Finally, using projection formula for $\nu^* \calO_{C}(-nx_0) = \calO_\Sigma(-ny_0)$ and tensoring by $q_C^*\calO_{C}(nx_0)=(\id\times\wt\rho)^*\ol p_C^*\calO_C(nx_0)$, yields the exact sequence stated in the lemma. Note that $(\id \times \wt\alpha_C)^*\calT$ is supported on $\RSing(\nu) \times\Hilb_{n,C}^{cur} \times_{\ol\Jac_C} \PMod_{\nu}$ and hence the tensorization with a line bundle that is a pullback from $C$ yields an isomorphism in the last term. 
    \end{proof}

To simplify some notation, let us introduce the abbreviations 
\[
\underline{C}:= C \times \Hilb^{cur}_{n,C} \times_{\ol\Jac_C} \PMod_{\nu} , \quad 
\underline{\Sigma}: = \Sigma \times \Hilb^{cur}_{n,C} \times_{\Hyb_\nu} \Hilb^{cur}_{n,\Sigma} , 
\]
for the triple products in the main diagram \eqref{equ:big_diagram}. Similarly, we set
\[ \ol\calI_C:=(\id \times \wt\rho)^*\calI_{\calD_{C}}, \quad \ol\calI_\Sigma:=(\id \times (q\circ j))^* \calI_{\calD_{\Sigma}}.
\] 

\begin{lemma}
\label{key lemma}
    There exists an exact sequence given by 
    \[ 0 \to  \wt\alpha_C^{*}  \calQ^{\vee} \otimes \widetilde{\rho}^*\calO_{\Hilb_C}(\calD_{C,x_0})^{\vee}
    \to  R^1\widetilde{h}_{\Sigma*}(\ol\calI_\Sigma \otimes q_{\Sigma}^*\omega_{\Sigma/C})
    \to R^1\wt{h}_{C*}\ol\calI_C 
    \to 0, 
    \]
    on $\Hilb^{cur}_{n,C} \times_{\ol\Jac_C} \PMod_{\nu}$.
\end{lemma}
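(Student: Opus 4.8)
The plan is to derive the asserted sequence by dualising the pushforward of the universal extension in Lemma \ref{lem:unvrsl_ext_seq}, after twisting it by the relative dualising sheaf of $\wt h_C$. Write $B:=\Hilb_{n,C}^{cur}\times_{\ol\Jac_C}\PMod_\nu$ for the common base, so that $\wt h_C$ and $\wt h_\Sigma$ are the projections off $C$ and off $\Sigma$ in the triple products $\underline C$ and $\underline\Sigma$, factoring as $\wt h_\Sigma=\wt h_C\circ(\nu\times\id)$. I will use the conductor/adjunction formula for the normalisation of nodes, $\nu^*\omega_C\cong\omega_\Sigma(\Exc(\nu))$, equivalently $\omega_{\Sigma/C}=\omega_\Sigma\otimes\nu^*\omega_C^\vee\cong\calO_\Sigma(-\Exc(\nu))$; this makes $q_\Sigma^*\omega_{\Sigma/C}$ the relative dualising sheaf of the finite map $\nu\times\id$, and records the relations $\omega_{\wt h_C}=q_C^*\omega_C$, $\omega_{\wt h_\Sigma}=q_\Sigma^*\omega_\Sigma$, and $\omega_\Sigma\otimes\omega_{\Sigma/C}^\vee=\nu^*\omega_C$, which are the bookkeeping identities that reconcile the $\omega_{\Sigma/C}$--twist on the $\Sigma$--side with the absence of a twist on the $C$--side.

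First I would tensor the exact sequence of Lemma \ref{lem:unvrsl_ext_seq} by the line bundle $q_C^*\omega_C$ and push it forward along $\wt h_C$. By the projection formula and $\wt h_\Sigma=\wt h_C\circ(\nu\times\id)$, the middle term becomes $\wt h_{\Sigma*}(\ol\calI_\Sigma^\vee\otimes q_\Sigma^*\nu^*\omega_C)$. A fibrewise Serre-duality computation shows the first two terms are acyclic in positive degree: for a length-$n$ curvilinear divisor $D\subset C$ one has $H^1(C,\calI_D^\vee\otimes\omega_C)\cong H^0(C,\calI_D)^\vee=0$ by biduality of maximal Cohen--Macaulay sheaves on the Gorenstein curve $C$, and likewise on $\Sigma$ the relevant group is $H^0(\Sigma,\calI_E\otimes\calO_\Sigma(-\Exc(\nu)))^\vee=0$. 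Since the third term is torsion and finite over $B$, hence $\wt h_C$-acyclic, the long exact sequence collapses to a short exact sequence
\[
0 \to \wt h_{C*}(\ol\calI_C^\vee\otimes q_C^*\omega_C) \to \wt h_{\Sigma*}(\ol\calI_\Sigma^\vee\otimes q_\Sigma^*\nu^*\omega_C) \to \wt h_{C*}(\calT'\otimes q_C^*\omega_C) \to 0
\]
of locally free sheaves on $B$, where $\calT'=(\id\times\wt\alpha_C)^*\calT\otimes(\id\times\wt\rho)^*h_C^*\calO_{\Hilb_C}(\calD_{C,x_0})$.

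Next I would identify the cokernel. The factor $(\id\times\wt\rho)^*h_C^*\calO_{\Hilb_C}(\calD_{C,x_0})$ is pulled back from $B$, so the projection formula splits off $\wt\rho^*\calO_{\Hilb_C}(\calD_{C,x_0})$. For the remaining factor I would use flat base change around the cartesian square relating $\wt h_C$ to the projection $C\times\PMod_\nu\to\PMod_\nu$, together with \eqref{equ:univ_quotient}, which says $\calT$ pushes forward to the universal quotient bundle $\calQ$; this yields $\wt h_{C*}\big((\id\times\wt\alpha_C)^*\calT\big)\cong\wt\alpha_C^*\calQ$. The only subtlety is the extra $q_C^*\omega_C$: because $\calT$ is scheme-theoretically supported on the \emph{reduced} locus $\RSing(\nu)\times\PMod_\nu$ (the pushforward of the skyscraper $\calU_\Sigma|_{\Exc(\nu)}$ has the maximal ideal of each node acting by zero), tensoring by $q_C^*\omega_C$ twists only by the fibres $\omega_C\otimes\kappa(b_i)$, each canonically trivialised by the residue. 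Hence $\wt h_{C*}(\calT'\otimes q_C^*\omega_C)\cong\wt\alpha_C^*\calQ\otimes\wt\rho^*\calO_{\Hilb_C}(\calD_{C,x_0})$.

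Finally I would dualise the above sequence of locally free sheaves on $B$ and invoke relative Serre duality fibrewise for $\wt h_C$ and $\wt h_\Sigma$. Using $\wt h_{C*}\ol\calI_C=0=\wt h_{\Sigma*}(\ol\calI_\Sigma\otimes q_\Sigma^*\omega_{\Sigma/C})$, duality gives $\big(\wt h_{C*}(\ol\calI_C^\vee\otimes q_C^*\omega_C)\big)^\vee\cong R^1\wt h_{C*}\ol\calI_C$ and, crucially via $\omega_\Sigma\otimes(\nu^*\omega_C)^\vee=\omega_{\Sigma/C}$, $\big(\wt h_{\Sigma*}(\ol\calI_\Sigma^\vee\otimes q_\Sigma^*\nu^*\omega_C)\big)^\vee\cong R^1\wt h_{\Sigma*}(\ol\calI_\Sigma\otimes q_\Sigma^*\omega_{\Sigma/C})$, while the cokernel dualises to $\wt\alpha_C^*\calQ^\vee\otimes\wt\rho^*\calO_{\Hilb_C}(\calD_{C,x_0})^\vee$; the dualised sequence is then exactly the one asserted. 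The main obstacle I expect is the duality bookkeeping over singular fibres: running relative Grothendieck--Serre duality for the families $\wt h_C,\wt h_\Sigma$ of nodal (Gorenstein) curves and justifying the biduality $(\calI_D^\vee)^\vee\cong\calI_D$ together with the $\calExt^{>0}$-vanishing for the non-locally-free ideal sheaves $\ol\calI_C,\ol\calI_\Sigma$. The conceptual crux is the cokernel identification, where $\omega_{\Sigma/C}=\calO_\Sigma(-\Exc(\nu))$ enters through Serre duality on $\Sigma$ and the residue trivialisation of $\omega_C$ at the nodes removes the spurious twist, so that the quotient comes out as $\calQ$ on the nose.
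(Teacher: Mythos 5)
Your proposal is correct, and it reaches the sequence by a genuinely different route: you perform the paper's two main operations in the opposite order. The paper dualises \emph{upstairs} first, applying $\calHom(-,\calO_{\underline{C}})$ to the sequence of Lemma \ref{lem:unvrsl_ext_seq}; reflexivity of $\ol\calI_C$ and Grothendieck--Verdier duality for the \emph{finite} morphism $\nu\times\id$ (this is where $\omega_{\Sigma/C}$ enters there) give the sheaf sequence $0\to(\nu\times\id)_*(\ol\calI_\Sigma\otimes q_\Sigma^*\omega_{\Sigma/C})\to\ol\calI_C\to\calExt^1(\,\cdot\,,\calO_{\underline{C}})\to 0$ on the total space, which is then pushed forward along $\wt h_C$, so that the two $R^1$ terms of the statement fall out of the long exact sequence directly, and the $\calExt^1$ term is converted into $\wt\alpha_C^*\calQ^\vee\otimes\wt\rho^*\calO_{\Hilb_C}(\calD_{C,x_0})^\vee$ by relative duality applied to a torsion sheaf. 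You instead push forward first (after twisting by $q_C^*\omega_C$), collapse the long exact sequence to a short exact sequence of locally free sheaves on $\Hilb^{cur}_{n,C}\times_{\ol\Jac_C}\PMod_{\nu}$ using the fibrewise vanishing $H^1(C,\calI_D^\vee\otimes\omega_C)\cong H^0(C,\calI_D)^\vee=0$ and its $\Sigma$-analogue, and only then dualise on the base, converting each $\bigl(h_*(\calI^\vee\otimes\omega)\bigr)^\vee$ into the corresponding $R^1h_*$ term via relative Serre duality for the families $\wt h_C$ and $\wt h_\Sigma$; in your version $\omega_{\Sigma/C}$ enters through the identity $\omega_\Sigma\otimes(\nu^*\omega_C)^\vee\cong\omega_{\Sigma/C}$ rather than through duality for $\nu\times\id$. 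The cokernel identification (flat base change giving $\wt h_{C*}(\id\times\wt\alpha_C)^*\calT\cong\wt\alpha_C^*\calQ$, with the $q_C^*\omega_C$-twist acting trivially on the reduced support $\RSing(\nu)\times\Hilb^{cur}_{n,C}\times_{\ol\Jac_C}\PMod_{\nu}$) is common to both proofs. The trade-off: your route keeps every intermediate object locally free on the base, so the final dualisation step is elementary, but it requires flatness of $\ol\calI_C^\vee$ over the base together with cohomology-and-base-change to obtain the $R^1$-vanishing and the local freeness of the pushforwards; the paper's route needs no fibrewise or base-change argument (only $\wt h_{C*}\ol\calI_C=0$ and acyclicity of sheaves finite over the base), at the price of duality for the finite map and an $\calExt$-computation on the possibly singular total space. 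Both ultimately rest on the same homological input --- reflexivity and $\calExt^{>0}$-vanishing for families that are fibrewise maximal Cohen--Macaulay on Gorenstein curves --- which you correctly single out as the point requiring care.
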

\begin{proof} 
    With the above notation, Lemma \ref{lem:unvrsl_ext_seq} says that there exists an exact sequence on $\underline{C}$ given by 
    \[0 \to \ol\calI_C^{\vee}
\to (\nu \times \id)_*\ol\calI_\Sigma^{\vee} \to (\id\times\wt\alpha_C)^*\calT \otimes (\id \times \wt\rho)^*h_C^*\calO_{\Hilb_C}(\calD_{C,x_0})
\to 0.
    \] 
    Now we compute the dual of this sequence. First, since torsion-free sheaves on $C$ are reflexive (see Lemma 1.1 of \cite{hartshorne}), we have 
    \[  \calHom(\ol\calI_C^{\vee},\calO_{\underline{C}}) \cong \ol\calI_C^{\vee\vee}\cong\ol\calI_C.
    \] 
    To dualise the second term, we apply Grothendieck-Verdier duality. If $\omega_{\Sigma/C}:= \omega_\Sigma \otimes \nu^*\omega_C^{\vee}$ and $\omega_{\underline{\Sigma}/\underline{C}}:= \omega_{\underline{\Sigma}} \otimes (\nu\times\id)^*\omega_{\underline{C}}^{\vee}$ are the relative dualising sheaves, then $\omega_{\underline{\Sigma}/\underline{C}}=q_\Sigma^*\omega_{\Sigma/C}$, and then
    \begin{align*}
    \calHom((\nu \times \id)_{*}\ol\calI_\Sigma^{\vee}, \calO_{\underline{C}})
    & \cong (\nu \times \id)_*(\calHom( \ol\calI_\Sigma^{\vee}, \calO_{\underline{\Sigma}}) \otimes \omega_{\underline{\Sigma}/\underline{C}}) \\
    & \cong (\nu \times \id)_* (\ol\calI_\Sigma \otimes q_\Sigma^*\omega_{\Sigma/C}). 
    \end{align*}
    We therefore obtain the dual exact sequence
    \begin{align*}
    0 \shortrightarrow  (\nu \times \id)_*(\ol\calI_\Sigma \otimes q_\Sigma^*\omega_{\Sigma/C}) 
    \shortrightarrow \ol\calI_C 
    \shortrightarrow \calExt^1((\id\times\wt\alpha_C)^*\calT\otimes (\id \times \wt\rho)^*h_C^*\calO_{\Hilb_C}(\calD_{C,x_0}),\calO_{\underline{C}}) \shortrightarrow 0\,.
    \end{align*}
    By Hom-tensor adjointness  and functoriality around diagram $\boxed{6}$ from \eqref{equ:big_diagram}, the last term can be rewritten as 
    \begin{align*} 
    &\calExt^1((\id\times\wt\alpha_C)^*\calT, (\id \times \wt\rho)^*h_C^*\calO_{\Hilb_C}(\calD_{C,x_0})^\vee)\\
    =\ &\calExt^1((\id\times\wt\alpha_C)^*\calT ,\calO_{\underline{C}}) \otimes (\id \times \wt\rho)^*h_C^*\calO_{\Hilb_C}(\calD_{C,x_0})^\vee \\
    =\ &\calExt^1((\id\times\wt\alpha_C)^*\calT ,\calO_{\underline{C}}) \otimes \widetilde h_C^*\wt{\rho}^*\calO_{\Hilb_C}(\calD_{C,x_0})^\vee .
    \end{align*}
    Taking the pushforward along $\wt{h}_C: \underline{C} \to \Hilb^{cur}_{n,C} \times_{\ol\Jac_C} \PMod$,  we obtain the exact sequence
    \begin{align}
    \begin{split}
    \label{eq: final sequence}
    0 \to \wt h_{C,*}\calExt^1((\id\times\wt\alpha_C)^*\calT ,\calO_{\underline{C}}) &\otimes \widetilde{\rho}^*\calO_{\Hilb_C}(\calD_{C,x_0})^\vee\\
    &\to R^1\wt h_{C,*}(\nu \times \id)_*(\ol\calI_\Sigma\otimes q_\Sigma^*\omega_{\Sigma/C})
    \to R^1\wt h_{C,*}\ol\calI_C\to 0\,.
    \end{split} 
    \end{align}
    It remains to compare the first and second terms of this sequence with the statement of the Lemma. To compute the first term, we consider $\wt h_{C,*}^1((\id\times\wt\alpha_C)^*\calT ,\calO_{\underline{C}})$. We may apply Grothendieck-Verdier duality for the projection $\wt h_{C}$ with relative dualising sheaf $q_{C}^*\omega_C$. Also recall the definition of $\calQ$ from equation \eqref{equ:univ_quotient}. This allows us to write 
    \begin{align*}
    \wt h_{C,*}\calExt^1((\id\times\wt\alpha_C)^*\calT,\calO_{\underline{C}}) 
    & \cong \wt h_{C,*}R^1\calHom((\id\times\wt\alpha_C)^*\calT\otimes q_C^*\omega_C,q_C^*\omega_C) , \\ 
    & \cong \calHom( \wt h_{C,*} ((\id\times\wt\alpha_C)^*\calT\otimes q_C^*\omega_C),\calO_{\Hilb \times \PMod}) , \\ 
    & \cong \calHom( \wt h_{C,*}(\id\times\wt\alpha_C)^*\calT,\calO_{\Hilb \times \PMod})=\widetilde{\alpha}_C^*\calQ^\vee.
     \end{align*}
     Here, as in the proof of the previous lemma, $(\id\times\wt\alpha_C)^*\calT\otimes q_C^*\omega_C \cong(\id\times\wt\alpha_C)^*\calT$, since $(\id\times\wt\alpha_C)^*\calT$ is torsion, supported on $\RSing(\nu)\times \Hilb^{cur}_{n,C} \times_{\ol\Jac_C} \PMod_{\nu}$. This computation shows that the first term of \eqref{eq: final sequence} coincides with the statement of the Lemma. Finally, for the second term, commutativity of diagram \boxed{4} from \eqref{equ:big_diagram} yields the isomorphism  
     \[
     R^1\widetilde{h}_{C*}(\nu \times \id)_*(\ol\calI_\Sigma \otimes q_{\Sigma}^*\omega_{\Sigma/C})
     \cong R^1\widetilde{h}_{\Sigma*}(\ol\calI_\Sigma \otimes q_{\Sigma}^*\omega_{\Sigma/C}), 
     \] 
     and so \eqref{eq: final sequence} agrees with the statement of the lemma. This concludes the proof. 
\end{proof}

Next, we compute the determinant of the extension term from the previous lemma. 

\begin{lemma} 
\label{lemm:compute extension term}
There exists an isomorphism of line bundles, 
\[ 
\det\left(R^1\wt h_{\Sigma,*}(\ol\calI_{\Sigma} \otimes q_\Sigma^*\omega_{\Sigma/C})\right) \cong j^*q^*\left(\det(\calA_{\Sigma,n})\otimes \bigotimes_{i=1}^k \calO_{\Hilb_\Sigma}(\calD_{\Sigma, b_i^+})^\vee\otimes \calO_{\Hilb_\Sigma}(\calD_{\Sigma, b_i^-})^\vee\right),\]
on $\Hilb^{cur}_{n,C}\times_{\ol\Jac_C}\PMod_{\nu}$.
\end{lemma}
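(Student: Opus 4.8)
The plan is to transport the whole computation to the Hilbert scheme $\Hilb^{cur}_{n,\Sigma}$ of the normalisation, where Lemma \ref{lemm:calA=R1calI} already expresses $\det(\calA_{\Sigma,n})$ as $\det R^1 h_{\Sigma,*}\calI_{\calD_\Sigma}$, and then to pull the resulting identity back along $q\circ j$. First I would pin down the relative dualising sheaf: since $\nu:\Sigma\to C$ is the partial normalisation of the nodes $\RSing(\nu)$, the conductor (adjunction) formula for nodal normalisations gives $\nu^*\omega_C\cong\omega_\Sigma(\Exc(\nu))$, so that
\[
\omega_{\Sigma/C}=\omega_\Sigma\otimes\nu^*\omega_C^\vee\cong\calO_\Sigma(-\Exc(\nu))=\bigotimes_{i=1}^k\calO_\Sigma(-b_i^+)\otimes\calO_\Sigma(-b_i^-).
\]

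Next I would rewrite the sheaf being pushed forward. Writing $W:=\Hilb^{cur}_{n,C}\times_{\ol\Jac_C}\PMod_{\nu}$ and $g:=q\circ j:W\to\Hilb^{cur}_{n,\Sigma}$, the projections $\wt h_\Sigma:\Sigma\times W\to W$ and $q_\Sigma:\Sigma\times W\to\Sigma$ both factor through $\id\times g:\Sigma\times W\to\Sigma\times\Hilb^{cur}_{n,\Sigma}$, with $q_\Sigma=\ol p_\Sigma\circ(\id\times g)$. Hence, setting $\calK:=\calI_{\calD_\Sigma}\otimes\ol p_\Sigma^*\calO_\Sigma(-\Exc(\nu))$ on $\Sigma\times\Hilb^{cur}_{n,\Sigma}$, the identity of Step one identifies the sheaf of interest as $\ol\calI_\Sigma\otimes q_\Sigma^*\omega_{\Sigma/C}\cong(\id\times g)^*\calK$. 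The task then splits into computing $\det R^1 h_{\Sigma,*}\calK$ on $\Hilb^{cur}_{n,\Sigma}$ and controlling the base change along $g$.

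For the base change I would \emph{not} use flatness of $g$ (it fails, since $j$ is a closed immersion), but rather the base-change stability of the determinant of the \emph{derived} pushforward. The square with verticals $h_\Sigma,\wt h_\Sigma$ and horizontals $g,\id\times g$ is cartesian and Tor-independent because $h_\Sigma$ is flat and proper; moreover $\calK$ is flat over $\Hilb^{cur}_{n,\Sigma}$, being the ideal sheaf of a flat family twisted by a pulled-back line bundle. Thus $Rh_{\Sigma,*}\calK$ is a perfect complex and its Knudsen--Mumford determinant commutes with $g^*$. Fibrewise one has $H^0(\Sigma,\calI_D(-\Exc(\nu)))\subseteq H^0(\Sigma,\calO_\Sigma(-\Exc(\nu)))=0$, so both zeroth direct images vanish, and the two-term perfect complexes yield $\det R^1\wt h_{\Sigma,*}(\ol\calI_\Sigma\otimes q_\Sigma^*\omega_{\Sigma/C})\cong g^*\det R^1 h_{\Sigma,*}\calK$.

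To evaluate $\det R^1 h_{\Sigma,*}\calK$ I would peel off the $2k$ points of $\Exc(\nu)$ one at a time. For a smooth point $y\in\Sigma$ and a line bundle $\calL$, tensoring $0\to\ol p_\Sigma^*\calO_\Sigma(-y)\to\calO\to\calO_{\{y\}\times\Hilb_\Sigma}\to0$ with $\calI_{\calD_\Sigma}\otimes\ol p_\Sigma^*\calL$ and restricting to the slice uses $\calI_{\calD_\Sigma}|_{\{y\}\times\Hilb_\Sigma}\cong\calO_{\Hilb_\Sigma}(-\calD_{\Sigma,y})$; with the $R^0$'s vanishing and the slice contributing nothing in degree one, the long exact sequence collapses to an extension of $R^1 h_{\Sigma,*}(\calI_{\calD_\Sigma}\otimes\ol p_\Sigma^*\calL)$ by $\calO_{\Hilb_\Sigma}(\calD_{\Sigma,y})^\vee$, so each determinant picks up a factor $\calO_{\Hilb_\Sigma}(\calD_{\Sigma,y})^\vee$. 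Iterating over $y=b_1^+,b_1^-,\dots,b_k^+,b_k^-$ and invoking $\det R^1 h_{\Sigma,*}\calI_{\calD_\Sigma}\cong\det(\calA_{\Sigma,n})$ from Lemma \ref{lemm:calA=R1calI} yields $\det R^1 h_{\Sigma,*}\calK$ in the desired form, and pulling back via $g^*=j^*q^*$ finishes the argument. The hard part is the base-change step: the non-flatness of $g$ forces one to pass through the derived category and the determinant of cohomology, while keeping track that $\calI_{\calD_\Sigma}$ need not be locally free at the unresolved nodes of $\Sigma$ — harmless here, since it stays flat over the Hilbert scheme and the slicing only ever occurs at the smooth points $b_i^\pm$.
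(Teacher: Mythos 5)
Your proposal is correct and follows essentially the same route as the paper: identify $\omega_{\Sigma/C}\cong\calO_\Sigma(-\Exc(\nu))$, realise the sheaf as the pullback along $\id\times(q\circ j)$ of $\calI_{\calD_\Sigma}\otimes\ol p_\Sigma^*\calO_\Sigma(-\Exc(\nu))$, compute the determinant on $\Hilb^{cur}_{n,\Sigma}$ via twisting exact sequences together with Lemma \ref{lemm:calA=R1calI}, and conclude by base change along $q\circ j$. The only differences are cosmetic: the paper twists by the whole divisor $\calB$ in a single exact sequence rather than peeling off the points $b_i^{\pm}$ one at a time, and it invokes base change around the relevant Cartesian square tersely, where your perfect-complex argument (resting on the fibrewise vanishing $H^0(\Sigma,\calI_E(-\Exc(\nu)))=0$) supplies the detailed justification for why base change applies despite $q\circ j$ not being flat.
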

\begin{proof}
    Consider again the exceptional divisor $B=\{b_1^+,b_1^-,\ldots,b_k^+,b_k^-\}=\Exc(\nu) \subset\Sigma$. The canonical bundle of a nodal curve $C$ and the partial normalisation $\Sigma$ are related by the exact sequence   
    \[
    0 \to \omega_{C} \to \nu_*\omega_{\Sigma}(B) \xrightarrow{\,res\,} \calO_{\RSing(\nu)} \to 0\,,
    \] where $res$ is the sum of the residues; see \cite[Page 82]{harris-morrison}. 
    Pulling it back to $\Sigma$, and noticing that $\nu^*\omega_C$ is a line bundle (so torsion-free), yields 
    \[
    0 \to\nu^*\omega_{C} \to\nu^*\nu_*(\omega_{\Sigma}(B)) \xrightarrow{res} \calO_B \to 0.
    \]
    But $\nu^*\nu_*(\omega_{\Sigma}(B))\cong\omega_\Sigma(B)\oplus\calO_B$, hence 
    $\nu ^*\omega_C \cong \omega_{\Sigma}(B)$. The upshot is that the relative canonical bundle is 
    \[ 
    \omega_{\Sigma/C} = \omega_\Sigma \otimes \nu^*\omega_C^{\vee} \cong \calO_\Sigma(-B).
    \] 
    Define a divisor $\calB \subset \Sigma \times \Hilb_{n,\Sigma}^{cur} $ by
    \[
    \calB:=\ol p_\Sigma^{-1}(B)= B \times \Hilb_{n,\Sigma}^{cur}  .
    \]
    Then, we have the isomorphisms 
    \[ 
    \ol\calI_{\Sigma} \otimes q_\Sigma^*\omega_{\Sigma/C} 
    \cong (\id \times (q\circ j))^*(\calI_{\calD_{\Sigma}} \otimes \ol p_\Sigma^*\calO_\Sigma(-B))
    \cong (\id \times (q\circ j))^*(\calO_{\Sigma\times\Hilb_\Sigma}(-\calD_\Sigma-\calB)) . 
    \] 
    On the other hand, we have an exact sequence on $\Sigma \times \Hilb_{n,\Sigma}^{cur}$ given by 
    \[ 
    0 \to \calO_{\Sigma\times\Hilb_\Sigma}(-\calD_\Sigma-\calB) \to \calO_{\Sigma\times\Hilb_\Sigma}(-\calD_\Sigma) \to \calO_{\calB}\otimes \calO_{\Sigma\times\Hilb_\Sigma}(-\calD_\Sigma) \to 0.
    \] 
    Pushing it forward along $h_\Sigma$ yields  
    \[ 
    0 \shortrightarrow R^0 h_{\Sigma,*}(\calO_{\calB} \otimes \calO_{\Sigma\times\Hilb_\Sigma}(-\calD_\Sigma)) \to R^1h_{\Sigma,*}\calO_{\Sigma\times\Hilb_\Sigma}(-\calD_\Sigma-\calB)\to R^1h_{\Sigma,*}\calO_{\Sigma\times\Hilb_\Sigma}(-\calD_\Sigma) \shortrightarrow 0.
    \]
    Now we use that $R^0 h_{\Sigma,*}(\calO_{\calB} \otimes \calO_{\Sigma\times\Hilb_\Sigma}(-\calD_\Sigma))= \bigoplus_{i=1}^k \calO_{\Hilb_\Sigma}(\calD_{\Sigma, b_i^+})^\vee \otimes \calO_{\Hilb_\Sigma}(\calD_{\Sigma, b_i^-})^\vee 
    $. So, taking determinants and using Lemma \ref{lemm:calA=R1calI}, we obtain the isomorphisms
    \begin{align*} \det(R^1h_{\Sigma,*}(\calI_{\calD_{\Sigma}} \otimes \ol p_\Sigma^*\omega_{\Sigma/C}))&=\det(R^1h_{\Sigma,*}\calO_{\Sigma\times\Hilb_\Sigma}(-\calD_\Sigma-\calB))\\
     & \cong \det(R^1h_{\Sigma,*}\calO_{\Sigma\times\Hilb_\Sigma}(-\calD_\Sigma)) \otimes \bigotimes_{i=1}^k \calO_{\Hilb_\Sigma}(\calD_{\Sigma, b_i^+})^\vee \otimes \calO_{\Hilb_\Sigma}(\calD_{\Sigma, b_i^-})^\vee \\
    & \cong \det(\calA_{\Sigma,n}) \otimes \bigotimes_{i=1}^k \calO_{\Hilb_\Sigma}(\calD_{\Sigma, b_i^+})^\vee \otimes \calO_{\Hilb_\Sigma}(\calD_{\Sigma, b_i^-})^\vee
    \end{align*}
    Now we obtain the result by the base change around diagram $\boxed{1}$ from \eqref{equ:big_diagram}.
    \end{proof}

We are now ready to conclude the proof of Proposition \ref{pr Aa_Sigma, Aa_C and Vv}. 

\begin{proof}[Proof of Proposition \ref{pr Aa_Sigma, Aa_C and Vv}]
Using Lemmas \ref{lemm:calA=R1calI} and \ref{lemm:compute extension term}, and taking the determinant of the exact sequence from Lemma \ref{key lemma}, we conclude that
\begin{align*}
 &j^*q^*\left(\det(\calA_{\Sigma,n}) \otimes \bigotimes_{i=1}^k \calO_{\Hilb_\Sigma}(\calD_{\Sigma, b_i^+})^\vee\otimes \calO_{\Hilb_\Sigma}(\calD_{\Sigma, b_i^-})^\vee \right)\otimes \wt\rho^*\det(\calA_{C,n})^\vee \\
 \cong\  &\wt\alpha_C^*\det(\calQ)^\vee\otimes \wt\rho^*\calO_{\Hilb_C}(\calD_{C,x_0})^{\,-k}.
\end{align*}
Note that $\wt\alpha_C^*\calQ=(\wt h_{C,*}(\id\times\wt\alpha_C)^*\calT)^\vee$, and thus $\calQ$ has rank $k$.
From \eqref{eq:pullback slice under qj}, we have $\wt{\rho}^*\calO_{\Hilb_C}(\calD_{C,x_0})^k\cong j^*q^*\calO_{\Hilb_\Sigma}(\calD_{\Sigma,y_0})^k$. Moreover, by restricting the formula of Lemma \ref{lemm:Extend_Schwarz} (applied to the smooth curve $\Sigma$ with the marked point $y_0$) to each slice $\{b_i^\pm\}\times \Sym_\Sigma^n$, we get 
\[\calO_\Sym(\calD_{\Sigma,b_i^\pm})^\vee\cong\alpha_\Sigma^*\calU_{0,b_i^\pm}^\vee\otimes\calO_\Sym(\calD_{\Sigma,y_0})^\vee.
\] Hence, we obtain isomorphisms 
\begin{align*}
j^*q^*\Big( \det(\calA_{\Sigma,n}) \otimes \calO_{\Hilb_\Sigma}(\calD_{\Sigma,y_0})^{\,-k} \Big) \otimes \wt\rho^*\det(\calA_{C,n})^\vee & \cong \wt\alpha_C^*\det(\calQ)^\vee\otimes j^*q^*\alpha_\Sigma^*\bigg(\bigotimes_{i=1}^k(\calU_{0,b_i^+} \otimes \calU_{0,b_i^-})\bigg)\\
& \cong \wt\alpha_C^*\bigg(\det(\calQ)^\vee\otimes\bigotimes_{i=1}^k  (\dot\nu^*\calU_{0,b_i^+}\otimes\dot\nu^*\calU_{0,b_i^-})\bigg). 
\end{align*} 
The second isomorphism uses the commutative square $\boxed{2}$ in \eqref{equ:big_diagram}, which commutes by Lemma \ref{lm closed embedding cartesian diagram}. Finally, by the definition of $\calQ$ in \eqref{equ:univ_quotient}, we have, for each $i$, an exact sequence
\[ 0 \to \calV_{\Sigma, i} \to \dot{\nu}^*\calU_{0,b_i^+} \oplus \dot{\nu}^*\calU_{0,b_i^-} \to \calQ_i \to 0. 
\] Hence, we conclude $\det(\calQ)^\vee\otimes \bigotimes_{i=1}^k (\dot{\nu}^*\calU_{0,b_i^+} \otimes \dot{\nu}^*\calU_{0,b_i^-})=\det(\calV_{\Sigma})$. This completes the proof. 
\end{proof}

\section{Autoduality and the partial normalisation map}
\label{sc FM and normalisation}

This section computes the relation between two Fourier--Mukai transforms: the one associated to an integral nodal curve $C$ and the one associated to a partial normalisation $\Sigma \to C$. We first describe the restriction of the Poincar\'e sheaf to the locus of the compactified Jacobian described by pushforward under the normalisation map, which is addressed in Section \ref{sc relation Poincares}, and subsequently provide the relation of the associated Fourier--Mukai transforms in Section \ref{sc relation FM}.

\subsection{Isomorphism of Poincar\'e sheaves}
\label{sc relation Poincares}

We denote by $\calP_C$ and $\calP_{\Sigma}$ the respective Poincar\'e sheaves on $\ol\Jac_{C} \times \ol\Jac_{C}$ and $\ol\Jac_\Sigma \times \ol\Jac_\Sigma$. The main result of this section is a comparison between $\calP_{\Sigma}$ and the pullback of $\calP_C$ along the closed embedding 
\[
\check\nu \times \id : \ol\Jac_\Sigma^{\,-k} \times \ol\Jac_C \longhookrightarrow \ol\Jac_C \times \ol\Jac_{C}, 
\]
where
\begin{equation} 
\label{eq:pushforwardmorphism}
\check{\nu} : \ol\Jac_\Sigma^{\,-k} \longhookrightarrow \ol\Jac_C, \quad \calL \longmapsto \nu_{*}\calL \,  
\end{equation}
is the closed embedding defined by pushforward under $\nu$. 

We begin with a slice-by-slice comparison on restrictions of the form 
\[
\calP_{\Sigma,\calF} := \calP_\Sigma|_{\{\calF\} \times \ol\Jac_{\Sigma}}, \quad \calP_{C,\calF'} := \calP_C|_{\{\calF'\} \times \ol\Jac_C} ,
\]
taken at a compatible pair of geometric points $\calF \in \ol\Jac_{\Sigma}$ and $\calF' \in \ol\Jac_C$. Fix a smooth geometric point $x_0\in C$, with preimage $y_0 := \nu^{-1}(x_0)\subset\Sigma$. Our comparison uses the maps 
\[
\tau_{k,y_0} : \ol\Jac_\Sigma^{\,-k} \xrightarrow{\cong} \ol\Jac_\Sigma , \quad 
\rho:\PMod_{\nu}\longrightarrow\ol\Jac_C, \quad 
\dot\nu:\PMod_{\nu}\longrightarrow \ol\Jac_\Sigma, 
\]
as defined in \eqref{eq translation iso}, \eqref{eq definition of rho} and \eqref{eq proj PMod onto Jac} respectively. 

\begin{proposition} 
\label{pr fibrewise description of Poincare and normalisation}
For every geometric point $\calL \in \ol\Jac^{\,-k}_\Sigma$, there exists an isomorphism 
\begin{equation} 
\label{eq description of Pp on a slice}
\calP_{C,\nu_*\calL} 
 \cong \rho_* \Big(\det(\calV_{\Sigma})\otimes \dot \nu^* \calP_{\Sigma, \calL(ky_0)} \Big), \\
\end{equation} 
and similarly for the dual sheaf:  
\begin{align} 
\begin{split}
\label{eq description of Pp dual on a slice}
\calP_{C,\nu_*\calL}^{\vee} 
 \cong \rho_* \Big( \det(\calV_{\Sigma})^{\vee}\otimes\omega_{\PMod}\otimes  \dot \nu^* \calP_{\Sigma, \calL(ky_0)}^{\vee} \Big) . 
\end{split} 
\end{align}
\end{proposition}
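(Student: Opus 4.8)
The plan is to establish the main isomorphism \eqref{eq description of Pp on a slice} first, and then to deduce the dual formula \eqref{eq description of Pp dual on a slice} by Grothendieck--Verdier duality. For the main isomorphism I would reduce the slice statement to a comparison of sheaves on the curvilinear Hilbert scheme $\Hilb_{n,C}^{cur}$. Since $\calP_C$ is symmetric, pulling \eqref{eq description of Pp on a slice} back along $\alpha_C$ and invoking Arinkin's descent (Theorem \ref{tm descent construction of Arinkin}) replaces the left-hand slice $\calP_{C,\nu_*\calL}$ by the restriction $\calG_{0,C}|_{\Hilb_{n,C}^{cur}\times\{\nu_*\calL\}}$, up to a constant line. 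For the right-hand side I would use the Cartesian square \eqref{eq cartesian diagram Hilb PMod} and flat base change along the smooth surjection $\wt\alpha_C$ to write $\alpha_C^*\rho_*(-)\cong\wt\rho_*\wt\alpha_C^*(-)$, and then the commuting square of Lemma \ref{lm closed embedding cartesian diagram} to turn $\wt\alpha_C^*\dot\nu^*\calP_{\Sigma,\calL(ky_0)}$ into $(q\circ j)^*\alpha_\Sigma^*\calP_{\Sigma,\calL(ky_0)}$; by Arinkin's descent on $\Sigma$ this is $(q\circ j)^*\calG_{0,\Sigma}|_{\Hilb_{n,\Sigma}^{cur}\times\{\calL(ky_0)\}}$.

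With both sides expressed through the sheaves $\calG$, the next step feeds in Proposition \ref{pr Aa_Sigma, Aa_C and Vv}, which computes $\wt\alpha_C^*\det(\calV_\Sigma)$ as exactly the discrepancy $j^*q^*(\det(\calA_{\Sigma,n})\otimes\calO_{\Hilb_\Sigma}(\calD_{\Sigma,y_0})^{-k})\otimes\wt\rho^*(\det\calA_{C,n})^\vee$. Here the factor $j^*q^*\det\calA_{\Sigma,n}$ cancels the $\det(\calA_{\Sigma,n})^\vee$ inside $\calG_{0,\Sigma}$, while $\wt\rho^*(\det\calA_{C,n})^\vee$ is pulled out of $\wt\rho_*$ by the projection formula to rebuild the $\det(\calA_{C,n})^\vee$ of $\calG_{0,C}$. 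Using \eqref{eq:pullback slice under qj} to identify $j^*q^*\calO_{\Hilb_\Sigma}(\calD_{\Sigma,y_0})^{-k}$ with $\wt\rho^*\calO_{\Hilb_C}(\calD_{C,x_0})^{-k}$, and then the degree-shift Lemma \ref{lm Gg_0 and Gg_d} (whose slice at $\calL$ reads $\calG_{0,\Sigma}|_{\calL(ky_0)}\otimes\calO_{\Hilb_\Sigma}(\calD_{\Sigma,y_0})^{-k}\cong\calG_{-k,\Sigma}|_{\calL}$), the whole comparison collapses to the single identity of ``main terms''
\[
\wt\rho_*\big[(q\circ j)^*\big((\psi_\Sigma)_*\sigma_\Sigma^*\calL^{\boxtimes_n}\big)^{sign}\big]\cong\big((\psi_C)_*\sigma_C^*(\nu_*\calL)^{\boxtimes_n}\big)^{sign},
\]
with $\calL$ of degree $-k$ on $\Sigma$ and $\nu_*\calL$ of degree $0$ on $C$.

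This main-term identity is where I expect the real work to lie. I would prove it by enlarging the diagram \eqref{equ:big_diagram} to include the curvilinear flag schemes, so that $\sigma_C,\psi_C$ and $\sigma_\Sigma,\psi_\Sigma$ sit in compatible Cartesian squares over the fibre product, and matching the two pushforwards through the support correspondence $\nu(E)\subset D$ of Corollary \ref{co description of Hilb^cur times PMod}: away from the nodes $\nu$ is an isomorphism, so $(\nu_*\calL)_x\cong\calL_y$ at corresponding support points $x=\nu(y)$, and at a resolved node the curvilinear, off-diagonal condition forces the subscheme onto a single branch, again matching fibres. This yields the isomorphism over the image of the closed immersion $j$. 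The \emph{main obstacle} is the passage from this open comparison to a global isomorphism: base change for $\wt\rho_*$ against $(q\circ j)^*$ is only transparent away from the big diagonal, so the identity must be extended across the remaining locus. I would close this gap using Lemma \ref{lm complement of image j}, which confines that locus to collisions over the nodes, together with the maximal Cohen--Macaulay property of $\calG^{cur}$ (Lemma \ref{lm Gg_0 and Gg_d}) and of the Poincar\'e sheaf, so that an isomorphism agreeing on a large open subset extends and then descends along $\alpha_C$.

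Finally, for the dual formula \eqref{eq description of Pp dual on a slice} I would apply $R\calHom(-,\calO_{\ol\Jac_C})$ to the established isomorphism. As $\ol\Jac_C$ is Gorenstein with trivial dualising sheaf and $\rho$ is finite, Grothendieck duality gives $R\calHom(\rho_*\calE,\calO_{\ol\Jac_C})\cong\rho_*(\calE^\vee\otimes\omega_{\PMod})$ for $\calE=\det(\calV_\Sigma)\otimes\dot\nu^*\calP_{\Sigma,\calL(ky_0)}$, where $\omega_{\PMod}=\rho^!\calO_{\ol\Jac_C}$ is the dualising sheaf computed in Lemma \ref{lm description of omega_PMod}. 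The maximal Cohen--Macaulay property of all sheaves involved keeps these duals concentrated in degree zero, and $\calE^\vee=\det(\calV_\Sigma)^\vee\otimes\dot\nu^*\calP_{\Sigma,\calL(ky_0)}^\vee$ then produces exactly \eqref{eq description of Pp dual on a slice}.
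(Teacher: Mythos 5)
Your reduction is, up to reordering, the paper's own: the passage to $\calG_{0,C}$-slices via Theorem \ref{tm descent construction of Arinkin}, base change through the square \eqref{eq cartesian diagram Hilb PMod}, the commuting square of Lemma \ref{lm closed embedding cartesian diagram}, the cancellation of the determinant lines via Proposition \ref{pr Aa_Sigma, Aa_C and Vv}, \eqref{eq:pullback slice under qj} and Lemma \ref{lm Gg_0 and Gg_d}, and Grothendieck--Verdier duality for the finite morphism $\rho$ for the dual statement all match the paper's proof. The genuine gap is in your treatment of the ``main-term identity'', which is exactly where the paper's real work lies. You establish it away from the big diagonal and then propose to extend the isomorphism across the remaining locus using maximal Cohen--Macaulayness. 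This cannot work, because the locus you must cross has codimension \emph{one}: the complement of the image of $j$ consists of pairs $(D,E)$ with a length-two collision over a resolved node (e.g.\ $D_i=D_{b_i,a_i}$ with $a_i\in\CC^*$ while $E_i=2b_i^{+}$), and these form a divisor --- one parameter $a_i$ plus $n-2$ free points inside an $n$-dimensional space --- whose image in $\Hilb^{cur}_{n,C}$ is again a divisor. Hartogs-type extension of isomorphisms between S2 (in particular MCM) sheaves requires the removed locus to have codimension at least two; across a divisor it fails in general (compare $\calO$ and $\calO(D)$, which agree off $D$). The only legitimate MCM-extension in the paper's argument is the final passage from $\Hilb^{cur}_{n,C}$ to $\Hilb_{n,C}$, where the complement really does have codimension $\geq 2$ by \cite[Lemma 3.9]{melo2}.

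The missing idea is the paper's Lemma \ref{lm support}: the sheaf being pushed forward, $\left(\wt\psi_{*}\,\wt\sigma^{*}\calL^{\boxtimes_n}\right)^{sign}$, \emph{vanishes identically} on the complement of the image of $j$, so the pushforward factors through $j$ on the nose and no extension problem ever arises. The vanishing is representation-theoretic rather than geometric: by Lemma \ref{lm complement of image j} the complement of the image of $j$ lies over the big diagonal of $\Hyb_\nu$, hence inside the images of the fixed loci of nontrivial permutations $\gamma\in\sym_n$; an anti-invariant germ $t$ on the fixed locus of an odd $\gamma$ satisfies $t=\gamma^{*}t=-t$, hence vanishes, and every nontrivial even $\gamma$ admits an odd permutation acting trivially on its fixed locus. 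Your proposal never invokes this sign-vanishing argument, and without it (or a genuine substitute) the key identity --- and hence the Proposition --- is not established.
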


\begin{proof}
Since $C$ is nodal and integral, by means of Proposition \ref{pr alpha curv surjective}, one can choose $n \gg 0$ such that the restriction of the Abel--Jacobi map to the curvilinear Hilbert scheme $\alpha_C$ is surjective. One can assume, without losing generality, that $\alpha_\Sigma$ is surjective too. In that case, and thanks to equivariance of the Poincar\'e sheaf under permutation \cite[Lemma 6.1.(c)]{arinkin} it is enough to describe  the restriction $\calG_{0, C,\nu_*\calL}:=\calG_{0, C}|_{\Hilb\times\{\nu_*\calL\}}$ of $\calG_{0, C}$ to the slice associated to $\nu_*\calL\in \ol\Jac_C$. 

As an application of proper base change and projection formula, one has the isomorphism $\left (\nu_*\calL \right )^{\boxtimes_n} \cong \nu_{*}^n\left ( \calL^{\boxtimes_n} \right )$, where $\nu^n : \Sigma^n \longrightarrow C^n$ is induced by $\nu$. Specializing formula \eqref{eq description of Gg_C} to the slice henceforth results in
\[
\calG_{0, C,\nu_*\calL} \cong \left (\psi_{C,*}\sigma_C^*\nu_{*}^n \calL^{\boxtimes_n} \right )^{sign}  \otimes \det(\calA_{C,n})^\vee.
\]

Consider the following Cartesian diagram
\begin{equation} \label{n-fold normalisation, Flag and surface}
\begin{tikzcd}[column sep = huge]
\Flag_{n, C}^{cur} \times_{C^n} \Sigma^n \arrow[r, "\wt\sigma"] \arrow[d, "\wt\nu"'] & \Sigma^n \arrow[d, "\nu^{n}"] \\
\Flag_{n, C}^{cur} \arrow[r, "\sigma_C"'] & C^n.
\end{tikzcd}
\end{equation}
Observe that the data of a length $1$ extension of ideal sheaves $\calI_D \subset \calI_{D'}$ over $C$, an ideal sheaf $\calI_{E}$ such that $\chow_{C}(\calI_D) = \chow_{\Sigma}(\calI_E)$ and a point $y \in \Sigma$ whose image $\nu(y)$ coincides with $D'\setminus D$, determines naturally a length $1$ extension of ideal sheaves $\calI_E \subset \calI_{E'}$ over $\Sigma$. To prove this claim note that either the partial normalisation $\nu : \Sigma \to C$ is a local isomorphism around $y$ or $\Sigma$ is smooth at $y$. In the first case, the extension $\calI_D \subset \calI_{D'}$ determines naturally the extension $\calI_E \subset \calI_{E'}$ and in the second case, there is only one possible extension of ideal sheaves supported at a $y$ as it is a smooth point. This implies that a point $((\calI_{D_1}, \dots, \calI_{D_n}), (y_1, \dots, y_n ))$ in $\Flag_{n, C}^{cur} \times_{C^n} \Sigma^n$ determines naturally a point $(\calI_{E_1}, \dots, \calI_{E_n})$, so one obtains a morphism
\[
s: \Flag_{n, C}^{cur} \times_{C^n} \Sigma^n \to \Flag_{n, \Sigma}^{cur}
\]
fitting in a commutative diagram
\begin{equation} \label{wtsigma = s composed with sigma}
\begin{tikzcd}[column sep = huge]
\Flag_{n, C}^{cur} \times_{C^n} \Sigma^n \arrow[r, "s"] \arrow[rd, "\wt\sigma"'] & \Flag_{n, \Sigma}^{cur} \arrow[d, "\sigma_\Sigma"] 
\\
 & \Sigma^n
\end{tikzcd}.
\end{equation}
Making use of $\sigma_C$, $\sigma_\Sigma$ and $s$ defined above, one can easily construct the morphism $\wt\psi$ fitting in the commutative square  
\begin{equation} \label{Flag, normalisation and Hilb}
\begin{tikzcd}[column sep = huge]
\Flag_{n, C}^{cur} \times_{C^n} \Sigma^n \arrow[r, "\wt\psi"] \arrow[d, "\wt\nu"'] & \Hilb_{n, C}^{cur} \times_{\Hyb_\nu} \Hilb_{n, \Sigma}^{cur} \arrow[d, "\beta"] \\
\Flag_{n, C}^{cur} \arrow[r, "\psi_C"'] & \Hilb_{n, C}^{cur}
\end{tikzcd}.
\end{equation}
Recalling that $\nu^{n}$ is proper (because $\nu$ is so), we use proper base change around (\ref{n-fold normalisation, Flag and surface}), and functoriality around \eqref{wtsigma = s composed with sigma} and \eqref{Flag, normalisation and Hilb}, to get 
\[
\calG_{0, C,\nu_*\calL} \cong \left (\beta 
_{*}\wt\psi_{*} s^* \sigma^{*}_{\Sigma} \calL^{\boxtimes_n} \right )^{sign} \otimes \det(\calA_{C,n})^\vee,
\]
where the permutation action lifts to $\Flag_{n, C}^{cur} \times_{C^n} \Sigma^n$ by pullback under $\wt \nu$. By invariance of $\beta$ with respect to this action we obtain, 
\[
\calG_{0, C,\nu_*\calL} \cong \beta 
_{*} \left (\wt\psi_{*} s^* \sigma^{*}_{\Sigma} \calL^{\boxtimes_n} \right )^{sign} \otimes \det(\calA_{C,n})^\vee.
\]
We shall see next that the previous construction factors through the closed embedding $j:\Hilb^{cur}_{n,C} \times_{\ol\Jac_C} \PMod_{\nu} \longhookrightarrow \Hilb^{cur}_{n,C} \times_{\Hyb_\nu} \Hilb_{n,\Sigma}^{cur}$ of \eqref{eq def j}.

We claim that the support of $\left (\wt\psi_{*} s^* \sigma^{*}_{\Sigma} \calL^{\boxtimes_n} \right )^{sign}$ lies in the image of $j$. This will be proved in Lemma \ref{lm support} below. Consequently, one has that
\[
\calG_{0, C,\nu_*\calL} \cong \wt \rho
_{*} j^* \left (\wt\psi_{*}  s^* \sigma^{*}_{\Sigma}\calL^{\boxtimes_n} \right )^{sign} \otimes \det(\calA_{C,n})^\vee,
\]
where we observe, as in \eqref{eq cartesian diagram Hilb PMod}, that $\wt \rho = \beta \circ j$.

The projection $\psi_C:\Flag_{n, C}^{cur} 
\to \Hilb_{n, C}^{cur}$, gives rise to $\Flag_{n, C}^{cur} \times_{C^n} \Sigma^{n} 
\to \Hilb_{n, C}^{cur} \times_{\Hyb_\nu} \Flag_{n, \Sigma}^{cur}$. By definition, the elements of $\Flag_{n, C}^{cur}$ and $\Hilb_{n, C}^{cur}$ are locally contained in smooth curves. Hence, starting from $D \in \Hilb_{n, C}^{cur}$, one determines uniquely a filtration of $D$ out of a filtration on $\Flag_{n,\Sigma}$. This naturally provides an inverse to the previous morphism, so
\[
\Flag_{n, C}^{cur} \times_{C^n} \Sigma^{n} 
\cong \Hilb_{n, C}^{cur} \times_{\Hyb_\nu} \Flag_{n, \Sigma}^{cur}.
\]
The statement above, followed by the isomorphism
\[
\Hilb_{n, C}^{cur} \times_{\Hyb_\nu} \Flag_{n, C}^{cur}
\cong ( \Hilb_{n, C}^{cur} \times_{\Hyb_\nu} \Hilb_{n, \Sigma}^{cur} ) \times_{\Hilb_{n, \Sigma}^{cur}} \Flag_{n,\Sigma}^{cur}, 
\]
provides us with the Cartesian diagram, 
\begin{equation*}
\begin{tikzcd}[column sep = huge]
\Flag_{n, C}^{cur} \times_{C^n} \Sigma^{n} \arrow[r, "\wt{\psi}"] \arrow[d, "s"'] & \Hilb^{cur}_{n,C} \times_{\Hyb_\nu}\Hilb_{n, \Sigma}^{cur} \arrow[d, "q"] \\
\Flag_{n,\Sigma}^{cur} \arrow[r, "\psi_{\Sigma}"'] & \Hilb_{n, \Sigma}^{cur},
\end{tikzcd}
\end{equation*}
where $q$ denotes the obvious projection. Since $\pi_\Sigma$ is proper, then proper base change with respect to it gives 
\[
\calG_{0, C,\nu_*\calL} \cong \wt \rho
_{*} j^* \left ( q^*  \psi_{\Sigma,*} \sigma_\Sigma^* \calL^{\boxtimes_n} \right )^{sign} \otimes \det(\calA_{C,n})^\vee,
\]
and by equivariance with respect to the action of the symmetric group,
\begin{equation} \label{Q restriction stage 2}\calG_{0, C,\nu_*\calL} \cong \wt \rho
_{*} j^*q^* \left (\psi_{\Sigma,*} \sigma_\Sigma^* \calL^{\boxtimes_n} \right )^{sign} \otimes \det(\calA_{C,n})^\vee.
\end{equation}

We now turn our attention to the sheaf $\calG_{-k, \Sigma}$. After \eqref{eq description of Gg_C} (applied to $\Sigma$), we have that the restriction $\calG_{-k, \Sigma}|_{\Hilb_{n,\Sigma}\times\{\calL\}}$ to slice associated to $\calL \in \Jac_\Sigma^{\,-k}$ is
\[
\calG_{-k, \Sigma, \calL} \cong ( \psi_{\Sigma, *} \sigma_{\Sigma}^{*}( \calL^{\, \boxtimes_n} ) )^{sign} \otimes \det(\calA_{\Sigma,n})^\vee. 
\]
Recalling Lemma \ref{lm Gg_0 and Gg_d} (again applied to $\Sigma$ and to the point $y_0$), 
\[
\calG_{-k,\Sigma, \calL} \cong  \calG_{0,\Sigma, \calL(ky_0)} \otimes \calO_{\Hilb_\Sigma}(\calD_{\Sigma,y_0})^{\,-k}.
\]
Using the two previous isomorphisms in (\ref{Q restriction stage 2}) yields 
\[
\calG_{0, C,\nu_*\calL} \cong \wt \rho_{*}j^*q^* \left( \calG_{0, \Sigma, \calL(ky_0)} \otimes \det(\calA_{\Sigma,n}) \otimes \calO_{\Hilb_C}(\calD_{\Sigma,y_0})^{\,-k} \right ) \otimes \det(\calA_{C,n})^\vee.
\]
Statement \eqref{descent construction of Arinkin} and functoriality with respect to the commutative diagram
\[
\begin{tikzcd}[column sep = huge]
\Hilb_{n, C}^{cur} \times_{\ol \Jac_C} \PMod_\nu \arrow[r, "\tilde \alpha_C"] \arrow[d, "q \circ j"'] & \PMod_\nu \arrow[d, "\dot{\nu}"] \\
\Hilb_{n, \Sigma}^{cur} \arrow[r, "\alpha_{\Sigma}"'] & \ol\Jac_{\Sigma},
\end{tikzcd}
\]
gives rise to the isomorphism 
\[
\calG_{0, C,\nu_*\calL} \cong \wt \rho_{*} \left( \wt{\alpha}_C^* \dot \nu^{*} \calP_{\Sigma, \calL(ky_0)} \otimes j^*q^*\det(\calA_{\Sigma,n}) \otimes j^*q^*\calO_{\Hilb_C}(\calD_{\Sigma,y_0})^{\,-k} \right ) \otimes \det(\calA_{C,n})^\vee.
\]
An application of the projection formula together with Proposition \ref{pr Aa_Sigma, Aa_C and Vv}, allows us to write
\begin{align*}
\calG_{0, C,\nu_*\calL} 
& \cong \wt{\rho}_{*} \wt\alpha_C^{*} \Big( \dot{\nu}^{*} \calP_{\Sigma, \calL(ky_0)} \otimes \det(\calV_{\Sigma}) \Big) . 
\end{align*}
Proper base change with respect to the Cartesian diagram \eqref{eq cartesian diagram Hilb PMod} provides us with the conclusive isomorphism 
\begin{align}\label{equ:calG_slice}
\calG_{0, C,\nu_*\calL} 
& \cong \alpha_C^* \rho_* \left( \dot \nu^* \calP_{\Sigma, \calL(ky_0)} \otimes  \det(\calV_{\Sigma}) \right ) .  
\end{align}
Note that $\rho_* \left( \dot \nu^* \calP_{\Sigma, \calL(ky_0)} \otimes  \det(\calV_{\Sigma}) \right )$ is maximal Cohen--Macaulay as $\rho: \PMod_{\nu} \to \ol\Jac_C$ is finite and $\dot\nu^* \calP_{\Sigma, \calL(ky_0)} \otimes  \det(\calV_{\Sigma})$ is maximal Cohen--Macaulay. Therefore, both sides of \eqref{equ:calG_slice} extend along $\Hilb_{C,n}^{cur} \subset \Hilb_{C,n}$ to isomorphic  maximal Cohen--Macaulay sheaves using \cite[Lemma 2.2]{arinkin} and \cite[Lemma 3.9]{melo2}. Now, the faithfulness of the pullback along the projective bundle $A_C:\Hilb_{C,n} \to \ol\Jac_C$ (recall Assumption \ref{assumption}) implies the first statement of the proposition, namely \eqref{eq description of Pp on a slice}. 

We address the second statement making use of Grothendieck-Verdier duality in \eqref{eq description of Pp on a slice}, recalling that $\rho$ is proper and that the dualising sheaf of $\ol\Jac_C$ is trivial:
\begin{align*}
    \calP_{C,\nu_*\calL}^{\vee}&\cong\calHom(\rho_* \left( \dot \nu^* \calP_{\Sigma, \calL(ky_0)} \otimes  \det(\calV_{\Sigma}) \right ),\calO_{\ol\Jac_C})\\
    &\cong\rho_*(\calHom(\dot \nu^* \calP_{\Sigma, \calL(ky_0)} \otimes  \det(\calV_{\Sigma}),\omega_{\PMod}))\\
    &\cong\rho_*(\dot \nu^* \calP^{\vee}_{\Sigma, \calL(ky_0)} \otimes  \det(\calV_{\Sigma})^{\vee}\otimes\omega_{\PMod}).
\end{align*}

Now, the second statement \eqref{eq description of Pp dual on a slice} follows naturally from \eqref{eq description of Pp on a slice} and Lemma 2.1 of \cite{arinkin}. This concludes the proof of Proposition \ref{pr fibrewise description of Poincare and normalisation}. 
\end{proof}

The above proof relied on the following Lemma, whose proof we now address.   

\begin{lemma} \label{lm support}
The support of the sheaf $\left (\wt\psi_{*} \wt\sigma^{*} \calL^{\boxtimes_n} \right )^{sign}$ is contained in the image of the map 
\[
j:\Hilb^{cur}_{n,C} \times_{\ol\Jac_C} \PMod_{\nu}\to \Hilb_{n, C}^{cur} \times_{\Hyb_\nu} \Hilb^{cur}_{n,\Sigma} , 
\] 
as defined in \eqref{eq def j}. 
\end{lemma}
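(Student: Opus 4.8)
The plan is to bound the support by the image of $\widetilde\psi$ and then show that the anti-invariant part vanishes off $\image(j)$. Since $\calL^{\boxtimes_n}$ is a line bundle on $\Sigma^n$, its pullback $\widetilde\sigma^*\calL^{\boxtimes_n}$ is a line bundle on $\Flag^{cur}_{n,C}\times_{C^n}\Sigma^n$, hence of full support; as $\widetilde\psi$ is proper, the support of $\widetilde\psi_*\widetilde\sigma^*\calL^{\boxtimes_n}$ is contained in the closed image $\image(\widetilde\psi)$, and the sign summand has support contained in it. By Lemma \ref{lm complement of image j} every point of $\image(\widetilde\psi)$ lying off $\image(j)$ maps into the big diagonal $\Delta\subset\Hyb_\nu$, so by Lemma \ref{lm description of Hilb^cur} the offending local factor $D_i$ of $D$ is a length-$\geq 2$ curvilinear subscheme at a node $b_i$. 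It therefore suffices to prove that the anti-invariant part vanishes at such mismatched points over $\Delta$.

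The mechanism I would use is the $\sym_n$-action on the fibres of $\widetilde\psi$. A collision of two support points at a node contributes a transposition $\tau\in\sym_n$ to the stabilizer of the fibre, acting on $\widetilde\sigma^*\calL^{\boxtimes_n}$ compatibly with the permutation of the factors of $\Sigma^n$. Working in the local model $\calO_{C,b_i}\cong\CC[y_i^+,y_i^-]/\langle y_i^+y_i^-\rangle$, an infinitesimal support point running along one branch of $C$ lifts uniquely, to the corresponding branch of $\Sigma$, whereas a lift that switches branches is possible only over the reduced node. Reading this against Corollary \ref{co description of Hilb^cur times PMod}, the configurations that land in $\image(j)$ are exactly those for which the collision is along a single branch (both lifts on one branch of $\Sigma$) or transverse with the two lifts landing on the two distinct points $b_i^+,b_i^-$; in both of these the transposition $\tau$ either acts by $\epsilon\mapsto-\epsilon$ on a curvilinear fat point or permutes two fibre points over distinct points of $\Sigma$, so the anti-invariant part is non-zero, as it must be.

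For a mismatched configuration the two colliding slots are interchanged by $\tau$ while the one-dimensional fibre of $\widetilde\sigma^*\calL^{\boxtimes_n}$ is fixed — here the $\sym_n$-symmetry of $\calL^{\boxtimes_n}$ makes the swap of tensor factors act as the identity — so the anti-invariant part vanishes, giving the stated containment of supports. The main obstacle is precisely this local computation at the nodes: one must classify, in the cases of Lemma \ref{lm description of Hilb^cur}, how $\tau$ acts on the fibre of $\widetilde\psi$ and on $\widetilde\sigma^*\calL^{\boxtimes_n}$, and check that the locus on which $\tau$ fixes the line-bundle fibre coincides with the complement of $\image(j)$ isolated in Corollary \ref{co description of Hilb^cur times PMod} and Lemma \ref{lm closed embedding cartesian diagram}. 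Once this bookkeeping is in place, the sign decomposition converts it directly into the vanishing required.
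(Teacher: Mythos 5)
Your reduction via Lemma \ref{lm complement of image j} and your basic mechanism (anti\-/invariance kills sections where an odd permutation acts trivially, while fat\-/point fibres with the $\epsilon\mapsto-\epsilon$ action, or free orbits, let them survive) are exactly the ingredients of the paper's proof. But there is a genuine gap in the case analysis your argument is keyed to, and it sits precisely at the ``bookkeeping'' you defer. The dichotomy you claim --- configurations in $\im(j)$ are exactly the single\-/branch collisions and the transverse ones with lifts $b_i^+,b_i^-$, and at every mismatched configuration $\tau$ fixes the fibre point and acts trivially on the line\-/bundle fibre --- is false. Take $D_i$ to be the length\-/two subscheme supported on the $+$ branch at $b_i$ (ideal $\langle y_i^-,(y_i^+)^2\rangle$) and $E_i=b_i^++b_i^-$. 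This pair is Chow\-/compatible, so it defines points of $\Hilb_{n,C}^{cur}\times_{\Hyb_\nu}\Hilb_{n,\Sigma}^{cur}$, and your classification (read off from case (2) of Corollary \ref{co description of Hilb^cur times PMod}) declares it mismatched. Yet the fibre of $\wt\psi$ over such a point consists of the two reduced points with lift tuples $(\ldots,b_i^+,b_i^-,\ldots)$ and $(\ldots,b_i^-,b_i^+,\ldots)$, which $\tau$ interchanges rather than fixes: your vanishing argument does not apply, and anti\-/invariant germs do survive (take a germ at one of the two points and anti\-/symmetrize). If this configuration genuinely lay outside $\im(j)$, the lemma itself would be false. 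What saves it is that it lies in $\im(j)$: locally $\calI_{D_i}^\vee\cong\nu_*\calO_\Sigma(b_i^+)$, and the parabolic module $(M,V)$ with $M=\calI_E^\vee(-ny_0)$, where $E$ equals $b_i^++b_i^-$ at the node, and $V=M_{b_i^+}$, satisfies $\rho(M,V)=\alpha_C(D)$ and $j(D,(M,V))=(D,E)$; the literal reading of the Corollary's case (2) omits this possibility and cannot be taken at face value here. So the coincidence you propose to verify between the trivial\-/action locus and the complement of $\im(j)$ fails, and your plan, carried out honestly, runs into a contradiction with the case list rather than into a proof.

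A second, more local gap: even at the points your mechanism is correctly aimed at, such as $(D_{b_i,a_i},2b_i^+)$, the criterion you state --- $\tau$ fixes the fibre point and acts trivially on the one\-/dimensional fibre of $\wt\sigma^*\calL^{\boxtimes_n}$ --- is not sufficient for the stalk of the sign part to vanish. Your own single\-/branch case shows this: there too the closed point is fixed and the swap acts trivially on $\calL_{b_i^+}\otimes\calL_{b_i^+}$, yet anti\-/invariant germs survive, because a germ can vanish at the point without vanishing as a germ (it is a multiple of the anti\-/invariant nilpotent coordinate $\epsilon$). What one actually needs, and what holds at the genuinely off\-/image points, is that the odd permutation acts trivially on a whole neighbourhood of the fibre inside $\Flag_{n,C}^{cur}\times_{C^n}\Sigma^n$: over $(D_{b_i,a_i},2b_i^+)$ the local fibre product is the reduced curve of punctual flags with constant lift $(b_i^+,b_i^+)$, on which $\tau$ is the identity, and only then does $\gamma^*s=s$ combined with $\gamma^*s=-s$ kill germs rather than just values. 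The paper organises the argument to avoid your pointwise classification altogether: it covers the complement of $\im(j)$ by images of fixed loci of nontrivial permutations and kills anti\-/invariant sections on loci where an odd permutation acts as the identity, reducing even permutations to odd ones. Your route can be repaired to match this, but it requires both corrections above.
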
 
\begin{proof}
Recall that the action of the symmetric group $\sym_n$ on $\Flag^{cur}_{n,C}$ is induced by its action on $C^n$. It follows that, for $\gamma\in\sym_n$, the fixed point set $\left ( \Flag_{n,C}^{cur} \right )^\gamma$ coincides with the preimage of $(C^n)^\gamma$ inside $\Flag_{n,C}^{cur}$. Furthermore, since we consider the action of $\sym_n$ on $\Flag_{n, C}^{cur} \times_{C^n} \Sigma^n$ to be the one obtained by pullback under $\wt \nu:\Flag_{n, C}^{cur} \times_{C^n} \Sigma^n\to\Flag_{n, C}^{cur}$, and if $\left ( \Flag_{n, C}^{cur} \times_{C^n} \Sigma^n \right )^\gamma$ denotes the locus fixed by $\gamma$, then 
\begin{equation}\label{eq fixed by gamma}
    \left ( \Flag_{n, C}^{cur} \times_{C^n} \Sigma^n \right )^\gamma\cong \left ( \Flag_{n, C}^{cur} \right )^\gamma \times_{(C^n)^\gamma} (\nu^n)^{-1} (C^n)^\gamma.
\end{equation}

By Lemma \ref{lm closed embedding cartesian diagram}, the image of the immersion $j$ is closed in $\Hilb_{n, C}^{cur} \times_{\Hyb_\nu} \Sym_\Sigma^n$, and by Lemma \ref{lm complement of image j} lies over the big diagonal $\Delta\subset\Hyb_\nu$, so it is contained in $\chow^{-1}\left ( \Delta \right ) \times_{\Delta}(\nu^{(n)})^{-1}\left ( \Delta \right ) \subset \Hilb_{n, C}^{cur} \times_{\Hyb_\nu} \Sym_\Sigma^n$. Observe that $\Delta$ is the image under $\pi_C : C^n \to \Hyb_\nu$ of the union of $(C^n)^\gamma$, with $\gamma$ varying in $\sym_n\setminus\{0\}$. It follows, using \eqref{eq fixed by gamma}, that the complement of the image of $j$ is contained in
\[
\bigcup_{\gamma\in\sym_n\setminus\{0\}}\wt \psi \left ( \left ( \Flag_{n, C}^{cur} \times_{C^n} \Sigma^n \right )^\gamma \right ).
\]
Consider $\gamma \in \sym_n$ odd , since $\gamma$ is the identity on $\left ( \Flag_{n, C}^{cur} \times_{C^n} \Sigma^n \right )^\gamma$, it follows that the restriction of $\left (\wt\psi_{*}\wt\sigma^{*} \calL^{\boxtimes_n} \right )^{sign}$ to $\wt \psi \left ( \left ( \Flag_{n, C}^{cur} \times_{C^n} \Sigma^n \right )^\gamma \right )$ vanishes as the sections must satisfy $-s=\gamma^*s = s$. For any non-trivial even $\gamma \in \sym_n$, there always exists an odd permutation $\gamma'$ acting trivially on $\left ( \Flag_{n, C}^{cur} \times_{C^n} \Sigma^n \right )^\gamma$, the locus fixed by $\gamma$ and $\left (\wt\psi_{*}\wt\sigma^{*} \calL^{\boxtimes_n} \right )^{sign}$ vanishes there. Hence, $\left (\wt\psi_{*} \wt\sigma^{*} \calL^{\boxtimes_n} \right )^{sign}$ vanishes on the complement of $\im(j)$ 
\end{proof}

With the fibrewise comparison between $\calP_{C}$ and $\calP_{\Sigma}$ at hand, we now pass to the global comparison result, which describes how the Poincar\'e sheaves interact with the partial normalisation map. 

\begin{theorem} 
\label{tm Poincare and normalisation} 
Let $C$ be an integral nodal curve. Fix a partial normalisation $\nu : \Sigma \to C$ that resolves precisely $k$ nodes. Pick $y_0 \in \Sigma$ such that $\nu(y_0)$ is a smooth point of $C$. Then, there exists an isomorphism 
\begin{align} 
\begin{split}
\label{eq relation of Poincares}
(\check\nu \times \id)^*\calP_C 
\cong (\id \times \rho)_* \left ( \q_2^* \det(\calV_{\Sigma}) \otimes (\tau_{k, y_0} \times \dot{\nu})^*\calP_\Sigma \right),
\end{split}
\end{align}
over $\ol\Jac_\Sigma^{\,-k}\times \ol\Jac_C$, where $\q_2 : \ol\Jac_\Sigma^{\,-k} \times \PMod_{\nu} \to \PMod_{\nu}$ is the natural projection.

For the dual sheaves, we similarly have
\begin{align} 
\begin{split}
\label{eq relation of dual Poincares}
(\id \times \check\nu)^*\calP_C^{\vee} 
& \cong (\rho \times \id)_* \left ( \q_1^*\det(\calV_{\Sigma})^{\vee}\otimes\q_1^*\omega_{\PMod}\otimes  (\dot{\nu} \times \tau_{k, y_0})^*\calP_\Sigma^{\vee} \right ) \, . 
\end{split}
\end{align}
where $\q_1 : \PMod_{\nu} \times \ol\Jac_\Sigma^{\,-k} \to \PMod_{\nu}$ is the natural projection. 
\end{theorem}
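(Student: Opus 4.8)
The plan is to upgrade the slice-by-slice comparison of Proposition \ref{pr fibrewise description of Poincare and normalisation} to the global isomorphisms \eqref{eq relation of Poincares} and \eqref{eq relation of dual Poincares} by a seesaw argument over the relevant $\ol\Jac_\Sigma^{\,-k}$ factor, and then to pin down the resulting ambiguity, a line bundle pulled back from the base, using the normalisations of the Poincar\'e sheaves.

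For \eqref{eq relation of Poincares}, write $\calA := (\check\nu\times\id)^*\calP_C$ and let $\calB$ denote the right-hand side, both coherent on $\ol\Jac_\Sigma^{\,-k}\times\ol\Jac_C$. First I would check that $\calA$ and $\calB$ are flat over the first factor $\ol\Jac_\Sigma^{\,-k}$: for $\calA$ this is inherited from the flatness of $\calP_C$ over both factors (Theorem \ref{tm descent construction of Arinkin}); for $\calB$ it follows since $\calP_\Sigma$ is flat over both factors, $\tau_{k,y_0}$ is an isomorphism, $\dot\nu$ is flat, $\det(\calV_\Sigma)$ is a line bundle, and $\id\times\rho$ is finite, so its pushforward is exact and preserves flatness over the base. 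Next I would restrict both sheaves to an arbitrary slice $\{\calL\}\times\ol\Jac_C$. Base change along the finite map $\id\times\rho$, together with the identifications $\q_2^*\det(\calV_\Sigma)|_{\{\calL\}}=\det(\calV_\Sigma)$ and $(\tau_{k,y_0}\times\dot\nu)^*\calP_\Sigma|_{\{\calL\}}=\dot\nu^*\calP_{\Sigma,\calL(ky_0)}$, shows $\calB|_{\{\calL\}\times\ol\Jac_C}\cong\rho_*(\det(\calV_\Sigma)\otimes\dot\nu^*\calP_{\Sigma,\calL(ky_0)})$, whereas $\calA|_{\{\calL\}\times\ol\Jac_C}=\calP_{C,\nu_*\calL}$. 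These agree by \eqref{eq description of Pp on a slice}.

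With fibrewise agreement in hand, the seesaw step: the fibres $\calP_{C,\nu_*\calL}$ are rank one torsion-free sheaves on the integral variety $\ol\Jac_C$, hence simple, so $\dim\Hom(\calA_{\calL},\calB_{\calL})=1$ for every $\calL$. Cohomology and base change then makes $\q_{1,*}\calHom(\calA,\calB)$ a line bundle $\calN$ on $\ol\Jac_\Sigma^{\,-k}$, and evaluation of the tautological section gives a morphism $\calA\otimes\q_1^*\calN\to\calB$ that is an isomorphism on each slice, hence an isomorphism. To see that $\calN$ is trivial, I would restrict to $\ol\Jac_\Sigma^{\,-k}\times\{\calO_C\}$, with $\calO_C$ the origin of $\Jac_C\subset\ol\Jac_C$. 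There $\calA$ restricts to $\check\nu^*(\calP_C|_{\ol\Jac_C\times\{\calO_C\}})\cong\calO$ by the normalisation of $\calP_C$; and since $\rho$ is an isomorphism over the smooth locus (Lemma \ref{lm tau surjective}), $\rho^{-1}(\calO_C)$ is the single point $(\calO_\Sigma,V_0)$ with $\dot\nu(\calO_\Sigma,V_0)=\calO_\Sigma$, so $\calB$ restricts to $\rho_*(\det(\calV_\Sigma)|_{(\calO_\Sigma,V_0)}\otimes\dot\nu^*(\calP_\Sigma|_{\ol\Jac_\Sigma\times\{\calO_\Sigma\}}))\cong\calO$ by the normalisation of $\calP_\Sigma$. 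Comparing these restrictions forces $\calN\cong\calO_{\ol\Jac_\Sigma^{\,-k}}$, which establishes \eqref{eq relation of Poincares}.

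For the dual statement \eqref{eq relation of dual Poincares}, I would run the identical argument with the roles of the two factors exchanged, now over the second factor $\ol\Jac_\Sigma^{\,-k}$, using the symmetry of $\calP_C$ and $\calP_\Sigma$ (Theorem \ref{tm descent construction of Arinkin}) together with the dual slice formula \eqref{eq description of Pp dual on a slice}; the twist by $\omega_{\PMod}$ is exactly the one already present fibrewise. Alternatively, \eqref{eq relation of dual Poincares} can be obtained from \eqref{eq relation of Poincares} by applying relative Grothendieck--Verdier duality to the finite morphism $\id\times\rho$, whose relative dualising sheaf is $\q_2^*\omega_{\PMod}$ because $\ol\Jac_C$ has trivial dualising sheaf, which reproduces the factor $\omega_{\PMod}$. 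The main obstacle is the seesaw step: one must justify carefully that two flat families, fibrewise isomorphic with simple rank one torsion-free members, differ only by a line bundle pulled back from the base; this rests on the simpleness of rank one torsion-free sheaves on the integral scheme $\ol\Jac_C$ and on cohomology-and-base-change, and the explicit trivialisation of $\calN$, while routine, depends on identifying $\rho^{-1}(\calO_C)$ and on the compatible normalisations of $\calU_C$ and $\calU_\Sigma$ at $x_0$ and $y_0$.
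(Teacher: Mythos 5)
Your proposal is correct and takes essentially the same route as the paper: both upgrade the slice-wise isomorphisms of Proposition \ref{pr fibrewise description of Poincare and normalisation} to the global statement via the see-saw principle, checking flatness over the $\ol\Jac_\Sigma^{\,-k}$ factor (using exactness of the finite pushforward $(\id\times\rho)_*$), fibrewise agreement, simpleness of rank one torsion-free sheaves on the integral variety $\ol\Jac_C$, and triviality of both sides on the slice through $\calO_C$ (where $\rho$ is a local isomorphism and $\rho^{-1}(\calO_C)$ is the single point lying over $\calO_\Sigma$). The only differences are cosmetic: you inline the standard proof of the see-saw lemma, which the paper instead cites as Lemma \ref{lm see-saw} from \cite{melo2}, and you offer relative Grothendieck--Verdier duality along $\id\times\rho$ as an alternative derivation of \eqref{eq relation of dual Poincares}, which the paper instead obtains by running the same see-saw argument over the second factor using the dual slice formula \eqref{eq description of Pp dual on a slice}.
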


\begin{remark}
In Theorem \ref{tm Poincare and normalisation}, the term $\det(\calV_{\Sigma})$ universally reflects the possible variations of the subspaces $V$ that define points $(M,V) \in \PMod_{\nu}^{d}$ on the moduli of parabolic modules (see Definition \ref{def par mod}). 
\end{remark}

\begin{remark}\label{rmk:indep y_0}
    The Poincaré sheaf $\calP_C$ is the unique maximal Cohen-Macaulay extension of the Poincaré line bundle on $\ol\Jac_C \times \Jac_C \cup \Jac_C \times \ol\Jac_C$ defined via determinant of cohomology \cite[Theorem A]{arinkin}. Moreover, this Poincaré line bundle in degree $0$ is independent of the choice of normalisation of the universal sheaf $\calU_C$ \cite[Remark 4.2]{melo2}. Hence the left-hand-side of formula \eqref{eq relation of Poincares} is independent of the choice of the smooth point $y_0$. 
    
    That the right-hand-side of \eqref{eq relation of Poincares} is independent of this choice can be seen a posteriori by Theorem \ref{tm Poincare and normalisation}, but also a priori as follows. Changing the fixed point to $y_1 \in \Sigma\setminus\nu^{-1}(\Sing(C))$ acts on the Poincaré sheaf by 
    \[
    (\tau_{k, y_1} \times \id)^*\calP_\Sigma \cong  \pi_2^*\calP_{\Sigma,\calO(y_1-y_0)^k} \otimes (\tau_{k, y_0} \times \id)^*\calP_\Sigma
    \]
    as proven by Arinkin in \cite[Lemma 6.5]{arinkin}. Here $\pi_2$ is the projection onto the second factor of $\ol\Jac_\Sigma \times \ol\Jac_\Sigma$. Moreover, by compatibility with the Abel-Jacobi map one has $\calP_{\Sigma,\calO(y_1-y_0)^k} \cong\calP_{\Sigma,\calO(y_1-y_0)}^k\cong \calU_{\Sigma,y_1}^{k}$, and thus $(\tau_{k,y_1} \times \dot\nu)^*\calP_{\Sigma}\cong \pi_2^*\dot\nu^*\calU_{\Sigma,y_1}^k\otimes(\tau_{k,y_0} \times \dot\nu)^*\calP_\Sigma$. On the other hand, $\det(\calV_{\Sigma})$ depends on the choice of $y_0$ and changes to $\det(\calV_{\Sigma})\otimes \dot\nu^*\calU_{\Sigma,y_1}^{\,-k}$ upon choosing $y_1$. A cancellation of terms allows us to conclude the right-hand-side is independent of choosing $y_0$. The same conclusion holds for \eqref{eq relation of dual Poincares}.
\end{remark}

We need to recall the {\it see-saw} principle before addressing the proof of Theorem \ref{tm Poincare and normalisation}. Here, we reproduce the statement as in \cite[Lemma 5.5]{melo2}, adapting the hypothesis to our case.

\begin{lemma}[See-saw principle] \label{lm see-saw}
	Let $Z$ and $T$ be two reduced locally Noetherian schemes with $Z$ proper and connected. Let $\calE$ and $\calF$ be two sheaves on $T \times Z$, flat over $T$, such that 
	\begin{enumerate}
		\item[(i)] $\calF |_{\{ t \} \times Z} \cong \calE |_{\{ t \} \times Z}$, for all $t \in T$;
		\item[(ii)] $\calF|_{\{ t \} \times Z}$ is simple for every $t \in T$;  
		\item[(iii)] there exists $z_0 \in Z$ and an isomorphism of line bundles $\calF |_{T \times \{ z_0 \}} \cong \calE |_{T \times \{ z_0 \}}$.
	\end{enumerate}
	Then $\calE$ and $\calF$ are isomorphic. 
\end{lemma}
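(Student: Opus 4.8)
The plan is to prove this as a relative, simple-sheaf refinement of Mumford's see-saw theorem: I would manufacture a single global homomorphism $\calE \to \calF$ out of the fibrewise isomorphisms, show it is an isomorphism, and then kill the residual ambiguity---a line bundle on $T$---using the slice hypothesis~(iii). Write $p : T \times Z \to T$ for the projection and set $\calE_t := \calE|_{\{t\}\times Z}$ and $\calF_t := \calF|_{\{t\}\times Z}$. First I would record the fibrewise $\Hom$-computation: by~(i) one has $\calE_t \cong \calF_t$, and by~(ii) this sheaf is simple, so $\Hom(\calE_t, \calF_t) \cong \End(\calE_t) \cong \CC$. Thus the fibrewise space of homomorphisms is one-dimensional, of constant dimension as $t$ varies.

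The central step is to globalise this computation. Since $\calHom$ does not in general commute with restriction to fibres, I would not apply Grauert's theorem directly to $\calHom(\calE, \calF)$; instead I would invoke Grothendieck's theory of the relative $\Hom$ functor (EGA~III~7.7, FGA). Because $\calE$ is flat over $T$ and $p$ is proper, the functor $\underline{\Hom}_T(\calE, \calF)$ is represented by a coherent sheaf on $T$ whose formation is compatible with arbitrary base change $T' \to T$. Feeding in the constancy of $\dim_\CC \Hom(\calE_t, \calF_t) = 1$ together with the reducedness of $T$, this sheaf $\calN := p_* \calHom(\calE, \calF)$ is then locally free of rank one, i.e.\ a line bundle, with a canonical identification $\calN \otimes k(t) \cong \Hom(\calE_t, \calF_t)$ on fibres. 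I expect this base-change bookkeeping to be the main obstacle: the delicate point is to genuinely use the flatness of $\calE$ over $T$ inside the relative-Hom machinery, rather than confusing $\calHom(\calE, \calF)|_{\{t\}\times Z}$ with $\calHom(\calE_t, \calF_t)$, which need not agree.

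With $\calN$ produced, I would let the adjunction counit $p^*\calN \to \calHom(\calE, \calF)$, after a twist by $p^*\calN^{-1}$, furnish a global homomorphism $\phi : \calE \otimes p^*\calN \to \calF$. By the base-change identification, $\phi_t$ is a generator of $\Hom(\calE_t, \calF_t)$, hence nonzero; composing with a fixed isomorphism $\calF_t \to \calE_t$ and using $\End(\calE_t) = \CC$ shows that every such nonzero map is an isomorphism, so $\phi$ is a fibrewise isomorphism. A fibrewise isomorphism of coherent sheaves flat over $T$ is an isomorphism (the cokernel vanishes on every fibre hence vanishes, and the flatness of $\calF$ keeps the kernel sequence exact on fibres), giving $\calE \otimes p^*\calN \cong \calF$. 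Finally I would restrict this isomorphism along the slice $T \times \{z_0\} \cong T$: since $p^*\calN$ restricts to $\calN$ there, and~(iii) provides an isomorphism of line bundles $\calE|_{T \times \{z_0\}} \cong \calF|_{T \times \{z_0\}}$, I conclude $\calN \cong \O_T$. Substituting back yields $\calE \cong \calE \otimes p^*\calN \cong \calF$, completing the proof.
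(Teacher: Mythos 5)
Your proof is correct. For comparison: the paper itself gives no proof of this lemma --- it is quoted, with hypotheses adapted, from Melo--Rapagnetta--Viviani \cite[Lemma 5.5]{melo2} --- and your argument is exactly the standard one behind that reference: represent the relative $\Hom$ functor by a coherent sheaf on $T$ (EGA III 7.7 / Altman--Kleiman), use the fibrewise one-dimensionality of $\Hom(\calE_t,\calF_t)$ coming from (i) and (ii) together with reducedness of $T$ to conclude that this sheaf is a line bundle $\calN$ compatible with base change, upgrade the universal homomorphism to an isomorphism $\calE\otimes p^*\calN\cong\calF$ via the fibrewise-isomorphism criterion, and finally trivialise $\calN$ by restricting to the slice $T\times\{z_0\}$ and invoking (iii). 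One small misattribution, harmless here because both sheaves are assumed flat: the representability-with-base-change of the relative $\Hom$ functor, and likewise the exactness of the kernel sequence on fibres in your last step, require flatness over $T$ of the \emph{target} $\calF$, not of the source $\calE$.
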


\begin{proof}[Proof of Theorem \ref{tm Poincare and normalisation}] 

It suffices to show that the right and left-hand-sides of \eqref{eq relation of Poincares} and \eqref{eq relation of dual Poincares} satisfy the hypothesis of the see-saw principle. 

First we check the flatness hypothesis. Let $\calH := \q_2^* \, \det(\calV_{\Sigma}) \otimes  (\tau_{k, y_0} \times \dot{\nu})^*\calP_\Sigma$. This is a sheaf over $\ol\Jac_\Sigma^{\,-k} \times \PMod_{\nu}$ that is flat over $\ol\Jac_\Sigma^{\,-k}$. Thus pushforward $(\id \times \rho)_* \calH$ along the affine map $\id \times \rho$ is flat over $\ol\Jac_\Sigma^{\,-k}$ by \cite[Lemma 29.25.4]{stacks}. Hence the right-hand side of \eqref{eq relation of Poincares} is a flat sheaf over the first factor. Similarly, starting with the sheaf $\q_1^*\omega_\PMod \otimes\calH^\vee$, one can show that the right-hand side of \eqref{eq relation of dual Poincares} is a flat sheaf over the second factor. For the left-hand sides of \eqref{eq relation of Poincares} and \eqref{eq relation of dual Poincares}, flatness follows by noting that $\calP_C$ and $\calP^\vee_C$ are flat over the second factor, and so $(\check\nu \times \id)^*\calP_C$ and  $(\id \times \check\nu)^*\calP_C^\vee$ are flat over $\ol\Jac_\Sigma^{\, -k}$. 

Now we deal with (i)-(iii). Proposition \ref{pr fibrewise description of Poincare and normalisation} implies that condition (i) holds for \eqref{eq relation of Poincares} and \eqref{eq relation of dual Poincares}. Condition (ii) is automatic since torsion-free rank one sheaves on irreducible varieties are simple. Since $\rho$ is locally an isomorphism at $\calO_C \in \ol\Jac_C$ and $\q_2^*\det(\calV_{\Sigma})$ is pullback from $\PMod_{\nu}$ the right-hand-side restricted to $\ol\Jac^{\,-k}_\Sigma \times \{ \calO_C \}$ is trivial as the Poincar\'e bundle of $\Sigma$ restricts to the trivial bundle on $\ol\Jac_\Sigma \times \{ \calO_\Sigma \}$. Therefore, hypothesis (iii) holds for \eqref{eq relation of Poincares}. Similarly, $\calP_C^\vee$ and $\calP_\Sigma^\vee$ are trivial when restricted to $\{ \calO_C \} \times \ol\Jac_C$ and $\{ \calO_\Sigma \} \times \ol\Jac_\Sigma$, implying (iii) in the case of \eqref{eq relation of dual Poincares}.
\end{proof}

\subsection{Equivalence of Fourier--Mukai transforms}
\label{sc relation FM}

Using the description of the Poincar\'e sheaves obtained in Section \ref{sc relation Poincares}, we provide here a relation between the associated Fourier--Mukai transforms. 


\begin{theorem} 
\label{tm relation of FM dual}
Let $C$ be an integral nodal curve. Fix a partial normalisation $\nu : \Sigma \to C$ that resolves precisely $k$ nodes. Pick $y_0 \in \Sigma$ such that $\nu(y_0)$ is a smooth point of $C$. Then, for every object $\calF^\bullet \in D^b(\ol\Jac_\Sigma^{\,-k})$, one has an isomorphism
\begin{align} 
\begin{split}
\label{eq relation of FM}
\Phi^{\calP_C}_{1 \rightarrow 2} \left ( \check\nu_*\calF^\bullet \right ) 
& \cong \rho_* \left (\det(\calV_{\Sigma}) \otimes  \dot\nu^*\Phi^{\calP_\Sigma}_{1 \rightarrow 2}(\tau_{k, y_0,*}\calF^\bullet) \right ) ,
\end{split} 
\end{align}
and, similarly, for the inverse transform one has 
\begin{align} 
\begin{split}
\label{eq relation of dual FM}
\Phi^{\calP^{\vee}_C}_{2 \rightarrow 1} \left ( \check\nu_*\calF^\bullet \right ) 
& \cong \rho_* \left (\det(\calV_{\Sigma})^{\vee}\otimes\omega_{\PMod} \otimes  \dot\nu^*\Phi^{\calP_\Sigma^{\vee}}_{2 \rightarrow 1}(\tau_{k, y_0,*}\calF^\bullet) \right ) . 
\end{split}
\end{align}
\end{theorem}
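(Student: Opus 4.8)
The plan is to deduce the statement from the Poincar\'e-sheaf identity of Theorem \ref{tm Poincare and normalisation} by formal manipulations of integral transforms, namely base change and the projection formula applied to the Cartesian squares relating $\ol\Jac_\Sigma^{\,-k}$, $\PMod_\nu$ and $\ol\Jac_C$. The only genuinely geometric input is Theorem \ref{tm Poincare and normalisation}; everything else is bookkeeping in $D^b$.

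First I would unwind the definition \eqref{eq FM Arinkin} to write $\Phi^{\calP_C}_{1\rightarrow 2}(\check\nu_*\calF^\bullet) \cong R\pi_{2,*}(\pi_1^*R\check\nu_*\calF^\bullet \otimes \calP_C)$. The square with top arrow $\check\nu\times\id : \ol\Jac_\Sigma^{\,-k}\times\ol\Jac_C \to \ol\Jac_C\times\ol\Jac_C$ and vertical first-factor projections $p_1,\pi_1$ is Cartesian, so flatness of $\pi_1$ gives $\pi_1^*R\check\nu_*\cong R(\check\nu\times\id)_*p_1^*$, and the projection formula converts the kernel into $(\check\nu\times\id)^*\calP_C$. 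At this point I use that $\calP_C$ is flat over its first factor (Theorem \ref{tm descent construction of Arinkin}), so that the derived pullback $L(\check\nu\times\id)^*\calP_C$ coincides with the ordinary pullback appearing in Theorem \ref{tm Poincare and normalisation}. Since $\pi_2\circ(\check\nu\times\id)=p_2$, the expression collapses to $Rp_{2,*}(p_1^*\calF^\bullet\otimes(\check\nu\times\id)^*\calP_C)$ on $\ol\Jac_\Sigma^{\,-k}\times\ol\Jac_C$.

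Next I would substitute $(\check\nu\times\id)^*\calP_C \cong (\id\times\rho)_*(\q_2^*\det(\calV_\Sigma)\otimes(\tau_{k,y_0}\times\dot\nu)^*\calP_\Sigma)$ from \eqref{eq relation of Poincares}. As $p_1^*\calF^\bullet$ is pulled back from $\ol\Jac_\Sigma^{\,-k}$ and $p_1\circ(\id\times\rho)=\q_1$, the projection formula for $\id\times\rho$ moves the pushforward outside; then $p_2\circ(\id\times\rho)=\rho\circ\q_2$ rewrites the transform as $R\rho_*R\q_{2,*}(\q_1^*\calF^\bullet\otimes\q_2^*\det(\calV_\Sigma)\otimes(\tau_{k,y_0}\times\dot\nu)^*\calP_\Sigma)$. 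A last projection formula along $\q_2$ extracts the line bundle $\det(\calV_\Sigma)$, and $R\rho_*=\rho_*$ because $\rho$ is finite. It then remains to recognise the residual term as $\dot\nu^*\Phi^{\calP_\Sigma}_{1\rightarrow 2}(\tau_{k,y_0,*}\calF^\bullet)$: unwinding \eqref{eq FM Arinkin} over $\Sigma$ and using that $\tau_{k,y_0}$ is an isomorphism yields $\Phi^{\calP_\Sigma}_{1\rightarrow 2}(\tau_{k,y_0,*}\calF^\bullet)\cong R\tilde p_{2,*}(\tilde p_1^*\calF^\bullet\otimes(\tau_{k,y_0}\times\id)^*\calP_\Sigma)$, and base change along the $(\PP^1)^k$-bundle $\dot\nu$ of Lemma \ref{lm tau surjective}, via the Cartesian square $\tilde p_2\circ(\id\times\dot\nu)=\dot\nu\circ\q_2$, produces exactly $R\q_{2,*}(\q_1^*\calF^\bullet\otimes(\tau_{k,y_0}\times\dot\nu)^*\calP_\Sigma)$. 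This matches the residual term and establishes \eqref{eq relation of FM}. The dual identity \eqref{eq relation of dual FM} follows by the same argument applied to $\id\times\check\nu$ and to \eqref{eq relation of dual Poincares}, using flatness of $\calP_C^\vee$ over the second factor and carrying the extra twist by $\omega_{\PMod}$ through the projection formulae.

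The hard part is not any single computation but the verification that every base change and projection formula is valid in $D^b$: this rests on the flatness of $\calP_C$ (respectively $\calP_C^\vee$) over the factor being restricted—so that the underived pullback recorded in Theorem \ref{tm Poincare and normalisation} is the correct derived object—and on tracking at each stage whether the relevant map is flat (the projections, $\dot\nu$, $\tau_{k,y_0}$) or proper ($\check\nu$, $\rho$, the factors $\ol\Jac$) in order to license the exchange of pushforward and pullback.
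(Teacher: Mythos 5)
Your proposal is correct and follows essentially the same route as the paper's proof: both deduce the statement from Theorem \ref{tm Poincare and normalisation} by substituting the Poincar\'e-sheaf identity into the definition \eqref{eq FM Arinkin} and then applying flat/proper base change and the projection formula around the same Cartesian squares (for $\check\nu\times\id$, $\id\times\rho$, $\id\times\dot\nu$ and $\tau_{k,y_0}\times\id$), with the dual case handled verbatim. Your explicit remark that flatness of $\calP_C$ over the first factor guarantees the underived pullback $(\check\nu\times\id)^*\calP_C$ agrees with the derived one is a point the paper leaves implicit, but it does not change the argument.
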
 

\begin{remark}
\label{re: convolution rule}
Theorem \ref{tm relation of FM dual} can be interpreted as a rule for composing Fourier--Mukai transforms with the convolution functor
\begin{equation}
\label{eq: def convolution V}
\Theta^{\calV_{\Sigma}} := \rho_{*} \big(\det(\calV_{\Sigma}) \otimes \dot\nu^{*}(\bullet)  \big) : D^b(\ol\Jac_{\Sigma}) \to D^b(\ol\Jac_C) , 
\end{equation} 
where $\det(\calV_{\Sigma})$ is a tautological choice of convolution kernel, universal for the variation of the vector spaces $V$ for points $(M, V)$ in the moduli $\PMod_{\nu}$ of parabolic modules. Indeed, Theorem \ref{tm relation of FM dual} provides a natural equivalence  
\[
\Phi^{\calP_C}_{1 \rightarrow 2} \circ \check\nu_* \simeq \Theta^{\calV_{\Sigma}} \circ \Phi^{\calP_\Sigma}_{1 \rightarrow 2} \circ \tau_{k, y_0} , 
\]
emulating formulae for composing integration and convolution in Fourier analysis. 
\end{remark}

\begin{remark} Recall that $\rho$ is a finite morphism and hence the pushforward $\rho_*$ is exact (see \cite[Corollaire 5.2.2]{EGAII}). Thus, from Theorem \ref{tm relation of FM dual} we can deduce that the pushforward $\check\nu_*\calF$ will be WIT for $\Phi^{\calP_C}$ for every coherent sheaf $\calF$ on $\ol{\Jac}_\Sigma^{\, 0}$ that is WIT for $\Phi^{\calP_\Sigma}$.
\end{remark}

\begin{remark}
\label{remark on y_0}
Similarly to Remark \ref{rmk:indep y_0}, we can see that the right-hand-side of \eqref{eq relation of FM} and \eqref{eq relation of dual FM} do not depend on the choice of $y_0$. We will see that for \eqref{eq relation of FM}, the case of \eqref{eq relation of dual FM} being the same. If we choose $y_1\in\Sigma\setminus\nu^{-1}(\Sing(C))$ instead, then $\det(\calV_{\Sigma})$ changes to $\det(\calV_{\Sigma})\otimes\dot\nu^*\calU_{\Sigma,y_1}^{\,-k}$. Now, we look at $\Phi^{\calP_\Sigma}_{1 \rightarrow 2}(\tau_{k, y_1,*}\calF^\bullet)$. Let $\wt\tau:\Jac_\Sigma^0\to\Jac_\Sigma^0$ given by $\wt\tau(M)=M\otimes\calO_\Sigma(ky_1-ky_0)$, so that $\tau_{k,y_1}=\wt\tau\circ\tau_{k,y_0}$. Then, using base change, projection formula and \cite[Lemma 6.5]{arinkin}
    \begin{equation*}
\begin{split}
\Phi^{\calP_\Sigma}_{1 \rightarrow 2}(\tau_{k,y_1,*}\calF^\bullet)
&= R\pi_{2,*}\bigl(\pi_1^*\wt\tau_*\tau_{k,y_0,*}\calF^\bullet\otimes\calP_\Sigma\bigr)\\
&\cong R\pi_{2,*}(\id\times\wt\tau)_*\bigl(\pi_1^*\tau_{k,y_0,*}\calF^\bullet\otimes(\id\times\wt\tau)^*\calP_\Sigma\bigr)\\
&\cong R\pi_{2,*}\big(\pi_1^*\tau_{k,y_0,*}\calF^\bullet\otimes(\id\times\tau_{-k,y_0})^*(\pi_2^*\calP_{\Sigma,\calO(y_1-y_0)^k}
      \otimes(\id\times\tau_{k,y_0})^*\calP_\Sigma)\big)\\
&\cong R\pi_{2,*}\big(\pi_1^*\tau_{k,y_0,*}\calF^\bullet\otimes\calP_\Sigma\big)\otimes\calP_{\Sigma,\calO(y_1-y_0)^k}\\
&\cong \Phi^{\calP_\Sigma}_{1 \rightarrow 2}(\tau_{k,y_0,*}\calF^\bullet)\otimes \calU_{\Sigma,y_1}^k.
\end{split}
\end{equation*}
This proves the independence of the choice of $y_0$. 
\end{remark}

\begin{proof}[Proof of Theorem \ref{tm relation of FM dual}]
Recall the construction of $\Phi^{\calP_C}_{1 \rightarrow 2}$ described in \eqref{eq FM Arinkin}. Following the Cartesian diagram
\[
\begin{tikzcd}[column sep = huge]
\ol\Jac_\Sigma^{\,-k} \times \ol\Jac_C \arrow[r, "\t_1"] \arrow[d, "(\check\nu \times \id)"'] & \ol\Jac_\Sigma^{\,-k} \arrow[d, "\check\nu"] 
\\
\ol\Jac_C \times \ol\Jac_C  \arrow[r, "\pi_1"'] & \ol\Jac_C,
\end{tikzcd}
\]
one has 
\begin{equation} \label{eq Phi nu_*Ff 1}
\Phi^{\calP_C}_{1 \rightarrow 2}(\check\nu_*\calF^\bullet) \cong R\t_{2,*}\left (\t_1^*\calF^\bullet \otimes (\check\nu \times \id)^*\calP_C \right ),
\end{equation}
where we have applied flat base change, projection formula and functoriality with respect to 
\[
\begin{tikzcd}[column sep = huge]
\ol\Jac_\Sigma^{\,-k} \times \ol\Jac_C \arrow[d, "(\check\nu \times \id)"'] \arrow[rd, "\t_2"] & 
\\
\ol\Jac_C \times \ol\Jac_C  \arrow[r, "\pi_2"'] & \ol\Jac_C.
\end{tikzcd} 
\]
We now substitute \eqref{eq relation of Poincares} into \eqref{eq Phi nu_*Ff 1} and apply the projection formula with respect to $\t_2$ and $(\rho \times \id)$, and functoriality with respect to the commutative diagrams 
\[
\begin{tikzcd}[column sep = huge]
\ol\Jac_\Sigma^{\,-k} \times \PMod_\nu \arrow[d, "(\id \times \rho)"'] \arrow[rd, "\q_1"] & 
\\
\ol\Jac_\Sigma^{\,-k} \times \ol\Jac_C  \arrow[r, "\t_1"'] & \ol\Jac_\Sigma^{\,-k},
\end{tikzcd}
\qquad
\begin{tikzcd}[column sep = huge]
\ol\Jac_\Sigma^{\,-k} \times \PMod_\nu \arrow[d, "(\id \times \rho)"'] \arrow[r, "\q_2"] & \PMod_\nu \arrow[d, "\rho"]
\\
\ol\Jac_\Sigma^{\,-k} \times \ol\Jac_C  \arrow[r, "\t_2"'] & \ol\Jac_C \, ,
\end{tikzcd}
\]
which allows us to write 
\begin{align}
\begin{split}
\label{eq Phi nu_*Ff 2}
\Phi^{\calP_C}_{1 \rightarrow 2}(\check\nu_*\calF^\bullet) 
& \cong \rho_* \left( \det(\calV_{\Sigma}) \otimes  R\q_{2,*} \left( \q_1^*\calF^\bullet \otimes (\tau_{y_0} \times \dot{\nu})^*\calP_\Sigma \right) \right) \, .
\end{split} 
\end{align}
Due to functoriality with respect to the diagram on the left, and flat base change for the diagram on the right,
\[
\begin{tikzcd}[column sep = huge]
\ol\Jac_\Sigma^{\,-k} \times \PMod_\nu \arrow[d, "(\id \times \dot \nu)"'] \arrow[rd, "\q_1"] & 
\\
\ol\Jac_\Sigma^{\,-k} \times \ol\Jac_\Sigma  \arrow[r, "\r_1"'] & \ol\Jac_\Sigma^{\,-k},
\end{tikzcd}
\qquad
\begin{tikzcd}[column sep = huge]
\ol\Jac_\Sigma^{\,-k} \times \PMod_\nu \arrow[d, "(\id \times \dot\nu)"'] \arrow[r, "\q_2"] & \PMod_\nu \arrow[d, "\dot \nu"]
\\
\ol\Jac_\Sigma^{\,-k} \times \ol\Jac_\Sigma  \arrow[r, "\r_2"'] & \ol\Jac_\Sigma \, ,
\end{tikzcd}
\]
one observes that \eqref{eq Phi nu_*Ff 2} becomes
\begin{align*}
\begin{split}
\Phi^{\calP_C}_{1 \rightarrow 2}(\check\nu_*\calF^\bullet) 
& \cong \rho_* \left( \det(\calV_{\Sigma}) \otimes  \dot\nu^* R\r_{2,*}\left (\r_1^*\calF^\bullet \otimes  (\tau_{k, y_0} \times \id)^*\calP_\Sigma \right) \right) \, . 
\end{split} 
\end{align*}
This proves the statement \eqref{eq relation of FM} after applying projection formula with respect to $\tau_{k, y_0} \times \id$.

Since the definition of $\Phi^{\calP_C^\vee}_{2 \rightarrow 1}$ given in \eqref{eq dual FM Arinkin} is analogous to that of \eqref{eq FM Arinkin} and the relation between $\calP^\vee_C$ and $\calP_\Sigma^\vee$ of \eqref{eq relation of dual Poincares} is analogous to \eqref{eq relation of Poincares}, the proof \eqref{eq relation of dual FM} follows verbatim from the proof of \eqref{eq relation of FM}. 
\end{proof}

\begin{example}\label{babyexample}
We provide here an example where the formula of Theorem \ref{tm relation of FM dual} can be reconfirmed by direct calculation. Suppose that $C$ is a nodal elliptic curve, hence $k=1$, and let the node be $b\in C$. Then $\Jac_C^0\cong C\setminus\{b\}$ and $\ol\Jac_C\cong C$ \cite[Section 3]{kass}. In addition, $\Sigma=\PP^1$, $\Jac_{\PP^1}^0=\{\calO_{\PP^1}\}$ and $\PMod_{\nu}=\PP^1$. Under these identifications, the maps $\nu$ and $\rho$ coincide and $\dot\nu$ is the constant map. Furthermore, $\calV_{\Sigma}=\det(\calV_{\Sigma})=\calO_{\PP^1}(-1)$. Now, take $\calF^\bullet=\calO_{\Jac_{\PP^1}^{-1}}$ the trivial bundle over the point $\Jac_{\PP^1}^{-1}=\{\calO_{\PP^1}(-1)\}$. Then, 
\[\rho_*\big(\det(\calV_{\Sigma})\otimes\dot\nu^*\Phi^{\calP_{\PP^1}}_{1\rightarrow 2}(\tau_{1,y_0,*}\calO_{\Jac_{\PP^1}^{-1}})\big)=\rho_*\calO_{\PP^1}(-1).
\] On the other hand, $\Phi^{\calP_C}_{1\rightarrow 2}\big(\check\nu_*\calO_{\Jac_{\PP^1}^{-1}}\big)=\Phi^{\calP_C}_{1\rightarrow 2}(\calO_{\tilde b})$ where $\tilde b$ is the point in $\ol\Jac_C\cong C$ corresponding to the node $b\in C$. By the universal property of $\calP_C$, $\Phi^{\calP_C}_{1\rightarrow 2}(\calO_{\tilde b})$ is the unique rank $1$ and degree zero torsion-free sheaf on $\ol\Jac_C$ which is not a line bundle. With the isomorphism $\ol\Jac_C\cong C$ this corresponds to the unique such object but now on $C$, this being $\nu_*\calO(-1)$. Hence, 
\[\Phi^{\calP_C}_{1\rightarrow 2}(\calO_{\tilde b})\cong\rho_*\calO_{\PP^1}(-1).\] We then see that the claimed isomorphism holds.
\end{example}

\subsection{Spin-valued Wilson operators}
\label{sec: Spin FM} 

We explain how a spin structure on $\PMod_{\nu}$ and a spin-valued Wilson operator leads to a more symmetric presentation of our Fourier--Mukai transform formulae from Theorem \ref{tm relation of FM dual}. This form of our results will be applied to mirror symmetry for branes in the Hitchin system in a sequel article \cite{FHHO}. The appearance of a spin structure compensation for the fact that $\nu : \Sigma \to C$ is two-to-one over the locus $\Exc(\nu) \subset \Sigma$ of resolved singularities, so a Wilson loop over $C$ (as per \cite[\textsection 6.1]{kapustin&witten}) lifts to a so-called \textit{`Wilson spin-half loop'} over $\Sigma$. 

Precisely, the Wilson operators in question are defined as follows. Given a point $y \in \Exc(\nu) \subset \Sigma$, we choose a square root $\calU_{\Sigma, y}^{1/2}$ of the line bundle $\calU_{\Sigma, y} := \calU_{\Sigma} |_{\{ y \} \times \ol\Jac_{\Sigma}}$. The restriction $\calU_{\Sigma, y}$ is locally free, even though $\calU_{\Sigma}$ is not, because $y \in \Sigma$ is by assumption a smooth point. Then, to any $y \in \Exc(\nu)$ and any subset $I = \{ y_1, \ldots, y_{l}\} \subset \Exc(\nu)$, let us introduce the notation 
\begin{equation}
\label{eq: spin Wilson}
\WW_{\Sigma, y}^{1/2} := (\bullet) \otimes \calU_{\Sigma, y}^{1/2} : D^b(\ol\Jac_{\Sigma}) \to D^b(\ol\Jac_{\Sigma}), 
\quad \WW^{1/2}_{\Sigma, \otimes I} := \WW^{1/2}_{\Sigma, y_1} \circ \cdots \circ \WW^{1/2}_{\Sigma, y_l} ,
\end{equation} 
and refer to these as the \textit{half-twisted Wilson operators}. Note that the definition of $\WW^{1/2}_{\Sigma, \otimes I}$ is independent of the labeling of the elements of $I$ as tensor products commute. These functors naturally satisfy the relations $\WW_{\Sigma, y} \simeq \WW_{\Sigma, y}^{1/2} \circ \WW_{\Sigma, y}^{1/2}$ and $\WW_{\Sigma, I} \simeq \WW_{\Sigma, I}^{1/2} \circ \WW_{\Sigma, I}^{1/2}$. The divisor $I$ of interest for us is the entire resolved locus $I = \Exc(\nu)$. For convenience, let us fix orderings $\RSing(\nu) = \{b_1, \ldots, b_k\}$ and $\Exc(\nu) = \{b^{\pm}_1, \ldots, b^{\pm}_k\}$. The construction of $\WW^{1/2}_{\Sigma, \otimes \Exc(\nu)}$ depends on the family of choices $\{ \calU_{\Sigma, b^{\pm}_i}^{1/2}\}_{b^{\pm}_i \in \Exc(\nu)}$, and we recall from Corollary \ref{spin structures} that such choices define a spin structure on $\PMod_{\nu}$ given by
\begin{equation}
\label{eq: choice of spin}
\omega_{\PMod}^{1/2} :=  \bigotimes\limits_{i=1}^k \calV_{\Sigma, i} \otimes \dot\nu^*\calU^{-1/2}_{\Sigma, b_i^-} \otimes \dot \nu^*\calU^{-1/2}_{\Sigma, b_i^+} .
\end{equation} 
Note that, by construction, there exists a natural equivalence $\det(\calV_{\Sigma}) \otimes (\bullet) \simeq \omega_{\PMod}^{1/2} \otimes \dot\nu^{*} \WW^{1/2}_{\Sigma, \otimes \Exc(\nu)}$, which we use to give the following immediate restatement of Theorems \ref{tm Poincare and normalisation} and \ref{tm relation of FM dual}. 

\begin{corollary}
\label{thm: spin and Wilson}
Let $C$ be an integral nodal curve. Fix a partial normalisation $\nu : \Sigma \to C$ that resolves precisely $k$ nodes. Pick $y_0 \in \Sigma$ such that $\nu(y_0)$ is a smooth point of $C$. Moreover, fix square roots $\calU_{\Sigma, b^{\pm}_i}^{1/2}$ and let $\omega_{\PMod}^{1/2}$ be the corresponding square root on $\PMod_{\nu}$ as per \eqref{eq: choice of spin}. 

\begin{itemize}
    \item One has isomorphisms of Poincar\'e sheaves 
\[
(\check\nu \times \id)^*\calP_C 
\cong (\id \times \rho)_* \left ( \q_2^* \left( \omega_{\PMod}^{1/2} \otimes \dot{\nu}^{*} \calU^{1/2}_{\Sigma, \otimes \Exc(\nu)} \right) \otimes (\tau_{k, y_0} \times \dot{\nu})^*\calP_\Sigma \right ) ,
\]
\[
(\id \times \check\nu)^*\calP_C^{\vee} 
\cong (\rho \times \id)_* \left ( \q_1^* \left( \omega_{\PMod}^{1/2} \otimes \dot{\nu}^{*} \calU^{-1/2}_{\Sigma, \otimes \Exc(\nu)} \right) \otimes (\dot{\nu} \times \tau_{k, y_0})^*\calP_\Sigma^{\vee} \right ) ,
\]
where $\q_i: \ol\Jac_\Sigma^{\,-k} \times \PMod_\nu \to \PMod_\nu$ are the natural projections.
\item For every $\calF^\bullet \in D^b(\ol\Jac_\Sigma^{\,-k})$, one has isomorphisms of transformed sheaves 
\[
\Phi^{\calP_C}_{1 \rightarrow 2} \left ( \check\nu_*\calF^\bullet \right) 
\cong \rho_* \left ( \omega_{\PMod}^{1/2} \otimes \dot\nu^{*} \big( \WW^{1/2}_{\Sigma, \otimes \Exc(\nu)} \circ \Phi^{\calP_\Sigma}_{1 \rightarrow 2}(\tau_{k, y_0,*}\calF^\bullet) \big) \right) , 
\]
\[
\Phi^{\calP_C^{\vee}}_{2 \rightarrow 1} \left ( \check\nu_*\calF^\bullet \right) 
\cong \rho_* \left ( \omega_{\PMod}^{1/2} \otimes \dot\nu^{*} \big( \WW^{-1/2}_{\Sigma, \otimes \Exc(\nu)} \circ \Phi^{\calP_\Sigma^{\vee}}_{2 \rightarrow 1}(\tau_{k, y_0,*}\calF^\bullet) \big) \right) . 
\]
\end{itemize} 
\end{corollary}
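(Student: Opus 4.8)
The plan is to treat Corollary \ref{thm: spin and Wilson} as a purely formal rewriting of Theorems \ref{tm Poincare and normalisation} and \ref{tm relation of FM dual}, in which the tautological kernel $\det(\calV_{\Sigma})$ is traded for the spin structure $\omega_{\PMod}^{1/2}$ and a pullback of the half-twisted Wilson kernel. The only geometric input is the decomposition of $\det(\calV_{\Sigma})$ already recorded in Corollary \ref{spin structures}.

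First I would establish the key identity. Writing $\calU^{1/2}_{\Sigma, \otimes \Exc(\nu)} := \bigotimes_{i=1}^k \calU^{1/2}_{\Sigma, b_i^+} \otimes \calU^{1/2}_{\Sigma, b_i^-}$ for the line bundle on $\ol\Jac_\Sigma$ implementing $\WW^{1/2}_{\Sigma, \otimes \Exc(\nu)} = (\bullet) \otimes \calU^{1/2}_{\Sigma, \otimes \Exc(\nu)}$, and using that $\calV_{\Sigma} = \calV_{\Sigma,1} \oplus \cdots \oplus \calV_{\Sigma,k}$ is a direct sum of line bundles so that $\det(\calV_{\Sigma}) \cong \bigotimes_{i=1}^k \calV_{\Sigma, i}$, the formula \eqref{eq: choice of spin} for $\omega_{\PMod}^{1/2}$ rearranges into
\[
\det(\calV_{\Sigma}) \cong \omega_{\PMod}^{1/2} \otimes \dot\nu^* \calU^{1/2}_{\Sigma, \otimes \Exc(\nu)} .
\]
Dualising and tensoring by $\omega_{\PMod} \cong (\omega_{\PMod}^{1/2})^{\otimes 2}$ then gives the companion identity
\[
\det(\calV_{\Sigma})^{\vee} \otimes \omega_{\PMod} \cong \omega_{\PMod}^{1/2} \otimes \dot\nu^* \calU^{-1/2}_{\Sigma, \otimes \Exc(\nu)} ,
\]
with $\calU^{-1/2}_{\Sigma, \otimes \Exc(\nu)} := (\calU^{1/2}_{\Sigma, \otimes \Exc(\nu)})^{\vee}$.

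Next I would substitute these two identities into the four displayed formulae of Theorems \ref{tm Poincare and normalisation} and \ref{tm relation of FM dual}. For the two Poincar\'e-sheaf statements this is immediate: replacing $\q_2^* \det(\calV_{\Sigma})$ and $\q_1^* \det(\calV_{\Sigma})^{\vee} \otimes \q_1^* \omega_{\PMod}$ by the right-hand sides above reproduces verbatim the two displayed isomorphisms. For the two Fourier--Mukai statements I would additionally pull the line bundle $\calU^{\pm 1/2}_{\Sigma, \otimes \Exc(\nu)}$ inside the pullback $\dot\nu^*$ and then recognise, by the very definition of the half-twisted Wilson operator, that $\calU^{1/2}_{\Sigma, \otimes \Exc(\nu)} \otimes \Phi^{\calP_\Sigma}_{1 \rightarrow 2}(\tau_{k, y_0,*}\calF^\bullet)$ equals $\WW^{1/2}_{\Sigma, \otimes \Exc(\nu)} \circ \Phi^{\calP_\Sigma}_{1 \rightarrow 2}(\tau_{k, y_0,*}\calF^\bullet)$, and likewise with $\calU^{-1/2}_{\Sigma, \otimes \Exc(\nu)}$ and $\WW^{-1/2}_{\Sigma, \otimes \Exc(\nu)} := (\bullet)\otimes \calU^{-1/2}_{\Sigma, \otimes \Exc(\nu)}$ for the inverse transform.

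There is essentially no obstacle: all the analytic and homological content is already carried by Theorems \ref{tm Poincare and normalisation}--\ref{tm relation of FM dual} and by Corollary \ref{spin structures}. The only points requiring care are the bookkeeping of the square roots $\calU^{1/2}_{\Sigma, b_i^{\pm}}$ and the observation that $\WW^{\pm 1/2}_{\Sigma, \otimes \Exc(\nu)}$, being tensoring by a line bundle, commutes with the pullback $\dot\nu^*$; this is what lets the Wilson operator be factored cleanly outside $\dot\nu^*$ in the Fourier--Mukai formulae. Finally, in line with the remark following the corollary, although $\omega_{\PMod}^{1/2}$ and $\calU^{1/2}_{\Sigma, \otimes \Exc(\nu)}$ each depend on the chosen square roots, their product $\omega_{\PMod}^{1/2} \otimes \dot\nu^* \calU^{1/2}_{\Sigma, \otimes \Exc(\nu)}$ is canonically $\det(\calV_{\Sigma})$ by the identity above, so the displayed formulae are independent of these choices.
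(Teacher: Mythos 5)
Your proposal is correct and follows essentially the same route as the paper: the paper also proves this corollary as an immediate restatement of Theorems \ref{tm Poincare and normalisation} and \ref{tm relation of FM dual}, via the natural equivalence $\det(\calV_{\Sigma}) \otimes (\bullet) \simeq \omega_{\PMod}^{1/2} \otimes \dot\nu^{*} \WW^{1/2}_{\Sigma, \otimes \Exc(\nu)}$ coming from \eqref{eq: choice of spin}. Your write-up merely makes explicit what the paper leaves implicit, namely the companion identity $\det(\calV_{\Sigma})^{\vee} \otimes \omega_{\PMod} \cong \omega_{\PMod}^{1/2} \otimes \dot\nu^{*}\calU^{-1/2}_{\Sigma, \otimes \Exc(\nu)}$ for the dual statements and the fact that tensoring by a pulled-back line bundle commutes with $\dot\nu^{*}$.
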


\begin{remark}
\label{re: spin independence}
A priori, the formulae in Corollary \ref{thm: spin and Wilson} appear to depend on the choice of square roots $\calU^{1/2}_{\Sigma, b_i^{\pm}}$ and the resultant choice of spin structure $\omega_{\PMod}^{1/2}$. However, the functor 
\[
\omega_{\PMod}^{1/2} \otimes \dot\nu^{*} \WW^{1/2}_{\Sigma, \otimes \Exc(\nu)} ( \bullet)  \simeq  \det(\calV_{\Sigma}) \otimes (\bullet) , 
\]
and therefore the formulae of Corollary \ref{thm: spin and Wilson}, are independent of these choices. 
\end{remark}

\subsection{Restriction to compactified Prym varieties}\label{sec:Prym}

Motivated by the study of mirror symmetry between moduli spaces of $\SL(m,\CC)$ and $\PGL(m,\CC)$-Higgs bundles \cite{hausel&thaddeus, donagi&pantev}, this section shows that our main constructions and results are well-behaved upon restriction to compactified Prym varieties. In \cite{FHR, groechenig&shen}, an equivalence is established between the derived category of a compactified Prym variety and a $\Gamma$-equivariant incarnation of the latter, where $\Gamma$ is a group of $m$-torsion points of a base smooth curve. This can be understood as a duality statement between cuspidal $\SL(m,\CC)$ and $\PGL(m,\CC)$-Hitchin fibres, generalizing the autoduality of cuspidal $\GL(m,\CC)$-Hitchin fibres obtained by Arinkin \cite{arinkin}. 

We begin by describing the compactified Prym varieties and their properties. Consider a partial normalisation map $\nu : \Sigma \to C$ between irreducible nodal curves together with two ramified $m:1$-coverings $\beta_C : C \to X$ and $\beta_\Sigma : \Sigma \to X$  over a smooth projective curve $X$. We assume $\beta_C$ and $\beta_{\Sigma}$ fit into the commutative diagram
\begin{equation} \label{eq nu and beta commute}
\begin{tikzcd}
\Sigma & & C
\\
& X. &
\arrow[from=1-1, to=1-3, "\nu"]
\arrow[from=1-1, to=2-2, "\beta_\Sigma"']
\arrow[from=1-3, to=2-2, "\beta_C"]
\end{tikzcd} 
\end{equation}

Let us define the {\it norm map} associated to $\beta_C$ to be the degree preserving morphism
\[
\Nm_{\beta_C} (\bullet) := \det(\beta_{C,*}\calO_C)^{-1} \otimes \det \circ \beta_{C,*}  (\bullet) : \ol\Jac_C^{\,d} \longrightarrow \Jac_X^d,
\]
and $\Nm_{\beta_\Sigma} : \ol\Jac_\Sigma^{\,d} \longrightarrow \Jac_X^d$ is defined identically. For $d=0$, the associated \textit{compactified Prym varieties} are defined to be the preimage under the norm map of the trivial line bundle,
\[
\ol{\Prym}_C := \Nm_{\beta_C}^{-1}(\calO_X) \quad \textnormal{and} \quad \ol\Prym_\Sigma := \Nm_{\beta_\Sigma}^{-1}\left (\calO_X \right). 
\]
For the non-zero degree $d = -k$ we give a twisted version of the Prym construction, compatible with the normalisation $\nu : \Sigma \to C$. It is convenient to consider the geometric point $\calO(-\beta_C(\RSing(\nu)) \in \Jac_X^{-k}$ defined by the resolved singularities $\RSing(\nu) \subset C$ and take the preimage
\[
 \ol{\Prym}_\Sigma^{\,-k} := \Nm_{\beta_\Sigma}^{-1}\left (\calO(-\beta_C(\RSing(\nu))) \right).
\] 
The compactified Prym varieties $\ol{\Prym}_C$, $\ol\Prym_\Sigma$ and $\ol{\Prym}_{\Sigma}^{\,-k}$ are Gorenstein varieties with trivial dualising sheaf (see \cite[Corollary 3.2]{FHR} for instance). They come equipped with closed embeddings 
\[
\jmath_C : \ol{\Prym}_C \longhookrightarrow \ol\Jac_C, \quad \jmath_\Sigma : \ol\Prym_\Sigma \longhookrightarrow \ol\Jac_\Sigma \quad \textnormal{and} \quad \jmath_\Sigma' : \ol\Prym_\Sigma^{\,-k} \longhookrightarrow \ol\Jac_\Sigma^{\,-k}.
\]
The choice of preimage in the construction of $\ol{\Prym}_\Sigma^{\,-k}$ is designed for the following purpose. The translation isomorphism 
\[
\morph{\ol \Jac_\Sigma^{\,-k}}{\ol \Jac_\Sigma}{\calF}{\calF\otimes \calO_\Sigma(k y_0),}{}{\tau_{k, y_0}}  
\] 
associated to a smooth point $y_0 \in \Sigma$, can be shown, after an additional twist, to restrict to $\ol{\Prym}_\Sigma$, simultaneously trivialising the torsors $\ol \Jac_\Sigma^{\,-k}$ and $\ol{\Prym}_\Sigma^{\,-k}$. We prove this in parallel with the following compatibility results between normalisation maps and Prym constructions.  Let 
\[\Gamma := \Jac_X[m] \, , \] 
denote the subgroup of $\Jac_X$ consisting of $n$-torsion line bundles.

\begin{lemma}
\label{Lemma:compatib-maps-Pryms}
Let $C$ be an integral nodal curve. Let $\nu : \Sigma \to C$ be a partial normalisation resolving the divisor $\RSing(\nu) \subset \Sing(C)$ of cardinality $k$. Then, the following hold:
\begin{enumerate}
\item The tensoral action of $\Gamma$ on $\ol\Jac_C$, $\ol\Jac_\Sigma$ and $\ol{\Jac}_\Sigma^{\,-k}$ restricts to $\ol{\Prym}_C$, $\ol\Prym_\Sigma$ and $\ol{\Prym}_\Sigma^{\,-k}$.

\item The morphism $\check \nu$ restricts to the compactified Prym varieties, giving rise to a closed embedding
\[
\breve{\nu} : \ol\Prym^{\,-k}_\Sigma \longhookrightarrow \ol{\Prym}_C,
\]
which then lies in the commutative square
\begin{equation} \label{eq relation jmath nu}
\begin{tikzcd}
\ol\Prym^{\,-k}_\Sigma & \ol{\Prym}_C
\\
\ol\Jac_\Sigma^{\,-k} & \ol\Jac_C.
\arrow[from=1-1, to=1-2, "\breve\nu"]
\arrow[from=1-1, to=2-1, "\jmath_\Sigma'"']
\arrow[from=1-2, to=2-2, "\jmath_C"]
\arrow[from=2-1, to=2-2, "\check\nu"']
\end{tikzcd}
\end{equation} 

\item Let $\calL$ be an $m$-th root of $\calO_X (\beta_C(\RSing(\nu) - k x_0))$ and $\tau_{\calL}$ be the translation 
\begin{equation} \label{eq traslation including pull-back}
\tau_{\calL}: \ol\Jac_\Sigma \xrightarrow{\ \cong \ } \ol\Jac_\Sigma, \quad \calF \to \calF \otimes \beta_\Sigma^*\calL\,.
\end{equation}
Then the following diagram commutes:
\begin{equation} \label{eq relation tau and jmath}
\begin{tikzcd}
\ol{\Prym}_\Sigma^{\,-k} & \Nm_{\beta_\Sigma}^{-1}(\calO_X(k\beta_\Sigma(y_0)-\beta_C(\RSing(\nu)))) & \ol\Prym_\Sigma
\\
\ol{\Jac}_\Sigma^{\,-k} & \ol{\Jac}_\Sigma & \ol{\Jac}_\Sigma\,.
\arrow[from=1-1, to=1-2, "\mathring{\tau}_{k, y_0}", "\cong"']
\arrow[from=1-2, to=1-3, "\mathring{\tau}_{\calL}", "\cong"']
\arrow[from=1-2, to=2-2, hook]
\arrow[from=1-3, to=2-3, "\jmath_\Sigma", hook]
\arrow[from=1-1, to=2-1, "\jmath_\Sigma'"', hook]
\arrow[from=2-1, to=2-2, "\tau_{k, y_0}"', "\cong"]
\arrow[from=2-2, to=2-3, "\tau_{\calL}"', "\cong"]
\end{tikzcd} 
\end{equation}
Here $\mathring{\tau}_{k, y_0}$, $\mathring{\tau}_{\calL}$ denote the restricted translation isomorphisms.

\item In the following Cartesian square 
\begin{equation} \label{eq def of ring rho}
\begin{tikzcd}
\PMod_{\nu} \times_{\ol{\Jac}_C} \ol\Prym_C \arrow[d, "\imath_{\nu}"']\arrow[r, "\mathring\rho"] 
& \ol\Prym_C\arrow[d, "\jmath_{C}"] 
\\ 
\PMod_{\nu} \arrow[r, "\rho"'] 
& \ol{\Jac}_{C} \, , 
\end{tikzcd}
\end{equation}
the composition of the inclusion $\imath_{\nu}$ with $\dot{\nu}$ factors through $\ol \Prym_\Sigma$, giving rise to the commutative diagram
\begin{equation} \label{eq def of ring nu}
\begin{tikzcd}
\PMod_{\nu} \times_{\ol{\Jac}_{C}} \ol{\Prym}_{C} \arrow[d, "\imath_{\nu}"']\arrow[r, "\mathring\nu"] 
& \ol{\Prym}_{\Sigma} \arrow[d, "\jmath_{\Sigma}"] 
\\ 
\PMod_{\nu} \arrow[r, "\dot\nu"'] 
& \ol{\Jac}_{\Sigma} \,. 
\end{tikzcd}
\end{equation} 
\end{enumerate}
\end{lemma}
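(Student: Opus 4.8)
The plan is to deduce all four assertions from two elementary compatibility properties of the norm map: its behaviour under tensoring by (pullbacks of) line bundles, and its behaviour under the finite pushforward $\nu_*$. Throughout I would use that $\beta_C$, $\beta_\Sigma$ and $\nu$ are finite (so their pushforwards are exact), that $\beta_\Sigma=\beta_C\circ\nu$ forces $\beta_{\Sigma,*}=\beta_{C,*}\circ\nu_*$, and hence that $\det\beta_{\Sigma,*}M\cong\det\beta_{C,*}(\nu_*M)$ for every rank-one torsion-free $M$ on $\Sigma$. The recurring computational input is the determinant identity
\[
\det(\beta_{\Sigma,*}\calO_\Sigma)\otimes\det(\beta_{C,*}\calO_C)^{-1}\cong\calO_X(\beta_C(\RSing(\nu))),
\]
obtained by pushing the normalisation sequence $0\to\calO_C\to\nu_*\calO_\Sigma\to\calO_{\RSing(\nu)}\to 0$ forward along $\beta_C$ and taking determinants, using that $\det\beta_{C,*}\calO_{\RSing(\nu)}\cong\calO_X(\beta_C(\RSing(\nu)))$ for the length-$k$ torsion sheaf $\calO_{\RSing(\nu)}$.

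For (1) I would first record that, for any $\gamma\in\Jac_X$ and any rank-one torsion-free $\calF$, the projection formula gives $\beta_{C,*}(\calF\otimes\beta_C^*\gamma)\cong\beta_{C,*}\calF\otimes\gamma$, so taking determinants multiplies $\gamma$ by the generic rank $m$ of $\beta_{C,*}\calF$, yielding $\Nm_{\beta_C}(\calF\otimes\beta_C^*\gamma)\cong\Nm_{\beta_C}(\calF)\otimes\gamma^{\otimes m}$. For $\gamma\in\Gamma=\Jac_X[m]$ one has $\gamma^{\otimes m}\cong\calO_X$, so the norm is unchanged and the tensoral action preserves every fibre of $\Nm_{\beta_C}$; the same argument over $\Sigma$ settles $\ol\Prym_\Sigma$ and $\ol\Prym_\Sigma^{\,-k}$.

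For (2), combining $\det\beta_{C,*}(\nu_*\calL)\cong\det\beta_{\Sigma,*}\calL$ with the displayed identity gives $\Nm_{\beta_C}(\nu_*\calL)\cong\Nm_{\beta_\Sigma}(\calL)\otimes\calO_X(\beta_C(\RSing(\nu)))$ for $\calL\in\ol\Jac_\Sigma^{\,-k}$; the twist $\calO_X(-\beta_C(\RSing(\nu)))$ in the definition of $\ol\Prym_\Sigma^{\,-k}$ is precisely what trivialises the right-hand side, so $\check\nu$ carries $\ol\Prym_\Sigma^{\,-k}$ into $\ol\Prym_C$. The restricted map $\breve\nu$ is a closed immersion, being the restriction of the closed immersion $\check\nu$ to a closed subscheme landing in a closed subscheme, and the square \eqref{eq relation jmath nu} commutes tautologically. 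For (3) the same tensoring principle applied to line bundles gives $\Nm(\calO_\Sigma(ky_0))\cong\calO_X(k\beta_\Sigma(y_0))$ and $\Nm(\beta_\Sigma^*\calL)\cong\calL^{\otimes m}\cong\calO_X(\beta_C(\RSing(\nu))-k\beta_\Sigma(y_0))$, using $\beta_\Sigma(y_0)=\beta_C(x_0)$. Chasing these through shows $\tau_{k,y_0}$ sends $\ol\Prym_\Sigma^{\,-k}$ into the intermediate fibre $\Nm_{\beta_\Sigma}^{-1}(\calO_X(k\beta_\Sigma(y_0)-\beta_C(\RSing(\nu))))$ and $\tau_{\calL}$ sends that fibre into $\ol\Prym_\Sigma$, so the restricted translations $\mathring\tau_{k,y_0},\mathring\tau_{\calL}$ exist and \eqref{eq relation tau and jmath} commutes by construction.

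For (4) the essential point is to promote the pointwise computation to a morphism-level identity. Applying $\det\beta_{C,*}$ to the universal exact sequence \eqref{eq:univ-ses} on $C\times\PMod_{\nu}$, whose cokernel $\calT$ is supported over $\RSing(\nu)$ with universal quotient $\calQ$ of length $k$, and feeding in the displayed determinant identity, I expect to obtain the equality of morphisms $\Nm_{\beta_\Sigma}\circ\dot\nu=\Nm_{\beta_C}\circ\rho:\PMod_{\nu}\to\Jac_X$. Restricting along $\imath_\nu$ to the fibre product of \eqref{eq def of ring rho}, where $\rho$ factors through $\ol\Prym_C$ via $\mathring\rho$ and hence $\Nm_{\beta_C}\circ\rho\circ\imath_\nu$ is the constant morphism to $\calO_X$, forces $\Nm_{\beta_\Sigma}\circ\dot\nu\circ\imath_\nu$ to be constant $\calO_X$; since $\ol\Prym_\Sigma=\Nm_{\beta_\Sigma}^{-1}(\calO_X)$, this is exactly the scheme-theoretic factoring of $\dot\nu\circ\imath_\nu$ through $\jmath_\Sigma$, producing $\mathring\nu$ and the commutativity of \eqref{eq def of ring nu}. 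The main obstacle I anticipate is precisely this upgrade: carrying the determinant-of-cohomology bookkeeping through \eqref{eq:univ-ses} in families so that the factoring is scheme-theoretic rather than merely set-theoretic, whereas each individual fibrewise identity reduces cleanly to the two compatibility properties recorded above.
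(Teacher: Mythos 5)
Your proposal is correct and takes essentially the same route as the paper's proof: the projection-formula identity $\Nm_{\beta}(\beta^*\gamma\otimes\bullet)\cong\gamma^{m}\otimes\Nm_{\beta}(\bullet)$ handles (1) and (3), the determinant identity $\det(\beta_{\Sigma,*}\calO_\Sigma)\cong\det(\beta_{C,*}\calO_C)\otimes\calO_X(\beta_C(\RSing(\nu)))$ obtained by pushing forward the normalisation sequence handles (2), and pushforward-plus-determinants of the defining sequence of $\rho$ handles (4). The only divergence is in (4), where you run the determinant bookkeeping on the universal sequence \eqref{eq:univ-ses} to obtain the morphism-level identity $\Nm_{\beta_\Sigma}\circ\dot\nu=\Nm_{\beta_C}\circ\rho$ (which indeed yields the scheme-theoretic factoring through $\ol\Prym_\Sigma$, independent of any reducedness of the fibre product), whereas the paper performs the computation pointwise on \eqref{eq preimage of tau} and defers the details of the factoring to \cite{gothen&oliveira}; your treatment is a self-contained refinement of the same argument rather than a different one.
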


\begin{proof}
As a direct consequence of the projection formula, we have, for any $\calL \in \Jac_X^0$;
\begin{equation} \label{eq Nm and otimes}
\Nm_{\beta_C} \left ( \beta_C^*\calL \otimes \bullet \right) \cong \calL^m \otimes \Nm_{\beta_C}(\bullet) \quad \textnormal{and} \quad \Nm_{\beta_\Sigma} \left ( \beta_\Sigma^*\calL \otimes \bullet \right ) \cong \calL^m \otimes \Nm_{\beta_\Sigma}(\bullet).
\end{equation}
This immediately implies (1). For (2), we make use of the determinant formula
\[
\det(R\beta_{C, *}\calO_C) \cong \det(R\beta_{\Sigma, *}\calO_\Sigma) \otimes \calO_X(-\beta_C(\RSing(\nu))), 
\]
derived from pushforward of the short exact sequence
\[
0 \to \calO_C \to \nu_{*}\calO_{\Sigma} \to \calO_{\RSing(\nu)} \to 0 .  
\]  
Given a point $\calF \in \ol\Prym^{-k}_\Sigma$, we then obtain the following isomorphisms: 
\begin{align*}
\Nm_{\beta_C}(\nu_{*}\calF) 
& = \det(\beta_{C, *}\calO_C)^{-1} \otimes \det(\beta_{C, *}\nu_* \calF )  \\
& \cong \det(\beta_{\Sigma, *}\calO_{\Sigma})^{-1} \otimes \calO_X(\beta_{C}(\RSing(\nu))) \otimes \det(\beta_{\Sigma, *}\calF)   \\
& \cong \Nm_{\beta_{\Sigma}}(\calF) \otimes \calO_X(\beta_{C}(\RSing(\nu))) \\  
& \cong \calO_X , 
\end{align*}
and thus $\check\nu(\calF) = \nu_{*}\calF$ lands in $\ol\Prym_{C}$. This proves (2). For (3), by definition of the $\ol \Prym^{-k}_\Sigma$ we have  
\begin{equation} \label{eq tau Prym is another fibre}
\tau_{k,y_0} \big ( \ol{\Prym}_\Sigma^{\,-k}  \big ) \cong \Nm_{\beta_\Sigma}^{-1} \big ( \calO_X(-\beta_C\RSing(\nu) +k \beta_{\Sigma}(y_0) ) \big ),
\end{equation}
and it follows from \eqref{eq Nm and otimes} that composing with $\tau_{\calL} = (\bullet) \otimes \beta_\Sigma^*\calL$ lands in $\ol\Prym^0_\Sigma$.

For (4), we push forward under $\beta_C$ the short exact sequence \eqref{eq preimage of tau} associated to a point in $\PMod_{\nu} \times_{\ol{\Jac}} \ol\Prym_C$, followed by taking determinants. We observe that the composition of the inclusion $\imath_{\nu}$ with the morphism $\dot{\nu} : \PMod_{\nu} \to \ol{\Jac}_{\Sigma}$ factors through $\ol \Prym_\Sigma$ (see \cite{gothen&oliveira} for more details).
\end{proof} 

\begin{lemma} \label{lm equivariance of PMod}
There is a $\Gamma$-action on $\PMod_{\nu}$, such that the morphisms 
\[  \ol\Jac_C \xleftarrow{\ \rho \ } \PMod_{\nu} \xrightarrow{\ \dot\nu\ } \ol\Jac_\Sigma
\] are $\Gamma$-equivariant. In particular, the universal bundle $\calV_{\Sigma} \to \PMod_{\nu}$ inherits a $\Gamma$-action. \end{lemma}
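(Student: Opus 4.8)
The plan is to define the $\Gamma$-action by tensoring the underlying sheaf data of a parabolic module with pullbacks of torsion line bundles from $X$, and then to verify equivariance of $\rho$ and $\dot\nu$ directly on the defining exact sequence \eqref{eq preimage of tau}. Concretely, for $\calL \in \Gamma = \Jac_X[m]$ I would set
\[
\calL \cdot (M, V) := \big( M \otimes \beta_\Sigma^* \calL, \; V \otimes \beta_\Sigma^* \calL|_{\Exc(\nu)} \big),
\]
where $V \otimes \beta_\Sigma^* \calL|_{\Exc(\nu)}$ denotes the image of $V$ under the canonical isomorphism $M \otimes \calO_{\Exc(\nu)} \xrightarrow{\ \cong\ } (M \otimes \beta_\Sigma^* \calL) \otimes \calO_{\Exc(\nu)}$. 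Since $\calL$ has degree zero, $M \otimes \beta_\Sigma^* \calL$ is again a rank one, degree zero torsion-free sheaf on $\Sigma$ and each $V_i \otimes (\beta_\Sigma^* \calL)_{b_i^\pm}$ is again one-dimensional, so this is a well-defined parabolic module; functoriality of tensor product makes it a group action, and tensoring the universal family $\big( (\id \times \dot\nu)^* \calU_\Sigma, \calV_\Sigma \big)$ by $\beta_\Sigma^* \calL$ induces, via the fine moduli property of $\PMod_\nu$, the corresponding automorphism $a_\calL : \PMod_\nu \to \PMod_\nu$.

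Equivariance of $\dot\nu$ is immediate, as $\dot\nu(\calL \cdot (M, V)) = M \otimes \beta_\Sigma^* \calL$ is precisely the tensoral $\Gamma$-action on $\ol\Jac_\Sigma$ applied to $\dot\nu(M, V)$. For $\rho$ the key input is commutativity of \eqref{eq nu and beta commute}, which gives $\beta_\Sigma^* \calL = \nu^* \beta_C^* \calL$. Applying the projection formula to $\nu$ then yields
\begin{align*}
\nu_*(M \otimes \beta_\Sigma^* \calL) &\cong \nu_* M \otimes \beta_C^* \calL, \\
\nu_*\big((M \otimes \calO_{\Exc(\nu)}/V) \otimes \beta_\Sigma^* \calL\big) &\cong \nu_*\big(M \otimes \calO_{\Exc(\nu)}/V\big) \otimes \beta_C^* \calL,
\end{align*}
so the defining sequence \eqref{eq preimage of tau} of $\rho(\calL \cdot (M, V))$ is obtained from that of $\rho(M, V)$ by tensoring with the line bundle $\beta_C^* \calL$. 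As tensoring by a line bundle is exact, the kernels satisfy $\rho(\calL \cdot (M, V)) \cong \rho(M, V) \otimes \beta_C^* \calL$, which is exactly the tensoral $\Gamma$-action on $\ol\Jac_C$; hence $\rho$ is $\Gamma$-equivariant.

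Finally, the action on $\calV_\Sigma$ follows from its tautological construction. The universal sheaf $\calU_\Sigma$ carries a natural $\Gamma$-equivariant structure for the tensoral action (up to the twist by a pullback from $\ol\Jac_\Sigma$ forced by the normalisation at $y_0$), so since $\dot\nu$ is equivariant the restriction $\dot\nu^* \calU_\Sigma|_{\Exc(\nu) \times \PMod_\nu}$ is linearised; because the action sends the parabolic line $V_i$ to $V_i \otimes (\beta_\Sigma^*\calL)_{b_i^\pm}$, the tautological subbundle $\calV_\Sigma$ of \eqref{eq definition of Vv_i}--\eqref{eq:def V} is preserved and thereby inherits a $\Gamma$-linearisation. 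The step I expect to require the most care is this last one: promoting the action on $\PMod_\nu$ from a bijection on geometric points to an honest morphism of the fine moduli space, and transferring the linearisation to $\calV_\Sigma$ while tracking the normalisation twist $(\beta_\Sigma^* \calL)_{y_0}$ of $\calU_\Sigma$. The twisting factors are one-dimensional spaces pulled back from the base, hence immaterial as line bundles on $\PMod_\nu$, but choosing them coherently in $\calL$ (the cocycle condition for a genuine linearisation) is the only delicate point.
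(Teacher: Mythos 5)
Your overall route is the same as the paper's: define the action of $L \in \Gamma$ by $(M,V) \mapsto (M \otimes \beta_\Sigma^*L,\ V \otimes \beta_\Sigma^*L)$, observe that equivariance of $\dot\nu$ holds by construction, and obtain equivariance of $\rho$ by twisting the defining sequence \eqref{eq preimage of tau}, using $\beta_\Sigma^*\calL = \nu^*\beta_C^*\calL$ and the projection formula. Your treatment of $\rho$ is in fact more explicit than the paper's (which only asserts compatibility of \eqref{eq preimage of tau} with the two actions), and your closing remarks on promoting the pointwise action to a morphism of the fine moduli space and linearising $\calV_\Sigma$ address a level of rigour the paper does not enter into.

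There is, however, one genuine flaw, and it sits exactly at the crux of the lemma. You define the new parabolic datum as the image of $V$ under a ``canonical isomorphism $M \otimes \calO_{\Exc(\nu)} \cong (M \otimes \beta_\Sigma^*\calL) \otimes \calO_{\Exc(\nu)}$''. No such canonical isomorphism exists: the restriction $\beta_\Sigma^*\calL|_{\Exc(\nu)}$ is abstractly but not canonically trivial, since its fibres are the one-dimensional spaces $\calL_{\beta_C(b_i)}$. What does exist --- and what the paper's proof isolates as the key point --- is the canonical identification of stalks
\[
(\beta_\Sigma^*\calL)_{b_i^+} \cong (\beta_\Sigma^*\calL)_{b_i^-} \cong \calL_{\beta_C(b_i)},
\]
coming from the commutativity of \eqref{eq nu and beta commute}, i.e.\ $\beta_\Sigma(b_i^+) = \beta_\Sigma(b_i^-) = \beta_C(b_i)$. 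Because both summands of $M_{b_i^+} \oplus M_{b_i^-}$ are twisted by the \emph{same} one-dimensional space, the assignment $V_i \mapsto V_i \otimes \calL_{\beta_C(b_i)}$ is a well-defined line in $(M\otimes\beta_\Sigma^*\calL)_{b_i^+} \oplus (M\otimes\beta_\Sigma^*\calL)_{b_i^-}$, independent of any choice of trivialisation (scaling a vector of $\calL_{\beta_C(b_i)}$ rescales the whole line). This is precisely where the argument would fail for a line bundle $N$ on $\Sigma$ not pulled back along $\nu$: if $N_{b_i^+}$ and $N_{b_i^-}$ are not identified, a line such as the graph of an isomorphism $M_{b_i^+} \to M_{b_i^-}$ has no canonical image after twisting, so no action on $\PMod_\nu$ is defined --- this is why only $\Gamma = \Jac_X[m]$ acts. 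You clearly sense this (your final paragraph notes the twisting factors are ``pulled back from the base''), but you never invoke it where it is actually needed, and the justification you do give is false as stated. Replacing the claimed isomorphism by the stalk identification above repairs the proof and makes it coincide with the paper's.
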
 
\begin{proof}
Given a parabolic module $(M,V)$, with $M\in\ol\Jac_\Sigma$, a $n$-torsion line bundle $L\in\Gamma$ acts by
\[ L\cdot(M,V)=(\beta_\Sigma^*L\otimes M, W),
\] where  
\[ W\ :=\ V\otimes \beta^*_\Sigma L\ = \ \bigoplus_{i=1}^k V_i\otimes(\beta^*_\Sigma L)_{b_i^\pm} \ \subset\  \bigoplus_{i=1}^k  (M_{b_i^-}\oplus M_{b_i^+})\otimes(\beta_\Sigma^*L)_{b_i^\pm}.
\]
Note that there is a natural isomorphism of stalks $(\beta_\Sigma^*L)_{b_i^+} \cong (\beta_\Sigma^*L)_{b_i^-} \cong  L_{\beta_C(b)}$. Hence, $W$ is well-defined. By definition of the action $\dot{\nu}$ is $\Gamma$-equivariant. Furthermore, the exact sequence \eqref{eq preimage of tau} is compatible with the $\Gamma$-action on $\ol\Jac_C$ and the action on $\PMod_{\nu}$ just defined. In particular, $\rho$ is $\Gamma$-equivariant.
\end{proof}


Let us now restrict the Poincaré sheaves to our compactified Prym varieties. Consider the pullback the Poincar\'e sheaf along the closed embeddings of the Prym varieties,
\begin{equation} \label{eq restricted Poincare}
\calR_C := (\jmath_C \times \jmath_C)^*\calP_C, \quad \calR_\Sigma := \left ( \jmath_\Sigma \times \jmath_\Sigma \right )^* \calP_\Sigma,
\end{equation}
supported on $\ol\Prym_C \times \ol\Prym_C$ and $\ol\Prym_\Sigma \times \ol \Prym_\Sigma$ respectively. 
These sheaves can be related via the following Prymian version of Theorem \ref{tm Poincare and normalisation}. 

\begin{proposition}
With the notation introduced above, one has
\begin{equation} \label{eq relation between Rr s}
(\breve \nu \times \id)^* \calR_C \cong (\id \times \mathring\rho)_*( \mathring q_2^* \imath_\nu^* \det(\calV_\Sigma) \otimes ((\mathring\tau_{\calL} \circ \mathring \tau_{k,y_0}) \times \mathring \nu)^* \calR_\Sigma) \, ,
\end{equation}
where $\mathring q_2$ is the natural projection from $\ol{\Prym}^{\, -k}_\Sigma \times \left ( \PMod_{\nu} \times_{\ol{\Jac}_{C}} \ol{\Prym}_{C} \right )$ to $\PMod_{\nu} \times_{\ol{\Jac}_{C}} \ol{\Prym}_{C}$.
\end{proposition}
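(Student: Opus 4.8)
The plan is to derive \eqref{eq relation between Rr s} by restricting the global isomorphism \eqref{eq relation of Poincares} of Theorem \ref{tm Poincare and normalisation} along the closed embedding $\jmath_\Sigma' \times \jmath_C : \ol\Prym_\Sigma^{\,-k} \times \ol\Prym_C \longhookrightarrow \ol\Jac_\Sigma^{\,-k} \times \ol\Jac_C$, and then matching both sides using the compatibility diagrams of Lemma \ref{Lemma:compatib-maps-Pryms}. For the left-hand side this is purely functorial: the commutativity of \eqref{eq relation jmath nu} gives $\check\nu \circ \jmath_\Sigma' = \jmath_C \circ \breve\nu$, so that
\[
(\jmath_\Sigma' \times \jmath_C)^*(\check\nu \times \id)^*\calP_C \cong (\breve\nu \times \id)^*(\jmath_C \times \jmath_C)^*\calP_C = (\breve\nu \times \id)^*\calR_C,
\]
recovering the left-hand side of \eqref{eq relation between Rr s} by the definition \eqref{eq restricted Poincare} of $\calR_C$.

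For the right-hand side I would first bring the restriction inside the pushforward. Taking the product of the Cartesian square \eqref{eq def of ring rho} with the trivial square $\jmath_\Sigma' : \ol\Prym_\Sigma^{\,-k} \to \ol\Jac_\Sigma^{\,-k}$ produces a Cartesian square whose bottom arrow is $\id \times \rho$, right arrow $\jmath_\Sigma' \times \jmath_C$, top arrow $\id \times \mathring\rho$ and left arrow $\jmath_\Sigma' \times \imath_\nu$. Since $\rho$ is finite, hence affine, the morphism $\id \times \rho$ is affine, and affine pushforward commutes with the arbitrary base change $\jmath_\Sigma' \times \jmath_C$ at the level of quasi-coherent sheaves (see \cite{stacks}); this yields
\[
(\jmath_\Sigma' \times \jmath_C)^*(\id \times \rho)_*\calH \cong (\id \times \mathring\rho)_*(\jmath_\Sigma' \times \imath_\nu)^*\calH,
\]
where $\calH := \q_2^*\det(\calV_\Sigma) \otimes (\tau_{k, y_0} \times \dot\nu)^*\calP_\Sigma$. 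The $\det(\calV_\Sigma)$ factor pulls back immediately to $\mathring q_2^* \imath_\nu^*\det(\calV_\Sigma)$, using $\q_2 \circ (\jmath_\Sigma' \times \imath_\nu) = \imath_\nu \circ \mathring q_2$, and the identity $\dot\nu \circ \imath_\nu = \jmath_\Sigma \circ \mathring\nu$ from \eqref{eq def of ring nu} rewrites the second slot of the Poincaré factor. The remaining task is to reconcile the first slot: the restriction produces $\tau_{k, y_0} \circ \jmath_\Sigma'$, whereas the target \eqref{eq relation between Rr s} demands $\tau_\calL \circ \tau_{k, y_0} \circ \jmath_\Sigma'$, the extra translation $\tau_\calL$ being precisely what is needed to land in $\ol\Prym_\Sigma$ rather than in the fibre $\Nm_{\beta_\Sigma}^{-1}(\calO_X(k\beta_\Sigma(y_0)-\beta_C(\RSing(\nu))))$ of \eqref{eq tau Prym is another fibre}, as recorded by the commutativity of \eqref{eq relation tau and jmath}.

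The hard part will therefore be to show that inserting $\tau_\calL$, which tensors by the pulled-back bundle $\beta_\Sigma^*\calL$, leaves the \emph{restricted} Poincaré sheaf unchanged. I would handle this with Arinkin's translation formula \cite[Lemma 6.5]{arinkin}, giving $(\tau_\calL \times \id)^*\calP_\Sigma \cong \pi_2^*\calP_{\Sigma, \beta_\Sigma^*\calL} \otimes \calP_\Sigma$, and then prove that $\calP_{\Sigma, \beta_\Sigma^*\calL}$ restricts to the trivial line bundle on $\ol\Prym_\Sigma$. The latter is the orthogonality of $\beta_\Sigma^*\Jac_X$ and $\ol\Prym_\Sigma$ under the autoduality pairing: since $\beta_\Sigma^*$ and $\Nm_{\beta_\Sigma}$ are adjoint for the Poincaré pairings and $\ol\Prym_\Sigma \subset \ker(\Nm_{\beta_\Sigma})$, any bundle pulled back from $X$ pairs trivially with the Prym, and over the compactification this is exactly the content of the autoduality of \cite{FHR, groechenig&shen}. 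As the twisting bundle $\pi_2^*\calP_{\Sigma, \beta_\Sigma^*\calL}$ is pulled back from the second factor, which under $\mathring\nu$ lands in $\ol\Prym_\Sigma$, its contribution becomes trivial after restriction. Combining this vanishing with the commutativities \eqref{eq relation tau and jmath} and \eqref{eq def of ring nu} identifies $(\jmath_\Sigma' \times \imath_\nu)^*(\tau_{k, y_0} \times \dot\nu)^*\calP_\Sigma$ with $((\mathring\tau_\calL \circ \mathring\tau_{k, y_0}) \times \mathring\nu)^*\calR_\Sigma$, and assembling the two tensor factors completes the proof.
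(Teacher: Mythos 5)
Your proposal is correct and follows essentially the same route as the paper's own proof: restrict the Jacobian-level isomorphism \eqref{eq relation of Poincares} along $\jmath_\Sigma' \times \jmath_C$, commute the restriction past $(\id\times\rho)_*$ via base change around the Cartesian square \eqref{eq def of ring rho} (the paper invokes base change directly; your appeal to affineness of the finite map $\rho$ is a valid justification), and absorb the extra translation $\tau_{\calL}$ using Arinkin's \cite[Lemma 6.5]{arinkin} together with the fact that the Poincar\'e sheaf restricted to the Prym is insensitive to twists pulled back from $\Jac_X$ --- which is exactly \cite[Proposition 4.5]{FHR} ($\calP_\Sigma|_{\{\beta_\Sigma^*N\}\times\ol\Jac}\cong\Nm^*\calP_X|_{\{N\}\times\Jac_X}$, trivial on the central fibre of the norm map), the precise form of the ``orthogonality'' you describe. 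The only cosmetic difference is that the paper writes $\tau_{k,y_0}=\tau_{\calL^{-1}}\circ(\tau_\calL\circ\tau_{k,y_0})$ and kills the $\tau_{\calL^{-1}}$-factor, whereas you insert $\tau_\calL$ and kill its effect; this is the same computation with $N=\calL^{-1}$ versus $N=\calL$.
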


\begin{proof}
Applying $(\jmath'_\Sigma \times \jmath_C)^*$ to \eqref{eq relation of Poincares}, one gets the identification
\begin{equation}\label{eq previous relation of Rr}
(\breve \nu \times \id)^* (\jmath_C \times \jmath_C)^*\calP_C \cong (\id \times \mathring\rho)_* \left ( \mathring q_2^* \imath_\nu^* \det(\calV_\Sigma) \otimes ((\mathring\tau_{\calL} \circ \mathring \tau_{k,y_0}) \times \mathring \nu)^* \left ( \jmath_\Sigma \times \jmath_\Sigma \right )^*(\tau_{\calL^{-1}} \times \id)^*\calP_\Sigma \right) \, ,
\end{equation}
after an iteration of base change under \eqref{eq def of ring rho} and functoriality under \eqref{eq relation jmath nu}, \eqref{eq def of ring nu} and
\[
\begin{tikzcd}
\ol{\Prym}^{\, -k}_\Sigma \times \left ( \PMod_{\nu} \times_{\ol{\Jac}_{C}} \ol{\Prym}_{C} \right ) \arrow[d, "\jmath'_\Sigma \times \imath_{\nu}"']\arrow[r, "\mathring q_2"] 
& \PMod_{\nu} \times_{\ol{\Jac}_{C}} \ol{\Prym}_{C} \arrow[d, "\imath_{\nu}"] 
\\ 
\ol\Jac^{\, -k}_\Sigma \times \PMod_{\nu} \arrow[r, "q_2"'] 
& \PMod_{\nu} \, .
\end{tikzcd}
\]
Thanks to \cite[Lemma 6.5]{arinkin} and \cite[Proposition 4.5]{FHR}, one has for any line bundle $N \in \Jac_X$ and $\tau_N$ defined as in \eqref{eq traslation including pull-back},
\[
(\tau_{N} \times \id)^*\calP_\Sigma \cong \p_{2}^*\calP_{\Sigma}|_{\{ \beta_{\Sigma}^*N\} \times \ol \Jac } \otimes \calP_\Sigma \cong \p_{2}^*\Nm^*\calP_{X}|_{\{N\} \times \Jac_X } \otimes \calP_\Sigma,
\]
where we recall that $\p_{2}$ is the projection to the second factor $\ol\Jac_\Sigma \times \ol\Jac_\Sigma \to \ol\Jac_\Sigma$. If further, $\mathring\p_{2}$ denotes the projection to the second factor $\ol\Prym_\Sigma \times \ol\Prym_\Sigma \to \ol\Prym_\Sigma$,  
\begin{equation}\label{eq pull-back of Poincare_Sigma under jmath}
\left ( \jmath_\Sigma \times \jmath_\Sigma \right )^*(\tau_{N} \times \id)^*\calP_\Sigma \cong \mathring \p_2^* \jmath_\Sigma^*\Nm^*\calP_{X}|_{\{N\} \times \Jac_X } \otimes\calR_\Sigma \cong \calR_\Sigma,
\end{equation}
since $\jmath_\Sigma$ amounts to the inclusion of the central fibre of the Norm map $\Nm$. We obtain \eqref{eq relation between Rr s} after substituting into \eqref{eq previous relation of Rr} the identifications \eqref{eq restricted Poincare} and \eqref{eq pull-back of Poincare_Sigma under jmath} when $N= \calL^{-1}$.
\end{proof}

We consider the $\Gamma$-action on the products $\ol\Prym_C \times \ol\Prym_C$ and $\ol\Prym^{\,-k}_\Sigma \times \ol\Prym_\Sigma$ given by the standard $\Gamma$-action on the second factor and the trivial one on the first. Observe that the morphisms $\jmath_C$, $\jmath_\Sigma$, $\rho$ and $\dot\nu$ are $\Gamma$-equivariant by Lemma \ref{lm equivariance of PMod}. Then, so are $\mathring \rho$, $\imath_\nu$ and $\mathring \nu$ by construction, hence \eqref{eq def of ring rho} and \eqref{eq def of ring nu} are, respectively, Cartesian and commutative diagrams in the $\Gamma$-equivariant category. It follows from Lemma \ref{lm equivariance of PMod} that $\imath_{\nu}^{*}\det(\calV_{\Sigma})$ is $\Gamma$-equivariant. 

Crucially, the sheaves in \eqref{eq restricted Poincare} are $\Gamma$-equivariant \cite[Proposition 4.5]{FHR}. Furthermore, they can be equipped with compatible $\Gamma$-equivariant structures. 

\begin{proposition} \label{pr compatible Gamma-structures}
A $\Gamma$-equivariant structure on $\calR_C$ and $\imath_\nu^* \det(\calV_\Sigma)$ naturally induce a $\Gamma$-equivariant structure on $\calR_\Sigma$.
\end{proposition}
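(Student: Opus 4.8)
The plan is to transport the $\Gamma$-equivariant structure through the isomorphism \eqref{eq relation between Rr s}, exploiting that every sheaf and morphism appearing there is $\Gamma$-equivariant. The conceptual input is a torsor principle: if $\calE$ is an indecomposable $\Gamma$-equivariant sheaf on an integral proper $\CC$-scheme, then its global automorphisms are the scalars $\CC^\times$ (the simplicity observation already used for hypothesis (ii) of Lemma \ref{lm see-saw}), so any two $\Gamma$-equivariant structures on $\calE$ differ by a character, and the set of such structures is a torsor under $\hat\Gamma := \Hom(\Gamma, \CC^\times)$. Since $\calR_C$, $\calR_\Sigma$ and $\imath_\nu^*\det(\calV_\Sigma)$ are of rank one torsion-free (hence simple on the relevant integral products, cf.\ \cite[Proposition 4.5]{FHR}), this applies to each of them, and also to both sides of \eqref{eq relation between Rr s}.

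First I would fix the given structures on $\calR_C$ and on $\imath_\nu^*\det(\calV_\Sigma)$, and observe that the left-hand side $(\breve\nu\times\id)^*\calR_C$ of \eqref{eq relation between Rr s} inherits a canonical structure, because $\breve\nu\times\id$ is $\Gamma$-equivariant by \eqref{eq relation jmath nu} and by the convention that $\Gamma$ acts trivially on the first factor. Next I would equip $\calR_\Sigma$ with an auxiliary structure and propagate it to the right-hand side: the map $\mu := (\mathring\tau_{\calL}\circ\mathring\tau_{k,y_0})\times\mathring\nu$ is $\Gamma$-equivariant (the translations commute with the tensoral $\Gamma$-action by Lemma \ref{Lemma:compatib-maps-Pryms}, and $\mathring\nu$ is equivariant by Lemma \ref{lm equivariance of PMod}), while $\mathring q_2$ and the finite morphism $\id\times\mathring\rho$ are equivariant, so $(\id\times\mathring\rho)_*\big(\mathring q_2^*\imath_\nu^*\det(\calV_\Sigma)\otimes\mu^*\calR_\Sigma\big)$ acquires an induced structure.

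The crux is then that the assignment sending a structure on $\calR_\Sigma$ to the induced structure on the right-hand side is $\hat\Gamma$-equivariant over the identity: replacing the structure on $\calR_\Sigma$ by its twist by a character $\chi$ twists $\mu^*\calR_\Sigma$ by $\chi$ (because $\mu$ covers the identity of $\Gamma$), and tensoring by the fixed equivariant bundle $\mathring q_2^*\imath_\nu^*\det(\calV_\Sigma)$ and pushing forward along $\id\times\mathring\rho$ leave this twist unchanged. A map of $\hat\Gamma$-torsors over the identity is a bijection, so there is a unique structure on $\calR_\Sigma$ whose induced structure on the right-hand side agrees with the structure transported from the left-hand side through \eqref{eq relation between Rr s}. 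This is the desired $\Gamma$-equivariant structure, and it is characterised as the unique one making \eqref{eq relation between Rr s} an isomorphism in the $\Gamma$-equivariant category, which yields the asserted naturality.

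The main obstacle I anticipate is verifying the torsor principle in our singular setting, i.e.\ that the relevant rank one torsion-free sheaves are genuinely simple on the (integral, Gorenstein but singular) products of compactified Pryms, so that their equivariant structures form $\hat\Gamma$-torsors; this is the analogue of hypothesis (ii) of Lemma \ref{lm see-saw} and must also be checked for the pulled-back sheaf $(\breve\nu\times\id)^*\calR_C$. A secondary check is that the equivariant operations — the finite pushforward $\mathring\rho_*$, the pullback $\mu^*$ along an equivariant map whose first component is the translation isomorphism, and tensoring by an equivariant line bundle — induce maps of torsors preserving rather than altering the character weight; both reduce to the equivariance statements of Lemmas \ref{Lemma:compatib-maps-Pryms} and \ref{lm equivariance of PMod} together with the fact that $\mu$ covers $\id_\Gamma$.
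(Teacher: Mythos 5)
Your proposal takes a genuinely different route from the paper, and the difference is exactly where it breaks. The paper's proof is a direct functorial transport: for each $L \in \Gamma$ it pulls the structure isomorphism $\calR_C \cong (\id \times \tau_{L,C})^*\calR_C$ back along $(\id\times\mathring\rho)^*(\breve\nu\times\id)^*$, feeds it through \eqref{eq relation between Rr s} via the counit of the adjunction for the finite morphism $\id\times\mathring\rho$, cancels the fixed equivariant line bundle $\mathring q_2^*\imath_\nu^*\det(\calV_\Sigma)$, and descends along the $\Gamma$-equivariant morphism $\mathring\nu$. No simplicity, integrality, or torsor input enters anywhere, and the argument works uniformly in the endoscopic and non-endoscopic cases. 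You replace this explicit construction by an abstract uniqueness argument, and that replacement imports a hypothesis the paper deliberately avoids.

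Concretely, the gap is the step ``a map of $\hat\Gamma$-torsors over the identity is a bijection.'' For this you need the \emph{target} --- the set of $\Gamma$-equivariant structures on the right-hand side of \eqref{eq relation between Rr s}, equivalently on $(\breve\nu\times\id)^*\calR_C$ --- to be a single $\hat\Gamma$-orbit, i.e.\ you need $\End$ of that sheaf to be $\CC$. Freeness of the $\hat\Gamma$-action is automatic (twisting a structure by a nontrivial character changes it, since the structure isomorphisms are nonzero), so your map of structure-sets is injective; but injectivity alone does not place the structure transported from the left-hand side into its image --- that is precisely transitivity on the target, which is the whole content of the proposition in your setup. You flag this simplicity as ``the main obstacle'' but do not resolve it, and it is not the routine check you suggest: (i) it would require integrality of $\ol\Prym_\Sigma^{\,-k}\times\ol\Prym_C$ and $\ol\Prym_\Sigma\times\ol\Prym_\Sigma$, which is genuinely doubtful in the endoscopic case (there the open Prym locus inside $\Jac_C$ can be disconnected, so the compactified Prym can be reducible), and the proposition is stated and used in that case too; (ii) it would require purity/torsion-freeness of the restricted Poincar\'e sheaves on these products, which the paper nowhere establishes and which does not follow formally from restriction; and (iii) simplicity is in any case delicate for a pushforward under the finite, generically $2^\ell$-to-one-over-the-singular-locus map $\mathring\rho$ (cf.\ Lemma \ref{lm tau surjective}), since finite pushforward can only enlarge endomorphism algebras. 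Two further inaccuracies: your ``torsor principle'' is misstated, as indecomposability does not imply $\Aut = \CC^\times$ (one needs simplicity, $\End = \CC$); and hypothesis (ii) of Lemma \ref{lm see-saw} concerns simplicity of restrictions to slices of the irreducible varieties $\ol\Jac_C$, $\ol\Jac_\Sigma$, not of sheaves on products of compactified Pryms. To close the gap you would either have to prove simplicity of $(\breve\nu\times\id)^*\calR_C$ in full generality, or abandon the torsor argument and construct the induced structure on $\calR_\Sigma$ directly --- which is what the paper's proof does.
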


\begin{proof}
Given a certain $L \in \Gamma$, denote by $\tau_{L,C}$ and $\tau_{L, \Sigma}$ the shifts provided by tensoring under $\beta_C^*L$ and $\beta_\Sigma^*L$. An isomorphism between $\calR_C$ and $(\id \times \tau_{L,C})^*\calR_C$ induces an isomorphism between $(\id \times \mathring \rho)^*(\breve \nu \times \id)^*\calR_C$ and $(\id \times \tau_{L, \Sigma})^*(\id \times \mathring \rho)^*(\breve \nu \times \id)^*\calR_C$, where we made use of the $\Gamma$-equivariance of $\mathring \rho$. Under the identification \eqref{eq relation between Rr s} and the counit transformation, this further provides an isomorphism between $( \mathring q_2^* \imath_\nu^* \det(\calV_\Sigma) \otimes ((\mathring\tau_{\calL} \circ \mathring \tau_{k,y_0}) \times \mathring \nu)^* \calR_\Sigma)$ and $(\id \times \tau_{L, \Sigma})^*( \mathring q_2^* \imath_\nu^* \det(\calV_\Sigma) \otimes ((\mathring\tau_{\calL} \circ \mathring \tau_{k,y_0}) \times \mathring \nu)^* \calR_\Sigma)$. Recalling that $\mathring \nu$ is $\Gamma$-equivariant, the previous isomorphism together with the one provided by the $\Gamma$-equivariant structure on $\imath_\nu^* \det(\calV_\Sigma)$, induces an isomorphism between $\calR_\Sigma$ and $(\id \times \tau_{L, \Sigma})^*\calR_\Sigma$.
\end{proof}

Let us equip $\calR_C$ and $\calR_\Sigma$ with compatible $\Gamma$-equivariant structures, as per Proposition \ref{pr compatible Gamma-structures}. One can then consider the following integral functors in the category of $\Gamma$-equivariant sheaves, with $\Gamma$ acting trivially on the first factor,
\[
\Psi^{\calR_{C}}_{1 \rightarrow 2} : D^b\left (\ol\Prym_C \right ) \to D^b \left (\ol\Prym_C, \Gamma \right ) , \quad \quad \Psi^{\calR_{\Sigma}}_{1 \rightarrow 2} : D^b\left (\ol\Prym_{\Sigma} \right) \to D^b\left (\ol\Prym_\Sigma, \Gamma \right ) \, . 
\]
Both functors are shown to be equivalences of categories in \cite[Thm 4.8]{FHR}, \cite[Thm 4.7]{groechenig&shen}. The following Prymian analogue of Theorem \ref{tm relation of FM dual} describes the relation between them. 

\begin{corollary} 
\label{co: SL_n transform} 
Let $C$ be an integral nodal curve with arithmetic genus $g$ and a partial normalisation $\nu : \Sigma \to C$. Consider compatible $m$-coverings $C \to X$ and $\Sigma \to X$ and their compactified Pryms, as described above. Pick $y_0 \in \Sigma$ such that $\nu(y_0)$ lies in the smooth locus of $C$. Then, for every $\calF^\bullet \in D^b(\ol\Prym_{\Sigma}^{\,-k})$, one has the isomorphism
\begin{equation} \label{eq Psi}
\Psi^{\calR_C}_{1 \rightarrow 2} \left ( R\breve\nu_*\calF^\bullet \right ) \cong  \mathring\rho_* \left( \imath_{\nu}^{*}\det(\calV_{\Sigma}) \otimes \mathring\nu^* \Psi^{\calR_\Sigma}_{1 \rightarrow 2} (\mathring\tau_{\calL*}\mathring\tau_{k, y_0,*} \calF^\bullet ) \right). 
\end{equation} 
\end{corollary}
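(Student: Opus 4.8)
The plan is to transport the proof of Theorem~\ref{tm relation of FM dual} essentially verbatim into the $\Gamma$-equivariant setting, using the Prymian Poincaré comparison \eqref{eq relation between Rr s} in place of \eqref{eq relation of Poincares} and performing every step in the relevant $\Gamma$-equivariant derived categories. Write $\mathring\pi_1,\mathring\pi_2$ for the two projections off $\ol\Prym_C\times\ol\Prym_C$ (so that $\Psi^{\calR_C}_{1\to 2}=R\mathring\pi_{2,*}(\mathring\pi_1^*(\bullet)\otimes\calR_C)$), and $\mathring\t_1,\mathring\t_2$ for the projections off $\ol\Prym_\Sigma^{\,-k}\times\ol\Prym_C$. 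First I would unwind the definition of $\Psi^{\calR_C}_{1\to 2}$ and use the Cartesian square
\[
\begin{tikzcd}
\ol\Prym_\Sigma^{\,-k}\times\ol\Prym_C \arrow[r,"\mathring\t_1"]\arrow[d,"\breve\nu\times\id"'] & \ol\Prym_\Sigma^{\,-k}\arrow[d,"\breve\nu"]\\
\ol\Prym_C\times\ol\Prym_C \arrow[r,"\mathring\pi_1"'] & \ol\Prym_C
\end{tikzcd}
\]
together with base change along the flat projection $\mathring\pi_1$, the projection formula, and the identity $\mathring\t_2=\mathring\pi_2\circ(\breve\nu\times\id)$, exactly as in \eqref{eq Phi nu_*Ff 1}, to obtain $\Psi^{\calR_C}_{1\to 2}(R\breve\nu_*\calF^\bullet)\cong R\mathring\t_{2,*}\bigl(\mathring\t_1^*\calF^\bullet\otimes(\breve\nu\times\id)^*\calR_C\bigr)$.

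Next I would substitute \eqref{eq relation between Rr s}. Applying the projection formula along $\id\times\mathring\rho$ (which sends $\mathring\t_1^*\calF^\bullet$ to $\mathring q_1^*\calF^\bullet$), then functoriality through the Cartesian square \eqref{eq def of ring rho} — giving $\mathring\t_2\circ(\id\times\mathring\rho)=\mathring\rho\circ\mathring q_2$ — and finally the projection formula along $\mathring q_2$ to extract the pullback $\mathring q_2^*\imath_\nu^*\det(\calV_\Sigma)$, mirrors the passage from \eqref{eq Phi nu_*Ff 1} to \eqref{eq Phi nu_*Ff 2}. This yields
\[
\Psi^{\calR_C}_{1\to 2}(R\breve\nu_*\calF^\bullet)\cong R\mathring\rho_*\Bigl(\imath_\nu^*\det(\calV_\Sigma)\otimes R\mathring q_{2,*}\bigl(\mathring q_1^*\calF^\bullet\otimes((\mathring\tau_\calL\circ\mathring\tau_{k,y_0})\times\mathring\nu)^*\calR_\Sigma\bigr)\Bigr).
\]

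It then remains to identify the inner pushforward with $\mathring\nu^*\Psi^{\calR_\Sigma}_{1\to 2}(\mathring\tau_{\calL*}\mathring\tau_{k, y_0,*}\calF^\bullet)$. Here I would factor the map as $(\mathring\tau_\calL\circ\mathring\tau_{k,y_0})\times\mathring\nu=(\id\times\mathring\nu)\circ((\mathring\tau_\calL\circ\mathring\tau_{k,y_0})\times\id)$ and push $\mathring\nu^*$ outside by flat base change along the product of the Cartesian square \eqref{eq def of ring nu} with $\ol\Prym_\Sigma^{\,-k}$ (whose lower horizontal arrow is the flat projection $\mathring\r_2$), together with functoriality of the companion triangle for $\mathring q_1$ — precisely the penultimate step of the proof of Theorem~\ref{tm relation of FM dual}. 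Finally, since $\mathring\tau_\calL\circ\mathring\tau_{k,y_0}$ is an isomorphism, the projection formula transfers the translation from the kernel $\calR_\Sigma$ onto the input, so that $R\mathring\r_{2,*}(\mathring\r_1^*\calF^\bullet\otimes((\mathring\tau_\calL\circ\mathring\tau_{k,y_0})\times\id)^*\calR_\Sigma)=\Psi^{\calR_\Sigma}_{1\to 2}(\mathring\tau_{\calL*}\mathring\tau_{k,y_0,*}\calF^\bullet)$, producing \eqref{eq Psi}.

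The main obstacle I anticipate is not the formal manipulation — which is a faithful copy of the $\GL$-computation — but the bookkeeping of $\Gamma$-equivariant structures. Each of the functors above (base change, projection formula, functoriality) must be realised in the $\Gamma$-equivariant category, which is legitimate because $\mathring\rho$, $\imath_\nu$ and $\mathring\nu$ are $\Gamma$-equivariant by Lemma~\ref{lm equivariance of PMod} and the construction in Lemma~\ref{Lemma:compatib-maps-Pryms}, and $\imath_\nu^*\det(\calV_\Sigma)$ inherits the induced equivariant structure. The crucial point is that $\calR_C$ and $\calR_\Sigma$ must be equipped with the \emph{compatible} pair of equivariant structures furnished by Proposition~\ref{pr compatible Gamma-structures}: it is exactly this compatibility that makes the entire chain of isomorphisms $\Gamma$-equivariant, so that both sides of \eqref{eq Psi} are identified as objects of $D^b(\ol\Prym_C,\Gamma)$ and not merely after forgetting the $\Gamma$-action.
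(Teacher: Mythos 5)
Your proposal is correct and takes exactly the paper's route: the paper's own proof of Corollary \ref{co: SL_n transform} consists of the single remark that, starting from \eqref{eq relation between Rr s}, the argument is analogous to that of Theorem \ref{tm relation of FM dual}, which is precisely the transport you carry out (note only that the square you invoke for the base change is the product square with bottom arrow $\mathring\r_2$, which is Cartesian automatically, whereas \eqref{eq def of ring nu} itself is only asserted to be commutative). Your explicit bookkeeping of the $\Gamma$-equivariant structures via Proposition \ref{pr compatible Gamma-structures} and the equivariance of $\mathring\rho$, $\mathring\nu$, $\imath_\nu$ correctly supplies what the paper leaves implicit.
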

\begin{proof} Starting from \eqref{eq relation between Rr s}, the proof is analogous to that of Theorem \ref{tm relation of FM dual}. 
\end{proof}

\begin{remark}
Note that Theorem \ref{co: SL_n transform} does not depend on the choice of $m$-th root $\calL$ of $\calO_X(k\beta_\Sigma(y_0)-\beta_C(\RSing(\nu)))$. Indeed, choosing a different $m$-th root $L \otimes \calL$ for any $L \in \Gamma$, provides
\[
\Psi^{\calR_\Sigma}_{1 \rightarrow 2} (\mathring\tau_{L\otimes\calL,*}\mathring\tau_{k, y_0,*} \calF^\bullet ) \cong \Psi^{\calR_\Sigma}_{1 \rightarrow 2} \left (\mathring\tau_{L,*}(\mathring\tau_{\calL,*}\mathring\tau_{k, y_0,*} \calF^\bullet) \right ) \cong \left [ \calR_\Sigma |_{\{ L \} \times \ol\Jac}  \right ] \otimes \Psi^{\calR_\Sigma}_{1 \rightarrow 2} (\mathring\tau_{\calL,*}\mathring\tau_{k, y_0,*} \calF^\bullet ).
\]
After \eqref{eq pull-back of Poincare_Sigma under jmath}, one has
\[
\calR_\Sigma |_{\{ L \} \times \ol\Jac} \cong \calO_{\ol\Jac} 
\]
naturally equipped with the trivial $\Gamma$-equivariant structure. Hence, 
\[
\Psi^{\calR_\Sigma}_{1 \rightarrow 2} (\mathring\tau_{L\otimes\calL,*}\mathring\tau_{k, y_0,*} \calF^\bullet ) \cong \Psi^{\calR_\Sigma}_{1 \rightarrow 2} (\mathring\tau_{\calL,*}\mathring\tau_{k, y_0,*} \calF^\bullet ),
\]
giving the same description of \eqref{eq Psi}.
\end{remark}

We dedicate the rest of the section to discuss the role of endoscopy. Inspired by \cite{ngo}, \cite{frenkel&witten} and \cite{hausel&pauly}, we say that the finite flat covering $\beta_C : C \to X$ is of {\it endoscopic type} if the action of $\Gamma$ on the corresponding compactified Prym variety is not free. Otherwise, we say that $\beta_C$ is of {\it non-endoscopic type}. By \cite[Lemma 3.3 and Proposition 3.4]{FHR}, whenever $C$ is irreducible, $\beta_C$ is of endoscopic type if and only if its composition with the (full) normalisation of $C$ provides an unramified cover of $X$.

In the framework of \eqref{eq nu and beta commute}, as both $C$ and $\Sigma$ have the same (full) normalisation, one has that $\beta_C$ is of the same type (endoscopic or non-endoscopic) as $\beta_\Sigma$. In the non-endoscopic case, the quotient maps  
\[
\xi_C: \ol\Prym_C \to \ol\Prym_C/\Gamma , \quad \quad 
\xi_\Sigma: \ol\Prym_\Sigma \to \ol\Prym_\Sigma/\Gamma , 
\]
are \'etale, and the quotients are connected reduced projective Gorenstein schemes of finite type \cite[Proposition 3.6]{FHR}. Furthermore, $\calR_C$ and $\calR_\Sigma$ descend along $\xi_C$ and $\xi_\Sigma$ \cite[Corollary 5.1]{FHR}; so there exists sheaves $\calQ_C$ and $\calQ_\Sigma$ over $\ol\Prym_\Sigma\times \ol{\Prym}_C/\Gamma$ and $\mathring\tau_{\calL}^{-1} \left (\ol\Prym_{\Sigma} \right ) \times \ol\Prym_\Sigma/\Gamma$ respectively, such that
\[
\calR_C \cong ( \id \times \xi_C)^{*} \calQ_C, \quad \quad \calR_\Sigma \cong ( \id \times \xi_\Sigma)^{*} \calQ_\Sigma.
\]
Taking $\calQ_C$ and $\calQ_\Sigma$ as integral kernels, we define the integral functors 
\begin{equation} \label{eq Phi^Q}
\Phi^{\calQ_{C}}_{1 \rightarrow 2} : D^b\left (\ol\Prym_C \right ) \to D^b\left (\ol\Prym_C/\Gamma \right) , \quad \quad 
\Phi^{\calQ_{\Sigma}}_{1 \rightarrow 2} : D^b\left (\ol\Prym_{\Sigma} \right ) \to D^b\left (\ol\Prym_\Sigma/\Gamma \right ). 
\end{equation}

\begin{theorem}[Theorem 5.2 of \cite{FHR}] \label{tm Phi^Q equivalence}
Let $C$ be an integral nodal curve with arithmetic genus $g$ and a partial normalisation $\nu : \Sigma \to C$ resolving precisely $k$ nodes. Suppose that $C$ and $\Sigma$ are equipped with degree $m$-coverings $\beta_C : C \to X$ and $\beta_\Sigma : \Sigma \to X$ of non-endoscopic type. The associated integral functors $\Phi^{\calQ_C}_{1 \rightarrow 2}$ and $\Phi^{\calR_\Sigma}_{1 \rightarrow 2}$ are equivalences of categories fitting in the commutative diagrams
\[
\begin{tikzcd}[column sep = huge]
D^b( \ol{\Prym}_C ) \arrow[r, "\Phi^{\calQ_C}_{1 \rightarrow 2}"]  \arrow[rd, "\Psi^{\calR_C}_{1 \rightarrow 2}"'] &  D^b( \ol{\Prym}_C/\Gamma ) \arrow[bend right=30,swap,"{\xi_C^*}" pos=0.56]{d} \\
 & D^b( \ol{\Prym}_C, \Gamma ) \arrow[bend right=30,swap,"{\xi_{C,*}^\Gamma}"]{u} 
\end{tikzcd}, \qquad \begin{tikzcd}[column sep = huge]
D^b\left (\ol\Prym_{\Sigma} \right) \arrow[r, "\Phi^{\calQ_\Sigma}_{1 \rightarrow 2}"]  \arrow[rd, "\Psi^{\calR_\Sigma}_{1 \rightarrow 2}"'] &  D^b( \ol\Prym_\Sigma/\Gamma ) \arrow[bend right=30,swap,"{\xi_\Sigma^*}" pos=0.56]{d} \\
 & D^b( \ol\Prym_\Sigma, \Gamma ) \arrow[bend right=30,swap,"{\xi_{\Sigma,*}^\Gamma}"]{u}  
\end{tikzcd} ,
\]
where $\xi_{C,*}^\Gamma$ and $\xi_{\Sigma,*}^\Gamma$ amount to the $\Gamma$-invariant part of their corresponding push-forwards. 
\end{theorem}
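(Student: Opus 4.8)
The plan is to deduce the statement from the already-established equivalences $\Psi^{\calR_C}_{1 \rightarrow 2}$ and $\Psi^{\calR_\Sigma}_{1 \rightarrow 2}$ of \cite[Thm 4.8]{FHR} and \cite[Thm 4.7]{groechenig&shen}, by exhibiting $\Phi^{\calQ_C}_{1 \rightarrow 2}$ and $\Phi^{\calQ_\Sigma}_{1 \rightarrow 2}$ as the composites of these with the standard descent equivalence along the free quotients $\xi_C$ and $\xi_\Sigma$. I will treat the case of $C$; the case of $\Sigma$ is verbatim.

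First I would record the free-quotient descent. Since $\beta_C$ is of non-endoscopic type, $\Gamma$ acts freely on $\ol\Prym_C$ and $\xi_C : \ol\Prym_C \to \ol\Prym_C/\Gamma$ is an \'etale $\Gamma$-torsor. Equivariant descent for a free action then gives an equivalence $\xi_C^* : D^b(\ol\Prym_C/\Gamma) \xrightarrow{\ \cong\ } D^b(\ol\Prym_C, \Gamma)$, with quasi-inverse $\xi_{C,*}^\Gamma$ extracting the $\Gamma$-invariant part of the (finite, hence exact) pushforward. That $\xi_{C,*}^\Gamma \circ \xi_C^* \simeq \id$ and $\xi_C^* \circ \xi_{C,*}^\Gamma \simeq \id$ follows by flat base change along the torsor isomorphism $\ol\Prym_C \times_{\ol\Prym_C/\Gamma} \ol\Prym_C \cong \ol\Prym_C \times \Gamma$.

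The second step relates the two integral functors through the descended kernel. Writing $\mathrm{pr}_1, \mathrm{pr}_2$ for the projections of $\ol\Prym_C \times \ol\Prym_C/\Gamma$ and of $\ol\Prym_C \times \ol\Prym_C$, the defining relation $\calR_C \cong (\id \times \xi_C)^* \calQ_C$ together with flat base change along the Cartesian square
\[
\begin{tikzcd}
\ol\Prym_C \times \ol\Prym_C \arrow[r, "\id \times \xi_C"] \arrow[d, "\mathrm{pr}_2"'] & \ol\Prym_C \times \ol\Prym_C/\Gamma \arrow[d, "\mathrm{pr}_2"] \\
\ol\Prym_C \arrow[r, "\xi_C"'] & \ol\Prym_C/\Gamma
\end{tikzcd}
\]
yields a natural isomorphism $\xi_C^* \circ \Phi^{\calQ_C}_{1 \rightarrow 2} \simeq \Psi^{\calR_C}_{1 \rightarrow 2}$ of functors $D^b(\ol\Prym_C) \to D^b(\ol\Prym_C, \Gamma)$; this is precisely the commutativity of the stated triangle after composing with $\xi_{C,*}^\Gamma$. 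Since $\xi_C^*$ is an equivalence by the first step and $\Psi^{\calR_C}_{1 \rightarrow 2}$ is an equivalence by the cited results, it follows that $\Phi^{\calQ_C}_{1 \rightarrow 2} \simeq \xi_{C,*}^\Gamma \circ \Psi^{\calR_C}_{1 \rightarrow 2}$ is an equivalence, and the same argument over $\Sigma$ completes the proof.

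The main obstacle I anticipate is the bookkeeping of $\Gamma$-equivariant structures: one must check that the descent isomorphism $\calR_C \cong (\id \times \xi_C)^*\calQ_C$ is compatible with the equivariant structure on $\calR_C$ fixed in Proposition \ref{pr compatible Gamma-structures}, so that the base-change isomorphism above is genuinely an isomorphism of functors \emph{landing in} $D^b(\ol\Prym_C, \Gamma)$ and not merely in the underlying $D^b(\ol\Prym_C)$. The only other delicate point is verifying that $\xi_{C,*}^\Gamma$ is a two-sided quasi-inverse of $\xi_C^*$ in the derived equivariant setting rather than a one-sided adjoint; freeness of the $\Gamma$-action is exactly what secures this, which is where the non-endoscopic hypothesis is used.
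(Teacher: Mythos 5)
This statement is one the paper never proves internally: it is imported verbatim as Theorem 5.2 of \cite{FHR}, so there is no in-paper argument to compare yours against. That said, your reconstruction is correct and is the natural (and almost certainly the cited reference's) route: compose the equivariant Fourier--Mukai equivalence with free-quotient descent. All three inputs you need are available in the paper's setup. Freeness of the $\Gamma$-action is the \emph{definition} of non-endoscopic type; \'etaleness of $\xi_C$ and $\xi_\Sigma$ is quoted from Proposition 3.6 of \cite{FHR}; and --- this resolves the first ``obstacle'' you flag --- the isomorphism $\calR_C \cong (\id \times \xi_C)^*\calQ_C$ is quoted from Corollary 5.1 of \cite{FHR} as a \emph{descent} statement, which by definition is an isomorphism of $\Gamma$-equivariant sheaves when the pullback carries its canonical equivariant structure; so compatibility with the equivariant structure fixed on $\calR_C$ is built into the construction of $\calQ_C$ rather than being something to verify afterwards. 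Your two steps are then sound: for the descent equivalence, $\xi_C^* : D^b(\ol\Prym_C/\Gamma) \to D^b(\ol\Prym_C, \Gamma)$ has two-sided quasi-inverse $\xi_{C,*}^{\Gamma}$ because the abelian-level equivalence $\Coh(\ol\Prym_C/\Gamma) \simeq \Coh^{\Gamma}(\ol\Prym_C)$ for a free finite quotient derives trivially ($\xi_C$ is finite flat and invariants are exact in characteristic zero); and flat base change along your Cartesian square, which is $\Gamma$-equivariant once the left-hand column is given the standard action (on the second factor, resp.\ on $\ol\Prym_C$ itself) and the right-hand column the trivial one, yields $\xi_C^* \circ \Phi^{\calQ_C}_{1 \rightarrow 2} \simeq \Psi^{\calR_C}_{1 \rightarrow 2}$ genuinely as functors into $D^b(\ol\Prym_C, \Gamma)$. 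Composing with the quasi-inverse gives both commutative triangles and the equivalence property of $\Phi^{\calQ_C}_{1 \rightarrow 2}$, and the case of $\Sigma$ is identical, as you say.
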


In the non-endoscopic case, the action of $\Gamma$ is faithful. Denote by 
\[ \wt{\xi}_\Sigma:  \PMod_{\nu} \times_{\ol{\Jac}_{C}}\ol{\Prym}_{C} \to \PMod_{\nu} \times_{\ol{\Jac}_{C}} \ol{\Prym}_{C}/\Gamma 
\] the quotient of the Prymian version of the moduli space of parabolic modules by the $m$-torsion points. By Proposition \ref{pr compatible Gamma-structures} $\imath_{\nu}^{*}\det(\calV_{\Sigma})$ can be equipped with a $\Gamma$-equivariant structure. By the Drézet--Narasimhan--Kempf descent criterion \cite[Theorem 2.3]{drezet&narasimhan}, it descends along $\wt\xi_\Sigma$ to a sheaf $[\imath_{\nu}^{*}\det(\calV_{\Sigma})]$ on the quotient $\PMod_{\nu} \times_{\ol{\Jac}_{C}} \ol{\Prym}_{C}/\Gamma$, such that 
\begin{equation} \label{eq descent of det Vv}
\imath_{\nu}^{*}\det(\calV_{\Sigma}) \cong \wt\xi_\Sigma^* [\imath_{\nu}^{*}\det(\calV_{\Sigma})].
\end{equation}
Given all of the above, one can naturally derive the following from Theorem \ref{tm Phi^Q equivalence} and Theorem \ref{co: SL_n transform}.

\begin{corollary} 
\label{co: SL_n transform non-endoscopic}
Let $C$ be an integral nodal curve with arithmetic genus $g$ and partial normalisation $\nu : \Sigma \to C$, which resolves $k$ nodes. Suppose that $C$ and $\Sigma$ are equipped with $m$-coverings $\beta_C : C \to X$ and $\beta_\Sigma : \Sigma \to X$ of non-endoscopic type. Denote by $[\mathring \rho]$, $[\mathring \nu]$, $[\imath_\nu]$ the corresponding morphisms between $\Gamma$-orbits induced by $\mathring \rho$, $\mathring \nu$, $\imath_\nu$. Then, for $\calF^\bullet \in D^b(\ol\Prym_{\Sigma}^{\,-k})$, one has the isomorphism
\[
\Phi^{\calQ_C}_{1 \rightarrow 2} \left ( \breve\nu_*\calF^\bullet \right ) \cong  [\mathring\rho]_* \left( [\imath_{\nu}^{*}\det(\calV_{\Sigma})] \otimes [\mathring\nu]^* \Phi^{\calQ_\Sigma}_{1 \rightarrow 2} (\mathring\tau_{\calL,*}\mathring\tau_{k, y_0,*} \calF^\bullet ) \right). 
\]
\end{corollary}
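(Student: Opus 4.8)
The plan is to deduce the statement directly from Corollary \ref{co: SL_n transform} by applying the descent equivalences of Theorem \ref{tm Phi^Q equivalence}. First I would record that, since $\breve\nu$ is a closed immersion and in particular finite, one has $R\breve\nu_*=\breve\nu_*$, so the left-hand side of Corollary \ref{co: SL_n transform} reads $\Psi^{\calR_C}_{1 \rightarrow 2}(\breve\nu_*\calF^\bullet)$. By the left-hand triangle of Theorem \ref{tm Phi^Q equivalence}, the $\Gamma$-invariant pushforward satisfies $\xi_{C,*}^\Gamma\circ\Psi^{\calR_C}_{1 \rightarrow 2}\simeq\Phi^{\calQ_C}_{1 \rightarrow 2}$; hence applying $\xi_{C,*}^\Gamma$ to the isomorphism of Corollary \ref{co: SL_n transform} produces exactly $\Phi^{\calQ_C}_{1 \rightarrow 2}(\breve\nu_*\calF^\bullet)$ on the left. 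It then remains to transport $\xi_{C,*}^\Gamma$ across the right-hand side.

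The key geometric input is that, in the non-endoscopic case, $\Gamma$ acts freely on all three compactified Pryms, so the quotient maps $\xi_C$, $\xi_\Sigma$ and $\wt\xi_\Sigma$ are \'etale $\Gamma$-torsors, and the induced morphisms $[\mathring\rho]$, $[\mathring\nu]$, $[\imath_\nu]$ on quotients sit in Cartesian squares with the $\Gamma$-equivariant morphisms $\mathring\rho$, $\mathring\nu$, $\imath_\nu$ of \eqref{eq def of ring rho} and \eqref{eq def of ring nu} (these are $\Gamma$-equivariant by Lemma \ref{lm equivariance of PMod} and its Prymian refinement). I would then use flat base change along the torsor square for $\mathring\rho$, together with $\xi_{C,*}^\Gamma\circ\xi_C^*\simeq\id$ and $\wt\xi_{\Sigma,*}^\Gamma\simeq(\wt\xi_\Sigma^*)^{-1}$, to obtain the commutation $\xi_{C,*}^\Gamma\circ\mathring\rho_*\simeq[\mathring\rho]_*\circ\wt\xi_{\Sigma,*}^\Gamma$. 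Inside $\wt\xi_{\Sigma,*}^\Gamma$ I would descend each tensor factor: one has $\imath_\nu^*\det(\calV_\Sigma)\simeq\wt\xi_\Sigma^*[\imath_\nu^*\det(\calV_\Sigma)]$ by \eqref{eq descent of det Vv}, while the right-hand triangle of Theorem \ref{tm Phi^Q equivalence} gives $\Psi^{\calR_\Sigma}_{1 \rightarrow 2}\simeq\xi_\Sigma^*\circ\Phi^{\calQ_\Sigma}_{1 \rightarrow 2}$, so that $\mathring\nu^*\Psi^{\calR_\Sigma}_{1 \rightarrow 2}(-)\simeq\mathring\nu^*\xi_\Sigma^*\Phi^{\calQ_\Sigma}_{1 \rightarrow 2}(-)\simeq\wt\xi_\Sigma^*[\mathring\nu]^*\Phi^{\calQ_\Sigma}_{1 \rightarrow 2}(-)$ by commutativity of the torsor square for $\mathring\nu$.

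Combining these, the argument of $\wt\xi_{\Sigma,*}^\Gamma$ becomes $\wt\xi_\Sigma^*\big([\imath_\nu^*\det(\calV_\Sigma)]\otimes[\mathring\nu]^*\Phi^{\calQ_\Sigma}_{1 \rightarrow 2}(\mathring\tau_{\calL,*}\mathring\tau_{k,y_0,*}\calF^\bullet)\big)$, using that $\wt\xi_\Sigma^*$ is monoidal; applying $\wt\xi_{\Sigma,*}^\Gamma\circ\wt\xi_\Sigma^*\simeq\id$ and the commutation of the previous paragraph then yields the claimed formula. The main obstacle I anticipate is purely at the level of $\Gamma$-equivariant structures: one must verify that the compatible equivariant structures on $\calR_C$, $\calR_\Sigma$ and $\imath_\nu^*\det(\calV_\Sigma)$ fixed via Proposition \ref{pr compatible Gamma-structures} are precisely the ones rendering all of the above base-change and descent isomorphisms equivariant, so that passing to the invariant pushforwards $\xi_{C,*}^\Gamma$ and $\wt\xi_{\Sigma,*}^\Gamma$ is legitimate; the underlying equivalences are otherwise standard \'etale torsor descent and flat base change.
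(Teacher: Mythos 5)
Your proposal is correct and is precisely the derivation the paper intends: the paper proves this corollary in one line by citing Theorem \ref{tm Phi^Q equivalence} and Corollary \ref{co: SL_n transform} together with the descent isomorphism \eqref{eq descent of det Vv}, and your write-up (applying $\xi_{C,*}^\Gamma$, commuting it past $\mathring\rho_*$ via the torsor base-change square, and descending the two tensor factors along $\wt\xi_\Sigma$) is exactly that argument spelled out. The equivariant-structure compatibility you flag as the remaining obstacle is precisely what Proposition \ref{pr compatible Gamma-structures} was set up to guarantee, so there is no gap.
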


\printbibliography

\end{document}